\newtheorem{theorem}{Theorem}[section]
\newtheorem{lemma}[theorem]{Lemma}
\newtheorem{proposition}[theorem]{Proposition}
\newtheorem{corollary}[theorem]{Corollary}
\newtheorem{claim}[theorem]{Claim}
\theoremstyle{definition}
\theoremstyle{remark}
\newtheorem{remark}{Remark}[section]
\numberwithin{equation}{section}
\DeclareMathOperator{\sech}{sech}
\DeclareMathOperator{\diag}{diag}
\DeclareMathOperator{\supp}{supp}
\newcommand{\R}{\mathbb{R}}
\newcommand{\C}{\mathbb{C}}
\newcommand{\N}{\mathbb{N}}
\newcommand{\Z}{\mathbb{Z}}
\newcommand{\eps}{\epsilon}
\renewcommand{\a}{\alpha} 
\newcommand{\hxi}{\hat{\xi}}
\newcommand{\heta}{\hat{\eta}}
\newcommand{\tc}{\tilde{c}}
\newcommand{\barr}{\bar{r}}
\newcommand{\tu}{\tilde{u}}
\newcommand{\baru}{\bar{u}}
\newcommand{\tf}{\tilde{f}}
\newcommand{\barf}{\bar{f}}
\newcommand{\tv}{\tilde{v}}
\newcommand{\tx}{\tilde{x}}
\newcommand{\ty}{\tilde{y}}
\newcommand{\la}{\langle}
\newcommand{\ra}{\rangle}
\newcommand{\pd}{\partial}
\newcommand{\spann}{\operatorname{span}}
\newcommand{\mL}{\mathcal{L}}
\newcommand{\mF}{\mathcal{F}}
\begin{document}
\title{Asymptotic Linear Stability of the Benney-Luke equation in 2D}
\author{Tetsu Mizumachi}
\address{Division of Mathematical and Information Sciences\\
Hiroshima University\\
1-7-1 Kagamiyama, Higashi-Hiroshima 739-8521, Japan}
\email{tetsum@hiroshima-u.ac.jp}
\author{Yusuke Shimabukuro}
\address{ Institute of Mathematics\\
Academia Sinica\\
6F, Astronomy-Mathematics Building, No. 1, Sec. 4, Roosevelt Road, Taipei 10617, Taiwan}
\email{shimaby@gate.sinica.edu.tw}
\keywords{line solitary waves, transverse linear stability}
\subjclass[2010]{Primary 35B35, 37K45;\\Secondary 35Q35}
\begin{abstract} 
  In this paper, we study transverse linear stability of line solitary
  waves to the $2$-dimensional Benney-Luke equation which arises in
  the study of small amplitude long water waves in $3$D. In the case
  where the surface tension is weak or negligible, we find a curve of
  resonant continuous eigenvalues near $0$.  Time evolution of these
  resonant continuous eigenmodes is described by a $1$D damped wave
  equation in the transverse variable and it gives a linear approximation
  of the local phase shifts of modulating line solitary waves. In
  exponentially weighted space whose weight function increases in the
  direction of the motion of the line solitary wave, the other part of
  solutions to the linearized equation decays exponentially as $t\to\infty$.
\end{abstract}

\maketitle
\tableofcontents
\section{Introduction}
\label{sec:intro}
In this paper, we study transverse linear stability of line solitary waves
for the Benney-Luke equation
\begin{equation}
  \label{eq:BL}
\pd_t^2\Phi-\Delta\Phi+a\Delta^2\Phi-b\Delta\pd_t^2\Phi
+(\pd_t\Phi)(\Delta\Phi)+\pd_t(|\nabla\Phi|^2)=0
\quad\text{on $\R\times\R^2$.}
\end{equation}
The Benney-Luke equation is an approximation model of small amplitude
long water waves with finite depth originally derived by Benney and
Luke \cite{BL64} as a model for 3D water waves.  Here
$\Phi=\Phi(t,x,y)$ corresponds to a velocity potential of water waves.
We remark that \eqref{eq:BL} is an isotropic model for propagation of
water waves whereas KdV, BBM and KP equations are unidirectional
models.  See e.g. \cite{BCG13,BCS02} for the other bidirectional
models of 2D and 3D water waves.
\par
The parameters $a$, $b$ are positive and satisfy $a-b=\hat{\tau}-1/3$, where
$\hat{\tau}$ is the inverse Bond number. 
If we think of waves propagating in one direction, slowly evolving in
time and having weak transverse variation, then the Benney-Luke equation
can be formally reduced to the KP-II equation if $0<a<b$ and
to the KP-I equation if $a>b>0$. 
More precisely, the Benney-Luke equation \eqref{eq:BL} is reduced to
$$2f_{\tx\tilde{t}}+(b-a)f_{\tx\tx\tx\tx}+3f_{\tx}f_{\tx\tx}+f_{\ty\ty}=0$$
in the coordinate $\tilde{t}=\eps^3t$, $\tx=\eps(x-t)$ and
$\ty=\eps^2 y$ by taking terms only of order $\eps^5$,
where $\Phi(t,x,y)=\eps f(\tilde{t},\tx,\ty)$.
See e.g. \cite{MilKel96} for the details.
In this paper, we will assume $0<a<b$, which corresponds to the case where
the surface tension is weak or negligible.
\par

The solution $\Phi(t)$ of the Benney-Luke equation \eqref{eq:BL} formally
satisfies the energy conservation law
\begin{equation}
  \label{eq:conserve}
E(\Phi(t),\pd_t\Phi(t))=E(\Phi_0,\Psi_0)
\quad\text{for $t\in\R$,}
\end{equation}
where 
$$
E(\Phi,\Psi):=\int_{\R^2}
\left\{|\nabla\Phi|^2+a(\Delta\Phi)^2+\Psi^2+b|\nabla\Psi|^2\right\}\,
dxdy\,,$$
and \eqref{eq:BL} is globally well-posed in the energy class
$(\dot{H}^2(\R^2)\cap \dot{H}^1(\R^2))
\times H^1(\R^2)$ (see \cite{Q13}).
The Benney Luke equation \eqref{eq:BL} has a 3-parameter family of line solitary wave
solutions
\begin{equation}
\label{eq:lsw}
\Phi(t,x,y)=\varphi_c(x\cos\theta+y\sin\theta-ct+\gamma)\,,
\quad \pm c>1\,,\quad \gamma\in\R\,,\quad \theta\in[0,2\pi)\,,
\end{equation}
where 
$$\varphi_c(x)=\frac{2(c^2-1)}{c\a_c}\tanh(\frac{\a_c}{2}x)\,,
\quad \a_c=\sqrt{\frac{c^2-1}{bc^2-a}}\,,$$
and $$q_c(x):=\varphi_c'(x)=\frac{c^2-1}{c}\sech^2\bigl(\frac{\a_cx}{2}\bigr)$$
is a solution of
\begin{equation}
  \label{eq:qc}
  (bc^2-a)q_c''-(c^2-1)q_c+\frac{3c}{2}q_c^2=0\,.
\end{equation}
Stability of solitary  waves to the $1$-dimensional Benney-Luke equation
are studied by \cite{Q03} for the strong surface tension case
$a>b>0$ and by \cite{MPQ13} for the weak surface tension case $b>a>0$. 
If $a>b>0$, then \eqref{eq:BL} has a stable ground state for $c$
satisfying $0<c^2<1$ (\cite{PQ99,Q05}). 
See also \cite{Marics} for the algebraic decay property of the ground state.
In view of \cite{RT1,RT2},
line solitary waves for the $2$-dimensional Benney-Luke equation
are expected to be unstable in this parameter regime.
On the other hand if $0<a<b$ and $c:=\sqrt{1+\eps^2}$ is close to $1$ (the sonic speed), 
then $\varphi_c(x-ct)$ is expected to be transversally stable because
$q_c(x)$ is similar to a KdV $1$-soliton and
line solitons of the KP-II equation is transversally stable
(\cite{KP,Miz13,Miz15}).
\par
The dispersion relation for the linearization of \eqref{eq:BL} around $0$ is 
$$\omega^2=(\xi^2+\eta^2)\frac{1+a(\xi^2+\eta^2)}{1+b(\xi^2+\eta^2)}$$
for a plane wave solution $\Phi(t,x,y)=e^{i(x\xi+y\eta-\omega t)}$.
If $b>a>0$, then $|\nabla\omega|\le 1$ and line solitary waves travel
faster than the maximum group velocity of linear waves.  Measuring the
size of perturbations with an exponentially weighted norm biased in
the direction of motion of a line solitary wave, we can observe that
perturbations which are decoupled from the line solitary wave decay as
$t\to\infty$.
In the $1$-dimensional case, small solitary waves are exponentially linearly
stable in the weighted space and  $\lambda=0$ is an isolated eigenvalue
of the linearized operator (see \cite{MPQ13}).
In our problem, however, the value $\lambda=0$ is not an isolated eigenvalue.
This is because line solitary waves do not decay in the transverse
direction.  Indeed, for any size of line solitary waves of
\eqref{eq:BL}, there appears a curve of continuous spectrum that goes
through $\lambda=0$ and locates in the stable half plane
(Theorem~\ref{thm:1}).  The curve of continuous eigenvalues has to do
with perturbations that propagate toward the transverse direction
along the crest of the line solitary wave
(Theorem~\ref{thm:linear-dynamics}).  If line solitary waves are small,
the rest of the spectrum locates in a stable half plane
$\{\lambda\in\C\mid \Re\lambda\le -\beta<0\}$ 
(Theorem~\ref{thm:spectrum}).  For the KP-II equation,
the spectrum of the linearized operator around a $1$-line
soliton near $\lambda=0$ can be obtained explicitly thanks to the
integrability of the equation (see \cite{APS,Burtsev85,Miz15}).  In this
paper, we will use the Lyapunov-Schmidt method to find resonant
eigemodes of the linearized operator.

To prove non-existence of unstable modes for the linearized operator
around small line solitary waves, we make use of the KP-II
approximation of the the linearized operator of \eqref{eq:BL} on long
length scales and make use of the transverse linear stability of line
solitons for the KP-II equation.  For $1$-dimensional long wave
models, non-existence of unstable modes for the linearized operator
around solitary waves has been proved by utilizing spectral stability
of KdV solitons.  See e.g. \cite{FP3,Miz13,MW96,PS2,PW97} and
\cite{MPQ13} for the $1$-dimensional Benney-Luke equation.  We expect
that the KP-II approximation of the linearized operator is useful to
other $2$-dimensional long wave models such as KP-BBM and Boussinesq
systems with no surface tension (see e.g. \cite{CCD10}).
\par
\par
Now let us introduce several notations. 
For an operator $A$, we denote by $\sigma(A)$ the spectrum
and by $D(A)$ and $R(A)$ the domain and the range of the
operator $A$, respectively.
For Banach spaces $V$ and $W$, let $B(V,W)$ be the space of all
linear continuous operators from $V$ to $W$ and
$\|T\|_{B(V,W)}=\sup_{\|x\|_V=1}\|Tu\|_W$ for $T\in B(V,W)$.
We abbreviate $B(V,V)$ as $B(V)$.
For $f\in \mathcal{S}(\R^n)$ and $m\in \mathcal{S}'(\R^n)$, let 
\begin{gather*}
(\mathcal{F}f)(\xi)=\hat{f}(\xi)
=(2\pi)^{-n/2}\int_{\R^n}f(x)e^{-ix\xi}\,dx\,,\\
(\mathcal{F}^{-1}f)(x)=\check{f}(x)=\hat{f}(-x)\,,
\end{gather*}
and $(m(D)f)(x)=(2\pi)^{-n/2}(\check{m}*f)(x)$.
We denote $\la f,g\ra$ by
$$\la f,g\ra=\sum_{j=1}^m \int_\R f_j(x)\overline{g_j(x)}\,dx$$
for $\C^m$-valued functions $f=(f_1,\cdots,f_m)$ and
$g=(g_1,\cdots,g_m)$.

Let $L^2_\a(\R^2)=L^2(\R^2;e^{2\a x}dxdy)$, $L^2_\a(\R)=L^2(\R;e^{2\a x}\,dx)$
and let $H^k_\a(\R^2)$ and $H^k_\a(\R)$ be Hilbert spaces with the norms
\begin{gather*}
\|u\|_{H^k_\a(\R^2)}=\left(\|\pd_x^ku\|_{L^2_\a(\R^2)}^2
+\|\pd_y^ku\|_{L^2_\a(\R^2)}^2+\|u\|_{L^2_\a(\R^2)}^2\right)^{1/2}\,,
\\
\|u\|_{H^k_\a(\R)}=\left(\|\pd_x^ku\|_{L^2_\a(\R)}^2+\|u\|_{L^2_\a(\R)}^2\right)^{1/2}\,.
\end{gather*}
We use $a\lesssim b$ and $a=O(b)$ to mean that there exists a
positive constant such that $a\le Cb$. 
Various constants will be simply denoted
by $C$ and $C_i$ ($i\in\mathbb{N}$) in the course of the
calculations. We denote $\la x\ra=\sqrt{1+x^2}$ for $x\in\R$.
\bigskip

\section{Statement of the result}
\label{sec:statement}
Since \eqref{eq:BL} is isotropic and translation invariant,
we may assume $\theta=\gamma=0$ in \eqref{eq:lsw} without loss of generality.
Let $\Psi=\pd_t\Phi$, $A=I-a\Delta$ and $B=I-b\Delta$. Then 
in the moving coordinate $z=x-ct$, the Benney-Luke equation \eqref{eq:BL}
can be rewritten as
\begin{equation}
  \label{eq:BL1}
\left\{
  \begin{aligned}
& \pd_t\Phi=c\pd_z\Phi+\Psi\,,\\
& \pd_t\Psi=c\pd_z\Psi+B^{-1}A\Delta \Phi
-B^{-1}(\Psi\Delta\Phi+2\nabla\Phi\cdot\nabla\Psi)\,,
\end{aligned}\right.
\end{equation}
Let $r_c(z)=-cq_c(z)$.
Linearizing \eqref{eq:BL1} around $(\Phi,\Psi)=(\varphi_c(z), r_c(z))$,
we have
\begin{gather}
  \label{eq:linear}
\pd_t  \begin{pmatrix}\Phi\\ \Psi\end{pmatrix}
=\mL \begin{pmatrix}\Phi\\ \Psi\end{pmatrix}\,,
\\ \notag
\mL=\mL_0+V\,,\quad 
\mL_0=
\begin{pmatrix}  c\pd_z & 1 \\ B^{-1}A\Delta & c\pd_z\end{pmatrix}\,,
\\ \notag
\\\
V=-B^{-1}\begin{pmatrix}  0 & 0 \\ v_{1,c} & v_{2,c}\end{pmatrix}\,,
\quad v_{1,c}=2r_c'(z)\pd_z+r_c(z)\Delta\,,\quad
v_{2,c}=2q_c(z)\pd_z+q_c'(z)\,.
\end{gather}
We study linear stability of \eqref{eq:linear} in a weighted space
$X:=H^1_\a(\R^2)\times L^2_\a(\R^2)$.
Let $\mL(\eta)u(z)=e^{-iy\eta}\mL(e^{iy\eta}u(z))$ for $\eta\in\R$. Note that
$V$ is independent of $y$. For each small $\eta\ne0$, the operator
$\mL(\eta)$ has two stable eigenvalues.
\begin{theorem}
\label{thm:1}
Let $0<a<b$ and $k\in\N$. Fix $c>1$ and $\a\in(0,\a_c)$.
Then there exist a positive constant $\eta_0$,
$\lambda(\eta)\in C^\infty([-\eta_0,\eta_0])$,
$$\zeta(\cdot,\eta)\in
C^\infty([-\eta_0,\eta_0];H^k_\a(\R)\times H^{k-1}_\a(\R))\,,\quad
\zeta^*(\cdot,\eta)\in
C^\infty([-\eta_0,\eta_0];H^k_{-\a}(\R)\times H^{k-1}_{-\a}(\R))$$
such that
\begin{gather} \notag
\mL(\eta) \zeta(z,\pm\eta)=\lambda(\pm\eta)\zeta(z,\pm\eta)\,,
\quad 
\mL(\eta)^* \zeta^*(z,\pm\eta)=\lambda(\mp\eta)\zeta^*(z,\pm\eta)\,,
\\  \label{eq:lambda-asymp}
\lambda(\eta)=i\lambda_1\eta -\lambda_2\eta^2+O(\eta^3)\,,
\\  \label{eq:zeta-asymp}
\zeta(\cdot,\eta)=\zeta_1+i\lambda_1\eta \zeta_2+O(\eta^2)
\quad\text{in $H^k_\a(\R)\times H^{k-1}_\a(\R)$,}
\\  \label{eq:zeta*-asymp}
\zeta^*(\cdot,\eta)=\zeta_2^*-i\lambda_1\eta \zeta_1^*+O(\eta^2)
\quad\text{in $H^k_{-\a}(\R)\times H^{k-1}_{-\a}(\R)$,}
\\  \label{eq:zeta-parity}
\overline{\lambda(\eta)}=\lambda(-\eta)\,,\enskip
\overline{\zeta(z,\eta)}=\zeta(z,-\eta)\,,\enskip \overline{\zeta^*(z,\eta)}=\zeta^*(z,-\eta)
\quad\text{for $\eta\in[-\eta_0,\eta_0]$ and $z\in\R$,}
\end{gather}
where $\lambda_1$ and $\lambda_2$ are positive constants,
$A_0=1-a\pd_z^2$, $B_0=1-b\pd_z^2$ and
\begin{gather*}
\zeta_1=
\begin{pmatrix}  q_c\\ r_c'\end{pmatrix}\,,
\quad
\zeta_2=
\begin{pmatrix} \int_z^\infty \pd_cq_c \\ -\pd_cr_c \end{pmatrix}\,,\\
\zeta_1^*=c
\begin{pmatrix}
-B_0\pd_cr_c-2q_c\pd_cq_c-q_c'\int_{-\infty}^z \pd_cq_c
\\ B_0\int_{-\infty}^z \pd_cq_c
\end{pmatrix}
\,,\quad
\zeta_2^*=\begin{pmatrix}  A_0q_c'\\ -B_0r_c\end{pmatrix}\,.
\end{gather*}
\end{theorem}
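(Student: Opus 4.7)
The plan is to apply Kato's analytic perturbation theory for isolated eigenvalues of finite algebraic multiplicity to the family $\mL(\eta)$ on $X_\a:=H^1_\a(\R)\times L^2_\a(\R)$, exploiting the fact that at $\eta=0$ there is a rank-$2$ Jordan block at $\lambda=0$ arising from the translation and speed symmetries of the line solitary wave. Once the rank-$2$ spectral projection is in place, the two bifurcating eigenvalues are the roots of an explicit $2\times 2$ matrix problem from which the asymptotic formulas are read off directly.

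First I would verify that $\lambda=0$ is isolated in $\sigma(\mL(0))$ on $X_\a$ for $\a\in(0,\a_c)$. The entries of $V$ decay like $e^{-\a_c|z|}$, so $V$ is $\mL_0$-compact on $X_\a$ and $\sigma_{\rm ess}(\mL(0))=\sigma_{\rm ess}(\mL_0)$. Conjugating $\mL_0$ by $e^{\a z}$ gives an explicit Fourier symbol, and the hypothesis that the line solitary wave travels faster than the maximal linear group velocity (a consequence of $b>a>0$ and $c>1$) forces the essential spectrum into a half-plane $\{\Re\lambda\le-\beta\}$ for some $\beta=\beta(\a,c)>0$; hence $0$ is isolated with finite algebraic multiplicity. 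Translation invariance gives $\mL(0)\zeta_1=0$, and differentiating the stationary form of \eqref{eq:BL1} with respect to $c$ (after subtracting the constant $\pd_c\varphi_c(\infty)$, which is why $\zeta_2$ involves $\int_z^\infty\pd_cq_c$ rather than $\pd_c\varphi_c$ itself) yields a vector in $X_\a$ satisfying $\mL(0)\zeta_2=\zeta_1$. A Fredholm-solvability check using $\int q_c\,\pd_cq_c\,dz=\tfrac12\pd_c\int q_c^2\,dz\ne 0$ rules out a longer chain. The same procedure on $X_{-\a}$, with primitives now taken from $-\infty$, produces the dual chain $\mL(0)^*\zeta_2^*=0$, $\mL(0)^*\zeta_1^*=\zeta_2^*$; normalizing by $\la\zeta_i,\zeta_j^*\ra=\delta_{ij}$ gives a biorthogonal basis of the generalized null space.

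Because $\mL$ contains $\pd_y$ only through $\pd_y^2$ inside $\Delta$, $A$ and $B$, the family $\mL(\eta)=\mL(0)+\eta^2 M_2+\eta^4 M_4+\cdots$ is real-analytic in $\eta$ (in fact polynomial in $\eta^2$). Kato's theorem then yields an analytic rank-$2$ spectral projection $P(\eta)$ near $\eta=0$, and the reduced matrix in the biorthogonal basis takes the form
\begin{equation*}
M(\eta)=\begin{pmatrix}0 & 1 \\ 0 & 0\end{pmatrix}+\eta^2\begin{pmatrix}B_{11} & B_{12} \\ B_{21} & B_{22}\end{pmatrix}+O(\eta^4),\qquad B_{ij}=\la M_2\zeta_j,\zeta_i^*\ra,
\end{equation*}
whose eigenvalues are $\lambda(\pm\eta)=\pm\sqrt{B_{21}}\,\eta+\tfrac12(B_{11}+B_{22})\eta^2+O(\eta^3)$. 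To obtain \eqref{eq:lambda-asymp} it remains to show $B_{21}<0$ (so that $\sqrt{B_{21}}=\pm i\lambda_1$ with $\lambda_1>0$) and $B_{11}+B_{22}=-2\lambda_2<0$; these signs should follow from repeated integration by parts against \eqref{eq:qc} together with the momentum-monotonicity identity $\pd_c\int q_c^2>0$, which holds when $bc^2-a>0$. The eigenvector expansion \eqref{eq:zeta-asymp} then comes from diagonalizing $M(\eta)$; the dual expansion \eqref{eq:zeta*-asymp} is analogous; \eqref{eq:zeta-parity} is immediate from $\overline{\mL(\eta)u}=\mL(-\eta)\bar u$; and $H^k_\a$ regularity passes to the Lyapunov--Schmidt corrections by elliptic bootstrap, since $A_0$ and $B_0$ are second-order elliptic.

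The hard part is the explicit algebra at two points. First, setting up the dual chain in $X_{-\a}$: because $\pd_c\varphi_c\notin L^2_{-\a}$, one must work with its antiderivative form to land in the right space, and the boundary contributions that appear in the duality pairings must be tracked carefully to obtain the exact formulas for $\zeta_1^*$ and $\zeta_2^*$ in the statement. Second, evaluating and signing $B_{21}$ and $B_{11}+B_{22}$ in closed form; this is where the structural parameters of the weakly-tensioned Benney--Luke equation (in particular $b>a>0$ and $bc^2-a>0$) must conspire to place $\lambda(\eta)$ on the imaginary axis to leading order and in the open left half-plane at second order.
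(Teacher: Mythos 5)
Your framework is essentially the paper's: both arguments reduce the eigenvalue problem to the two-dimensional generalized kernel $\spann\{\zeta_1,\zeta_2\}$ of $\mL(0)$ (Fredholm of index zero on $H^2_\a\times H^1_\a\to H^1_\a\times L^2_\a$ because $V(0)$ is relatively compact and the symbol bounds of Section~4 make $\mL_0(0)$ invertible), exploit that $\mL(\eta)=\mL(0)+\eta^2\mL_1(\eta)$ is analytic in $\eta^2$, and extract $\lambda(\eta)$ from the resulting $2\times2$ problem; your reduced-matrix formulas $\lambda_1^2=-B_{21}$ and $\lambda_2=-\tfrac12(B_{11}+B_{22})$ agree, after unwinding the normalization, with the paper's $\lambda_{1,0}^2=\la\mL_1(0)\zeta_1,\zeta_2^*\ra/(-\la\zeta_2,\zeta_2^*\ra)$ and \eqref{lambda2}. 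Two caveats on the setup: the family is analytic in $\eta^2$ but \emph{not} polynomial in $\eta^2$ (because of $B(\eta)^{-1}$), and the dual vectors $\zeta_1^*,\zeta_2^*$ of the theorem are \emph{not} biorthogonal to $\zeta_1,\zeta_2$ — the paper computes $\la\zeta_2,\zeta_1^*\ra>0$ in \eqref{eq:zeta2-zeta1*} — so if you normalize to $\la\zeta_i,\zeta_j^*\ra=\delta_{ij}$ you will not recover the stated $\zeta_1^*$ in \eqref{eq:zeta*-asymp}; the paper instead produces $\zeta^*(\eta)$ directly by the reflection $z\mapsto-z$ applied to the eigenvalue equation, and the cross term $\la\zeta_2,\zeta_1^*\ra$ then reappears explicitly in the formula for $\lambda_2$.

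The genuine gap is the sign $\lambda_2>0$ (and, less seriously, $B_{21}<0$, which does follow from a short computation such as \eqref{eq:mL1-1-1} using \eqref{eq:qc-alt}). You assert that the second-order sign "should follow from repeated integration by parts against \eqref{eq:qc} together with the momentum-monotonicity identity $\pd_c\int q_c^2>0$," but this is not the case: in the paper's identity \eqref{lambda2} the quantity $\frac{d}{dc}E\cdot\lambda_{2,0}$ is a sum of three terms of competing signs, one of which, $-\lambda_{1,0}^2\la\zeta_2,\zeta_1^*\ra$, is strictly negative. Establishing positivity of the total requires the explicit evaluations \eqref{lam-eq2}--\eqref{Int1} and then the verification that the quartic polynomials $n(\rho)$ and $d(\rho)$ in $\rho=c^2$ defined in \eqref{n-eq}--\eqref{d-eq} are positive for all $\rho>1$ and $b>a>0$ (done via $n(1),n'(1),n''>0$ and likewise for $d$). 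No soft structural or monotonicity argument is offered in the paper, and none is apparent; since $\lambda_2>0$ is exactly what places the resonance curve in the open left half-plane and drives the diffractive decay in Theorem~\ref{thm:linear-dynamics}, this computation is the substantive content of the proof and cannot be waved through. Your proposal correctly isolates where the difficulty lies but does not close it.
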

\begin{remark}
We remark that $\mL(0)$ is a linearized operator of the $1$-dimensional
Benney-Luke equation around $\varphi_c(z)$ and
$\zeta_1$ and $\zeta_2$ belong to the generalized kernel of $\mL(0)$.
More precisely,
  \begin{gather*}
\mL(0)\zeta_1=0\,,\quad
\mL(0)\zeta_2=\zeta_1\,,\quad \mL(0)^*\zeta_1^*=\zeta_2^*\,,\quad
\mL(0)^*\zeta_2^*=0\,,\\
\ker_g(\mL(0))=\spann\{\zeta_1,\zeta_2\}\,,
\quad \ker_g(\mL(0))=\spann\{\zeta_1^*,\zeta_2^*\}\,.
  \end{gather*}
The eigenvalue $\lambda=0$ for $\mL(0)$
splits into two stable eigenvalues $\lambda(\pm\eta)$ for $\mL(\eta)$
with $\eta\ne0$.
\par
In the exponentially weighted space $L^2_\a(\R)$,
the value $\lambda=0$ is an isolated eigenvalue of $\mL(0)$ and there
exists a $\beta>0$ such that 
$$\sigma(\mL(0))\setminus\{0\}\subset \{\lambda\in\C\mid \Re\lambda \le -\beta\}$$ 
provided $c>1$ and $c$ is sufficiently close to $1$.
See Lemma~2.1, Theorem~2.3 and Appendix~B in \cite{MPQ13}.
\end{remark}
\begin{remark}
  We expect that $\zeta_k(\cdot,\eta)$ and $\zeta_k^+(\cdot,\eta)$ ($k=1$, $2$) do not belong to $L^2(\R)$ as is the same with
continuous resonant modes for the KP-II equation. This is a reason why we study spectral stability of $\mL$ in
the exponentially weighted space $X$.
\end{remark}
We will prove Theorem~\ref{thm:1} by using the Lyapunov Schmidt method
in Section~\ref{sec:bifurcation}.
\par
Let $\mathcal{P}(\eta_0)$ be the spectral projection onto the subspace
corresponding to the continuous eigenvalues
$\{\lambda(\eta)\}_{-\eta_0\le \eta \le\eta_0}$ and 
$\mathcal{Q}(\eta_0)=I-\mathcal{P}(\eta_0)$.
Let $Z=\mathcal{Q}(\eta_0)(H^1_\a(\R^2)\times L^2_\a(\R^2))$.
If $\mL$ is spectrally stable, then $e^{t\mL}|_{Z}$ is exponentially stable.
\begin{corollary}
  \label{thm:linear-stability}
Let $0<a<b$, $c>1$ and $\a\in(0,\a_c)$. Consider the operator $\mL$
in the space $X=H^1_\a(\R^2)\times L^2_\a(\R^2)$.
Assume that there exist positive
constants $\beta$ and $\eta_0$ such that
\begin{equation}
  \label{ass:H}
\sigma(\mL|_Z)  \subset \{\lambda\mid \Re\lambda\le -\beta\}\,,
\tag{H}
\end{equation}
where $\mL|_Z$ is the restriction of the operator $\mL$ on $Z$.
Then for any $\beta'<\beta$, there exists a positive constant $C$ such that
\begin{equation}
  \label{eq:decay}
\|e^{t\mL}\mathcal{Q}(\eta_0)\|_{B(X)}
\le Ce^{-\beta' t}\quad\text{for any  $t\ge0$.}  
\end{equation}
\end{corollary}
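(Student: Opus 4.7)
The plan is to derive the semigroup decay \eqref{eq:decay} from the spectral hypothesis \eqref{ass:H} via the Gearhart--Pr\"uss theorem for $C_0$-semigroups on Hilbert spaces. Since $X$ is Hilbert, $Z$ is a closed $\mL$-invariant subspace (because $\mathcal{P}(\eta_0)$ commutes with $\mL$), and $\mL|_Z$ generates a $C_0$-semigroup, its growth bound equals the spectral abscissa
$$
\inf\Bigl\{\omega\in\R:\sup_{\Re\lambda>\omega}\|(\lambda-\mL|_Z)^{-1}\|_{B(Z)}<\infty\Bigr\}.
$$
Thus \eqref{eq:decay} reduces to showing that for every $\beta'<\beta$, the resolvent $(\lambda-\mL|_Z)^{-1}$ is uniformly bounded on the half-plane $\{\Re\lambda>-\beta'\}$.

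The key structural observation is that $V$ is independent of $y$: the Fourier transform in $y$ gives an isometric identification $X\cong L^2(\R_\eta;X_\a(\eta))$ with $X_\a(\eta):=H^1_\a(\R)\times L^2_\a(\R)$ (with $\pd_y$ replaced by $i\eta$), and $\mL$ becomes a direct integral $\int^\oplus\mL(\eta)\,d\eta$. The projection $\mathcal{P}(\eta_0)$ acts fiberwise, being rank-one onto $\zeta(\cdot,\eta)$ for $|\eta|\le\eta_0$ and zero otherwise, so
$$
\|(\lambda-\mL|_Z)^{-1}\|_{B(Z)}=\sup_{\eta\in\R}\|(\lambda-\mL(\eta)|_{Z(\eta)})^{-1}\|_{B(X_\a(\eta))},
$$
where $Z(\eta)$ is the fiberwise complement of $\zeta(\cdot,\eta)$ when $|\eta|\le\eta_0$ and $Z(\eta)=X_\a(\eta)$ otherwise. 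Hypothesis \eqref{ass:H} makes this supremum pointwise finite on $\{\Re\lambda>-\beta'\}$; the real task is uniformity in the pair $(\eta,\lambda)$.

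I would obtain this uniform bound by a three-region analysis. (i) On the compact set $\{|\eta|\le N,\,-\beta'\le\Re\lambda\le R,\,|\Im\lambda|\le R\}$, analytic dependence of $\mL(\eta)$ on $\eta$ combined with \eqref{ass:H} yields joint continuity of the resolvent in $(\lambda,\eta)$, hence a uniform bound. (ii) For $|\Im\lambda|\gg 1$: the conjugate of $\mL_0(\eta)$ by the weight $e^{\a z}$ is a matrix Fourier multiplier whose two eigenvalue branches $\lambda_\pm(\xi,\eta)=-c\a+ic\xi\pm i\sqrt{m(\xi+i\a,\eta)}$ lie in $\{\Re\lambda\le-c\a+O(\a)\}$, so $(\lambda-\mL_0(\eta))^{-1}$ is uniformly bounded on $\{\Re\lambda\ge-\beta'\}$ for $\a$ small enough, and a Neumann expansion absorbs the bounded perturbation $V$. (iii) For $|\eta|\gg 1$ and $\Re\lambda\ge-\beta'$: the same symbol analysis gives uniform boundedness of $(\lambda-\mL_0(\eta))^{-1}$ in $\eta$, and since $V$ contains the factor $B^{-1}$ which gains $\eta^{-2}$ smoothing at large $|\eta|$, the expansion $(\lambda-\mL(\eta))^{-1}=(\lambda-\mL_0(\eta))^{-1}\sum_{k\ge 0}[V(\lambda-\mL_0(\eta))^{-1}]^k$ converges uniformly.

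The main obstacle is making this Neumann expansion uniform in regions (ii)--(iii), where the non-normality of the matrix symbol of $\mL_0(\eta)$ can inflate the resolvent bound: one has to verify that the eigenvector matrix of the $2\times 2$ symbol is uniformly well-conditioned when $|\eta|$ or $|\xi|$ is large, so that $\|(\lambda-\mL_0(\eta))^{-1}\|$ stays controlled by the inverse distance from $\lambda$ to the symbol spectrum (a uniformly positive quantity on $\{\Re\lambda\ge-\beta'\}$). Once this explicit Fourier analysis is in place, the three regions cover the half-plane, the uniform resolvent bound is achieved, and Gearhart--Pr\"uss delivers \eqref{eq:decay}.
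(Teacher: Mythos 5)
Your overall strategy (Gearhart--Pr\"uss plus a uniform resolvent bound on $\{\Re\lambda\ge-\beta'\}$, with the compact part of the half-plane handled by \eqref{ass:H} and the rest by perturbing off $\mL_0$) is the same as the paper's, but two of your three regions contain genuine gaps.

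First, in region (ii) you assert that uniform boundedness of $(\lambda-\mL_0(\eta))^{-1}$ together with boundedness of $V$ lets ``a Neumann expansion absorb the bounded perturbation.'' That is not enough: the series $\sum_k[(\lambda-\mL_0)^{-1}V]^k$ converges only if $\|(\lambda-\mL_0)^{-1}V\|<1$, and a bounded resolvent composed with a bounded potential need not be small. The paper supplies the missing smallness by showing that $(\lambda-\lambda_\pm(D))^{-1}\to0$ \emph{strongly} as $|\lambda|\to\infty$ in $\Omega$ (dominated convergence on the symbol) and that the $y$-frequency--localized pieces of the potential, such as $\mu(D)B^{-1}q_c\chi(D_y)$ and $B^{-1}q_c''\chi(D_y)$, are \emph{compact} (because $q_c$ decays in $z$ and the $y$-frequencies lie in a compact set); a compact operator composed with a strongly null sequence converges to zero in norm, which gives $\|(\lambda-\mL_0)^{-1}V\chi(D_y)\|\le1/2$ for $|\lambda|$ large. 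Without some such compactness input, your argument does not rule out $\|(\lambda-\mL_0)^{-1}V\|\ge1$ along a sequence $|\Im\lambda|\to\infty$.

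Second, in region (iii) your claimed smallness of $V$ at large $|\eta|$ is false as stated: $v_{1,c}(\eta)=2r_c'\pd_z+r_c(\pd_z^2-\eta^2)$ is a second-order operator, so the symbol of $B(\eta)^{-1}v_{1,c}(\eta)$ behaves like $\eta^2/(1+b\eta^2)\to b^{-1}$ and the $B^{-1}$ gain is exactly cancelled; $V$ does \emph{not} tend to zero in operator norm as $|\eta|\to\infty$. The paper circumvents this by rewriting $v_{1,c}=cq_c''-c\Delta(q_c\cdot)$ and using the algebraic identity $\Delta(\lambda-\lambda_+(D))^{-1}(\lambda-\lambda_-(D))^{-1}=\tfrac{i}{2}\mu(D)S(D)^{-1}\{(\lambda-\lambda_+(D))^{-1}-(\lambda-\lambda_-(D))^{-1}\}$ to trade the Laplacian against one resolvent factor, leaving $\mu(D)B^{-1}$, whose symbol is $O\bigl((\xi^2+\eta^2)^{-1/2}\bigr)$; only the \emph{composition} $(\lambda-\mL_0)^{-1}V\tilde\chi(D_y)$ is small for large cutoff, not $V$ itself. (A minor further point: the fiber of $\mathcal{P}(\eta_0)$ at $\eta\ne0$ is rank two, spanned by $g_1(\cdot,\eta)$ and $g_2(\cdot,\eta)$ corresponding to the pair of eigenvalues $\lambda(\pm\eta)$, not rank one as you state; and the argument requires $\beta'<\a(c-1)/2$ so that $\Omega$ avoids $\sigma(\mL_0)$, as in the paper's Lemma on the Gearhart--Pr\"uss condition.)
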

The semigroup estimate \eqref{eq:decay} follows from the assumption
\eqref{ass:H} and the Geahart-Pr\"{u}ss theorem \cite{Ge78,Pr85} which
tells us that the boundedness of $C^0$-semigroup in a Hilbert space is
equivalent to the uniform boundedness of the resolvent operator on the
right half plane.  See also \cite{Herbst,How}.
\par
Time evolution of the continuous eigenmodes
$\{e^{t\lambda(\eta)}g(z,\eta)\}_{-\eta_0\le \eta\le \eta_0}$ can be considered
as a linear approximation of non-uniform phase shifts of modulating
line solitary waves.
For the KP-II equation, modulations of the local amplitude and the angle of
the local phase shift of a line soliton are described by a system of Burgers'
equations  (see \cite[Theorems~1.4 and 1.5]{Miz15}).
In this paper, we find the first order asymptotics of solutions for the
linearized equation \eqref{eq:linear} is described by a wave equation
with a diffraction term and it tends to a constant multiple of the
$x$-derivative of the line solitary wave as $t\to\infty$.
\begin{theorem}
  \label{thm:linear-dynamics}
Let $0<a<b$, $c>1$, $\a$ be as in Theorem~\ref{thm:linear-stability} and 
$(\Phi_0,\Psi_0)\in H^2_\a(\R^2)\times H^1_\a(\R^2)$. 
Assume \eqref{ass:H}. Then a solution of \eqref{eq:linear} with
$(\Phi(0),\pd_t\Phi(0))=(\Phi_0,\Psi_0)$ satisfies
\begin{equation*}
\left\|\begin{pmatrix}\pd_z\Phi(t,z,y) \\ \pd_t\Phi(t,z,y) \end{pmatrix}
-(H_t*W_t*f)(y)
\begin{pmatrix}  q_c'(z)\\ r_c'(z)\end{pmatrix}
\right\|_{L^2_\a(\R_z)L^\infty(\R_y)}
=O(t^{-1/4})\quad\text{as $t\to\infty$,}
\end{equation*}
where $f(y)=\left\la cB_0\Psi_0-A_0\pd_z\Phi_0, q_c\right\ra$,
$H_t(y)=(4\pi\lambda_2t)^{-1/2}e^{-y^2/4\lambda_2t}$,
$\kappa_1=\frac{\lambda_1}{2}\frac{d}{dc}E(q_c,r_c)$ and
$W_t(y)=(2\kappa_1)^{-1}$ for $y\in [-\lambda_1 t, \lambda_1 t]$
and $W_t(y)= 0$ otherwise.
\end{theorem}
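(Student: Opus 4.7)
My plan is to decompose the initial data $u_0=(\Phi_0,\Psi_0)$ via the spectral projections $\mathcal{P}(\eta_0)$ and $\mathcal{Q}(\eta_0)$. The $\mathcal{Q}$-part evolves as $O(e^{-\beta't})$ in $X$ by Corollary~\ref{thm:linear-stability} and is absorbed in the $O(t^{-1/4})$ remainder. For the $\mathcal{P}$-part, I will exploit the $y$-translation invariance of $\mL$ and apply the Fourier transform $\mF_y$ to decouple $\mL$ into the fiber operators $\mL(\eta)$. For $|\eta|\le\eta_0$, Theorem~\ref{thm:1} provides two simple eigenvalues $\lambda(\pm\eta)$ of $\mL(\eta)$ with right eigenvectors $\zeta(z,\pm\eta)$ and biorthogonal adjoint eigenvectors $\zeta^*(z,\pm\eta)$; on each fiber the solution then reads
\[
\widehat{e^{t\mL}\mathcal{P}(\eta_0) u_0}(z,\eta)=c_+(\eta)e^{t\lambda(\eta)}\zeta(z,\eta)+c_-(\eta)e^{t\lambda(-\eta)}\zeta(z,-\eta),
\quad c_\pm(\eta)=\frac{\la\hat u_0(\cdot,\eta),\zeta^*(\cdot,\pm\eta)\ra_z}{\la\zeta(\cdot,\pm\eta),\zeta^*(\cdot,\pm\eta)\ra_z}.
\]

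The key algebraic step is to compute the biorthogonal pairing in the denominator near $\eta=0$. Because of the Jordan structure $\mL(0)\zeta_2=\zeta_1$ and $\mL(0)^*\zeta_2^*=0$, we have $\la\zeta_1,\zeta_2^*\ra=\la\mL(0)\zeta_2,\zeta_1^*\ra=0$, so the denominator degenerates at $\eta=0$. Substituting the expansions from Theorem~\ref{thm:1} gives
\[
\la\zeta(\cdot,\pm\eta),\zeta^*(\cdot,\pm\eta)\ra_z=\pm 2i\lambda_1\kappa\,\eta+O(\eta^2),\qquad\kappa:=\la\zeta_1,\zeta_1^*\ra=\la\zeta_2,\zeta_2^*\ra,
\]
and a short integration-by-parts calculation using the explicit formulas for $\zeta_2,\zeta_2^*$ identifies $\kappa=\tfrac12\tfrac{d}{dc}E(q_c,r_c)$, so $\kappa_1=\lambda_1\kappa$. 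Noting that $\la\hat u_0(\cdot,\eta),\zeta_2^*\ra_z=\hat f(\eta)$ (after integrating by parts in $z$ and using $r_c=-cq_c$), I will expand the numerators to order $\eta$ and show that the singular $1/\eta$ contributions from $c_+$ and $c_-$ combine antisymmetrically through $e^{t\lambda(\eta)}-e^{t\lambda(-\eta)}\approx 2ie^{-\lambda_2\eta^2 t}\sin(\lambda_1\eta t)$ to produce, along the leading eigenvector $\zeta_1=(q_c,r_c')^T$, the Fourier symbol $\hat f(\eta)e^{-\lambda_2\eta^2 t}\sin(\lambda_1\eta t)/(\kappa_1\eta)=\widehat{H_t*W_t*f}(\eta)$. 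Reading off the first component and applying $\pd_z$ yields $\pd_z\Phi\approx(H_t*W_t*f)(y)q_c'(z)$, while the second component of the state vector gives $\pd_t\Phi=\Psi\approx(H_t*W_t*f)(y)r_c'(z)$.

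It remains to bound the corrections in $L^2_\a(\R_z)L^\infty(\R_y)$ by $O(t^{-1/4})$. These corrections fall into three classes: (i) the regular parts of $c_\pm$, which produce smooth multipliers acting on $\hat f$ (through the $\zeta_2$-component of $\zeta(z,\pm\eta)$) and on $\hat g$ with $g(y):=\la u_0(\cdot,y),\zeta_1^*\ra_z$; (ii) the Taylor remainders in $\lambda(\pm\eta)$ and in the expansions of $\zeta,\zeta^*$, each contributing a Fourier multiplier bounded by $t|\eta|^3 e^{-ct\eta^2}$ near the origin; (iii) the cutoff $|\eta|\le\eta_0$ versus $\R$, exponentially small thanks to the Gaussian weight $e^{-\lambda_2\eta^2 t}$. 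Since every error term has Fourier support in $[-\eta_0,\eta_0]$, I can estimate $\|h\|_{L^\infty_y}\lesssim\|\hat h\|_{L^1_\eta}\lesssim\eta_0^{1/2}\|\hat h\|_{L^2_\eta}$, and the Gaussian factor gives $\|e^{-\lambda_2\eta^2 t}\|_{L^2_\eta}\sim t^{-1/4}$; the regularity $(\Phi_0,\Psi_0)\in H^2_\a\times H^1_\a$ supplies the uniform bounds on $\hat f$ and $\hat g$ needed to absorb the singular $1/\eta$ factor. The main obstacle is the careful bookkeeping around the $1/\eta$ singularity of the projection coefficients $c_\pm(\eta)$: each coefficient individually blows up at $\eta=0$, and only the antisymmetric combination enforced by reality converts $1/\eta$ into the bounded multiplier $\widehat{W_t}(\eta)=\sin(\lambda_1\eta t)/(\kappa_1\eta)$, thereby producing the box function supported on $[-\lambda_1 t,\lambda_1 t]$.
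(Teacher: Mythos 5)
Your proposal is correct and follows essentially the same route as the paper: project onto the resonant continuous eigenmodes, expand the fiberwise spectral data of Theorem~\ref{thm:1} near $\eta=0$, observe that the $1/\eta$ singularity of the individual projection coefficients cancels in the antisymmetric combination to leave the multiplier $e^{-\lambda_2 t\eta^2}\sin(\lambda_1 t\eta)/(\kappa_1\eta)$ acting on $\hat f$, and control the remainders by the Gaussian factor (the paper packages the same cancellation into the real, even-in-$\eta$ basis $g_1,g_2$ and a $2\times2$ ODE with variation of constants, and obtains $L^\infty_y$ by interpolating the $j=0,1$ $\pd_y$-estimates rather than by your $L^1_\eta$ bound, but these are the same estimates). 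The one point to tighten is the $\mathcal{Q}(\eta_0)$-part: exponential decay in $X$ alone does not control the mixed norm $L^2_\a(\R_z)L^\infty(\R_y)$, so you must also apply the semigroup bound to $\pd_y$ of the data (this is exactly why $(\Phi_0,\Psi_0)\in H^2_\a\times H^1_\a$ is assumed) before passing to $L^\infty_y$.
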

We remark that if $f(y)$ is well localized and $\int_\R f(y)\,dy\ne0$,
then $H_t*W_t*f(y)\simeq (2\kappa_1)^{-1}\int_\R f(y)\,dy$ on any
compact intervals in $y$ as $t\to\infty$.  The first order asymptotics
of solutions to \eqref{eq:linear} suggests that the local phase shift
of line solitary waves propagate mostly at constant speed toward $y=\pm\infty$.
\par
If $c$ is close to $1$, then the assumption \eqref{ass:H} is valid
and the spectrum of $\mL$ is similar to that of the linearized KP-II operator
around a line soliton.
To utilize the spectral property of the linearized operators of
the KP-II equation around $1$-line solitons,
we introduce the scaled parameters and variables
\begin{equation}
  \label{eq:KP2-scale1}
\lambda=\eps^3\Lambda\,, \quad c^2=1+\eps^2\,, 
\quad \hat{z}=\eps z\,,\quad \hat{y}=\eps^2 y\,,\quad
\xi=\eps\hxi\,,\quad \eta=\eps^2\heta\,,  
\end{equation}
and translate the solitary wave profile $q_c(x)$ as
\begin{equation}
  \label{eq:KP2-scale2}
q_c(z)=\eps^2\theta_{\eps}(\hat{z})\,,\quad
\theta_{\eps}(\hat{z})=\frac{1}{c}
\sech^2\left(\frac{\hat{\a}_\eps\hat{z}}{2}\right)\,,
\quad \hat{\a}_\eps=\frac{1}{\sqrt{bc^2-a}}\,.
\end{equation}
Let
\begin{gather*}
\hat{\a}_0=(b-a)^{-1/2}\,, \quad
\theta_0(\hat{z})=\sech^2(\frac{\hat{\a}_0}{2}\hat{z})\,,\\
\mL_{KP}=-\frac{1}{2}\{(b-a)\pd_{\hat{z}}^3-\pd_{\hat{z}}
+\pd_{\hat{z}}^{-1}\pd_{\hat{y}}^2 +3\pd_{\hat{z}}(\theta_0\cdot)\}\,.   
\end{gather*}
We remark that the operator $\mL_{KP}$  
is the linearization of the KP-II equation 
\begin{equation}
  \label{eq:KPII}
2\pd_tu+(b-a)\pd_x^3u+\pd_x^{-1}\pd_y^2u+\frac{3}{2}\pd_x(u^2)=0
\end{equation}
around its line soliton solution $\theta_0(x-t)$.
The linearized operator $\mL_{KP}$ has continuous eigenvalues 
$\lambda_{KP}(\eta)=\frac{i\eta}{\sqrt{3}}\sqrt{1+i\gamma_1\eta}$
which has to do with dynamics of modulating
line solitons (see \cite{Burtsev85,Miz15} and Section~\ref{subsec:KP}).
\par
In the low frequency regime,
we can deduce the eigenvalue problem
\begin{equation}
  \label{eq:BLevp}
  \mathcal{L}  \begin{pmatrix}    u \\ v  \end{pmatrix}
=\lambda \begin{pmatrix}    u \\ v  \end{pmatrix}
\end{equation}
to $\mathcal{L}_{KP}\pd_{\hat{z}}u= \Lambda \pd_{\hat{z}}u$
provided $\eps$ is sufficiently small.
More precisely, we have the following.
\begin{theorem} \label{thm:spectrum}
Let $c=\sqrt{1+\eps^2}$, $\a=\hat{\a}\eps$ and $\hat{\a} \in (0,\hat{\a}_0/2)$.
Then there exist positive constants
$\eps_0$, $\eta_0$, $\hat{\beta}$ and a smooth function
$\lambda_\eps(\eta)$ such that if $\eps\in(0,\eps_0)$, then 
\begin{gather}
  \label{eq:spec-L}
\sigma(\mL)\setminus \{\lambda_\eps(\eta)\mid
\eta\in[-\eps^2\eta_0,\eps^2\eta_0]\}
\subset \{\lambda\in \C\mid \Re\lambda \le -\hat{\beta}\eps^3\}\,,
\\  \label{eq:asymp-ev}
\lim_{\eps\downarrow0}|\eps^{-3}\lambda_\eps(\eps^2\eta)-\lambda_{KP}(\eta)|
=O(\eta^3)
\quad\text{for $\eta\in[-\eta_0,\eta_0]$,}
\\ \label{eq:decay-estimate}
\|e^{t\mL}\mathcal{Q}(\eps^2\eta_0)\|_{B(X)}
\le K e^{-\hat{\beta}\eps^3 t}\quad\text{for any  $t\ge0$,}  
\end{gather}
where $K$ is a constant that does not depend on $t$.
\end{theorem}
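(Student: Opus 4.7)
The plan is to combine a transverse Fourier decomposition in $y$ with the KP-II scaling \eqref{eq:KP2-scale1}--\eqref{eq:KP2-scale2}, reducing the spectral analysis of $\mL$ on long length scales to the known transverse spectral stability of the linearized KP-II operator $\mL_{KP}$, and to dispose of the complementary frequencies by direct resolvent estimates exploiting the exponential weight.

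Since $\mL$ is $y$-translation invariant, one has $\sigma(\mL)=\overline{\bigcup_{\eta\in\R}\sigma(\mL(\eta))}$ on $X$, so it suffices to analyse $\mL(\eta)$ uniformly in $\eta$, split into the low-frequency window $|\eta|\le\eps^2\eta_0$ and its complement. In the low-frequency window I would substitute \eqref{eq:KP2-scale1}--\eqref{eq:KP2-scale2} into $(\mL(\eta)-\lambda)(u,v)^\top=0$, eliminate $v$ using the first line of \eqref{eq:linear}, and expand $B^{-1}A=1+(b-a)\Delta+O(\Delta^2)$ on long length scales; at the leading $\eps^5$-order the scalar equation for $\phi(\hat z)$ collapses to $(\mL_{KP}-\Lambda)\pd_{\hat z}\phi=0$, in accordance with the formal reduction recalled after \eqref{eq:BLevp}. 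To promote this formal reduction to an invertibility statement I would use the transverse spectral stability of the KP-II line soliton (Section~\ref{subsec:KP}) to control $(\mL_{KP}-\Lambda)^{-1}$ on the rescaled weighted space $L^2_{\hat\a}$, away from the resonant curve $\{\lambda_{KP}(\heta)\mid|\heta|\le\eta_0\}$, and run a Lyapunov-Schmidt reduction against $\ker_g(\mL_{KP})$ --- whose $\eps$-deformed counterpart is $\spann\{\zeta_1,\zeta_2\}$ paired against $\spann\{\zeta_1^*,\zeta_2^*\}$ from Theorem~\ref{thm:1} --- closing a Neumann series on the complementary subspace and solving a $2\times 2$ bifurcation equation whose unique root is the perturbed resonant curve $\lambda_\eps(\eta)$. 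The limit \eqref{eq:asymp-ev} then drops out by matching the reduced equation with the unperturbed eigenvalue $\lambda_{KP}(\heta)$.

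For $|\eta|>\eps^2\eta_0$, I would bound the resolvent of $\mL(\eta)$ by treating it as a perturbation of $\mL_0(\eta)$. The symbol of $\mL_0(\eta)$ has eigenvalues $\lambda^0_\pm(\xi,\eta)=ic\xi\pm i\omega(\xi,\eta)$ with $\omega^2=(\xi^2+\eta^2)(1+a(\xi^2+\eta^2))/(1+b(\xi^2+\eta^2))$; conjugating by $e^{\a z}$ shifts $\xi\mapsto\xi+i\a$, giving $\Re\lambda^0_\pm=-\a(c\pm\pd_\xi\omega)+O(\a^2)$. Since $b>a>0$ yields $|\nabla\omega|\le 1<c$ and $c-1=O(\eps^2)$, the gap $c\pm\pd_\xi\omega$ is bounded below by a positive constant times $\eps^2$ on $|\eta|\ge\eps^2\eta_0$, so with $\a=\hat\a\eps$ one gets $\sigma(\mL_0(\eta))\subset\{\Re\lambda\le-\hat\beta\eps^3\}$ for a suitable $\hat\beta>0$. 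Because $V$ is exponentially localized and has operator norm $O(\eps^2)$ (since $\|q_c\|_\infty,\|r_c\|_\infty=O(\eps^2)$), it is a small, relatively $\mL_0$-compact perturbation, and a Fredholm argument transfers the bound to $\mL(\eta)$ uniformly in this frequency regime, completing the proof of \eqref{eq:spec-L}.

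The principal obstacle is the low-frequency regime: the remainder in the KP-II reduction and the relevant spectral gap are both small, and the unperturbed KP-II resolvent is singular on the resonant curve, so a direct Neumann estimate fails. The Lyapunov-Schmidt reduction against the four-dimensional pair $\spann\{\zeta_1,\zeta_2\},\spann\{\zeta_1^*,\zeta_2^*\}$ restores an $O(1)$ bound on the transversal complement, while the reduced $2\times 2$ block carries the exact bifurcation curve $\lambda_\eps(\eta)$ already identified in Theorem~\ref{thm:1}. Once \eqref{eq:spec-L} is in place, the semigroup estimate \eqref{eq:decay-estimate} follows from the Gearhart-Pr\"uss theorem applied to $e^{t\mL}\mathcal{Q}(\eps^2\eta_0)$ as in Corollary~\ref{thm:linear-stability}, using the uniform resolvent bound on $\{\Re\lambda>-\hat\beta\eps^3\}$ extracted from both frequency regimes.
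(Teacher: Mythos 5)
Your low-frequency strategy is essentially the paper's: the KP-II rescaling of the eigenvalue problem, a Lyapunov--Schmidt reduction producing the resonant curve $\lambda_\eps(\eta)$ (Lemmas~\ref{lem:construction} and \ref{lem:resonance1}), and an appeal to the transverse linear stability of the KP-II line soliton (Proposition~\ref{prop:KPII}) to invert on the complement of the resonant modes. The genuine gap is in your treatment of the complementary frequencies $|\eta|>\eps^2\eta_0$. You propose to handle all of them by viewing $V$ as a ``small, relatively $\mL_0$-compact perturbation'' of $\mL_0(\eta)$, but this cannot work as stated. First, quantitatively: by \eqref{eq:cl-sf5} the distance from $\Omega_\eps=\{\Re\lambda\ge-\hat{\beta}\eps^3\}$ to $\sigma(\mL_0)$ is only $O(\eps^3)$, while $\|V\|_{B(X)}=O(\eps^2)$, so the naive bound on $\|(\lambda-\mL_0)^{-1}V\|_{B(X)}$ is $O(\eps^{-1})$ and the Neumann series does not close; relative compactness by itself yields stability of the essential spectrum but neither excludes unstable eigenvalues nor gives the uniform resolvent bound needed for Gearhart--Pr\"uss. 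Second, and more structurally: for transverse frequencies with $\eps^2\eta_0<|\eta|\lesssim\eps^2$ (indeed up to $|\eta|\sim K^2\eps^2$ with $K=\eps^{-3/20}$ in the paper's notation) the operator is still in the KP-II long-wave regime, where after rescaling the potential becomes the $O(1)$ term $\tfrac32\pd_{\hat z}(\theta_0\cdot)$ of $\mL_{KP}$; no perturbative argument off the free operator is available there, and the only known input is the non-perturbative transverse stability of the KP-II line soliton restricted to the range of $\mathcal{Q}_{KP}(\eta_0)$.

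The paper resolves both points by placing the cut in $y$-frequency not at $\eps^2\eta_0$ but at $K(K+\hat{\a})\eps^2$ with $K\to\infty$. Above that threshold (Lemma~\ref{lem:high frequencies}) it proves genuine smallness of the entries of $(\lambda-\mL_0)^{-1}V$ by exploiting the special structure $V=-B^{-1}\bigl(\begin{smallmatrix}0&0\\ v_{1,c}&v_{2,c}\end{smallmatrix}\bigr)$ with $v_{1,c},v_{2,c}$ written in divergence form (\eqref{eq:v1c-alt}, \eqref{eq:v2c-alt}), together with the refined lower bounds on $|\lambda-\lambda_-(\xi+i\a,\eta)|$ off the set $\widetilde{A}_{low}$ from Lemmas~\ref{lem:lm-} and \ref{cl:inv-lambda}; the gain $O(K^{-1})$ comes from the interplay of the $O(\eps^2)$ size of $q_c$, the smoothing of $B^{-1}$, and the improved resolvent estimates, not from a spectral gap of $\mL_0$ alone. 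Below that threshold it diagonalizes $\mL_0$ by $P(D)$, compares the second component with $\mL_{KP}$, and uses Proposition~\ref{prop:KPII} together with the projection comparison of Lemma~\ref{lem:orthogonality} (Lemmas~\ref{lem:a2-res-bound}--\ref{lem:low-frequencies}). To repair your argument you would need to either reproduce this intermediate-frequency analysis or find another mechanism that controls $(\lambda-\mL(\eta))^{-1}$ uniformly for $\eps^2\eta_0<|\eta|\ll 1$; the final step, deducing \eqref{eq:decay-estimate} from the uniform resolvent bound via Gearhart--Pr\"uss, is then correct as you state it.
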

\bigskip

\section{Resonant modes of the linearized operator}
\label{sec:resonant}
In this section, we will prove the existence of resonant continuous
eigenvalues of $\mL$ near $\lambda=0$ and show that 
the resonant eigenvalues and resonant eigenmodes for $\mL$ are similar to 
those for the linearized KP-II operator $\mL_{KP}$
provided line solitary waves are small.

\subsection{Spectral stability in the KP-II scaling regime}
\label{subsec:KP}
First, we recall some spectral properties of the linearized KP-II
equation around $1$-line solitons.  Let us consider the eigenvalue
problem of the linearized operator of \eqref{eq:KPII} around
$\theta_0$.  Let
\begin{gather*}
\mL_{KP,0}=-\frac{1}{2}\{(b-a)\pd_z^3-\pd_z+\pd_z^{-1}\pd_y^2\}\,,
\quad
\mL_{KP}=\mL_{KP,0}-\frac{3}{2}\pd_z(\theta_0\cdot)\,,\\
\mL_{KP}(\eta)=-\frac{1}{2}\pd_z\{(b-a)\pd_z^2-1+3\theta_0\}
+\frac{\eta^2}{2}\pd_z^{-1}\,.
\end{gather*}
Formally, we have $\mL_{KP}(u(z)e^{iy\eta})=e^{iy\eta}(\mL_{KP}(\eta)u)(z)$.
The operator $\mL_{KP,0}$ is spectrally stable
in exponentially weighted spaces.
\begin{lemma}
\label{lem:free-KP}
Let $\hat{\a}\in (0,\hat{\a}_0)$ and 
$\hat{\beta}_0=\frac{\hat{\a}}{2}\{1-(b-a)\hat{\a}^2\}$. Then
\begin{equation}
  \label{eq:KP-res1}
 \|(\Lambda-\mL_{KP,0})^{-1}\|_{B(L^2_{\hat{\a}}(\R^2))}
\le (\Re\Lambda+\hat{\beta}_0)^{-1}
\quad \text{for $\Lambda$ satisfying $\Re\Lambda> -\hat{\beta}_0$.}  
\end{equation}
Moreover, there exists a positive constant $C$ such that
if  $\Re\Lambda> -\hat{\beta}_0$,
\begin{gather}
\label{eq:KP-res2}
 \|\pd_z^j(\Lambda-\mL_{KP,0})^{-1}\|_{B(L^2_{\hat{\a}}(\R^2))}
\le C\left(\Re\Lambda+\frac{\hat{\beta}_0}{2}\right)^{-1+\frac{j}{2}}
\quad \text{for $j=1,2$,}
\\
  \label{eq:KP-sector}
 \|(\Lambda-\mL_{KP,0})^{-1}\|_{B(L^2_{\hat{\a}}(\R^2))}
\le C\left|\Lambda+\frac{\hat{\beta}_0}{2}\right|^{-2/3}\,.
\end{gather}
\end{lemma}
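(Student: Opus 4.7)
The plan is to conjugate $\mL_{KP,0}$ by the exponential weight $e^{\hat{\a}z}$ so that it becomes a constant-coefficient operator on $L^2(\R^2)$, and then to prove each resolvent estimate by a pointwise bound on its Fourier symbol. The map $u\mapsto v:=e^{\hat{\a}z}u$ is an isometric isomorphism $L^2_{\hat{\a}}(\R^2)\to L^2(\R^2)$ that sends $\pd_z$ to $\pd_z-\hat{\a}$; since $\hat{\a}>0$, $(\pd_z-\hat{\a})^{-1}$ is a well-defined bounded Fourier multiplier, and the conjugated operator has symbol
\[
p(\xi,\eta)=-\tfrac{1}{2}\bigl\{(b-a)w^3-w-w^{-1}\eta^2\bigr\},\qquad w:=i\xi-\hat{\a}.
\]
By Plancherel the three norms to be estimated equal $\sup_{(\xi,\eta)}|\Lambda-p|^{-1}$ and $\sup_{(\xi,\eta)}|\xi|^j|\Lambda-p|^{-1}$ respectively, so that \eqref{eq:KP-res1}--\eqref{eq:KP-sector} reduce to pointwise bounds on these ratios.

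A direct expansion of $w^3$ and $w^{-1}$ gives
\[
\Re p(\xi,\eta)=-\hat{\beta}_0-\frac{3(b-a)\hat{\a}}{2}\xi^2-\frac{\hat{\a}\eta^2}{2(\xi^2+\hat{\a}^2)},
\]
\[
\Im p(\xi,\eta)=\frac{\xi}{2}\Bigl\{(b-a)\xi^2+1-3(b-a)\hat{\a}^2-\frac{\eta^2}{\xi^2+\hat{\a}^2}\Bigr\}.
\]
Since $\Re p\le-\hat{\beta}_0$, one obtains $|\Lambda-p|\ge\Re\Lambda+\hat{\beta}_0$, which proves \eqref{eq:KP-res1}. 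For \eqref{eq:KP-res2}, keeping the $\xi^2$ term upgrades this to $|\Lambda-p|\ge\Re\Lambda+\hat{\beta}_0/2+\tfrac{3(b-a)\hat{\a}}{2}\xi^2$, and AM--GM in $\xi$ yields $\sup_\xi|\xi|^j(\Re\Lambda+\hat{\beta}_0/2+C\xi^2)^{-1}\lesssim(\Re\Lambda+\hat{\beta}_0/2)^{-1+j/2}$ for $j=1,2$.

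The hard part will be \eqref{eq:KP-sector}, where both the real and imaginary parts of $\Lambda-p$ must be exploited together. The image curve $\{p(\xi,0):\xi\in\R\}$ is cuspoidal, with $\Re p\sim-(b-a)\hat{\a}\xi^2$ and $\Im p\sim\pm(b-a)\xi^3/2$ as $|\xi|\to\infty$, so the numerical range of the conjugated operator is not contained in any proper sector based at $-\hat{\beta}_0$, and only a $|\Lambda|^{-2/3}$ resolvent decay along imaginary directions can hold. Setting $M:=|\Lambda+\hat{\beta}_0/2|$, I would split $(\xi,\eta)$ according to whether the quantity $D(\xi,\eta):=\xi^2+\eta^2/(\xi^2+\hat{\a}^2)$ exceeds $c_0M^{2/3}$ for a small constant $c_0$: if $D\gtrsim M^{2/3}$, the corresponding contribution to $-\Re p$ forces $|\Re(\Lambda-p)|\gtrsim M^{2/3}$; if $D\le c_0M^{2/3}$, then $|\xi|\lesssim c_0^{1/2}M^{1/3}$ and substitution into $\Im p$ gives $|\Im p|\le Cc_0^{1/2}M$, so that $|\Im(\Lambda-p)|\ge\tfrac{1}{2}|\Im\Lambda|\gtrsim M$ whenever $|\Im\Lambda|\gtrsim M$. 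The complementary case $|\Re\Lambda+\hat{\beta}_0/2|\gtrsim M$ is handled by the real-part bound, and the remaining region $M\lesssim 1$ is absorbed by \eqref{eq:KP-res1}; combining these regimes yields the required $|\Lambda+\hat{\beta}_0/2|^{-2/3}$ decay.
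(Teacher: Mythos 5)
Your proposal follows the paper's proof almost verbatim. The conjugation by $e^{\hat{\a}z}$ is the same device as the paper's weighted Plancherel identity \eqref{eq:wPl}--\eqref{eq:fm0}; your formulas for $\Re p$ and $\Im p$ agree exactly with the paper's computation of $\Re\mL_{KP,0}(\xi+i\hat{\a},\eta)$; and your treatment of \eqref{eq:KP-res1}--\eqref{eq:KP-res2} is the same real-part lower bound plus AM--GM in $\xi$. For \eqref{eq:KP-sector} the paper only cites the cusp bound $|\Im \mL_{KP,0}(\xi+i\hat{\a},\eta)|\lesssim \{-\Re \mL_{KP,0}(\xi+i\hat{\a},\eta)\}^{3/2}$; your case split on whether $D(\xi,\eta)$ exceeds $c_0M^{2/3}$ is precisely the standard way to extract the $M^{-2/3}$ decay from that cusp bound, so you are supplying details the paper omits rather than taking a different route.

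One step does not close as written: in the regime $M=|\Lambda+\hat{\beta}_0/2|\lesssim 1$ you claim the estimate is ``absorbed by \eqref{eq:KP-res1}'', but \eqref{eq:KP-res1} only gives $(\Re\Lambda+\hat{\beta}_0)^{-1}$, which is unbounded as $\Re\Lambda\downarrow-\hat{\beta}_0$ while $M^{-2/3}$ stays of order $\hat{\beta}_0^{-2/3}$ there. This cannot be repaired: your own formula gives $p(0,0)=-\hat{\beta}_0$, so $-\hat{\beta}_0$ lies in the closure of the symbol range and the resolvent norm genuinely blows up as $\Lambda\to-\hat{\beta}_0$; hence \eqref{eq:KP-sector} as literally stated fails for $\Lambda$ near $-\hat{\beta}_0$. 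This is a defect of the lemma itself (the sectorial bound is meaningful, and is used, only for $|\Lambda|$ large), and the paper's one-line justification has exactly the same lacuna, so your argument is on a par with the paper's; if you want an airtight statement, restrict \eqref{eq:KP-sector} to $|\Lambda+\hat{\beta}_0/2|\ge R_0$ for a fixed $R_0$, where your two large-$M$ cases already give the claim.
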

\begin{proof}
By the Plancherel theorem,
\begin{equation}
  \label{eq:wPl}
\|g\|_{L^2_\a(\R^2)}^2=\int_{\R^2}e^{2\a x}|g(x,y)|^2\,dxdy
=\int_{\R^2}|\hat{g}(\xi+i\a,\eta)|^2\,d\xi d\eta  
\end{equation}
for any $g\in C_0(\R^2)$ and
an operator $m(D):=\frac{1}{2\pi}\check{m}*f$
is bounded on $L^2_\a(\R^2)$ if and only if
\begin{equation}
  \label{eq:fm0}
 \|m(D)\|_{B(L^2_\a(\R^2))}=\sup_{(\xi,\eta)\in\R^2}
\left|m(\xi+i\a,\eta)\right|
<\infty\,.
\end{equation}
\par
 Suppose $f\in L^2_\a(\R^2)$ and that $u$ is a solution of
$$ (\Lambda-\mL_{KP,0})u=f\,.$$
Then 
$$ \hat{u}(\xi,\eta)=
\frac{\hat{f}(\xi,\eta)}{\Lambda-\mL_{KP,0}(\xi,\eta)}\,,$$
where $\mL_{KP,0}(\xi,\eta)= \frac{i}{2}\{(b-a)\xi^3+\xi-\xi^{-1}\eta^2\}$.
Since
\begin{align*}
\Re \mL_{KP,0}(\xi+i\hat{\a},\eta)=& 
-\frac{1}{2}  \left\{
3(b-a)\hat{\a}\xi^2+2\hat{\beta}_0+\frac{\hat{\a}\eta^2}{\xi^2+\hat{\a}^2}
 \right\}\ge -\hat{\beta}_0\,,
\end{align*}
it follows from \eqref{eq:fm0} that for $j=0,1,2$ and $\Lambda$ with
$\Re\Lambda>-\hat{\beta}_0$,
\begin{align*}
\|\pd_z^j(\Lambda-\mL_{KP,0})^{-1}\|_{B(L^2_{\hat{\a}}(\R^2)} =&
\sup_{(\xi,\eta)\in\R^2}
\frac{|\xi+i\hat{\a}|^j}{|\Lambda-\mL_{KP,0}(\xi+i\hat{\a},\eta)|} \,.
\end{align*}
Thus we have \eqref{eq:KP-res1} and \eqref{eq:KP-res2}.
Moreover, we have \eqref{eq:KP-sector} because
$|\Im \mL_{KP,0}(\xi+i\hat{\a},\eta)|\lesssim \{-\Re \mL_{KP,0}(\xi+i\hat{\a},\eta)\}^{3/2}$.
\end{proof}
Let $\gamma_1=4\sqrt{(b-a)/3}$, $\hat{x}=\frac{\hat{\a}_0}{2}x$
and 
\begin{gather*}
\lambda_{KP}(\eta)=\frac{i\eta}{\sqrt{3}}\sqrt{1+i\gamma_1\eta}\,,\\
g_0(x,\eta)=\frac{2(b-a)}{\gamma_1\sqrt{1+i\gamma_1\eta}}
\pd_x^2\left(e^{-\sqrt{1+i\gamma_1\eta}\hat{x}}\sech\hat{x}\right)\,,\\
g_0^*(x,\eta)=\frac{i}{\eta}
\pd_x\left(e^{\sqrt{1-i\gamma_1\eta}\hat{x}}\sech\hat{x}\right)\,.
\end{gather*}
Using Lemma~2.1 in \cite{Miz15} and the change of variable
\begin{equation*}
x\mapsto \frac{\hat{\a}_0}{2}x\,,\quad y\mapsto \frac{1}{\gamma_1}y\,,\quad
\eta \mapsto \gamma_1\eta\,,
\end{equation*}
we have for $\eta\in\R\setminus\{0\}$,
\begin{gather*}
 \mL_{KP}(\eta)g_{0}(x,\pm\eta)=\lambda_{KP}(\pm\eta)g_{0}(x,\pm\eta)\,,\\
 \mL_{KP}(\eta)^*g_{0}^*(x,\pm\eta)=\lambda_{KP}(\mp\eta)g_{0}^*(x,\pm\eta)\,,\\
\int_\R g_{0}(x,\eta)\overline{g_{0}^*(x,\eta)}\,dx=1\,,\quad
\int_\R g_{0}(x,\eta)\overline{g_{0}^*(x,-\eta)}\,dx=0\,.
\end{gather*}
To  resolve the singularity of $g_{0}(x,\eta)$ and the degeneracy of
$g_0^*(x,\eta)$ at $\eta=0$, we decompose them into
their real parts and imaginary parts. Let
\begin{gather*}
g_{0,1}(x,\eta)=g_0(x,\eta)+g_0(x,-\eta)\,,\quad
g_{0,2}(x,\eta)=\frac{1}{i\eta}\{g_0(x,\eta)-g_0(x,-\eta)\}\,,\\
g_{0,1}^*(x,\eta)=\frac12\{g_0^*(x,\eta)+g_0^*(x,-\eta)\}\,,\quad
g_{0,2}^*(x,\eta)=\frac{\eta}{2i}\{g_0^*(x,\eta)-g_0^*(x,-\eta)\}\,.
\end{gather*}
Then 
$$\int_\R g_{0,j}(x,\eta)\overline{g_{0,k}^*(x,\eta)}\,dx=\delta_{jk}
\quad\text{for $j$, $k=1$, $2$.}$$
Moreover, we see that
$g_{0,k}(x,\eta)$ and $g_{0,k}^*(x,\eta)$ are even in $\eta$ and that 
for $k=1$, $2$ and $\hat{\a}\in(0,\hat{\a}_0)$, 
\begin{gather}
\label{eq:g0eta-0}
\|g_{0,k}(\cdot,\eta)-g_{0,k}(\cdot,0)\|_{L^2_{\hat{\a}}}+
\|g_{0,k}^*(\cdot,\eta)-g_{0,k}^*(\cdot,0)\|_{L^2_{-\hat{\a}}}=O(\eta^2)\,,
\\
g_{0,1}(x,0)=-\frac{\sqrt{3}}{2}\theta_0'(x)\,, \quad
g_{0,2}(x,0)=\theta_0(x)+\left(\frac{x}{2}+\hat{\a}_0^{-1}\right)\theta_0'(x)\,,
\\
g_{0,1}^*(x,0)=\frac{\hat{\a}_0}{2\sqrt{3}}
\int_{-\infty}^x (x_1\theta_0'(x_1)+2\theta_0(x_1))\,dx_1\,,
\quad g_{0,2}^*(x,0)=\frac{\hat{\a}_0}{2}\theta_0(x)\,.
\end{gather}

Let $\mathcal{P}_{KP}(\eta_0)$ be the spectral projection to resonant modes
$\{g_0(x,\pm\eta)e^{iy\eta}\}_{-\eta_0\le \eta\le \eta_0}$ defined by
\begin{gather*}
\mathcal{P}_{KP}(\eta_0)f(x,y)=\frac{1}{\sqrt{2\pi}}\sum_{k=1,\,2}
\int_{-\eta_0}^{\eta_0}a_{0,k}(\eta)g_{0,k}(x,\eta)e^{iy\eta}\,d\eta\,,\\
 a_{0,k}(\eta)= \int_\R (\mF_yf)(x,\eta)\cdot g_{0,k}^*(x,\eta)\,dx\,,
\end{gather*}
and let $\mathcal{Q}_{KP}(\eta_0)=I-\mathcal{P}_{KP}(\eta_0)$.
By Lemma~3.1 in \cite{Miz15}, the operator $\mathcal{P}_{KP}(\eta_0)$ and $\mathcal{Q}_{KP}(\eta_0)$ are
bounded on $L^2_{\hat{\a}}(\R^2)$ for $\hat{\a}\in (0,\hat{\a}_0)$.
Moreover, we have the following.

\begin{proposition}
  \label{prop:KPII}
Let $\hat{\a}\in (0,\hat{\a}_0)$ and $\eta_*$ be a positive number satisfying 
$\frac{\hat{\a}}{2}(\Re\sqrt{1+i\gamma\eta_*}-1)=\hat{\a}$.
For any $\eta_0\in(0,\eta_*)$, there exists a positive number $b$ such that
$$\sup_{\Re\Lambda\ge -b}\|(\Lambda-\mL_{KP})^{-1}\mathcal{Q}_{KP}(\eta_0)\|_{B(L^2_{\hat{\a}}(\R^2))}<\infty\,.$$
\end{proposition}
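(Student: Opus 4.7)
My plan is to reduce the resolvent bound to a family of one-dimensional resolvent bounds via the partial Fourier transform in $y$, handle the large frequency regime by a Neumann series argument based on Lemma~\ref{lem:free-KP}, and handle the remaining bounded regime by combining relative compactness of the potential term with the transverse spectral stability of the KP-II line soliton established in \cite{Miz15}.

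First, since $\mL_{KP}$ commutes with translations in $y$, I would use the Fourier transform in $y$ to decompose it as the direct integral of the fiber operators $\mL_{KP}(\eta)$ on $L^2_{\hat{\a}}(\R_z)$; under this transform $\mathcal{Q}_{KP}(\eta_0)$ becomes multiplication in $\eta$ by $\tilde{\mathcal{Q}}(\eta)$, which equals the identity for $|\eta|>\eta_0$ and removes $\spann\{g_{0,1}(\cdot,\eta),g_{0,2}(\cdot,\eta)\}$ via the biorthogonal duals $g_{0,k}^*(\cdot,\eta)$ for $|\eta|\leq\eta_0$. Because $\mF_y$ is an isometry from $L^2_{\hat{\a}}(\R^2)$ onto $L^2(\R_\eta;L^2_{\hat{\a}}(\R_z))$, the desired estimate reduces to the fiberwise bound
\begin{equation*}
\sup_{\eta\in\R,\ \Re\Lambda\geq -b}
\|(\Lambda-\mL_{KP}(\eta))^{-1}\tilde{\mathcal{Q}}(\eta)\|_{B(L^2_{\hat{\a}}(\R_z))}<\infty\,.
\end{equation*}

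Next, writing $\mL_{KP}(\eta)=\mL_{KP,0}(\eta)+V$ with $V=-\tfrac{3}{2}\pd_z(\theta_0\,\cdot\,)$, I would observe that Lemma~\ref{lem:free-KP} applies fiberwise since the symbol computation carried out there is already pointwise in $\eta$; in particular $\|(\Lambda-\mL_{KP,0}(\eta))^{-1}\|_{B(L^2_{\hat{\a}})}\lesssim|\Lambda+\hat{\beta}_0/2|^{-2/3}$ and $\|\pd_z(\Lambda-\mL_{KP,0}(\eta))^{-1}\|_{B(L^2_{\hat{\a}})}\lesssim (\Re\Lambda+\hat{\beta}_0/2)^{-1/2}$ whenever $\Re\Lambda>-\hat{\beta}_0/2$, uniformly in $\eta\in\R$. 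Since $\theta_0\in L^\infty$, the operator $V(\Lambda-\mL_{KP,0}(\eta))^{-1}$ has norm tending to $0$ as $|\Lambda|\to\infty$ uniformly in $\eta$, so a Neumann series yields a uniform bound on $(\Lambda-\mL_{KP}(\eta))^{-1}$ on $\{|\Lambda|\geq R,\ \Re\Lambda\geq -\hat{\beta}_0/2\}$ for $R$ large. Inspection of the real part $\Re\mL_{KP,0}(\xi+i\hat{\a},\eta)=-\tfrac12\{3(b-a)\hat{\a}\xi^2+2\hat{\beta}_0+\hat{\a}\eta^2/(\xi^2+\hat{\a}^2)\}$ also shows that for $|\eta|\geq M$ the free resolvent is $O(\eta^{-2/3})$ on the compact set $K=\{\Re\Lambda\geq -b,\ |\Lambda|\leq R\}$, so the same Neumann argument handles $|\eta|\geq M$ on $K$.

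It remains to obtain a uniform bound on $K$ for $|\eta|\leq M$, taking $b<\hat{\beta}_0/2$ small. Here $V$ is relatively compact with respect to $\mL_{KP,0}(\eta)$ (its integral kernel involves the exponentially decaying $\theta_0$ and one derivative, which is dominated by $\pd_z(\Lambda-\mL_{KP,0}(\eta))^{-1}$), so for each $\eta\in[-M,M]$ the spectrum of $\mL_{KP}(\eta)$ in $K$ consists of finitely many eigenvalues of finite multiplicity, and $(\Lambda,\eta)\mapsto (\Lambda-\mL_{KP}(\eta))^{-1}\tilde{\mathcal{Q}}(\eta)$ is meromorphic in $\Lambda$ with continuous $\eta$-dependence away from these poles. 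Invoking the transverse linear stability of the KP-II line soliton from \cite{Miz15}, the only eigenvalues of $\mL_{KP}(\eta)$ in $\{\Re\Lambda\geq -b\}$ are $\lambda_{KP}(\pm\eta)$, with eigenfunctions $g_0(\cdot,\pm\eta)$ lying in $L^2_{\hat{\a}}(\R_z)$ precisely thanks to the threshold $\eta_*$ determined by the condition in the hypothesis. For $|\eta|\leq\eta_0$ the projection $\tilde{\mathcal{Q}}(\eta)$ removes this simple spectrum, while for $\eta_0<|\eta|\leq M$ the spectrum of $\mL_{KP}(\eta)$ lies in $\{\Re\Lambda<-b\}$ after possibly shrinking $b$; a compactness-continuity argument then yields the uniform bound on $K$.

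The main obstacle is the transverse spectral stability step, i.e.\ showing that outside the resonant eigenvalue curve $\{\lambda_{KP}(\eta)\}_{|\eta|\leq\eta_0}$ the fiber operators $\mL_{KP}(\eta)$ have no spectrum in $\{\Re\Lambda\geq -b\}$ uniformly in $|\eta|\leq M$. This rests on the explicit squared-eigenfunction analysis of \cite{Miz15}, together with the restriction $\eta_0<\eta_*$ that keeps the eigenfunctions $g_0(\cdot,\eta)$ inside the exponentially weighted space; the remaining gluing of the three regimes (large $|\Lambda|$, bounded $|\Lambda|$ with large $|\eta|$, and bounded $|\Lambda|$ with bounded $|\eta|$) is a routine compactness argument.
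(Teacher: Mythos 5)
Your route is genuinely different from the paper's. The paper's proof is two lines: it quotes Proposition~3.2 of \cite{Miz15}, which gives the semigroup decay $\|e^{t\mL_{KP}}\mathcal{Q}_{KP}(\eta_0)\|_{B(L^2_{\hat{\a}}(\R^2))}\le Ce^{-b_1t}$, and then, for $\Re\Lambda\ge -b>-b_1$, writes $(\Lambda-\mL_{KP})^{-1}\mathcal{Q}_{KP}(\eta_0)=\int_0^\infty e^{-\Lambda t}e^{t\mL_{KP}}\mathcal{Q}_{KP}(\eta_0)\,dt$ and bounds the integral by $C/(b_1-b)$. All of the hard spectral work (fiber decomposition in $\eta$, exclusion of unstable point spectrum outside the resonance curve, uniformity in $\eta$, the role of $\eta_*$) is packaged inside the cited semigroup estimate. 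Your proposal reconstructs that analysis directly on the resolvent side; this is a legitimate alternative (it is essentially how the input from \cite{Miz15} is itself established) and it makes visible where $\hat{\a}<\hat{\a}_0$ and $\eta_0<\eta_*$ enter, but it buys nothing here over the citation, and the step you yourself flag as the main obstacle --- that for $\eta_0\le|\eta|\le M$ the fiber operators $\mL_{KP}(\eta)$ have no spectrum in $\{\Re\Lambda\ge-b\}$, uniformly --- is precisely the content you must still import wholesale from \cite{Miz15}.

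There is also one concrete gap in your large-$|\Lambda|$ step. Since $V=-\tfrac32\pd_z(\theta_0\cdot)$, controlling $V(\Lambda-\mL_{KP,0}(\eta))^{-1}$ requires $\|\pd_z(\Lambda-\mL_{KP,0})^{-1}\|$, and the bound you quote from Lemma~\ref{lem:free-KP}, namely $C(\Re\Lambda+\hat{\beta}_0/2)^{-1/2}$, is uniform in $\Im\Lambda$ but does \emph{not} tend to zero as $|\Im\Lambda|\to\infty$; so the Neumann series is not justified as written. The conclusion is nonetheless true: using $\Re\mL_{KP,0}(\xi+i\hat{\a},\eta)\le-\hat{\beta}_0-\tfrac32(b-a)\hat{\a}\xi^2$ together with $|\Im\mL_{KP,0}(\xi+i\hat{\a},\eta)|\lesssim(1+|\xi|)\bigl(-\Re\mL_{KP,0}(\xi+i\hat{\a},\eta)\bigr)+C(1+|\xi|^3)$, one checks that $\sup_{(\xi,\eta)}|\xi+i\hat{\a}|\,|\Lambda-\mL_{KP,0}(\xi+i\hat{\a},\eta)|^{-1}\to0$ as $|\Lambda|\to\infty$ with $\Re\Lambda\ge-\hat{\beta}_0/2$, which is the sharper symbol estimate you need to insert. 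With that repair, and taking care that the two compact regions $|\eta|\le\eta_0$ and $\eta_0\le|\eta|\le M$ are treated separately (since $\tilde{\mathcal{Q}}(\eta)$ jumps at $|\eta|=\eta_0$), your outline closes; but the shorter and cleaner course is the paper's Laplace-transform argument.
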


\begin{proof}
By Proposition~3.2 in \cite{Miz15}, there exist positive constants
$b_1$ and $C$ such that
$$ \|e^{t\mL_{KP}}\mathcal{Q}_{KP}(\eta_0)\|_{B(L^2_{\hat{\a}}(\R^2))}
\le Ce^{-b_1t}\,.$$
If $\Re\Lambda\ge -b>-b_1$, then
\begin{align*}
  \|(\Lambda-\mL_{KP})^{-1}\mathcal{Q}_{KP}(\eta_0)\|_{B(L^2_{\hat{\a}})}
\le & \int_0^\infty 
\|e^{-\Lambda t}e^{t\mL_{KP}}\mathcal{Q}_{KP}(\eta_0)\|_{B(L^2_{\hat{\a}}(\R^2))}\,dt
 \lesssim  \frac{1}{b_1-b}\,.
\end{align*}
\end{proof}

\subsection{Resonant modes}
\label{subsec:resonant}
In this subsection, we will prove the existence of continuous resonant modes
of $\mL$ near $\lambda=0$ by using the Lyapunov Schmidt method.
Let 
\begin{gather*}
A(\eta)=1+a\eta^2-a\pd_z^2\,,\quad  B(\eta)=1+b\eta^2-b\pd_z^2\,,\\
\mL_0(\eta)=
\begin{pmatrix}
c\pd_z & 1 \\ B(\eta)^{-1}A(\eta)(\pd_z^2-\eta^2) & c\pd_z  
\end{pmatrix}\,,\\
\mL(\eta)=\mL_0(\eta)+V(\eta)\,,\quad
V(\eta)=-B(\eta)^{-1}
\begin{pmatrix}
  0 & 0 \\ v_{1,c}(\eta) & v_{2,c}(\eta)
\end{pmatrix}\,,
\\
v_{1,c}(\eta)=2r_c'\pd_z+r_c(\pd_z^2-\eta^2)\,,\quad
v_{2,c}(\eta)=2q_c\pd_z+q_c'\,.
\end{gather*}
If $e^{iy\eta}(u_1(z),u_2(z))$ is a solution of \eqref{eq:BLevp}, then
\begin{equation}
  \label{eq:evBL-eta}
 \mL(\eta)\begin{pmatrix}u_1 \\ u_2\end{pmatrix}
=\lambda \begin{pmatrix} u_1 \\ u_2 \end{pmatrix}
\end{equation}
or equivalently,
\begin{gather}
\label{eq:u1}
 \{A(\eta)(\pd_z^2-\eta^2)-(\lambda-c\pd_z)^2B(\eta)\}u_1
-v_{1,c}(\eta)u_1-v_{2,c}(\eta)(\lambda-c\pd_z)u_1=0\,,
\\ \label{eq:u2}
 u_2= (\lambda-c\pd_z)u_1\,.
\end{gather}
We will find solutions of \eqref{eq:evBL-eta} in
$H^1_\a(\R)\times L^2_\a(\R)$ for small $\eta$.  Using the change
of variables \eqref{eq:KP2-scale1} and \eqref{eq:KP2-scale2} and
dropping the hats in the resulting equation, we have
\begin{equation}
  \label{eq:ev-eta}
F(U,\Lambda,\eps,\eta):= 2L_\eps(\eta)U-\Lambda T_1(\eps,\eta)U
+\eps^2\Lambda^2B_\eps(\eta)\pd_z^{-1}U=0\,,
\end{equation}
where $U(z)=\pd_zu_1(z/\eps)$ and 
\begin{gather*}
L_\eps(\eta)= -\frac12\pd_z\{(bc^2-a)\pd_z^2-1+3c\theta_\eps\}
+\frac{\eta^2}{2}T_2(\eps,\eta)\,,\\
T_1(\eps,\eta)=2cB_\eps(\eta)-\eps^2(2\theta_\eps+\theta_\eps'\pd_z^{-1})\,,
\quad  T_2(\eps,\eta)=\{A_\eps(\eta)+\eps^2(bc^2-a)\pd_z^2+c\eps^2\theta_\eps\}\pd_z^{-1}\,,\\
A_\eps(\eta)=1+a\eps^2(\eps^2\eta^2-\pd_z^2)\,,\quad
B_\eps(\eta)=1+b\eps^2(\eps^2\eta^2-\pd_z^2)\,.
\end{gather*}
Let $L_\eps(\eta)$ be an operator on $L^2_{\hat{\a}}(\R)$  with 
$D(L_\eps)=H^3_{\hat{\a}}(\R)$ for an $\hat{\a}\in(0,\hat{\a}_\eps)$
and
$$(\pd_z^{-1}f)(z)=-\int_z^\infty f(z_1)\,dz_1
\quad\text{for $f\in L^2_{\hat{\a}}(\R)$.}$$
We remark that $F(U,\Lambda,0,\eta)=2\mL_{KP}(\eta)U-2\Lambda U$
and the translated eigenvalue problem \eqref{eq:ev-eta} is
similar to the eigenvalue problem of the  KP-II equation
provided $\eps$ is sufficiently small.
For small $\eta\ne0$, \eqref{eq:evBL-eta} has two eigenvalues 
in the vicinity of $0$. 
\par
First, we will find an approximate solution of \eqref{eq:ev-eta}.
Let $U(\eta)=U_0+\eta U_1+\eta^2U_2+O(\eta^3)$, 
$\Lambda(\eta)=i\Lambda_{1,\eps}^0\eta-\Lambda_{2,\eps}^0\eta^2+O(\eta^3)$
and formally equate the powers of $\eta$ in \eqref{eq:ev-eta}. Then
\begin{gather}
  \label{eq:U0}
  L_\eps(0)U_0=0\,,\\
\label{eq:U1}
L_\eps(0)U_1=\frac{i}{2}\Lambda_{1,\eps}^0T_1(\eps,0)U_0\,,\\
\label{eq:U2}
2L_\eps(0)U_2=-\left\{T_2(\eps,0)+\Lambda_{2,\eps}^0T_1(\eps,0)
-\eps^2(\Lambda_{1,\eps}^0)^2B_\eps(0)\pd_z^{-1} \right\}U_0
+i\Lambda_{1,\eps}^0T_1(\eps,0)U_1 \,.
\end{gather}
Let $\theta_{1,\eps}(z)=\pd_cq_c\left(\frac{z}{\eps}\right)$,
$\theta_{\eps,d}(z)=d\theta_\eps(\sqrt{d}z)$
and $\tilde{\theta}_{1,\eps}=2\pd_d\theta_{\eps,d}|_{d=1}$.
By \eqref{eq:qc},
\begin{equation}
  \label{eq:qc-new}
  (bc^2-a)\theta_\eps''-\theta_\eps +\frac{3c}{2}\theta_\eps^2=0\,.
\end{equation}
It follows from \cite[Proposition~2.8]{PW94} that
\begin{gather}
\label{eq:kerLe1}
L_\eps(0)\theta_\eps'=0\,,\quad L_\eps(0)\tilde{\theta}_{1,\eps}=-\theta_\eps'\,,
\\ \label{eq:kerLe2}
L_\eps(0)^*\theta_\eps=0\,, \quad
L_\eps(0)^*\int_{-\infty}^z \tilde{\theta}_{1,\eps}(z_1)\,dz_1=\theta_\eps\,,
\\ \label{eq:gkerLe}
\ker_g(L_\eps(0))=\spann\{\theta_\eps',\tilde{\theta}_{1,\eps}\}\,,
\quad
\ker_g(L_\eps(0)^*)=\spann\left\{\theta_\eps, \int_{-\infty}^z\tilde{\theta}_{1,\eps}\right\}\,,
\end{gather}
where $\ker_g(A)$ denotes the generalized kernel of the operator $A$.
Differentiating \eqref{eq:qc} with respect to $c$ and $x$,
using the change of variables \eqref{eq:KP2-scale1}, \eqref{eq:KP2-scale2}
and dropping the hats in the resulting equation,
we have
\begin{equation}
  \label{eq:kerLe}
 L_\eps(0)\theta_{1,\eps}=-\frac12T_1(\eps,0)\theta_\eps'\,,
\quad 
L_\eps(0)^*\int_{-\infty}^z\theta_{1,\eps}=\frac12 \pd_z^{-1}T_1(\eps,0)\theta_\eps'\,.
\end{equation}
Combining \eqref{eq:U0}, \eqref{eq:U1}, \eqref{eq:kerLe1}, \eqref{eq:kerLe}
and the fact that  $\ker(L_\eps(0))=\spann\{\theta_\eps'\}$, we have
\begin{equation}
  \label{eq:U0-U1}
U_0=\theta_\eps'\,, \quad
U_1=-i\Lambda_{1,\eps}^0\theta_{1,\eps}+C_1\theta_\eps'
\end{equation}
up to the constant multiplicity, where $C_1$ is an arbitrary constant.
\par

Next, we will determine $\Lambda_{1,\eps}^0$.
Multiplying \eqref{eq:U2} by $\theta_\eps$ and substituting \eqref{eq:U0-U1}
into the resulting equation, we have from \eqref{eq:kerLe2}
\begin{align*}
& \left\la T_2(\eps,0)\theta_\eps' +\Lambda_{2,\eps}^0T_1(\eps,0)\theta_\eps'
-\eps^2(\Lambda_{1,\eps}^0)^2B_\eps(0)\theta_\eps, \theta_\eps \right\ra
+i\Lambda_{1,\eps}^0 \left \la T_1(\eps,0)(i\Lambda_{1,\eps}^0 \theta_{1,\eps}-C_1\theta_\eps'),
 \theta_\eps\right\ra
\\=& -2\la U_2,L_\eps(0)^*\theta_\eps\ra=0\,.
\end{align*}
Since $\theta_\eps$ is even and $\theta_\eps'$ and $T_1(\eps,0)\theta_\eps'$ are odd,
we have $\la T_1(\eps,0)\theta_\eps', \theta_\eps \ra=\la \theta_\eps',\theta_\eps\ra=0$
and
\begin{gather}
  \label{eq:Lambda1}
(\Lambda_{1,\eps}^0)^2=\frac{f_1(\eps)}{f_2(\eps)}\,,
\\ \notag
f_1(\eps)= \la T_2(\eps,0)\theta_\eps', \theta_\eps\ra\,,
\quad
f_2(\eps)=\la T_1(\eps,0)\theta_{1,\eps} +\eps^2B_\eps(0)\theta_\eps, \theta_\eps\ra\,.
\end{gather}
By \eqref{eq:qc-new} and the fact that
$(T_1(\eps,0)\pd_z)^*\theta_\eps=-T_1(\eps,0)\theta_\eps'=
-c^{-1}\pd_z\{(A_\eps(0)+c^2B_\eps(0)\}\theta_\eps$,
we have
\begin{gather*}
f_1(\eps)=\frac{1+2c^2}{3}\la \theta_\eps,\theta_\eps\ra
+\frac{\eps^2}{3}(4a-bc^2)\la \theta_\eps',\theta_\eps'\ra\,,
\\
f_2(\eps)=\frac{1}{c}\la \{A_\eps(0)+c^2B_\eps(0)\}\theta_\eps,\theta_{1,\eps}\ra
+\eps^2\la B_\eps(0)\theta_\eps,\theta_\eps\ra\,.
\end{gather*}
Since
\begin{equation}
  \label{eq:theta-approx}
\|\theta_\eps-\theta_0\|_{H^k_\a(\R)\cap H^k_{-\a}(\R)}
+\|\theta_{1,\eps}-2\theta_0-z\theta_0'\|_{H^k(\R)\cap H^k_{-\a}(\R)}=O(\eps^2)
\quad\text{for any $k\ge0$,}
\end{equation}
we have $\Lambda_{1,\eps}^0=\pm \frac{1}{\sqrt{3}}+O(\eps^2)$.

Now we will use the Lyapunov Schmidt method to prove existence of 
solutions to \eqref{eq:ev-eta} satisfying
$(U(\eta),\Lambda(\eta))\simeq (\theta_\eps'-i\eta\Lambda_{1,\eps}^0\theta_{1,\eps},
i\eta\Lambda_{1,\eps}^0)$.
\begin{lemma}
  \label{lem:construction}
Let $\hat{\a}\in(0,\hat{\a}_0/2)$.
There exist positive constants $\eps_0$ and $\eta_0$ such that  \eqref{eq:ev-eta}
has a solution $(U_\eps(\eta),\Lambda_\eps(\eta))$ satisfying
for any $\eta\in [-\eta_0,\eta_0]$ and $k\ge0$,
\begin{gather}
\label{eq:f-Ue}
\sup_{\eps\in(0,\eps_0)} \left\| U_\eps(\eta)-\theta_\eps'+\Lambda_\eps(\eta)\theta_{1,\eps}
\right\|_{H^k_{\hat{\a}}(\R)}=O(\eta^2)\,,
\\
\label{eq:f-La}
\sup_{\eps\in(0,\eps_0)} \left|\Lambda_\eps(\eta)
-i\Lambda_{1,\eps}^0\eta+\Lambda_{2,\eps}^0\eta^2\right|=O(\eta^3)\,,
\end{gather}
where $\Lambda_{1,\eps}^0$ and $\Lambda_{2,\eps}^0$ are constants satisfying
$\Lambda_{1,\eps}^0=\frac{1}{\sqrt{3}}+O(\eps^2)$ and $\Lambda_{2,\eps}^0=
\frac{2}{3\hat{\a}_0}+O(\eps^2)$.
Moreover, 
\begin{equation}
  \label{eq:f-cc}
\overline{U_\eps(\eta)}=U_\eps(-\eta)\,,\quad
\overline{\Lambda_\eps(\eta)}=\Lambda_\eps(-\eta)
\quad\text{for $\eta\in[-\eta_0,\eta_0]$,}  
\end{equation}
and the mapping 
$[-\eta_0,\eta_0]\ni \eta\mapsto (U_\eps(\eta),\Lambda_\eps(\eta))\in H^k_{\hat{\a}}(\R)\times \R$
is smooth for any $k\ge0$.
\end{lemma}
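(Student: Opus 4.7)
The plan is to apply the Lyapunov--Schmidt reduction to $F(U,\Lambda,\eps,\eta)=0$ about the approximate solution $(U,\Lambda,\eta)=(\theta_\eps',0,0)$. Since $\ker L_\eps(0)=\spann\{\theta_\eps'\}$ and $L_\eps(0)\tilde{\theta}_{1,\eps}=-\theta_\eps'$ by \eqref{eq:gkerLe}, the value $0$ is an algebraically double, geometrically simple eigenvalue of $L_\eps(0)$ on $L^2_{\hat{\a}}(\R)$. Let $\Pi_\eps$ denote the associated Riesz projection, whose range is $\spann\{\theta_\eps',\tilde{\theta}_{1,\eps}\}$, and set $\Pi_\eps^c=I-\Pi_\eps$. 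The restriction $L_\eps(0)|_{R(\Pi_\eps^c)}$ is invertible, and the uniform-in-$\eps$ boundedness of its inverse in $B(L^2_{\hat{\a}}(\R))$ follows from Proposition~\ref{prop:KPII} together with the KP-II approximation $L_\eps\to\mL_{KP}$ under the scaling \eqref{eq:KP2-scale1}--\eqref{eq:KP2-scale2} and standard perturbation theory.

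Next comes the range equation. Decompose $U=U_k+U_r$ with $U_k=\Pi_\eps U$ and $U_r=\Pi_\eps^c U\in H^3_{\hat{\a}}(\R)\cap R(\Pi_\eps^c)$; since $L_\eps(0)U_k\in R(\Pi_\eps)$ by invariance, applying $\Pi_\eps^c$ to $F=0$ gives
\begin{equation*}
2L_\eps(0)U_r=\Pi_\eps^c\bigl[\Lambda T_1(\eps,\eta)U-\eta^2T_2(\eps,\eta)U-\eps^2\Lambda^2B_\eps(\eta)\pd_z^{-1}U\bigr].
\end{equation*}
For $U_k$ in a bounded set and $(\Lambda,\eta)$ small, the right-hand side is a contraction in $U_r$, so the implicit function theorem furnishes a unique smooth $U_r=U_r(U_k,\Lambda,\eps,\eta)$ with $\|U_r\|_{H^3_{\hat{\a}}}=O(|\Lambda|+\eta^2)$ uniformly in $\eps\in(0,\eps_0)$.

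Substituting back, the bifurcation equation $\Pi_\eps F(U_k+U_r,\Lambda,\eps,\eta)=0$ is two-dimensional. Normalizing $U_k=\theta_\eps'+\kappa\tilde{\theta}_{1,\eps}$ (so that the $\theta_\eps'$-component of $U$ is pinned to $1$), and pairing against the adjoint basis $\{\theta_\eps,\int_{-\infty}^z\tilde{\theta}_{1,\eps}\}$ of $\ker_g(L_\eps(0)^*)$, one obtains a smooth system $G_j(\kappa,\Lambda,\eps,\eta)=0$ for $j=1,2$ which vanishes identically at $(\kappa,\Lambda,\eta)=(0,0,0)$. Expanding in $(\kappa,\Lambda,\eta)$ and using the parity identities $\la T_1(\eps,0)\theta_\eps',\theta_\eps\ra=\la\theta_\eps',\theta_\eps\ra=0$ reproduces \eqref{eq:U0}--\eqref{eq:Lambda1}; the pairing with $\theta_\eps$ yields the leading balance
\begin{equation*}
\Lambda^2 f_2(\eps)+\eta^2 f_1(\eps)=O(|\Lambda|^3+|\eta|^3+|\Lambda\eta|^2),
\end{equation*}
with $f_1(\eps),f_2(\eps)>0$ and of size $1+O(\eps^2)$. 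Rescaling $\Lambda=\eta\mu$ and applying the implicit function theorem to the resulting non-degenerate system in $(\kappa,\mu,\eta)$ produces two smooth branches $\mu_\eps^\pm(\eta)$, hence $\Lambda_\eps^\pm(\eta)=\eta\mu_\eps^\pm(\eta)$. Carrying the expansion one order further and invoking \eqref{eq:theta-approx} at $\eps=0$ gives $\Lambda_{1,\eps}^0=1/\sqrt{3}+O(\eps^2)$ and $\Lambda_{2,\eps}^0=2/(3\hat{\a}_0)+O(\eps^2)$, and the estimates \eqref{eq:f-Ue}--\eqref{eq:f-La} follow.

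The conjugation \eqref{eq:f-cc} is immediate: $F$ has real coefficients and depends on $\eta$ only through even powers (because $A_\eps(\eta),B_\eps(\eta)$, and hence $T_1,T_2$, are even in $\eta$), so $(\overline{U_\eps(\eta)},\overline{\Lambda_\eps(\eta)})$ solves the equation with parameter $-\eta$, and uniqueness in the Lyapunov--Schmidt reduction identifies it with $(U_\eps(-\eta),\Lambda_\eps(-\eta))$. The $C^\infty$-dependence on $\eta$ and the higher regularity $U_\eps(\eta)\in H^k_{\hat{\a}}(\R)$ for every $k$ follow from the smoothness of $F$ in all its arguments and elliptic bootstrapping in $L_\eps(\eta)U_\eps=\tfrac12\Lambda_\eps T_1U_\eps-\tfrac12\eps^2\Lambda_\eps^2B_\eps\pd_z^{-1}U_\eps$. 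The main obstacle is to keep every constant—the norm of $\Pi_\eps^c$, the inverse of $L_\eps(0)|_{R(\Pi_\eps^c)}$, the bounds on $T_1,T_2,B_\eps$, and the coefficients in the reduced equation—uniformly bounded as $\eps\downarrow 0$, which is exactly what the KP-II scaling and Proposition~\ref{prop:KPII} secure.
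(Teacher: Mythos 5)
Your proposal follows essentially the same Lyapunov--Schmidt scheme as the paper: split off $\ker_g(L_\eps(0))=\spann\{\theta_\eps',\tilde{\theta}_{1,\eps}\}$, solve the range equation using the uniform invertibility of $Q_\eps L_\eps(0)Q_\eps$, rescale $\Lambda=i\eta\Lambda_1$, and close the two-dimensional bifurcation system; the leading balance $\Lambda^2f_2(\eps)+\eta^2f_1(\eps)=0$ and the invertibility of the reduced Jacobian are exactly the computations carried out in the paper, and your parity argument for \eqref{eq:f-cc} matches as well. The one substantive difference is the parametrization of the kernel direction, and it matters for \eqref{eq:f-Ue}. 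The paper writes the correction to $\theta_\eps'$ as $-\{\Lambda-\eta^2\gamma\}\theta_{1,\eps}+\eta^2\widetilde{U}$, exploiting the exact identity $L_\eps(0)\theta_{1,\eps}=-\tfrac12T_1(\eps,0)\theta_\eps'$ of \eqref{eq:kerLe} so that the $O(\Lambda)$ source in the range equation cancels identically and \eqref{eq:f-Ue} holds by construction. You instead write $U=\theta_\eps'+\kappa\tilde{\theta}_{1,\eps}+U_r$ with the $\theta_\eps'$-coefficient pinned to $1$. Since $\theta_{1,\eps}=\tilde{\theta}_{1,\eps}+O(\eps^2)$ but not exactly, the $\theta_\eps'$-component of $\theta_{1,\eps}$ relative to your basis is $O(\eps^2)$ rather than zero, so your normalized eigenfunction differs from $\theta_\eps'-\Lambda_\eps(\eta)\theta_{1,\eps}$ by an $O(\eps^2|\eta|)$ multiple of $\theta_\eps'$; this is not $O(\eta^2)$ uniformly in $\eps\in(0,\eps_0)$, so \eqref{eq:f-Ue} does not "follow" as stated from your construction — you must either adopt the paper's ansatz or rescale the eigenfunction at the end. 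This is a fixable detail, not a flaw in the method. Finally, the uniform bound on $(Q_\eps L_\eps(0)Q_\eps)^{-1}$ is a one-dimensional resolvent statement in $L^2_{\hat{\a}}(\R)$ (of Pego--Weinstein type), not a consequence of Proposition~\ref{prop:KPII}, which concerns the two-dimensional operator $\mL_{KP}$ with the transverse projection $\mathcal{Q}_{KP}$.
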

\begin{proof}
 Let $\Lambda(\eta)=i\eta\Lambda_1(\eta)$ and
 \begin{equation}
   \label{eq:defU}
U(\eta)=\theta_\eps'
-\{i\eta\Lambda_1(\eta)-\eta^2\gamma(\eta)\}\theta_{1,\eps}
+\eta^2\widetilde{U}(\eta)\,, \quad
\widetilde{U}(\eta)\perp 
\theta_\eps\,,\; \int_{-\infty}^z\tilde{\theta}_{1,\eps}(z_1)\,dz_1\,.   
 \end{equation}
Then \eqref{eq:ev-eta} is translated into
\begin{equation}
\label{eq:ev-eta2}
2\widetilde{L}_\eps(\eta)\widetilde{U}+G_1(\gamma,\Lambda_1,\eps,\eta)
-i\eta G_2(\gamma,\Lambda_1,\eps,\eta)=0\,,
\end{equation}
where
\begin{align*}
& \widetilde{L}_\eps(\eta)= 
L_\eps(\eta)-\frac{i}{2}\eta\Lambda_1(\eta)T_1(\eps,\eta)-\frac{\eps^2}{2}\eta^2\Lambda_1(\eta)^2B_\eps(\eta)\pd_z^{-1}\,,
\\ &
G_1(\gamma,\Lambda_1,\eps,\eta)=T_2(\eps,\eta)\theta_\eps'+2\gamma L_\eps(\eta)\theta_{1,\eps}
-\Lambda_1^2\{T_1(\eps,\eta)\theta_{1,\eps}+\eps^2B_\eps(\eta)\theta_\eps\}\,,
\\ &
G_2(\gamma,\Lambda_1,\eps,\eta)=2bc\eps^4\Lambda_1\theta_\eps'
+\Lambda_1\{T_2(\eps,\eta)+\gamma T_1(\eps,\eta)\}\theta_{1,\eps}
\\ & +\eps^2\Lambda_1^2(\Lambda_1+i\gamma\eta)
B_\eps(\eta)\int_z^\infty \theta_{1,\eps}(z_1)\,dz_1\,.
\end{align*}
Here we use \eqref{eq:kerLe} and the fact that
$\{T_1(\eps,\eta)-T_1(\eps,0)\}\theta_\eps'=2bc\eps^4\eta^2\theta_\eps'$.
\par
Let $P_\eps: L^2_{\hat{\a}}\to \ker_g(L_\eps(0))$ be the spectral projection associated
with $L_\eps(0)$ and let $Q_\eps=I-P_\eps(0)$.
Since $\widetilde{U}\in Q_\eps L^2_{\hat{\a}}(\R)$, we can translate \eqref{eq:ev-eta2} into
\begin{gather}
  \label{eq:ev-eta3}
  2\widehat{L}_\eps(\eta)\widetilde{U}+Q_\eps G_1(\gamma,\Lambda_1,\eps,\eta)
-i\eta Q_\eps G_2(\gamma,\Lambda_1,\eps,\eta)=0\,,
\\ \label{eq:ev-eta4}
F_1(\gamma,\Lambda_1,\eps,\eta):=
\left\la G_1(\gamma,\Lambda_1,\eps,\eta)-i\eta G_2(\gamma,\Lambda_1,\eps,\eta)
+2\{\widetilde{L}_\eps(\eta)-L_\eps(0)\}\widetilde{U},\theta_\eps\right\ra\,,
\\ \label{eq:ev-eta5}
F_2(\gamma,\Lambda_1,\eps,\eta):=\left\la G_1(\gamma,\Lambda_1,\eps,\eta)-i\eta G_2(\gamma,\Lambda_1,\eps,\eta)
+2\{\widetilde{L}_\eps(\eta)-L_\eps(0)\}\widetilde{U},\int_{-\infty}^z\tilde{\theta}_{1,\eps}\right\ra\,,
\end{gather}
where $\widehat{L}_\eps(\eta)=Q_\eps \widetilde{L}_\eps(\eta)Q_\eps$.
Let $k_1$ be a positive number such that
\begin{multline*}
 \sup_{\eps\in(0,\eps_0]\,,\,\eta\in[-\eta_0,\eta_0]}
\bigl(\|T_1(\eps,\eta)\|_{B(H^2_{\hat{\a}}(\R),L^2_{\hat{\a}}(\R))}
+\|T_2(\eps,\eta)\|_{B(H^1_{\hat{\a}}(\R),L^2_{\hat{\a}}(\R))}
\\ +\|B_\eps(\eta)\pd_z^{-1}\|_{B(H^1_{\hat{\a}}(\R),L^2_{\hat{\a}}(\R))}
\bigr)\le k_1\,.  
\end{multline*}
Suppose $\sup_{\eta\in[-\eta_0,\eta_0]}
\left(|\Lambda_1(\eta)|+|\gamma(\eta)|\right)\le k_2$ for a $k_2>0$.
Since 
$\|Q_\eps L_\eps(0)^{-1}Q_\eps\|_{B(L^2_{\hat{\a}}(\R), H^3_{\hat{\a}}(\R))}$ is uniformly bounded in $\eps\in(0,\eps_0)$ and
$$\|\widehat{L}_\eps(\eta)-Q_\eps L_\eps(0)Q_\eps\|_{B(H^2_{\hat{\a}}(\R),L^2_{\hat{\a}}(\R))}\lesssim 
\eta^2k_1(1+k_2^2\eps^2)+\eta k_1k_2\,,$$
we see that $\widehat{L}_\eps(\eta)^{-1}:Q_\eps L^2_{\hat{\a}}(\R)\to Q_\eps H^3_{\hat{\a}}(\R)$
is uniformly bounded in $\eps\in(0,\eps_0)$ and $\eta\in [-\eta_0,\eta_0]$
provided $\eps_0$ and $\eta_0$ are sufficiently small.
Thus there exists a positive constant $C_1$ such that
$$\sup_{\eps\in(0,\eps_0]\,,\,\eta\in[-\eta_0,\eta_0]}\|\widetilde{U}(\eta)\|_{H^3_{\hat{\a}}(\R)}
\le C_1\{(1+k_2)^2+\eps_0^2\eta_0k_2^3\}\,.$$
Let
\begin{gather}
  \label{eq:gamma0}
\gamma_\eps^0=\frac{f_3(\eps)}{f_4(\eps)}\,,\\
\notag
f_3(\eps)=
(\Lambda_{1,\eps}^0)^2 
\left\la T_1(\eps,0)\theta_{1,\eps}+\eps^2B_\eps(0)\theta_\eps, \int_{-\infty}^z \tilde{\theta}_{1,\eps}(z_1)\,dz_1\right\ra
-
\left\la T_2(\eps,0)\theta_\eps', \int_{-\infty}^z \tilde{\theta}_{1,\eps}(z_1)\,dz_1\right\ra\,,
\\ \notag
f_4(\eps)=2 \left\la L_\eps(0)\theta_{1,\eps},\int_{-\infty}^z \tilde{\theta}_{1,\eps}(z_1)\,dz_1\right\ra\,.
\end{gather}
By \eqref{eq:kerLe2} and \eqref{eq:theta-approx},
$f_4(\eps)=3\la \theta_0,\theta_0\ra+O(\eps^2)$.
Using \eqref{eq:Lambda1}, \eqref{eq:theta-approx} and the fact that
$(\Lambda_{1,\eps}^0)^2=\frac13+O(\eps^2)$ and
\begin{equation}
  \label{eq:theta-approx2}
\|\tilde{\theta}_{1,\eps}-2\theta_0-z\theta_0'\|_{H^k_{\hat{\a}}(\R)\cap H^k_{-\hat{\a}}(\R)}=O(\eps^2)
\quad\text{for any $k\ge0$,}
\end{equation}
we have
\begin{align*}
f_3(\eps)=&   \left\la \frac{2}{3}\theta_{1,\eps}-\theta_\eps,
\int_{-\infty}^z \tilde{\theta}_{1,\eps}\right\ra+O(\eps^2)
= -\frac{1}{6}\|\theta_0\|_{L^1(\R)}^2+O(\eps^2)\,.
\end{align*}
Thus we have
$$ \gamma_\eps^0
=-\frac{1}{18}\frac{\| \theta_0(z)\|_{L^1(\R)}^2}{\la \theta_0,\theta_0\ra}
+O(\eps^2)
=-\frac{1}{3\hat{\a}_0}+O(\eps^2)\,.
$$
In view of \eqref{eq:Lambda1} and \eqref{eq:gamma0},
\begin{align*}
  F_1(\widetilde{U}_0,\gamma_\eps^0,\Lambda_{1,\eps}^0,\eps,0)
=& \left\la G_1(\gamma,\Lambda_{1,\eps}^0,\eps,0), \theta_\eps\right\ra
\\=& \la T_2(\eps,0)\theta_\eps', \theta_\eps\ra
-(\Lambda_{1,\eps}^0)^2\la T_1(\eps,0)\theta_{1,\eps}+\eps^2B_\eps(0)\theta_\eps,\theta_\eps\ra
\\=& 0\,,
\end{align*}
where $\widetilde{U}_0=\widetilde{U}(0)$.
\begin{align*}
    F_2(\widetilde{U}_0,\gamma_\eps^0,\Lambda_{1,\eps}^0,\eps,0)
=& \left\la G_1(\gamma_\eps^0,\Lambda_{1,\eps}^0,\eps,0),
\int_{-\infty}^z \tilde{\theta}_{1,\eps}(z_1)\,dz_1\right\ra = 0\,,
\end{align*}
Next, we compute the Fr\'echet derivative of $(F_1,F_2)$ at
$\mathcal{U}_0=(\widetilde{U}_0,\gamma_\eps^0,\Lambda_{1,\eps}^0,\eps,0)$.
By \eqref{eq:kerLe2}, \eqref{eq:kerLe}, \eqref{eq:theta-approx}
and \eqref{eq:theta-approx2},
\begin{align*}
& \pd_{\gamma}F_1(\mathcal{U}_0)=2\la L_\eps(0)\theta_{1,\eps},\theta_\eps\ra=0\,,\\
& \pd_{\Lambda_1}F_1(\mathcal{U}_0)
=-2\Lambda_{1,\eps}^0\la T_1(\eps,0)\theta_{1,\eps}+\eps^2B_\eps(0)\theta_\eps,\theta_\eps\ra
=-6\Lambda_{1,\eps}^0\la \theta_0,\theta_0\ra+O(\eps^2)\,,
\\ & \pd_{\gamma}F_2(\mathcal{U}_0)
=2\la L_\eps(0)\theta_{1,\eps},\int_{-\infty}^z\tilde{\theta}_{1,\eps}\ra
=2\la \theta_{1,\eps},\theta_\eps\ra=3\la \theta_0,\theta_0\ra+O(\eps^2)\,,
\\ & \pd_{\Lambda_1}F_2(\mathcal{U}_0)
= -2\Lambda_{1,\eps}^0
\left\la T_1(\eps,0)\theta_{1,\eps}+\eps^2B_\eps(0)\theta_\eps,
     \int_{-\infty}^z \tilde{\theta}_{1,\eps}\right\ra
=-2\Lambda_{1,\eps}^0\|\theta_0\|_{L^1}^2+O(\eps^2)\,.
\end{align*}
and $D_{(\gamma,\Lambda_1)}(F_1,F_2)(\mathcal{U}_0)=
  \begin{pmatrix}\pd_{\gamma}F_1(\mathcal{U}_0) & \pd_{\Lambda_1}F_1(\mathcal{U}_0)
\\ \pd_{\gamma}F_2(\mathcal{U}_0) & \pd_{\Lambda_1}F_2(\mathcal{U}_0)
\end{pmatrix}$ is invertible.
Thus by the implicit function theorem, there exists a smooth curve
$(\gamma_\eps(\eta),\Lambda_{1,\eps}(\eta))$ around $\eta=0$ satisfying
\begin{equation}
\label{eq:La2}
\gamma_\eps(0)=\gamma_\eps^0\,,\quad
\Lambda_{1,\eps}(0)=\Lambda_{1,\eps}^0\,,\quad
\Lambda_{1,\eps}'(0)=
-\frac{\pd_\eta F_1(\mathcal{U}_0)}{\pd_{\Lambda_1}F_1(\mathcal{U}_0)}
=:i\Lambda_{2,\eps}^0\,.
\end{equation}
Since 
\begin{align*}
G_2(\gamma_\eps(0),\Lambda_{1,\eps}(0),\eps,0)
=& \Lambda_{1,\eps}(0)\{T_2(\eps,0)+\gamma_\eps(0)T_1(\eps,0)\}\theta_{1,\eps}+O(\eps^2)
\\=& \Lambda_{1,\eps}^0\left\{-\int_z^\infty \theta_{1,\eps}(z_1)\,dz_1
+2\gamma_\eps(0)\theta_{1,\eps}\right\}+O(\eps^2)
\quad\text{in $L^2_{\hat{\a}}(\R)$,}
\end{align*}
we have
\begin{align*}
  \pd_\eta F_1(\widetilde{U}_0,\gamma_\eps^0,\Lambda_{1,\eps}^0,\eps,0)
=& 
-i \left\la G_2(\widetilde{U}_0,\gamma_\eps^0,\Lambda_{1,\eps}^0,\eps,0),\theta_\eps\right\ra\,,
\\ =&
i\Lambda_{1,\eps}^0\left\{\frac12\|\theta_0\|_{L^1(\R)}^2-3\gamma_\eps^0\la \theta_0,\theta_0\ra\right\}
+O(\eps^2)
\\=& \frac{2i}{3}\Lambda_{1,\eps}^0\|\theta_0\|_{L^1(\R)}^2\,,
\end{align*}
and $\Lambda_{2,\eps}^0= \frac{1}{9}\|\theta_0\|_{L^1(\R)}^2
\|\theta_0\|_{L^2(\R)}^{-2}+O(\eps^2)=2/(3\hat{\a})+O(\eps^2)$.
\par
Letting $\Lambda_\eps(\eta)=i\eta\Lambda_{1,\eps}(\eta)$ and
\begin{align*}
U_\eps(\eta)=& \theta_\eps'-\{\Lambda_\eps(\eta)-\eta^2\gamma_\eps(\eta)\}\theta_{1,\eps}
\\ & 
-\frac{\eta^2}{2}\widehat{L}_\eps(\eta)^{-1}Q_\eps
\{G_1(\gamma_\eps(\eta),\Lambda_{1,\eps}(\eta),\eps,\eta)
-i\eta G_2(\gamma_\eps(\eta),\Lambda_{1,\eps}(\eta),\eps,\eta)\}\,,
\end{align*}
we have \eqref{eq:f-Ue} and \eqref{eq:f-cc} because
$\overline{\widetilde{L}_\eps(\eta)}=\widetilde{L}_\eps(-\eta)$
and $\overline{F_j(\gamma,\Lambda,\eta,\eps)}
=F_j(\overline{\gamma},\overline{\Lambda},-\eta,\eps)$ for
$j=1$, $2$.
Thus we complete the proof.
\end{proof}

\par
 
\begin{lemma}
  \label{lem:resonance1}
Let $c$, $\hat{\a}$, $\eps_0$ and $\eta_0$
be as in Lemma~\ref{lem:construction}.
For any $\eps\in(0,\eps_0)$ and  $\eta\in[-\eps^2\eta_0,\eps^2\eta_0]$, let
$\lambda(\eta)=\eps^3\Lambda_\eps(\eps^{-2}\eta)$,
$u(z,\eta)={}^t\!(u_1(z,\eta),u_2(z,\eta))$,
$v(z,\eta)={}^t\!(v_1(z,\eta),v_2(z,\eta))$  and
\begin{align*}
u_1(z,\eta)=& \pd_z^{-1}U_\eps(\eps z,\eps^{-2}\eta)\,,
\\
u_2(z,\eta)=& -c\eps U_\eps(\eps z,\eps^{-2}\eta)
+\lambda(\eta)(\pd_z^{-1}U_\eps)(\eps z,\eps^{-2}\eta)\,,
\\
v_1(z,\eta)=& (\lambda(-\eta)+c\pd_z)B(\eta)
\int_{-\infty}^{\eps z}U_\eps(-z_1,-\eps^{-2}\eta)\,dz_1
\\ & -(2q_c\pd_z+q_c')
\int_{-\infty}^{\eps z} U_\eps(-z_1,-\eps^{-2}\eta)\,dz_1\,,
\\
v_2(z,\eta)=& B(\eta)\int_{-\infty}^{\eps z} U_\eps(-z_1,-\eps^{-2}\eta)\,dz_1\,.
\end{align*}
Then
\begin{gather}
\label{eq:Lu,Lv}
\mL(\eta)u(\cdot,\eta)=\lambda(\eta)u(\cdot,\eta)\,, \quad
\mL(\eta)^*v(\cdot,\eta)=\lambda(-\eta)v(\cdot,\eta)\,,
\\ \label{eq:cc} 
\overline{\lambda(\eta)}=\lambda(-\eta)\,,\quad
\overline{u(z,\eta)}=u(z,-\eta)\,,\quad
\overline{v(z,\eta)}=v(z,-\eta)\,,
\\
\label{eq:gg*}
\la u(x,\eta),v(x,-\eta)\ra=0
\quad\text{for $\eta\in [-\eps^2\eta_0,\eps^2\eta_0]\setminus\{0\}$.}
\end{gather}
Moreover,  for any $k\in\N$, the mappings
$[-\eps^2\eta_0,\eps^2\eta_0]\ni\eta \mapsto u\left(\eps^{-1}\cdot,\eta\right)
\in H^k_{\hat{\a}}(\R)\times H^{k-1}_{\hat{\a}}(\R)$ 
and $[-\eps^2\eta_0,\eps^2\eta_0]\ni\eta\mapsto
v\left(\eps^{-1}\cdot,\eta\right) \in
H^k_{-\hat{\a}}(\R)\times H^{k-1}_{-\hat{\a}}(\R)$ are smooth.
\end{lemma}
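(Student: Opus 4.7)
The plan is to obtain the forward eigenpair by undoing the KP-II scaling \eqref{eq:KP2-scale1}-\eqref{eq:KP2-scale2}, construct the adjoint eigenfunction via a reflection identity, and then read off the symmetry, orthogonality, and smoothness properties from Lemma~\ref{lem:construction}.

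For $\mL(\eta)u=\lambda(\eta)u$, recall that \eqref{eq:ev-eta} was derived from \eqref{eq:u1} via $U=\pd_zu_1(\cdot/\eps)$ together with \eqref{eq:KP2-scale1}-\eqref{eq:KP2-scale2}. Running this backward, setting $u_1(z,\eta)=\pd_z^{-1}U_\eps(\eps z,\eps^{-2}\eta)$ with $\lambda(\eta)=\eps^3\Lambda_\eps(\eps^{-2}\eta)$ turns the scaled eigenequation for $U_\eps$ back into \eqref{eq:u1}. Defining $u_2=(\lambda(\eta)-c\pd_z)u_1$ as in \eqref{eq:u2} then yields the claimed $\mL(\eta)u=\lambda(\eta)u$.

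For the adjoint, I would reduce $\mL(\eta)^*v=\mu v$ to a scalar equation for $\tilde v_2:=B(\eta)^{-1}v_2$. Using $v_{2,c}^*=-v_{2,c}$, the second row of the adjoint system gives $v_1=(\mu+c\pd_z)v_2-v_{2,c}\tilde v_2$, which with $\mu=\lambda(-\eta)$ matches the explicit formula for $v_1$ in the lemma. Substituting back into the first row yields
\begin{equation*}
S\tilde v_2:=\bigl\{A(\pd_z^2-\eta^2)-(\mu+c\pd_z)^2B-v_{1,c}^*+(\mu+c\pd_z)v_{2,c}\bigr\}\tilde v_2=0.
\end{equation*}
A direct change of variable $z_1\mapsto -z_1$ in the integral defining $v_2$ gives $\tilde v_2(z,\eta)=-u_1(-z,-\eta)$. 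Conjugating $S$ by the reflection $\mathcal{R}u(z)=u(-z)$ (noting $r_c$ is even, $q_c'$ is odd, and $v_{1,c}^*$ is reflection-invariant) and comparing with the scalar forward operator $P$ of \eqref{eq:u1} at eigenvalue $\lambda(-\eta)$ gives
\begin{equation*}
\mathcal{R}S\mathcal{R}=P+(v_{1,c}-v_{1,c}^*)-[v_{2,c},c\pd_z].
\end{equation*}
A short computation yields $v_{1,c}-v_{1,c}^*=2r_c'\pd_z+r_c''$ and $[v_{2,c},c\pd_z]=-(2cq_c'\pd_z+cq_c'')$, so the structural identity $r_c=-cq_c$ makes the remainder vanish. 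Thus $S\tilde v_2=0$ reduces to the forward equation for $u_1(\cdot,-\eta)$, which holds, and $v_1$ was built so that the second row of the adjoint system is satisfied by construction.

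The identities \eqref{eq:cc} follow directly from \eqref{eq:f-cc}: both sign flips ($z\mapsto -z$ and $\eta\mapsto -\eta$) in the definition of $v$ cancel under conjugation. For \eqref{eq:gg*}, conjugating $\mL(\eta)^*v(\cdot,\eta)=\lambda(-\eta)v(\cdot,\eta)$ and using that $\mL(\eta)$ has real coefficients yields $\mL(\eta)^*v(\cdot,-\eta)=\lambda(\eta)v(\cdot,-\eta)$, and the standard Fredholm-type identity $(\lambda(\eta)-\overline{\lambda(\eta)})\la u(\eta),v(-\eta)\ra=0$, together with $\lambda(\eta)-\overline{\lambda(\eta)}=2i\eps\Lambda_{1,\eps}^0\eta+O(\eta^3)\ne 0$ for $\eta\ne 0$, closes this. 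Smoothness of $\eta\mapsto u(\eps^{-1}\cdot,\eta)$ and $\eta\mapsto v(\eps^{-1}\cdot,\eta)$ in the stated weighted $H^k$ spaces reduces via $\hat z=\eps z$ to the smoothness of $\heta\mapsto U_\eps$ and $\heta\mapsto\Lambda_\eps$ given at the end of Lemma~\ref{lem:construction}. The main obstacle I expect is the algebra in the adjoint step, where one must correctly identify the scalar adjoint operator $S$, perform the reflection conjugation, and recognize the cancellation enabled by $r_c=-cq_c$; the remaining claims are bookkeeping.
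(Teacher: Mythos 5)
Your proposal is correct and follows essentially the same route as the paper: undo the KP-II scaling to get the forward eigenpair from Lemma~\ref{lem:construction}, reduce the adjoint problem to a scalar equation for $\tv_2=B(\eta)^{-1}v_2$, use the reflection $z\mapsto-z$ together with the parity of $q_c$, $r_c$ (and $r_c=-cq_c$) to identify it with the forward scalar equation \eqref{eq:u1} at $\lambda(-\eta)$, and obtain \eqref{eq:gg*} from $\lambda(\eta)\neq\overline{\lambda(\eta)}$ for $\eta\neq0$. Your explicit operator identity $\mathcal{R}S\mathcal{R}=P+(v_{1,c}-v_{1,c}^*)-[v_{2,c},c\pd_z]$ is just a more detailed rendering of the paper's identities $v_{2,c}^*=-v_{2,c}$ and $v_{1,c}^*+c\pd_zv_{2,c}^*=v_{1,c}-cv_{2,c}\pd_z$.
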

\begin{proof}
By \eqref{eq:u1},\eqref{eq:u2} and the definition of $U_\eps(\eta)$,
we see that $u(z,\eta)$ is a solution of \eqref{eq:evBL-eta} with $\lambda=\lambda(\eta)$.
The mappings $\eta\mapsto u(\eps^{-1}\cdot,\eta)$ and $v(\eps^{-1}\cdot,\eta)$
are smooth thanks to the smoothness of $U_\eps(\eta)$
and \eqref{eq:cc} follows from \eqref{eq:f-cc}.
\par
Suppose $\mL(\eta)^*\begin{pmatrix}v_1 \\ v_2 \end{pmatrix}
=\lambda(-\eta)\begin{pmatrix}v_1 \\ v_2 \end{pmatrix}$ and $\tv_2=B(\eta)^{-1}v_2$. Then
\begin{gather}
\label{eq:tv1}
v_1=(\lambda(-\eta)+c\pd_z)B(\eta)\tv_2  +v_{2,c}(\eta)^*\tv_2\,,
\\
\label{eq:tv2}
\{A(\eta)(\pd_z^2-\eta^2)
-(\lambda(-\eta)+c\pd_z)^2B(\eta)\}\tv_2
-\{v_{1,c}(\eta)^*+(\lambda(-\eta)+c\pd_z)v_{2,c}(\eta)^*\}\tv_2=0\,.
\end{gather}
Formally, we have $v_{2,c}(\eta)^*=-v_{2,c}(\eta)$ and 
$v_{1,c}(\eta)^*+c\pd_zv_{2,c}(\eta)^*=v_{1,c}(\eta)-cv_{2,c}(\eta)\pd_z$.
Using the change of variable $z\mapsto -z$ and the fact that
$q_c$ is an even function, we see that $\tv_2(-z)$ satisfies \eqref{eq:u1}
with $\lambda=\lambda(-\eta)$ and that
$$\tv_2(z,\eta)=\int_{-\infty}^{\eps z} U_\eps(-z_1,-\eps^{-2}\eta)\,dz_1$$
is a solution of \eqref{eq:tv2}.
Thus we prove $\mL(\eta)^*v(\cdot,\eta)=\lambda(-\eta)v(\cdot,\eta)$.
We have \eqref{eq:gg*} from \eqref{eq:Lu,Lv} since
$\overline{\lambda(\eta)}\ne \lambda(\eta)$ for
$\eta\in [-\eps^2\eta_0,\eps^2\eta_0]\setminus\{0\}$.
Thus we complete the proof.
\end{proof}
Let
\begin{gather*}
g(z,\eta)=\frac{\sqrt{3}}{2}
\left(1+i\frac{\Re\la u(\cdot,\eta),v(\cdot,\eta)\ra}
{\Im \la u(\cdot,\eta),v(\cdot,\eta)\ra}
\right)
\begin{pmatrix}
u_1(z,\eta)\\ u_2(z,\eta)
\end{pmatrix}\,,
\\
g^*(z,\eta)=-\frac{\hat{\a}_0}{4}
\begin{pmatrix}  v_1(z,\eta) \\ v_2(z,\eta)\end{pmatrix}\,,
\end{gather*}
By \eqref{eq:cc} and \eqref{eq:gg*},
\begin{gather}
\label{eq:cc2}
\overline{g(z,\eta)}=g(z,-\eta)\,,\quad
\overline{g^*(z,\eta)}=g^*(z,-\eta)\,,\\
  \label{eq:gg*1}
\la g(\cdot,\eta),g^*(\cdot,-\eta)\ra=0\quad\text{and}\quad
\Re \la g(\cdot,\eta),g^*(\cdot,\eta)\ra=0
\quad\text{for $\eta\in[-\eps^2\eta_0,\eps^2\eta_0]$.}
\end{gather}
To resolve the degeneracy of the subspace
$\spann\{g(\cdot,\eta),g(\cdot,-\eta)\}$ at $\eta=0$, we introduce
\begin{gather}
\label{eq:def-gk}
  g_1(z,\eta)=\frac{1}{2}\{g(z,\eta)+g(z,-\eta)\}\,,
\quad
g_2(z,\eta)=
\frac{1}{2i\kappa(\eta)}\{g(z,\eta)-g(z,-\eta)\}
\,,\\
\label{eq:def-gk*}
g_1^*(z,\eta)=\frac{i}{2\kappa(\eta)}\{g^*(z,\eta)-g^*(z,-\eta)\}\,,
\quad g_2^*(z,\eta)=\frac{1}{2}\{g^*(z,\eta)+g^*(z,-\eta)\}\,,
\end{gather}
where $\kappa(\eta)=\frac12\Im\la g(\cdot,\eta),g^*(\cdot,\eta)\ra$.
By \eqref{eq:cc2} and \eqref{eq:gg*1}, we have
\begin{equation}
\label{eq:gg*2}
  \la g_i(\cdot,\eta),g_j^*(\cdot,\eta)\ra =\delta_{ij}
\quad\text{for $i$, $j=1$, $2$.}
\end{equation}
The profiles of $g_k(z,\eta)$ and $g_k^*(z,\eta)$ for small line solitary waves
are as follows.
\begin{corollary}
  \label{lem:resonance2}
Let $c$, $\hat{\a}$, $\eps_0$ and $\eta_0$
be as in Lemma~\ref{lem:construction}.
For every $k\ge0$, there exists a positive constant $C$
such that for $\eta\in [-\eps^2\eta_0,\eps^2\eta_0]$ and $\eps\in(0,\eps_0]$,
\begin{gather}
\label{eq:g1}
 \left\|
   \begin{pmatrix}
  1 & 0 \\ 0 & \eps^{-1}
   \end{pmatrix}
\left\{g_1(\eps^{-1}\cdot,\eps^2\eta) -\frac{\sqrt{3}}{2}
\begin{pmatrix}\theta_\eps \\ -\eps \theta_\eps' \end{pmatrix}
\right\} \right\|_{H^k_{\hat{\a}}(\R)\times H^{k-1}_{\hat{\a}}(\R)}
\le C(\eps^2+\eta^2)\,,
\\ \label{eq:g2}
\left\| \begin{pmatrix} 1 & 0 \\ 0 & \eps^{-1} \end{pmatrix}
\left\{ g_2\left(\eps^{-1}\cdot,\eps^2\eta\right)
-\frac{1}{2}
\begin{pmatrix}
\int_z^\infty \theta_{1,\eps}-2\hat{\a}_0^{-1}\theta_\eps
\\
\eps(\theta_{1,\eps}+2\hat{\a}_0^{-1}\theta_\eps')
\end{pmatrix}\right\}
\right\|_{H^k_{\hat{\a}}(\R)\times H^{k-1}_{\hat{\a}}(\R)}
\le C(\eps^2+\eta^2)\,,
\\ \label{eq:g1*}
\left\| \begin{pmatrix}  \eps^{-1} & 0 \\ 0 & 1 \end{pmatrix}
\left\{ g_1^*(\eps^{-1}\cdot,\eps^2\eta)
-\frac{\hat{\a}_0}{4\sqrt{3}}
\begin{pmatrix}
\eps \theta_{1,\eps} \\ \int_{-\infty}^z \theta_{1,\eps}
\end{pmatrix}\right\}
\right\|_{H^k_{-\hat{\a}}(\R)\times H^{k-1}_{-\hat{\a}}(\R)} \le C(\eps^2+\eta^2)\,,
\\ \label{eq:g2*}
 \left\| \begin{pmatrix}  \eps^{-1} & 0 \\ 0 & 1 \end{pmatrix}
\left\{ g_2^*(\eps^{-1}\cdot,\eps^2\eta) 
-\frac{\hat{\a}_0}{4}
\begin{pmatrix} \eps \theta_\eps' \\ \theta_\eps \end{pmatrix}\right\}
\right\|_{H^k_{-\hat{\a}}(\R)\times H^{k-1}_{-\hat{\a}} }\le C(\eps^2+\eta^2)\,.
\end{gather}
\end{corollary}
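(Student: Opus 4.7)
The approach is to exploit smoothness and evenness of the four profiles in $\eta$, reducing the corollary to evaluating them at $\eta = 0$. By Lemma~\ref{lem:resonance1}, the maps $\eta \mapsto u(\eps^{-1}\cdot, \eta)$ and $\eta \mapsto v(\eps^{-1}\cdot, \eta)$ are smooth into $H^k_{\hat{\a}}\times H^{k-1}_{\hat{\a}}$ and $H^k_{-\hat{\a}}\times H^{k-1}_{-\hat{\a}}$ respectively, with derivatives uniformly bounded in $\eps \in (0, \eps_0]$ by Lemma~\ref{lem:construction}. The relations \eqref{eq:cc2} combined with the definitions \eqref{eq:def-gk}, \eqref{eq:def-gk*} make each of $g_1, g_2, g_1^*, g_2^*$ real-valued and even in $\eta$, so a Taylor expansion produces a remainder of order $O(\eta^2)$ uniformly in $\eps$; it therefore suffices to match the $\eta = 0$ profiles up to error $O(\eps^2)$.

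Plugging $U_\eps(\cdot, 0) = \theta_\eps'$ into the formulas of Lemma~\ref{lem:resonance1} and simplifying via \eqref{eq:qc-new}, one finds that (in column-vector form) $u(\eps^{-1}z, 0) = (\theta_\eps, -c\eps\theta_\eps')$ and $v(\eps^{-1}z, 0) = (-c\eps\theta_\eps', -\theta_\eps) + O(\eps^2)$ in $H^k_{-\hat{\a}}\times H^{k-1}_{-\hat{\a}}$; in the unscaled variable these are proportional to $\zeta_1$ and $\zeta_2^*$, respectively. The Jordan structure from the remark after Theorem~\ref{thm:1} yields the key orthogonality $\la\zeta_1, \zeta_2^*\ra = \la\mL(0)\zeta_2, \zeta_2^*\ra = \la\zeta_2, \mL(0)^*\zeta_2^*\ra = 0$, whence $\la u(\cdot, 0), v(\cdot, 0)\ra = 0$. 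Setting $a(\eta) := \Re\la u, v\ra$ and $b(\eta) := \Im\la u, v\ra$, the parity then forces $a(\eta) = a_2\eta^2 + O(\eta^4)$ and $b(\eta) = b_1\eta + O(\eta^3)$.

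A direct manipulation of the definitions \eqref{eq:def-gk}, \eqref{eq:def-gk*} gives
\begin{gather*}
g_1 = \tfrac{\sqrt{3}}{2}\bigl(u_R - (a/b)u_I\bigr)\,,\quad g_2 = -\tfrac{8(b u_I + a u_R)}{\hat{\a}_0(a^2+b^2)}\,, \\
g_1^* = -\tfrac{4 b v_I}{\sqrt{3}(a^2+b^2)}\,,\quad g_2^* = -\tfrac{\hat{\a}_0}{4}v_R\,.
\end{gather*}
The $g_2^*$ formula at $\eta = 0$ yields \eqref{eq:g2*} directly from $v_R|_0$. Since $a/b = O(\eta) \to 0$, we also obtain $g_1(z, 0) = \tfrac{\sqrt{3}}{2}u_R(z, 0)$, which combined with $c = 1 + O(\eps^2)$ gives \eqref{eq:g1}. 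For $g_2$ and $g_1^*$ the ratios at $\eta = 0$ are well-defined because numerator and denominator both vanish to order $\eta^2$; a short expansion produces $g_2(z, 0) = -\tfrac{8}{\hat{\a}_0 b_1}\pd_\eta u_I(z, 0) - \tfrac{8 a_2}{\hat{\a}_0 b_1^2}u_R(z, 0)$ and $g_1^*(z, 0) = -\tfrac{4}{\sqrt{3} b_1}\pd_\eta v_I(z, 0)$. The derivatives $\pd_\eta u_I, \pd_\eta v_I$ are read off from $U_\eps(z, \eta) = \theta_\eps'(z) - i\Lambda_{1,\eps}^0\eta\theta_{1,\eps}(z) + O(\eta^2)$ of Lemma~\ref{lem:construction} with $\Lambda_{1,\eps}^0 = 1/\sqrt{3} + O(\eps^2)$, while $a_2, b_1$ are evaluated by integration by parts using the approximations \eqref{eq:theta-approx}, \eqref{eq:theta-approx2}; combining the two contributions produces \eqref{eq:g2} and \eqref{eq:g1*}.

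The main obstacle is the careful bookkeeping required to identify the specific coefficients $\tfrac{1}{2}, \tfrac{\sqrt{3}}{2}, \hat{\a}_0^{-1}, \tfrac{\hat{\a}_0}{4}$ in the target profiles and to track the $O(\eps^2)$ corrections coming from $c - 1$, from $\Lambda_{1,\eps}^0 - 1/\sqrt{3}$, and from the approximations \eqref{eq:theta-approx}, \eqref{eq:theta-approx2}. The Jordan-block orthogonality $\la\zeta_1, \zeta_2^*\ra = 0$ is indispensable: without it the factor $a/b$ would diverge at $\eta = 0$, and the claimed leading profiles for $g_1$ and $g_2$ would take a qualitatively different form.
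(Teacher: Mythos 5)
Your proposal is correct and follows essentially the same route as the paper: both arguments reduce the corollary to the first-order $\eta$-expansions of $u$ and $v$ supplied by Lemmas~\ref{lem:construction} and \ref{lem:resonance1}, together with the leading coefficients of $\Re\la u,v\ra$ (order $\eta^2$) and $\Im\la u,v\ra$ (order $\eta$) obtained from parity and the explicit profiles, so your rational formulas for $g_k,g_k^*$ and the Jordan-block derivation of $\la u(\cdot,0),v(\cdot,0)\ra=0$ are a harmless repackaging of the paper's direct expansion of $\la u(\cdot,\eps^2\eta),v(\cdot,\eps^2\eta)\ra$. The one piece you leave as a sketch --- the actual evaluation $b_1=-16/(\sqrt{3}\hat{\a}_0)$ and $a_2=32/(3\hat{\a}_0^2)$, which is where most of the paper's computation lives --- is routine and all of its ingredients are in place, so this is not a gap.
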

\begin{proof}
First, we expand $\la u(\cdot,\eps^2\eta), v(\cdot,\eps^2\eta)\ra$
into powers of $\eta$ up to the second order.
By the definitions of $u(z,\eta)$ and $v(z,\eta)$,
  \begin{align*}
 \la u(\cdot,\eps^2\eta), v(\cdot,\eps^2\eta)\ra
=& 2\lambda(\eps^2\eta)
\la u_1(\cdot,\eps^2\eta), v_2(\cdot,\eps^2\eta)\ra
-2c\la \pd_zu_1(\cdot,\eps^2\eta), v_2(\cdot,\eps^2\eta)\ra
\\ &-\left\la u_1(\cdot,\eps^2\eta), 2\eps q_cU_\eps(-\eps\cdot,-\eta)
+q_c'\int_{-\infty}^{\eps\cdot} U_\eps(-z_1,-\eta))\,dz_1\right\ra\,.
  \end{align*}
By \eqref{eq:defU} and \eqref{eq:La2},
$$u_1(\eps^{-1}z,\eps^2\eta)=\theta_\eps
+\left\{i\eta\Lambda_{1,\eps}^0
-\eta^2(\gamma_\eps^0+\Lambda_{2,\eps}^0)\right\}
\int_z^\infty\theta_{1,\eps}
-\eta^2\int_z^\infty\widetilde{U}_0(z_1)\,dz_1+O(\eta^3)
\quad\text{in $H^1_{\hat{\a}}(\R)$,}$$
and 
\begin{align*}
v_2(\eps^{-1}z,\eps^2\eta)= &
B_\eps(\eta)\left[
-\theta_\eps
+\left\{i\eta\Lambda_{1,\eps}^0+
\eta^2(\gamma_\eps^0+\Lambda_{2,\eps}^0)\right\}
\int_{-\infty}^z\theta_{1,\eps}
\right]
\\ & +\eta^2\int_{-\infty}^z B_\eps(0)\widetilde{U}_0(-z_1)\,dz_1
+O(\eta^3)
\quad\text{in $H^1_{-\hat{\a}}(\R)$.}
 \end{align*}
Using the fact that $\theta_\eps$ and $\theta_{1,\eps}$ are even, we have
\begin{align*}
\eps\la u_1(\cdot,\eps^2\eta), v_2(\cdot,\eps^2\eta)\ra 
=& 
-\la B_\eps(0)\theta_\eps,\theta_\eps\ra
-i\eta\Lambda_{1,\eps}^0\left(\int_\R B_\eps(0)\theta_\eps\right)\left( \int_\R \theta_{1,\eps}\right)+O(\eta^2)\,,
\end{align*}
\begin{align*}
\la \pd_zu_1(\cdot,\eps^2\eta), v_2(\cdot,\eps^2\eta)\ra 
=& \left\{2i\eta\Lambda_{1,\eps}^0-2\eta^2(\gamma_\eps^0+\Lambda_{2,\eps}^0)\right\}
\la B_\eps(0)\theta_\eps,\theta_{1,\eps}\ra
\\ & -\eta^2\left\{
(\Lambda_{1,\eps}^0)^2\la B_\eps(0)\theta_{1,\eps},\int_{-\infty}^z \theta_{1,\eps}\ra
+2\left\la \widetilde{U}_0,B_\eps(0)\theta_\eps\right\ra\right\} +O(\eta^3)\,,
\end{align*}
\begin{align*}
 \left\la u_1(\cdot,\eps^2\eta), 2\eps q_cU_\eps(-\eps\cdot,-\eta)
+q_c'\int_{-\infty}^{\eps z} U_\eps(-z_1,-\eta)\,dz_1\right\ra
=&
-3i\eps^2\eta\Lambda_{1,\eps}^0\la \theta_\eps^2,\theta_{1,\eps}\ra+O(\eps^2\eta^2)\,.
\end{align*}
In the last line, we use \eqref{eq:KP2-scale2}.
Since $\widetilde{U}(\eta) \perp \theta_\eps$ and
$\|B_\eps(0)\theta_\eps-\theta_\eps\|_{L^2_{-\hat{\a}}}=O(\eps^2)$, we have
$\la \widetilde{U}_0, B_\eps(0)\theta_\eps \ra= O(\eps^2)$.
Combining the above with \eqref{eq:theta-approx} and the fact that
$\lambda(\eps^2\eta)=\eps^3\{i\eta\Lambda_{1,\eps}^0+O(\eta^2)\}$,
we have
\begin{align*}
\Im \la u(\cdot,\eps^2\eta), v(\cdot,\eps^2\eta)\ra
=&
-2c\Im \la \pd_zu_1(\cdot,\eps^2\eta), v_2(\cdot,\eps^2\eta)\ra
+O(\eps^2\eta+\eta^3)
\\ =&
-4\eta \Lambda_{1,\eps}^0\la B_\eps(0)\theta_\eps,\theta_{1,\eps}\ra
+O(\eps^2\eta+\eta^3)
\\=& \left\{-\frac{16}{\sqrt{3}\hat{\a}_0}+O(\eps^2)\right\}\eta
+O(\eta^3)\,,
\end{align*}
\begin{align*}
\Re \la u(\cdot,\eps^2\eta), & v(\cdot,\eps^2\eta)\ra
=
-2c\Re \la \pd_zu_1(\cdot,\eps^2\eta), v_2(\cdot,\eps^2\eta)\ra  
+O(\eps^2\eta^2)
\\ =& 2\eta^2\left\{
(\Lambda_{1,\eps}^0)^2\la \theta_{1,\eps}, B_\eps(0)\int_{-\infty}^z\theta_{1,\eps}\ra
+2(\gamma_\eps^0+\Lambda_{2,\eps}^0)\la B_\eps(0)\theta_\eps,\theta_{1,\eps}\ra
+O(\eps^2+\eta^2)\right\}
\\=& \frac{32}{3\hat{\a}_0^2}\eta^2+O(\eps^2\eta^2+\eta^4)\,.
\end{align*}
Note that $\Re \la u(\cdot,\eps^2\eta), v(\cdot,\eps^2\eta)\ra$
is even in $\eta$ thanks to \eqref{eq:cc}.
Thus we have
$$
\frac{\Re \la u(\cdot,\eps^2\eta), v(\cdot,\eps^2\eta)\ra}
{\Im \la u(\cdot,\eps^2\eta), v(\cdot,\eps^2\eta)\ra}
=-\frac{2\eta}{\sqrt{3}\hat{\a}_0}+O(\eps^2\eta)\,,
$$
\begin{align*}
\la g(\cdot,\eps^2\eta), g^*(\cdot,\eps^2\eta)\ra
=& -\frac{\sqrt{3}\hat{\a}_0}{8}i
\Im\la u(\cdot,\eps^2\eta), v(\cdot,\eps^2\eta)\ra
\left\{1+\left(
\frac{\Re \la u(\cdot,\eps^2\eta), v(\cdot,\eps^2\eta)\ra}
{\Im \la u(\cdot,\eps^2\eta), v(\cdot,\eps^2\eta)\ra}\right)^2\right\}
\\=& 2i\eta\{1+O(\eps^2+\eta^2)\}\,,
\end{align*}
and \eqref{eq:g1}--\eqref{eq:g2*} follow immediately from the definitions
of $g_k$ and $g_k^*$ ($k=1$, $2$).
\end{proof}
\begin{remark}
\label{rem:L}
In view of \eqref{eq:cc2}, we have
\begin{gather*}
  \mL(\eta)g_1(\cdot,\eta)
=\Re\lambda(\eta)g_1(\cdot,\eta)-\kappa(\eta)\Im\lambda(\eta)g_2(\cdot,\eta)\,,
\\
\mL(\eta)g_2(\cdot,\eta)
=\frac{\Im\lambda(\eta)}{\kappa(\eta)}g_1(\cdot,\eta)
+\Re\lambda(\eta)g_2(\cdot,\eta)\,,\\
\mL(\eta)^*g_1^*(\cdot,\eta)
=\Re\lambda(\eta)g_1^*(\cdot,\eta)
+\frac{\Im\lambda(\eta)}{\kappa(\eta)}g_2^*(\cdot,\eta)\,,\\
\mL(\eta)^*g_2^*(\cdot,\eta)
=-\kappa(\eta)\Im\lambda(\eta)g_1^*(\cdot,\eta)
+\Re\lambda(\eta)g_2^*(\cdot,\eta)\,.
  \end{gather*}
\end{remark}
Now we define a spectral projection to resonant modes.
Let  $\mathcal{P}(\eta_0)$ be an operator defined by
\begin{gather*}
\mathcal{P}(\eta_0)f(z,y)=\frac{1}{\sqrt{2\pi}}\sum_{k=1,\,2}
\int_{-\eta_0}^{\eta_0}c_k(\eta)g_k(z,\eta)e^{iy\eta}\,d\eta
\,, \\
 c_k(\eta)=\int_\R (\mF_yf)(z,\eta)\cdot g_k^*(z,\eta)\,dz
\end{gather*}
for $f\in X$ and let $\mathcal{Q}(\eta_0)=I-\mathcal{P}(\eta_0)$.
Using Corollary~\ref{lem:resonance2}, we can prove that $\mathcal{P}(\eta_0)$
and $\mathcal{Q}(\eta_0)$ are spectral projections associated with $\mL$
in exactly the same way with \cite[Lemma~3.1]{Miz15}.
\begin{lemma}
  \label{lem:p0}
Let $c=\sqrt{1+\eps^2}$ and $\a\in(0,\hat{\a}_0/2)$. Then there exist positive constants
$\eps_0$ and $\eta_1$ such that for any $\eps\in(0,\eps_0)$ and $\eta_0\in[0,\eta_1]$,
\begin{enumerate}
\item \label{it:p01}
$\|\mathcal{P}(\eps^2\eta_0)f\|_X \le C\|f\|_X$ for any $f\in X$,
where $C$ is a positive constant depending only on $\a$, $\eps$ and $\eta_1$,
\item \label{it:p03}
$\mL \mathcal{P}(\eps^2\eta_0)f=\mathcal{P}(\eps^2\eta_0)\mL f$ for any $f\in D(\mL)$,
\item \label{it:p04}
$\mathcal{P}(\eps^2\eta_0)^2=\mathcal{P}(\eps^2\eta_0)$ on $X$,
\item \label{it:p05}
$e^{t\mL}\mathcal{P}(\eps^2\eta_0)=\mathcal{P}(\eps^2\eta_0)e^{t\mL}$ on $X$.
\end{enumerate}
\end{lemma}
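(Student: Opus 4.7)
The plan is to follow the same Fourier-in-$y$ strategy as in \cite[Lemma~3.1]{Miz15}, transferring the analysis to the fiber operators $\mL(\eta)$ on $H^1_\a(\R)\times L^2_\a(\R)$ and exploiting the uniform $\eta$-dependence of $g_k(\cdot,\eta)$ and $g_k^*(\cdot,\eta)$ provided by Corollary~\ref{lem:resonance2}. The first step is to rewrite
\[
(\mathcal{F}_y\mathcal{P}(\eps^2\eta_0)f)(z,\eta)
=\chi_{[-\eps^2\eta_0,\eps^2\eta_0]}(\eta)\sum_{k=1,2}
\left(\int_\R (\mathcal{F}_yf)(z',\eta)\cdot g_k^*(z',\eta)\,dz'\right)g_k(z,\eta)\,,
\]
so that $\mathcal{P}(\eps^2\eta_0)$ is a finite-rank operator fiberwise in $\eta$.

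For item (\ref{it:p01}), I would apply Plancherel in $y$ to reduce everything to an estimate on each fiber. By Cauchy--Schwarz in $z$ and by Corollary~\ref{lem:resonance2}, the coefficient $c_k(\eta)$ is controlled by
\[
|c_k(\eta)|\le \|(\mathcal{F}_yf)(\cdot,\eta)\|_{L^2_\a(\R)}\,
\|g_k^*(\cdot,\eta)\|_{L^2_{-\a}(\R)}\,,
\]
with the right-hand factor uniformly bounded on $[-\eps^2\eta_0,\eps^2\eta_0]$. Multiplying by $g_k(z,\eta)$ in the appropriate weighted norm, again uniformly bounded by Corollary~\ref{lem:resonance2}, and then integrating in $\eta$ on the compact interval, Plancherel in $y$ closes the estimate in the $L^2_\a$-component. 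For the $H^1_\a$-component one repeats the argument after applying $\pd_z$ to $g_1(z,\eta)$, using the $H^k_\a$-bounds in Corollary~\ref{lem:resonance2}. This yields $\|\mathcal{P}(\eps^2\eta_0)f\|_X\lesssim \|f\|_X$ with a constant depending only on $\a$, $\eps_0$ and $\eta_1$.

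For item (\ref{it:p03}), since $V$ is $y$-independent we have $\mathcal{F}_y(\mL f)(\cdot,\eta)=\mL(\eta)(\mathcal{F}_yf)(\cdot,\eta)$, so integration by parts in the duality bracket gives
\[
\int_\R \bigl(\mL(\eta)(\mathcal{F}_yf)\bigr)(z,\eta)\cdot g_k^*(z,\eta)\,dz
=\int_\R (\mathcal{F}_yf)(z,\eta)\cdot \mL(\eta)^*g_k^*(z,\eta)\,dz\,.
\]
By Remark~\ref{rem:L}, $\mL(\eta)^*g_k^*(\cdot,\eta)\in\spann\{g_1^*(\cdot,\eta),g_2^*(\cdot,\eta)\}$ with coefficients given by a $2\times 2$ matrix $M(\eta)$, while $\mL(\eta)g_k(\cdot,\eta)$ lies in $\spann\{g_1(\cdot,\eta),g_2(\cdot,\eta)\}$ with the transpose matrix. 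Substituting and using the biorthogonality \eqref{eq:gg*2} to read off coefficients, both $\mathcal{P}(\eps^2\eta_0)\mL f$ and $\mL\mathcal{P}(\eps^2\eta_0)f$ have the same Fourier representation. Item (\ref{it:p04}) then follows by inserting the representation of $\mathcal{P}(\eps^2\eta_0)f$ back into the definition of $\mathcal{P}(\eps^2\eta_0)$ and invoking \eqref{eq:gg*2} once more, which collapses the double integral to $\mathcal{P}(\eps^2\eta_0)f$. Finally, item (\ref{it:p05}) follows from (\ref{it:p03}) and (\ref{it:p01}): the commutation with $\mL$ passes to the resolvent $(\lambda-\mL)^{-1}$ on the resolvent set, and hence to the $C_0$-semigroup $e^{t\mL}$ via the Hille--Yosida/Laplace representation.

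The main obstacle is item (\ref{it:p01}): one must ensure that all the weighted norms of $g_k(\cdot,\eta)$ and $g_k^*(\cdot,\eta)$ are uniform in both $\eta\in[-\eps^2\eta_0,\eps^2\eta_0]$ and $\eps\in(0,\eps_0)$, which requires the full strength of Corollary~\ref{lem:resonance2} (including the $\eps^{-1}$-rescaling of the second components). The other items are then essentially formal consequences of the eigenvalue identities in Lemma~\ref{lem:resonance1} and Remark~\ref{rem:L} combined with the biorthogonality \eqref{eq:gg*2}.
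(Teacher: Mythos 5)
Your proposal is correct and follows essentially the same route as the paper, which simply invokes Corollary~\ref{lem:resonance2} and states that the properties are proved ``in exactly the same way'' as \cite[Lemma~3.1]{Miz15} --- i.e.\ precisely the fiberwise Fourier-in-$y$ argument you spell out (Plancherel in $y$ plus uniform weighted bounds on $g_k$, $g_k^*$ for boundedness, Remark~\ref{rem:L} with the biorthogonality \eqref{eq:gg*2} for commutation and idempotency, and passage through the resolvent for the semigroup identity). The only point worth adding for completeness in item~(\ref{it:p01}) is the $\pd_y$ part of the $H^1_\a(\R^2)$ norm, which is harmless since it only produces a factor $|\eta|\le\eps^2\eta_0$ on the support of the cut-off.
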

\bigskip

\section{Properties of the free operator $\mL_0$}
\label{sec:free operator}
In this section, we investigate properties of the linearized operator 
$\mL_0$ in $X$.
To begin with, we investigate the spectrum of $\mL_0$.
\begin{lemma}
  \label{lem:spectrum-free}
Let $\a_c'=\sqrt{\frac{c-1}{bc-a}}$.
Suppose $0<a<b$, $c>1$ and $\a\in(0,\a_c')$. Then
\begin{equation*}
\sigma(\mL_0(D))\subset
\left\{\lambda \in\C \mid \Re\lambda <-\frac{\a}{2}(c-1)\right\}\,.
\end{equation*}
\end{lemma}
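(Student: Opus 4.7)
The plan is to exploit the fact that $\mathcal{L}_0$ is a matrix of constant-coefficient Fourier multipliers, so its spectrum on $X$ can be read off from its shifted symbol. The conjugation $u\mapsto \widehat{e^{\a x}u}$ gives an isometry $L^2_\a(\R^2)\to L^2(\R^2)$ under which $\pd_z$ becomes multiplication by $i\zeta$ with $\zeta=\xi+i\a$. A standard Fourier multiplier argument (noting that $\a^2<(\a_c')^2<1/b$ keeps $1+b(\zeta^2+\eta^2)$ bounded away from zero for all real $\xi,\eta$) identifies $\sigma(\mL_0(D))$ with the closure of the set of symbol eigenvalues
$$\lambda_\pm(\zeta,\eta)=ic\zeta\pm\sqrt{-M(\zeta,\eta)},\qquad M(\zeta,\eta)=\frac{(\zeta^2+\eta^2)(1+a(\zeta^2+\eta^2))}{1+b(\zeta^2+\eta^2)},$$
as $(\xi,\eta)$ ranges over $\R^2$. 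Since $\Re(ic\zeta)=-c\a$, the claim reduces to the pointwise bound $\Re\sqrt{-M(\xi+i\a,\eta)}<\a(c+1)/2$ for all $(\xi,\eta)\in\R^2$, or equivalently $|M|-\Re M<\a^2(c+1)^2/2$.

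The first step is to handle the slice $\xi=0$, where $w:=\zeta^2+\eta^2=\eta^2-\a^2$ is real. When $\eta^2\ge\a^2$ we have $M\ge 0$, so $\sqrt{-M}$ is purely imaginary and $\Re\lambda_\pm=-c\a$. When $\eta^2<\a^2$ one has $-1/b<w<0$, so $M<0$ and $\sqrt{-M}$ is real. A short computation shows that $s\mapsto s(1-as)/(1-bs)$ is strictly increasing on $[0,1/b)$ (its derivative's numerator $1-2as+abs^2$ has discriminant $4a(a-b)<0$), so its value at $|w|\le\a^2$ is bounded above by $\a^2(1-a\a^2)/(1-b\a^2)$. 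The constraint $\a<\a_c'$ then gives $(1-a\a^2)/(1-b\a^2)<c$, and combined with the elementary identity $(c+1)/2-\sqrt{c}=(\sqrt{c}-1)^2/2>0$ for $c>1$, this yields the required strict inequality on the $\xi=0$ slice.

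The main obstacle is extending the bound to $\xi\neq 0$, where $w=u+iv$ with $v=2\a\xi\neq 0$ is genuinely complex. I would parametrize the admissible region for $w$ as $\{u+iv:v^2\le 4\a^2(u+\a^2)\}$, noting that the boundary $v^2=4\a^2(u+\a^2)$ corresponds to $\eta=0$ while the interior corresponds to $\eta\ne0$. Using the decomposition $M(w)=(a/b)w+((b-a)/b^2)\bigl(1-(1+bw)^{-1}\bigr)$ to separate the affine part from the bounded rational remainder, one can compute $|M|-\Re M=(\Im M)^2/(|M|+\Re M)$ explicitly in terms of $(u,v)$. Since $M(w)\sim aw/b$ as $|w|\to\infty$ forces $\Re\lambda_\pm\to -c\a$, the supremum is attained on a compact subset, so a monotonicity argument in the direction of increasing $u$ at fixed $v$ should reduce the maximum to $v=0$, which in turn is covered by the $\xi=0$ slice already handled. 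The most delicate aspect will be the bookkeeping of the imaginary contributions of $1/(1+bw)$ when $v\ne 0$, since those contributions must be retained in complex form to preserve the cancellations that produce the strict inequality.
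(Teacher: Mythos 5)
Your reduction of the lemma to the pointwise symbol bound is sound and agrees with the paper's starting point: the paper likewise identifies $\sigma(\mL_0(D))$ with the closure of $\{\lambda_\pm(\xi+i\a,\eta)\}$ via the explicit resolvent \eqref{eq:resl0} and \eqref{eq:cxi-muS}, and your algebra ($\Re\lambda_+=-c\a+\Re\sqrt{-M}$, $2(\Re\sqrt{-M})^2=|M|-\Re M$) is correct, as is your complete treatment of the slice $\xi=0$. The genuine gap is the case $\xi\ne0$, which is the bulk of the lemma and where you offer only a plan --- and the plan as stated does not close. Monotonicity of $|M|-\Re M$ in $u$ at fixed $v$ can only push the extremum to the endpoint of the $u$-range at that $v$, i.e.\ to the boundary parabola $v^2=4\a^2(u+\a^2)$ (the curve $\eta=0$), which is a one-parameter family you have not analyzed; it cannot ``reduce the maximum to $v=0$.'' A reduction to $v=0$ by monotonicity in $v$ is false outright: for fixed $u\ge0$ the value at $v=0$ is $|M|-\Re M=0$ (there $M$ is real and nonnegative) while it is strictly positive for $v\ne0$, so on that part of the region the slice $v=0$ is where the quantity is \emph{smallest}. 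The actual maximizer is the corner $u=-\a^2$, $v=0$ (i.e.\ $\xi=\eta=0$), and locating it requires a genuinely two-dimensional argument that your sketch does not supply.

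The paper avoids this optimization by never forming $|M|-\Re M$: it factors $\mu(\xi+i\a,\eta)S(\xi+i\a,\eta)$ with $\mu=\sgn(\xi)\sqrt{(\xi+i\a)^2+\eta^2}$ and $S$ as in \eqref{eq:P-def}, proves the sign conditions $\xi\Re\mu>0$ and $\xi\Im S<0$ (Claims~\ref{cl:mu} and \ref{cl:spectrum-free1}), so that $\Im(\mu S)=\Im\mu\,\Re S+\Re\mu\,\Im S\le\Im\mu\,\Re S$, and then bounds the two factors separately: $\Im\mu\le\a$ by monotonicity in $\eta^2$ \eqref{eq:im-mu}, and $\Re S\le|S|\le\frac12(1+S(i\a,0)^2)<\frac{c+1}{2}$ by monotonicity of $|S|^2$ in $\xi^2+\eta^2$ \eqref{eq:cl-sf3}. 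Some such factorization (or an honest two-variable maximization of $(\Im M)^2/(|M|+\Re M)$ over the parabolic region) is needed to complete your argument. A secondary, more minor omission: on $X=H^1_\a\times L^2_\a$ the identification of the spectrum with the symbol eigenvalues requires checking the off-diagonal resolvent entries against the weighted condition \eqref{eq:fm}, which uses the lower bound $|S|>\sqrt{a/b}$ from \eqref{eq:cl-sf2} in addition to $1+b(\zeta^2+\eta^2)$ being bounded away from zero.
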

By \eqref{eq:fm0}, the operator
$\begin{pmatrix}
m_{11}(D) & m_{12}(D) \\ m_{21}(D) & m_{22}(D)
\end{pmatrix}$
is bounded on $X$ if and only if 
\begin{equation}
  \label{eq:fm}
\sum_{i,j=1,2}(1+\xi^2+\eta^2)^{(j-i)/2}|m_{ij}(\xi+i\a,\eta)|<\infty\,.
\end{equation}
\par

The symbol of the operator $\mL_0$ is
$$
\mL_0(\xi,\eta)=\begin{pmatrix}
ic\xi & 1 \\  -(\xi^2+\eta^2)S(\xi,\eta)^2 & ic\xi \end{pmatrix}\,,
\quad
S(\xi,\eta)=\sqrt{\frac{1+a(\xi^2+\eta^2)}{1+b(\xi^2+\eta^2)}}\,,
$$
and we observe
$L_0(\xi,\eta)P(\xi,\eta)
=\diag(\lambda_+(\xi,\eta),\lambda_-(\xi,\eta))P(\xi,\eta)$,
where
\begin{gather}
\lambda_\pm(\xi,\eta)=ic\xi\pm i\mu(\xi,\eta)S(\xi,\eta)\,,
\quad \mu(\xi,\eta)=\xi\sqrt{1+\xi^{-2}\eta^2}\,,
\notag \\
 \label{eq:P-def}
P(\xi,\eta)=
\begin{pmatrix} -i\mu(\xi,\eta)^{-1} &  i\mu(\xi,\eta)^{-1}
\\ S(\xi,\eta) & S(\xi,\eta) \end{pmatrix}\,.  
\end{gather}
\par
To investigate properties of the resolvent operator $(\lambda-\mL_0)^{-1}$,
we need the following.
\begin{claim}
  \label{cl:mu}
Suppose $0<a<b$ and $\a>0$. Then
\begin{gather}
  \label{eq:im-mu}
0\le\Im\mu(\xi+i\a,\eta)\le \Im\mu(\xi+i\a,0)=\a\quad\text{for $\xi\in\R$,}
\\
\label{eq:sign-mu}
\xi\Re\mu(\xi+i\alpha,\eta)>0\,, \quad \Im\mu(\xi+i\a,\eta) >0
\quad\text{for $\xi\ne0$.}
\end{gather}
\end{claim}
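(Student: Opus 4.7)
The plan is to analyze $\mu$ through its square. Setting $\mu(\xi+i\a,\eta)=p+iq$ and using $\mu^2=(\xi+i\a)^2+\eta^2=\xi^2+\eta^2-\a^2+2i\a\xi$, I obtain the two real identities
\begin{equation*}
p^2-q^2=\xi^2+\eta^2-\a^2\,,\qquad pq=\a\xi\,.
\end{equation*}
Both sign assertions in \eqref{eq:sign-mu} then reduce to showing $q>0$ whenever $\xi\ne0$: indeed, given $q>0$, the second identity yields $\xi\Re\mu=\a\xi^2/q>0$ as well.

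To verify $q>0$ for $\xi\ne0$, I would fix such $\xi$ and view $\mu(\xi+i\a,\cdot)$ as an analytic function of $\eta\in\R$. The branch points of $\sqrt{(\xi+i\a)^2+\eta^2}$ are located at $\eta=\pm(\a-i\xi)$, which lie off the real axis precisely because $\xi\ne0$, so $\mu$ is analytic on all of $\R_\eta$. At $\eta=0$ the defining formula gives $\mu(\xi+i\a,0)=\xi+i\a$, so $q=\a>0$ there; and since $pq=\a\xi\ne0$ rules out $q=0$ at any real $\eta$, continuity forces $q>0$ throughout. This gives \eqref{eq:sign-mu} and the strict positivity $\Im\mu>0$ stated in \eqref{eq:im-mu}.

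For the upper bound $q\le\a$, I substitute $p=\a\xi/q$ into the first identity to get the biquadratic
\begin{equation*}
q^4+(\xi^2+\eta^2-\a^2)q^2-\a^2\xi^2=0\,,
\end{equation*}
whose unique positive root is $q^2=\frac12\bigl\{-(\xi^2+\eta^2-\a^2)+\sqrt{(\xi^2+\eta^2-\a^2)^2+4\a^2\xi^2}\bigr\}$. The inequality $q^2\le\a^2$ is then equivalent to $(\xi^2+\eta^2-\a^2)^2+4\a^2\xi^2\le(\xi^2+\eta^2+\a^2)^2$, which after expansion collapses to $0\le 4\a^2\eta^2$. The boundary case $\xi=0$ is handled directly: $\mu^2=\eta^2-\a^2$ is real, so either $p=0$ with $q^2=\a^2-\eta^2\in[0,\a^2]$ (when $|\eta|\le\a$) or $q=0$ (when $|\eta|\ge\a$), and in either case $0\le q\le\a$. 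There is no substantive obstacle; the only subtle point is the branch determination, which is handled cleanly by analytic continuation in $\eta$ from the base value $\mu(\xi+i\a,0)=\xi+i\a$.
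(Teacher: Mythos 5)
Your proof is correct, and it rests on the same two real identities the paper extracts from $\mu^2=(\xi+i\a)^2+\eta^2$, namely $p^2-q^2=\xi^2+\eta^2-\a^2$ and $pq=\a\xi$; but the way you exploit them differs from the paper at both steps. For the sign assertions, the paper simply rewrites $\mu(\xi+i\a,\eta)=\sgn(\xi)\sqrt{(\xi+i\a)^2+\eta^2}$ and reads off \eqref{eq:sign-mu}, whereas you run an analytic-continuation/continuity argument in $\eta$ (branch points at $\eta=\pm(\a-i\xi)$ are off the real axis for $\xi\ne0$, $q=\a>0$ at $\eta=0$, and $pq=\a\xi\ne0$ forbids $q$ from vanishing); your version is longer but makes the branch determination explicit, which the paper leaves implicit. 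For the upper bound $\Im\mu\le\a$, the paper avoids solving for $q$: it sets $s=\eta^2$, differentiates the two identities to get $\pd_s\gamma_2=-\gamma_2/\bigl(2(\gamma_1^2+\gamma_2^2)\bigr)<0$, and concludes that $\Im\mu$ is maximal at $\eta=0$. You instead solve the biquadratic $q^4+(\xi^2+\eta^2-\a^2)q^2-\a^2\xi^2=0$ explicitly and verify $q^2\le\a^2$ by the algebraic reduction to $0\le4\a^2\eta^2$; I checked this computation and it is right, including the legitimacy of squaring since $\xi^2+\eta^2+\a^2>0$. The paper's monotonicity identity is slightly more economical and is reused later (in the proof of \eqref{eq:high}), while your closed-form expression for $q^2$ is more elementary and self-contained; either is acceptable here. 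The only soft spot is the boundary case $\xi=0$, $|\eta|<\a$, where you assert $q^2=\a^2-\eta^2$ without fixing the sign of $q$ — a one-line continuity remark (as you already used for $\xi\ne0$) closes this, and the paper is equally terse at that point.
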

\begin{claim}
  \label{cl:spectrum-free1}
Suppose $0<a<b$ and $0<\a<\a_c$. Then
\begin{gather}
\label{eq:cl-sf0}
\Re S(\xi+i\a,\eta)>0
\quad\text{for $(\xi,\eta)\in\R^2$,}
\,,\\
\label{eq:cl-sf1}
\xi\Im S(\xi+i\a,\eta)<0
\quad\text{for $\xi\in\R\setminus\{0\}$ and $\eta\in\R$,}
\\ \label{eq:cl-sf2}
\sqrt{\mathstrut{\frac{a}{b}}}<|S(\xi+i\a,\eta)| <S(i\a,0)<c
\quad\text{for $(\xi,\eta)\in\R^2\setminus\{(0,0)\}$,}
\\ \label{eq:cl-sf3}
|S(\xi+i\a,\eta)|<1-\frac{b-a}{2}
\frac{\xi^2+\eta^2-\a^2}{1+b(\xi^2+\eta^2-\a^2)}
\quad\text{for $(\xi,\eta)\in\R^2$.}
\end{gather}
\end{claim}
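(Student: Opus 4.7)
Throughout, let $w=(\xi+i\a)^2+\eta^2=u+iv$ with $u=\xi^2+\eta^2-\a^2$ and $v=2\a\xi$, so that $S=\sqrt{(1+aw)/(1+bw)}$ via the principal branch. The hypothesis $\a<\a_c$ combined with $a<b$ forces $b\a^2<1$ (since $\a_c^2<1/b$), hence $1+bu\ge 1-b\a^2>0$ and $1+au>0$ uniformly in $(\xi,\eta)$; I use this repeatedly.

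For \eqref{eq:cl-sf0} and \eqref{eq:cl-sf1}, a direct quotient calculation gives $\Im S^2 = 2(a-b)\a\xi/|1+bw|^2$, which vanishes only at $\xi=0$, and in that case $(1+aw)/(1+bw)$ is a ratio of two positive reals. Hence $S^2$ avoids the closed negative real axis, the principal branch is well defined with $\Re S>0$, and $\Im S^2=2(\Re S)(\Im S)$ transfers the sign to $\xi\,\Im S<0$ for $\xi\ne 0$.

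For \eqref{eq:cl-sf2} I would use the partial-fraction split
\[
S^2 = \frac{a}{b} + \frac{b-a}{b(1+bw)}.
\]
Since $\Re\bigl(1/(1+bw)\bigr)=(1+bu)/|1+bw|^2>0$, one reads off $\Re S^2>a/b$, so $|S|^2\ge\Re S^2>a/b$. For the upper bound, the triangle inequality applied to the same splitting gives $|S|^2\le a/b+(b-a)/(b|1+bw|)$; expanding $|1+bw|^2=(1+bu)^2+4b^2\a^2\xi^2$ and comparing with $(1-b\a^2)^2$ yields $|1+bw|>1-b\a^2$ strictly off $(\xi,\eta)=(0,0)$, while the triangle inequality is strict whenever $\xi\ne 0$; the two regimes together cover $(\xi,\eta)\ne(0,0)$ strictly. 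The tail estimate $S(i\a,0)<c$ unpacks directly to $\a^2(bc^2-a)<c^2-1$, i.e.\ $\a<\a_c$.

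The main algebraic step is \eqref{eq:cl-sf3}. Setting $P=1+bu$, $Q=1+au$, $R=(P+Q)/2=1+(a+b)u/2$, the claimed bound is $|S|\le R/P$, and after squaring and clearing denominators (using $|1+aw|^2=Q^2+4a^2\a^2\xi^2$ and $|1+bw|^2=P^2+4b^2\a^2\xi^2$) it reduces to
\[
(R^4-Q^2P^2)\,P^2 + (R^4 b^2 - a^2 P^4)\cdot 4\a^2\xi^2 \;\ge\; 0.
\]
The engine is to uncover the two factorizations
\[
R^2-QP=\tfrac{(b-a)^2}{4}u^2, \qquad R^2b-aP^2=(b-a)\bigl(P+\tfrac{b(b-a)}{4}u^2\bigr),
\]
which make both coefficients manifestly nonnegative (strictly positive off a thin set). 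Identifying these clean factorizations is the only real obstacle; the subsequent sign bookkeeping is routine. A minor caveat: at $(\xi,\eta)=(0,\pm\a)$ both sides reduce to $1$, so the strict ``$<$'' is attained with equality at those two points, and the argument really delivers ``$\le$'' (presumably harmless for the downstream applications).
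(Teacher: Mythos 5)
Your proof is correct. For \eqref{eq:cl-sf0}, \eqref{eq:cl-sf1} and \eqref{eq:cl-sf2} you follow essentially the same route as the paper: the partial-fraction identity $S^2=\frac{a}{b}+\bigl(1-\frac{a}{b}\bigr)\frac{1}{1+bw}$ with $\Re(1+bw)=1+b(\xi^2+\eta^2-\a^2)\ge 1-b\a^2>0$, giving $\Re S^2>a/b$, the sign of $\Im S^2$, and the triangle-inequality upper bound; you even supply the computation $S(i\a,0)<c\iff\a<\a_c$ that the paper only asserts. Where you genuinely diverge is \eqref{eq:cl-sf3}: you square twice, clear denominators, and verify positivity of the resulting quartic via the factorizations $R^2-QP=\frac{(b-a)^2}{4}u^2$ and $bR^2-aP^2=(b-a)\bigl(P+\frac{b(b-a)}{4}u^2\bigr)$ (both of which check out). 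The paper gets the same bound in one line: writing $u=\xi^2+\eta^2-\a^2$, the already-established estimate $|S|^2\le\frac{1+au}{1+bu}$ combined with the elementary inequality $|S|\le\frac12(|S|^2+1)$ gives $|S|\le\frac12\bigl(\frac{1+au}{1+bu}+1\bigr)=1-\frac{b-a}{2}\frac{u}{1+bu}$, which is exactly your $R/P$. Your brute-force verification buys nothing extra here, but it is sound. Your caveat about equality at $(\xi,\eta)=(0,\pm\a)$ is a correct catch: there $w=0$, $|S|=1$, and both sides of \eqref{eq:cl-sf3} equal $1$, so the strict inequality as printed fails at those two points; the paper's own argument likewise only yields $\le$ (since $|S|=\frac12(|S|^2+1)$ exactly when $|S|=1$), and the downstream use in Claim~\ref{cl:spectrum-free2} only needs the non-strict version.
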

\begin{claim}
\label{cl:spectrum-free2}
Suppose $0<a<b$, $c>1$ and $\a\in(0,\a_c')$. Then for $(\xi,\eta)\in\R^2$,
\begin{gather}
  \label{eq:cl-sf4}
-2\alpha c < \Re\lambda_+(\xi+i\a,\eta)\le -\alpha  c\,,\\
\label{eq:cl-sf5'}
  \Re\lambda_-(\xi+i\a,\eta)\le 
-\a\left\{c-1+\frac{b-a}{2}\frac{\xi^2+\eta^2-\a^2}{1+b(\xi^2+\eta^2-\a^2)}
\right\}\,,\\
\label{eq:cl-sf5}
-\alpha c \le \Re\lambda_-(\xi+i\a,\eta)\le -\frac{\alpha}{2}(c-1)\,.
\end{gather}
\end{claim}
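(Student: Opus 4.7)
The plan is to reduce all four inequalities to sharp two-sided bounds on $\Im(\mu S)$. Writing $\omega := \mu\, S$, a direct expansion of the definitions of $\lambda_\pm$ gives
\[
\Re \lambda_\pm(\xi+i\alpha,\eta) \;=\; -c\alpha \;\mp\; \Im\bigl(\mu(\xi+i\alpha,\eta)\, S(\xi+i\alpha,\eta)\bigr),
\]
so \eqref{eq:cl-sf4} becomes $0 \le \Im\omega < \alpha c$, \eqref{eq:cl-sf5'} becomes $\Im\omega \le \alpha|S|$ followed by insertion of \eqref{eq:cl-sf3}, and \eqref{eq:cl-sf5} amounts to $\Im\omega \ge 0$ together with a refined upper bound on $\Im\omega$.

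The main obstacle is showing $\Im\omega \ge 0$, since the natural decomposition $\Im\omega = \Im\mu\,\Re S+\Re\mu\,\Im S$ combines terms of opposite sign when $\xi \neq 0$: the first is non-negative by \eqref{eq:im-mu} and \eqref{eq:cl-sf0}, the second non-positive by \eqref{eq:sign-mu} and \eqref{eq:cl-sf1}. I would instead pass to $\omega^2 = w(1+aw)/(1+bw)$ with $w := (\xi+i\alpha)^2+\eta^2 = \rho+i\sigma$, $\sigma = 2\alpha\xi$. Expanding $w(1+aw)(1+b\bar{w})$ gives
\[
\Im\bigl[w(1+aw)(1+b\bar{w})\bigr] \;=\; \sigma\bigl(1+2a\rho+ab|w|^2\bigr),
\]
and completing the square in $\rho$ yields $1+2a\rho+ab|w|^2 = ab(\rho+1/b)^2 + (b-a)/b + ab\sigma^2 \ge (b-a)/b > 0$. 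Hence $\Im\omega^2$ has the same sign as $\xi$. Combined with the branch information in Claims~\ref{cl:mu} and \ref{cl:spectrum-free1}, which put $\omega$ in the right half-plane when $\xi>0$ (since $\Re\mu>0$ and $\Re S>0$ with $\Im\mu \ge 0$, $\Im S<0$) and in the left half-plane when $\xi<0$, the correct square root of $\omega^2$ lies in the first quadrant (resp.\ second quadrant), so $\Im\omega \ge 0$ in every case. This delivers the upper bound in \eqref{eq:cl-sf4} and the lower bound in \eqref{eq:cl-sf5}.

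For the matching upper bound on $\Im\omega$ I would revert to the decomposition: \eqref{eq:im-mu} and \eqref{eq:cl-sf0} give $\Im\mu\,\Re S \le \alpha\,\Re S \le \alpha|S|$, while \eqref{eq:sign-mu} and \eqref{eq:cl-sf1} render $\Re\mu\,\Im S \le 0$, whence $\Im\omega \le \alpha|S(\xi+i\alpha,\eta)|$. Inserting \eqref{eq:cl-sf3} yields \eqref{eq:cl-sf5'} at once, and the strict bound \eqref{eq:cl-sf2} then supplies the strict lower bound $-2\alpha c<\Re\lambda_+$ in \eqref{eq:cl-sf4}. Finally, the upper bound on $\Re\lambda_-$ in \eqref{eq:cl-sf5} follows from \eqref{eq:cl-sf5'}: since $t\mapsto t/(1+bt)$ is increasing on $t>-1/b$, the quantity $(\xi^2+\eta^2-\alpha^2)/(1+b(\xi^2+\eta^2-\alpha^2))$ is minimized at $\xi=\eta=0$ with value $-\alpha^2/(1-b\alpha^2)$, and the remaining inequality reduces to $\alpha^2(bc-a) \le c-1$, which is exactly the hypothesis $\alpha < \alpha_c'$.
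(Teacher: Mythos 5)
Your proposal is correct and follows essentially the same route as the paper: both reduce all three displayed estimates to $0\le\Im\bigl(\mu S\bigr)(\xi+i\a,\eta)\le \Im\mu\,\Re S\le\a|S(\xi+i\a,\eta)|$, obtain the lower bound from the identity $\Im\bigl[(\mu S)^2(\xi+i\a,\eta)\bigr]=2\a\xi\cdot(\text{positive factor})$ (your $\sigma(1+2a\rho+ab|w|^2)$ divided by $|1+bw|^2$ is exactly the paper's bracketed expression), and then insert \eqref{eq:cl-sf3} and the monotonicity of $t\mapsto t/(1+bt)$ to get \eqref{eq:cl-sf5'} and \eqref{eq:cl-sf5}. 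The only (minor) divergence is in extracting $\Im(\mu S)\ge0$ from the sign of $\Im[(\mu S)^2]$: you pin down the quadrant of $\mu S$ pointwise via the sign information of Claims~\ref{cl:mu} and \ref{cl:spectrum-free1}, while the paper argues by continuity of $\Im(\mu S)$, connectedness of the complement of its zero set $\{\xi=0,\,|\eta|\ge\a\}$, and positivity at the single point $(\xi,\eta)=(0,0)$; both are sound.
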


\begin{proof}[Proof of Claim~\ref{cl:mu}]
Since 
$$\mu(\xi+i\alpha,\eta)=(\xi+i\alpha)\sqrt{1+\frac{\eta^2}{(\xi+i\alpha)^2}}
=\mbox{sgn}(\xi)\sqrt{(\xi+i\alpha)^2+\eta^2}\,,$$
we have \eqref{eq:sign-mu}.
\par
Since $\Im\mu(i\a,\eta)=\sqrt{\a^2-\eta^2}$ for $\eta\in[-\a,\a]$
and $\Im\mu(i\a,\eta)=0$ for $\eta\in \R$ satisfying $|\eta|>\a$,
we have \eqref{eq:im-mu} for $\xi=0$.
Let $s=\eta^2$, $\gamma_1(\xi,s)=\Re\mu(\xi+i\a,\eta)$ and
$\gamma_2(\xi,s)=\Im\mu(\xi+i\a,\eta)$.
To prove \eqref{eq:im-mu}, it suffices to show that
$\gamma_2(\xi,s)$ is monotone decreasing in $s$ when $\xi\ne0$.
Differentiating
\begin{equation}
  \label{eq:g1-g2}
\gamma_1^2-\gamma_2^2=\xi^2-\a^2+s\quad\text{and}\quad\gamma_1\gamma_2=\a\xi
\end{equation}
with respect to $s$, we have
\begin{equation}
  \label{eq:g2_s}
\pd_s\gamma_2=-\frac{\gamma_2}{2(\gamma_1^2+\gamma_2^2)}\,.  
\end{equation}
Combining \eqref{eq:g2_s} with \eqref{eq:sign-mu}, we have $\pd_s\gamma_2<0$.
Thus we prove \eqref{eq:im-mu}.

\end{proof}
\begin{proof}[Proof of Claim~\ref{cl:spectrum-free1}]
We observe
  \begin{equation}
\label{eq:cl-sfpf1}
\begin{split}
S(\xi+i\a,\eta)^2=& \frac{1+a(\xi^2+\eta^2-\a^2)+2ia\a\xi}
{1+b(\xi^2+\eta^2-\a^2)+2ib\a\xi}
\\=& \frac{a}{b}+\left(1-\frac{a}{b}\right)
\frac{1}{1-b\a^2+b(\xi^2+\eta^2)+2ib\a\xi}\,.  
\end{split}
  \end{equation}
Since $0<a<b$ and $1-b\a^2>0$ for $\a\in(0,\a_c)$,
it follows from \eqref{eq:cl-sfpf1} that
\begin{gather}
\label{eq:cl-sf6a}
|S(\xi+i\a,\eta)|^2 \ge \Re S(\xi+i\a,\eta)^2
>\frac{a}{b}>0
\quad\text{for $(\xi,\eta)\in\R^2$,}
\\ 
\label{eq:cl-sf6b}  
\xi\Im S(\xi+i\a,\eta)^2<0
\quad\text{for $\xi\in\R\setminus\{0\}$ and $\eta\in\R$.}
\end{gather}
By \eqref{eq:cl-sf6a}, we have the first part of \eqref{eq:cl-sf2} and
\eqref{eq:cl-sf0} because $\Re S(i\a,0)=\sqrt{\frac{1-a\a^2}{1-b\a^2}}>0$
and $S(\xi+i\a,\eta)$ is continuous in $(\xi,\eta)\in\R^2$.
Eq.~\eqref{eq:cl-sf1} follows from \eqref{eq:cl-sf0} and \eqref{eq:cl-sf6b}.
\par
We have $c>S(i\a,0)$ for $\a\in(0,\a_c)$.
By \eqref{eq:cl-sfpf1} and the triangle inequality,
\begin{equation}
  \label{eq:cl-sfpf2}
\begin{split}
  |S(\xi+i\a,\eta)|^2 \le & 
\frac{a}{b}+\left(1-\frac{a}{b}\right)
\frac{1}{1+b(\xi^2+\eta^2-\a^2)}
\\=& \frac{1+a(\xi^2+\eta^2-\a^2)}{1+b(\xi^2+\eta^2-\a^2)}
\le  \frac{1-a\a^2}{1-b\a^2}=S(i\a,0)^2\,,
\end{split}  
\end{equation}
and $|S(\xi+i\a,\eta)|=S(i\a,0)$ if and only if $\xi=\eta=0$.
Thus we have the second part of \eqref{eq:cl-sf2}.
Furthermore, we have \eqref{eq:cl-sf3} from \eqref{eq:cl-sfpf2} since
$|S|\le (|S|^2+1)/2$. Thus we complete the proof.
\end{proof}

Using Claim~\ref{cl:spectrum-free1}, we will estimate the upper and
lower bounds of $\lambda_\pm(\xi+i\a,\eta)$.
\begin{proof}[Proof of Claim~\ref{cl:spectrum-free2}]
First, we will show
\begin{equation}
  \label{eq:muS>0}
 \Im\left(\mu(\xi+i\a,\eta)S(\xi+i\a,\eta)\right)\ge0
\quad\text{for $(\xi,\eta)\in\R^2$.}
\end{equation}
We see that $\mu(\xi+i\a,\eta)S(\xi+i\a,\eta)$ is a real number
if and only if $\xi=0$ and $|\eta|\ge\a$ since
\begin{align*}
\Im\left\{\mu(\xi+i\a,\eta)S(\xi+i\alpha,\eta)\right\}^2
= 2\alpha \xi
\left[\frac{a}{b}+\left(1-\frac{a}{b}\right)\frac{1}
{|1+b(\xi+i\alpha)^2+b\eta^2|^2}\right]
\end{align*}
and 
$$
\mu(i\a,\eta)^2S(i\a,\eta)^2=
(\eta^2-\a^2)\frac{1-b\a^2+\a\eta^2}{1-b\a^2+b\eta^2}\,.$$
Thanks to the continuity of $\Im(\mu S)(\xi+i\a,\eta)$ on $\R^2$ and
the fact that $\Im(\mu S)(i\a,0)>0$, we have \eqref{eq:muS>0}.
\par
By \eqref{eq:muS>0} and the definition of $\lambda_\pm$,
$$\Re\lambda_+(\xi+i\a,\eta)\le -\a c\,,\quad
\Re\lambda_-(\xi+i\a,\eta) \ge  -\a c\,.
$$
Since $0<\a<\a_c'<\a_c$, it follows from \eqref{eq:im-mu},
\eqref{eq:sign-mu},\eqref{eq:cl-sf1} and \eqref{eq:cl-sf2} that
\begin{align*}
\Re\lambda_+(\xi+i\a,\eta) \ge &
-\a c -\Im\mu(\xi+i\a,\eta)\Re S(\xi+i\a,\eta)
\\ > & -2\a c\,,
\end{align*}
\begin{equation}
  \label{eq:lambda-}
\begin{split}
\Re\lambda_-(\xi+i\a,\eta)
\le & -\a c +\Im\mu(\xi+i\a,\eta)\Re S(\xi+i\a,\eta)\,.
\end{split} 
\end{equation}
Combining \eqref{eq:lambda-} with \eqref{eq:im-mu} and \eqref{eq:cl-sf3},
we have  \eqref{eq:cl-sf5'}.
Since $x/(1+bx)$ is increasing on $[-\a^2,\infty)$ and $c>S(i\a,0)^2$
for $\a\in(0,\a_c')$,
\begin{align*}
\Re\lambda_-(\xi+i\a,\eta)\le &
-\a\left\{c-1-\frac{b-a}{2}\frac{\a^2}{1-b\a^2}\right\}
\\=& -\a\left(c-\frac12-\frac12S(i\a)^2\right)
<  -\frac{c-1}{2}\,.
\end{align*}
Thus we complete the proof.
\end{proof}

Now we are in position to prove Lemma~\ref{lem:spectrum-free}.
\begin{proof}[Proof of Lemma~\ref{lem:spectrum-free}]

If $\lambda\ne \lambda_\pm(\xi,\eta)$, 
\begin{equation}
  \label{eq:resl0}
(\lambda-\mL_0(\xi,\eta))^{-1}
=\frac{1}{(\lambda-\lambda_+(\xi,\eta))(\lambda-\lambda_-(\xi,\eta))}
\begin{pmatrix}
\lambda-ic\xi & 1 \\ -(\mu S)^2(\xi,\eta) & \lambda-ic\xi
\end{pmatrix}\,.
\end{equation}
Since $2ic\xi=\lambda_++\lambda_-$ and $2i\mu S=\lambda_+-\lambda_-$,
\begin{equation}
  \label{eq:cxi-muS}
\begin{split}
& \frac{2(\lambda-ci\xi)}
{(\lambda-\lambda_+(\xi,\eta))(\lambda-\lambda_-(\xi,\eta))}
=
\frac{1}{\lambda-\lambda_+(\xi,\eta)}
+\frac{1}{\lambda-\lambda_-(\xi,\eta)}\,,
\\ &
\frac{2i\mu(\xi,\eta)S(\xi,\eta)}
{(\lambda-\lambda(\xi,\eta))(\lambda-\lambda_-(\xi,\eta))}
=\frac{1}{\lambda-\lambda_+(\xi,\eta)}
-\frac{1}{\lambda-\lambda_-(\xi,\eta)}\,.
\end{split}
\end{equation}
In view of \eqref{eq:fm}, \eqref{eq:cl-sf2}, \eqref{eq:resl0} and \eqref{eq:cxi-muS},
the operator $\lambda-\mL_0$ has a bounded inverse on $X$ if
\begin{equation}
  \label{eq:resolvent-cond}
\sup_{(\xi,\eta)\in\R^2}|\lambda-\lambda_\pm(\xi+i\a,\eta)|^{-1}<\infty\,.
\end{equation}
Thus we have
\begin{equation}
  \label{eq:L0-ess}
\sigma(\mL_0(D))=\overline{\left\{\lambda_\pm(\xi+i\a,\eta)\mid
(\xi,\eta)\in\R^2\right\}}\,,
\end{equation}
 and Lemma~\ref{lem:spectrum-free} follows immediately from
\eqref{eq:cl-sf4}, \eqref{eq:cl-sf5} and \eqref{eq:L0-ess}.
\end{proof}

To prove the boundedness of $(\lambda-\mL)^{-1}$ restricted on
$\mathcal{Q}(\eta_0)X$ for a small $\eta_0>0$,
the estimate \eqref{eq:cl-sf5} in Claim~\ref{cl:spectrum-free2}
is insufficient.
To have a better estimate on $(\lambda-\lambda_-(D))^{-1}$,
we will estimate $\lambda_-(\xi,\eta)$ in the high frequency regime,
the middle frequency regime and in the low frequency regime, separately.
Let $\delta=\eps^{1/20}$,  $K=\delta^{-3}$ and
\begin{align*}
A_{high}=&\{(\xi,\eta)\in\R^2\mid \text{$|\xi|\ge \delta$
or $|\eta|\ge \delta|\xi+i\a|$}\}\,,\\  
A_{\xi,m}=&\{(\xi,\eta)\in\R^2\mid K\eps\le|\xi|\le \delta,\,
|\eta|\le\delta|\xi+i\a|\}\,,\\  
A_{\eta,m}=&\{(\xi,\eta)\in\R^2\mid |\xi|\le K\eps,\,
K\eps|\xi+i\a|\le |\eta|\le\delta|\xi+i\a|\}\,,\\
A_{low}=&\{(\xi,\eta)\in\R^2\mid |\xi|\le K\eps,\,
|\eta|\le K\eps|\xi+i\a|\}\,,\\  
\widetilde{A}_{low}=&\{(\xi,\eta)\in\R^2\mid |\xi|\le K\eps,\,
|\eta|\le K(K+\hat{\a})\eps^2\}\,.
\end{align*}
Obviously, we have $\R^2=A_{high}\cup A_{\xi,m}\cup A_{\eta,m}\cup A_{low}$
and $A_{low}\subset \widetilde{A}_{low}$.
Suppose $c=\sqrt{1+\eps^2}$ and that $\eps$ is a small positive number.
In the low frequency regime $A_{low}$,
\begin{gather*}
i\mu(D) \sim \eps \pd_{\hat{z}}
+\frac{\eps^3}{2}\pd_{\hat{z}}^{-1}\pd_{\hat{y}}^2\,,
\enskip S(D)\sim I+\frac{b-a}{2}\pd_{\hat{z}}^2\,,\enskip
\lambda_-(D)\sim \eps^3\mL_{KP,0}(D_{\hat{z}},D_{\hat{y}})\,,
\enskip \lambda_+(D) \sim 2\eps \pd_{\hat{z}}\,,
\end{gather*}
where $\hat{z}=\eps z$, $\hat{y}=\eps^2y$ and
$\mL_{KP,0}(D_{\hat{z}},D_{\hat{y}})
=-\frac{1}{2}\{(b-a)\pd_{\hat{z}}^3-\pd_{\hat{z}}+\pd_{\hat{z}}^{-1}\pd_{\hat{y}}^2\}$.
More precisely, we have the following.
\begin{lemma}
  \label{lem:lm-}
Let $c=\sqrt{1+\eps^2}$, $\a=\eps\hat{\a}$ and $\hat{\a}_\eps=1/\sqrt{bc^2-a}$. 
Let $\xi=\eps\hxi$, $\eta=\eps^2\heta$.
Suppose $\hat{\a}\in(0,\hat{\a}_\eps)$.
Then there exist positive constants $\eps_0$ and $C$ such
that for $\eps\in(0,\eps_0)$,
\begin{equation}
  \label{eq:lmm-low}
\lambda_-(\xi+i\a,\eta)=\frac{i\eps^3}{2}(\hxi+i\hat{\a})
\left\{1+(b-a)(\hxi+i\hat{\a})^2-\frac{\heta^2}{(\hxi+i\hat{\a})^2}
+O(K^4\eps^2)\right\}
\end{equation}
for $(\xi,\eta)\in A_{low}$,
\begin{equation}
\label{eq:lmm-low2}
\lambda_-(\xi+i\a,\eta)=\frac{i\eps^3}{2}(\hxi+i\hat{\a})
\left\{1+(b-a)(\hxi+i\hat{\a})^2-\frac{\heta^2}{(\hxi+i\hat{\a})^2}
+O(K^8\eps^2)\right\}
\end{equation}
for $(\xi,\eta)\in \widetilde{A}_{low}$,
\begin{align}
& \label{eq:lm-xi-m}
\Re\lambda_-(\xi+i\a,\eta)\le -\frac{\hat{\a}\eps^3}{4}
\left\{1+(b-a)\hxi^2\right\}
\quad\text{for $(\xi,\eta)\in A_{\xi,m}$,}
\\ &   \label{eq:lm-eta-m}
\Re\lambda_-(\xi+i\a,\eta)\le -\frac{\a\eps^3}{4}
\frac{\heta^2}{\hxi^2+\hat{\a}^2}
\quad\text{for $(\xi,\eta)\in A_{\eta,m}$,}
\\ &   \label{eq:high}
\Re\lambda_-(\xi+i\a,\eta)\le -C\delta^2\eps
\quad\text{for $(\xi,\eta)\in A_{high}$.}
\end{align}
\end{lemma}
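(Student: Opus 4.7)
The plan is to analyze $\lambda_-(\xi+i\a,\eta)=ic(\xi+i\a)-i\mu(\xi+i\a,\eta)S(\xi+i\a,\eta)$ separately in each of the four frequency regimes, using Taylor expansion in the scaled variables at low frequencies and the uniform upper bound \eqref{eq:cl-sf5'} from Claim~\ref{cl:spectrum-free2} at higher frequencies. For $A_{low}$ and $\widetilde{A}_{low}$, I would expand $c=1+\eps^2/2+O(\eps^4)$, $\mu=(\xi+i\a)\{1+\eta^2/(2(\xi+i\a)^2)+O(\eta^4/|\xi+i\a|^4)\}$ (which converges since $|\eta/(\xi+i\a)|=O(K\eps)$ resp.\ $O(K^2\eps)$), and $S=1-(b-a)w/2+O(w^2)$ with $w=(\xi+i\a)^2+\eta^2$; multiplying out and collecting terms of order $\eps^3$ yields the bracket in \eqref{eq:lmm-low}/\eqref{eq:lmm-low2}, and the different relative error sizes $O(K^4\eps^2)$ vs.\ $O(K^8\eps^2)$ reflect the wider range of $|\heta|$ allowed in $\widetilde{A}_{low}$.

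In $A_{\xi,m}$, since $|\xi|\ge K\eps\gg\a$ and $|\eta|\le\delta|\xi+i\a|$, one has $\xi^2+\eta^2-\a^2\ge\xi^2/2$ while $\xi^2+\eta^2=O(\delta^2)$ keeps $1+b(\xi^2+\eta^2-\a^2)$ bounded; inserting into \eqref{eq:cl-sf5'} yields \eqref{eq:lm-xi-m}. For $A_{\eta,m}$, I would split on whether $|\heta|\le K(K+\hat{\a})$: in this sub-region the point lies in $\widetilde{A}_{low}$, so \eqref{eq:lmm-low2} applies and computing the imaginary part of the bracket produces the desired $\hat{\a}\heta^2/(\hxi^2+\hat{\a}^2)$ factor (the remaining contributions stay positive because $\hat{\a}<\hat{\a}_0/2$ forces $1-(b-a)\hat{\a}^2>3/4$); in the complementary sub-region one still has $\eta^2<\a^2$, so I would first establish the bound at $\xi=0$ using $\Re\lambda_-(i\a,\eta)=-c\a+\sqrt{\a^2-\eta^2}\,S(i\a,\eta)$ together with the elementary inequality $c\a-\sqrt{\a^2-\eta^2}\,S(i\a,0)\ge\a(c-S(i\a,0))+S(i\a,0)\eta^2/(c\a/S(i\a,0)+\a)$, and then extend to $|\xi|\le K\eps$ by a Taylor correction $\int_0^\xi\Im\pd_{\xi'}(\mu S)\,d\xi'$ controlled by explicit bounds on $\pd_\xi\mu$ and $\pd_\xi S$. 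Finally, for $A_{high}$ I would split into $|\xi|\ge\delta$ (where $\xi^2-\a^2\ge\delta^2/2$ lets \eqref{eq:cl-sf5'} give $\Re\lambda_-\lesssim -\eps\delta^2$) and $|\xi|<\delta$ with $|\eta|\ge\delta|\xi+i\a|$ (further sub-split on whether $\eta^2\ge\a^2+\delta^2$, using \eqref{eq:cl-sf5'} in the large-$\eta$ case and the arguments from the upper part of $A_{\eta,m}$ together with $|\eta|\ge\delta\a$ otherwise).

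The main obstacle will be the $A_{\eta,m}$ regime. A naive Taylor expansion of $\mu$ does not close the argument because its remainder $O(\eta^4/|\xi+i\a|^3)$ is only of relative size $O(\delta^2)$ compared to the $\eta^2/(2(\xi+i\a))$ term (not $O(\eps^2)$) and can exceed the target bound $\eps^4\hat{\a}\heta^2/(4(\hxi^2+\hat{\a}^2))$ when $|\heta|$ is near its lower cutoff. The two-sub-case analysis above is what closes the gap, and matching the estimates consistently across the boundary $|\heta|=K(K+\hat{\a})$ requires careful bookkeeping of the error terms relative to the leading $\eps^3\heta^2/(\hxi^2+\hat{\a}^2)$ factor.
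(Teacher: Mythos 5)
Your overall strategy coincides with the paper's: Taylor expansion of $c$, $\mu$, $S$ in the scaled variables on $A_{low}$ and $\widetilde{A}_{low}$, the bound \eqref{eq:cl-sf5'} on $A_{\xi,m}$ and on the part of $A_{high}$ with $|\xi|\ge\delta$, and the inequality $\Re\lambda_-\le -c\,(\a-\Im\mu(\xi+i\a,\eta))$ (from \eqref{eq:lambda-}, \eqref{eq:im-mu} and \eqref{eq:cl-sf2}) on $A_{\eta,m}$ and on the remaining part of $A_{high}$. You are also right to flag the subtle point in $A_{\eta,m}$: the crude modulus bound $|(\xi+i\a)(\sqrt{1+w}-1-\tfrac{w}{2})|\lesssim \eta^4|\xi+i\a|^{-3}$ with $w=\eta^2/(\xi+i\a)^2$ is only $O(\delta^2\,|\xi+i\a|/\a)=O(\delta^2K)=O(\delta^{-1})$ relative to the leading imaginary correction $\a\eta^2/(2|\xi+i\a|^2)$, so it does not by itself justify the factor $(1+O(\delta^2))$ in the displayed expansion of $\Im\mu$.

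However, your proposed repair does not close the gap, and it contains a concrete error. In the sub-region of $A_{\eta,m}$ where $|\heta|>K(K+\hat{\a})$ you assert $\eta^2<\a^2$; this is false. There $|\eta|$ can be as large as $\delta|\xi+i\a|$, and for $|\xi|$ comparable to $K\eps$ one has $\delta|\xi+i\a|\approx\delta K\eps=\eps^{9/10}\gg \hat{\a}\eps=\a$. Consequently $\Im\mu(i\a,\eta)=\sqrt{\a^2-\eta^2}$ is not available as a starting point, and the planned perturbation in $\xi$ from $\xi=0$ cannot even begin on that set (nor is it clear it would reproduce the precise $\heta^2/(\hxi^2+\hat{\a}^2)$ dependence). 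The gap is closed much more simply by the structure already recorded in Claim~\ref{cl:mu}: writing $s=\eta^2$, $\gamma_2(\xi,s)=\Im\mu(\xi+i\a,\eta)$ and using \eqref{eq:g1-g2}--\eqref{eq:g2_s} one gets the exact formula
\begin{equation*}
\gamma_2(\xi,s)^2=\a^2\exp\Bigl(-\int_0^{s}\frac{d\sigma}{\left|(\xi+i\a)^2+\sigma\right|}\Bigr)\,,
\end{equation*}
and since $(1-\delta^2)|\xi+i\a|^2\le\left|(\xi+i\a)^2+\sigma\right|\le(1+\delta^2)|\xi+i\a|^2$ for $0\le\sigma\le\delta^2|\xi+i\a|^2$, this yields $\a-\Im\mu\ge \a\eta^2\bigl(4|\xi+i\a|^2\bigr)^{-1}=\eps^3\hat{\a}\heta^2\bigl(4(\hxi^2+\hat{\a}^2)\bigr)^{-1}$ uniformly on $\{|\eta|\le\delta|\xi+i\a|\}$, which is even stronger than \eqref{eq:lm-eta-m} (the stated bound carries an extra factor $\eps$). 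Equivalently, one may note that each term $\Im\bigl[(\xi+i\a)^{1-2k}\eta^{2k}\bigr]$, $k\ge1$, of the series for $\Im\mu$ vanishes at $\a=0$ and is $O\bigl(\a\,\eta^{2k}|\xi+i\a|^{-2k}\bigr)$, so the remainder is genuinely $O(\delta^2)$ relative to the $k=1$ term; the extra factor of $\a$ is exactly what the modulus bound throws away. Either of these observations replaces your two-sub-case analysis; the rest of your argument ($A_{low}$, $\widetilde{A}_{low}$, $A_{\xi,m}$, and the $|\xi|\ge\delta$ part of $A_{high}$) is sound and essentially identical to the paper's.
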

\begin{proof}[Proof of Lemma~\ref{lem:lm-}]
If $(\xi,\eta)\in A_{low}$, then
\begin{equation}
  \label{eq:Al}
|\hxi|\le K\,, \quad|\heta|/|\hxi+i\hat{\a}|\le K\,,
\end{equation}
\begin{equation}
  \label{eq:mu-low}
\begin{split}
\mu(\xi+i\a,\eta)=& \eps(\hxi+i\hat{\a})\sqrt{1+\frac{\eps^2\heta^2}
{(\hxi+i\hat{\a})^2}}
\\=& \eps(\hxi+i\hat{\a})
\left\{1+\frac{\eps^2}{2}\frac{\heta^2}{(\hxi+i\hat{\a})^2}
+O(K^4\eps^4)\right\}\,,
\end{split}  
\end{equation}
\begin{equation}
  \label{eq:S-low}
\begin{split}
S(\xi+i\a,\eta)=&\sqrt{
1+\frac{(a-b)\left\{(\xi+i\a)^2+\eta^2\right\}}
{1+b\left\{(\xi+i\a)^2+\eta^2\right\}}}
\\=& 1+\frac{a-b}{2}\eps^2(\hxi+i\hat{\a})^2+O(\eps^4K^4)\,.
\end{split}  
\end{equation}
Combining \eqref{eq:Al}--\eqref{eq:S-low} and the fact that
$c=1+\frac{\eps^2}{2}+O(\eps^4)$, we have
\eqref{eq:lmm-low}.
If $(\xi,\eta)\in \widetilde{A}_{low}$, then
$|\hxi|\le K$ and $|\heta|/|\hxi+i\hat{\a}|\le K(K+\hat{\a})/\hat{\a}$
and we can prove \eqref{eq:lmm-low2} in exactly the same way.
\par
Suppose $(\xi,\eta)\in A_{\xi,m}$.
Then $\xi=O(\delta)$, $\a/\xi=O(K^{-1})$ and $\eta/\xi=O(\delta)$.
By \eqref{eq:cl-sf5'},
\begin{align*}
\Re\lambda_-(\xi+i\a,\eta)\le &
-\a \left\{c-1+\frac{b-a}{2}\frac{\xi^2+\eta^2 -\a^2}
{1+b(\xi^2+\eta^2 -\a^2)}\right\}
\\=& -\a\left\{\frac{\eps^2}{2}+O(\eps^4)
+\frac{b-a}{2}\left(1+O(\delta^2+K^{-2})\right)\xi^2\right\}\,.
\end{align*}
Thus we have \eqref{eq:lm-xi-m} provided $\eps_0$, $\delta$ and $K^{-1}$
are sufficiently small.
\par
Let $(\xi,\eta)\in A_{\eta,m}$.
By \eqref{eq:im-mu}, \eqref{eq:cl-sf2} and \eqref{eq:lambda-},
\begin{equation}
  \label{eq:lambda-est2}
\begin{split}
  \Re\lambda_-(\xi+i\a,\eta)\le & -\a c+
\Im \mu(\xi+i\a,\eta)\Re S(\xi+i\a,\eta)
\\ \le & -c\left\{\a-\Im \mu(\xi+i\a,\eta)\right\}\,.
\end{split}  
\end{equation}
Since
\begin{align*}
\mu(\xi+i\a,\eta)=& 
(\xi+i\a)\sqrt{1+\frac{\eta^2}{(\xi+i\a)^2}}
\\=& \eps(\hxi+i\hat{\a})
\left\{1+\frac{\eps^2\heta^2}{2(\hxi+i\hat{\a})^2}
\left(1+O(\delta^2)\right)\right\}\,,
\end{align*}
\begin{align*}
\Im \mu(\xi+i\a,\eta)= \eps\hat{\a}
-\frac{\eps^3\hat{\a}\heta^2}{2(\hxi^2+\hat{\a}^2)}
\left(1+O(\delta^2)\right)\,.
\end{align*}
By \eqref{eq:lambda-est2} and the above,
we have \eqref{eq:lm-eta-m}  provided $\eps_0$, $\delta$ and $K^{-1}$
are sufficiently small.
\par
Finally, we will prove \eqref{eq:high}.
Suppose $(\xi,\eta)\in A_{high}$ and $|\xi|\ge \delta$.
Then there exists a positive constant $C_1$ such that $\xi^2+\eta^2-\a^2\ge  C_1\delta^2$ and it follows from \eqref{eq:cl-sf5'} that
\begin{align*}
\Re\lambda_-(\xi+i\a,\eta)\le &
-\a\left\{c-1+\frac{b-a}{2}\frac{C_1\delta^2}{1+bC_1\delta^2}\right\}
 \lesssim  -\eps\delta^2\,.  
\end{align*}
\par
Suppose $(\xi,\eta)\in A_{high}$ and $|\eta||\xi+i\a|^{-1}\ge \delta$.
By \eqref{eq:im-mu} and \eqref{eq:g2_s},
\begin{equation}
  \label{eq:lambda-est2'}
\Im\mu(\xi+i\a,\eta)=\gamma_2(\xi,s)\le \gamma_2(\xi,\delta^2|\xi+i\a|^2)
\quad\text{if $s=\eta^2\ge \delta^2|\xi+i\a|^2$.}
\end{equation}
If $0\le s \le \delta^2|\xi+i\a|^2$,
$$\gamma_1^2+\gamma_2^2=\left|(\xi+i\a)^2+s\right|\le (1+\delta^2)|\xi+i\a|^2,
$$
and it follows from \eqref{eq:g2_s} that for a $C>0$,
\begin{equation}
  \label{eq:lambda-est2''}
\gamma_2(\xi,\delta^2|\xi+i\a|^2)\le \gamma_2(\xi,0)
\exp\left(-\delta^2/2(1+\delta^2)\right)\le \alpha-C\delta^2\,.
\end{equation}
Substituting \eqref{eq:lambda-est2'} and \eqref{eq:lambda-est2''}
into  \eqref{eq:lambda-est2}, we have \eqref{eq:high}.
Thus we complete the proof.
\end{proof}

Finally, we will estimate operator norms of
$(\lambda-\lambda_\pm(D))^{-1}$ on $L^2(\R^2_\a)$ and its subspaces.
Let $\rho_y$ and $\tilde{\rho}_y$ be functions on $\R$ such that
$\rho_y(\eta)+\tilde{\rho}_y(\eta)=1$ for $\eta\in\R$ and
$$\rho_y(\eta)=
\begin{cases}
1 & \text{ if $|\eta|\le K(K+\hat{\a})\eps^2$,}\\ 
0 & \text{ if $|\eta|\ge K(K+\hat{\a})\eps^2$.}
\end{cases}$$
Let $\rho_z(\xi)$ be the characteristic function
of $\{\xi\in \C \mid |\Re\xi| \le K\eps\}$,
$\tilde{\rho}_z(\xi)=1-\rho_z(\xi)$ and
\begin{gather*}
Y:=\rho_y(D_y)L^2_\a(\R^2)\,,\quad Y_{low}:=\rho_z(D_z)Y\,,
\quad Y_{high}:=\tilde{\rho}_z(D_z)Y,.
\end{gather*}
We remark that $\widetilde{A}_{low}=\supp\rho_z(\xi)\rho_y(\eta)$.
\begin{lemma}
\label{cl:inv-lambda}
Let $c$, $\a$, and $\hat{\a}$ be as in Lemma~\ref{lem:lm-}.
Let $\hat{\beta}\in(0,\frac{\hat{\a}}{8})$ and $\lambda\in\Omega_\eps:=\{\lambda \in\C
\mid \Re\lambda\ge -\hat{\beta}\eps^3\}$.
Then there exist positive constants $C$ and $\eps_0$ such that if $\eps\in(0,\eps_0)$ and
$\lambda\in \Omega_\eps$,
\begin{gather}
\label{eq:l+inv}
\|(\lambda-\lambda_+(D))^{-1}\|_{B(L^2_\a)}\le C\eps^{-1}\,,
\\
\label{eq:l-inv}
\|(\lambda-\lambda_-(D))^{-1}\|_{B(L^2_\alpha)}\le C\eps^{-3}\,,
\\ 
\label{eq:l-inv2}
\|(\lambda-\lambda_-(D))^{-1}\|_{B(Y_{high})}  \le CK^{-2}\eps^{-3}\,,
\\
\label{eq:mu-res2'}
\|B^{-1}\mu(D)(\lambda-\lambda_-(D))^{-1}\|_{B(Y_{high})}
+\|B^{-1}\pd_z(\lambda-\lambda_-(D))^{-1}\|_{B(Y_{high})}
\le CK^{-1}\eps^{-2}\,,
\\
\label{eq:mu-res3}
\|B^{-1}\mu(D)(\lambda-\lambda_-(D))^{-1}\|_{B(Y_{low})}
+\|B^{-1}\pd_z(\lambda-\lambda_-(D))^{-1}\|_{B(Y_{low})} \le C\eps^{-2}\,.
\end{gather}
\end{lemma}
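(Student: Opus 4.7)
The approach is to apply the Plancherel-based Fourier multiplier characterisation \eqref{eq:fm0}, reducing each operator norm on $L^2_\a(\R^2)$ to the supremum of the corresponding symbol along the shifted contour $\{(\xi+i\a,\eta)\mid (\xi,\eta)\in\R^2\}$. For \eqref{eq:l+inv}, Claim~\ref{cl:spectrum-free2} gives $\Re\lambda_+(\xi+i\a,\eta)\le -c\hat{\a}\eps$ uniformly; combined with $\Re\lambda\ge -\hat{\beta}\eps^3$ this yields $|\lambda-\lambda_+|\ge c\hat{\a}\eps-\hat{\beta}\eps^3\gtrsim\eps$ for small $\eps$, hence the $\eps^{-1}$ bound. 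For \eqref{eq:l-inv}, I decompose $\R^2=A_{low}\cup A_{\xi,m}\cup A_{\eta,m}\cup A_{high}$ and invoke Lemma~\ref{lem:lm-} in each region. On $A_{low}$, combining \eqref{eq:lmm-low} with the KP-II estimate $\Re\mL_{KP,0}(\hxi+i\hat{\a},\heta)\le -\hat{\beta}_0$ from the proof of Lemma~\ref{lem:free-KP} (with $\hat{\beta}_0=\tfrac{\hat{\a}}{2}(1-(b-a)\hat{\a}^2)>\hat{\a}/4$, using $\hat{\a}<\hat{\a}_0/2$) yields $\Re\lambda_-\le -\tfrac{1}{2}\hat{\beta}_0\eps^3$; on $A_{\xi,m}$ and $A_{\eta,m}$, the bounds \eqref{eq:lm-xi-m} and \eqref{eq:lm-eta-m} (evaluated at $|\hxi|\ge K$ and $|\heta|/|\hxi+i\hat{\a}|\ge K$ respectively) give $\Re\lambda_-\lesssim -K^2\eps^3$; and on $A_{high}$, \eqref{eq:high} gives $\Re\lambda_-\lesssim -\delta^2\eps$. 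Since $\hat{\beta}<\hat{\a}/8\le\hat{\beta}_0/2$, in each region $|\lambda-\lambda_-|\gtrsim\eps^3$, proving \eqref{eq:l-inv}.

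For \eqref{eq:l-inv2}, the Fourier support of $Y_{high}$ obeys $|\xi|\ge K\eps$ and $|\eta|\le K(K+\hat{\a})\eps^2$. Provided $(K+\hat{\a})\eps\le\delta$, which holds for small $\eps$ because $K\eps=\delta^{-3}\eps\le\delta$ when $\eps\le\delta^4$, this support is contained in $A_{\xi,m}\cup A_{high}$; in both pieces one has $|\lambda-\lambda_-|\gtrsim K^2\eps^3$, yielding the improved $K^{-2}\eps^{-3}$ bound. For the multiplier factors in \eqref{eq:mu-res2'}, I exploit that $B(\xi+i\a,\eta)=1+b\mu(\xi+i\a,\eta)^2$ is uniformly bounded below when $\hat{\a}\eps<1/\sqrt{b}$, so that $|\mu|/|B|$ and $|\xi+i\a|/|B|$ are pointwise dominated by a constant multiple of $\min(|\xi+i\a|,|\xi+i\a|^{-1})$. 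On $Y_{high}$ this majorant is $\le C\delta$ in $A_{\xi,m}$ (where $|\xi+i\a|\le C\delta$) and $\le C$ in $A_{high}$; multiplying by the resolvent bounds just established produces the target $K^{-1}\eps^{-2}$ after checking that the worst of the resulting contributions is subdominant to it.

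The low-frequency multiplier bound \eqref{eq:mu-res3} is the most delicate ingredient, and is where the KP-II approximation enters directly. Since the Fourier support of $Y_{low}$ coincides with $\widetilde{A}_{low}$, the sharper asymptotic \eqref{eq:lmm-low2} permits the rewriting
\begin{equation*}
\lambda-\lambda_-(\xi+i\a,\eta)
=\eps^3\bigl(\Lambda-\mL_{KP,0}(\hxi+i\hat{\a},\heta)\bigr)\bigl(1+O(K^8\eps^2)\bigr),
\qquad \Lambda:=\eps^{-3}\lambda,
\end{equation*}
while $|\mu(\xi+i\a,\eta)|/|B(\xi+i\a,\eta)|\lesssim\eps|\hxi+i\hat{\a}|$ and analogously for $|\xi+i\a|/|B|$. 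Noting $\Re\Lambda\ge-\hat{\beta}>-\hat{\beta}_0$, the bound \eqref{eq:mu-res3} follows from the first-derivative KP-II resolvent estimate \eqref{eq:KP-res2} with $j=1$ after the change of variables $(\hxi,\heta)=(\eps^{-1}\xi,\eps^{-2}\eta)$, which supplies exactly the $\eps^{-2}$ power. The principal obstacle throughout is the bookkeeping: verifying that each region decomposition is compatible with the cutoffs $\rho_z,\tilde{\rho}_z,\rho_y$ and that the interplay of the scales $\eps$, $\delta=\eps^{1/20}$, and $K=\delta^{-3}$ makes every intermediate estimate subordinate to the target.
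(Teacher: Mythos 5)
Your overall architecture --- reducing each norm to a symbol supremum via \eqref{eq:fm0}, splitting $\R^2$ into $A_{low}\cup A_{\xi,m}\cup A_{\eta,m}\cup A_{high}$ and feeding in Lemma~\ref{lem:lm-}, and handling the low-frequency bound \eqref{eq:mu-res3} by comparison with the KP-II resolvent --- is the paper's, and your arguments for \eqref{eq:l+inv}, \eqref{eq:l-inv}, \eqref{eq:l-inv2} and \eqref{eq:mu-res3} go through. (For \eqref{eq:l-inv} the paper is more economical: \eqref{eq:cl-sf5} already gives $\Re\lambda_-(\xi+i\a,\eta)\le-\tfrac{\a}{2}(c-1)\simeq-\tfrac{\hat\a}{4}\eps^3$ uniformly on all of $\R^2$, so no region decomposition is needed there.)

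There is, however, a genuine gap in your treatment of \eqref{eq:mu-res2'} on $A_{\xi,m}$. You bound the symbol by $\sup(\text{majorant})\times\sup|\lambda-\lambda_-|^{-1}\le C\delta\cdot(K^2\eps^3)^{-1}=C\delta^{7}\eps^{-3}$, and with $\delta=\eps^{1/20}$, $K=\delta^{-3}$ this equals $C\eps^{-53/20}$, while the target $K^{-1}\eps^{-2}=\eps^{-37/20}$; the ratio is $\delta K^{-1}\eps^{-1}=\eps^{-4/5}\to\infty$, so the ``worst contribution'' is not subdominant to the target --- it dominates it, and the check you defer actually fails. What is needed is the supremum of the \emph{pointwise product}, not the product of suprema: on $A_{\xi,m}$ one has $|\xi+i\a|+|\mu(\xi+i\a,\eta)|\simeq|\xi|$ (since $|\xi|\ge K\eps\gg\a$ and $|\eta|\le\delta|\xi+i\a|$), while \eqref{eq:lm-xi-m} together with $\hat\beta<\hat\a/8$ gives $|\lambda-\lambda_-|\gtrsim\eps^3(1+(b-a)\hxi^2)\gtrsim\eps\xi^2$, so one power of $\xi$ cancels and $\bigl(|\xi+i\a|+|\mu|\bigr)/|\lambda-\lambda_-|\lesssim(\eps|\xi|)^{-1}\le(K\eps^2)^{-1}$; this is exactly the paper's estimate \eqref{eq:mu-res1b} (with \eqref{eq:mu-res1c} playing the same role on $A_{\eta,m}$ if that region is not discarded). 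Decoupling the two suprema destroys precisely this cancellation. A secondary, repairable point: the pointwise domination $|\mu|/|B|\lesssim\min(|\xi+i\a|,|\xi+i\a|^{-1})$ is false on all of $\R^2$ (at $\xi=0$, $|\eta|\sim1$ the left side is $\sim(1+b)^{-1}$ while the right side is $\a=\hat\a\eps$); it holds only after restricting to the $y$-frequency support of $Y$, where $|\eta|\lesssim K^2\eps^2$ forces $|\mu|\simeq|\xi+i\a|$, so the claim should be stated on $Y$ rather than pointwise on $\R^2$.
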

\begin{proof}
By \eqref{eq:cl-sf4} and \eqref{eq:cl-sf5},
\begin{equation}
  \label{eq:lm+est}
  \begin{split}
\inf_{\lambda\in \Omega_\eps\,,\,(\xi,\eta)\in\R^2}|\lambda-\lambda_+(\xi+i\a,\eta)|
\ge & 
\inf_{\lambda\in \Omega_\eps\,,\,(\xi,\eta)\in\R^2}
\Re(\lambda-\lambda_+(\xi+i\a,\eta))
\gtrsim  \eps\,,    
  \end{split}
\end{equation}
\begin{align*}
\inf_{\lambda\in \Omega_\eps\,,\,(\xi,\eta)\in\R^2}
|\lambda-\lambda_-(\xi+i\alpha,\eta)|
\ge &  
\inf_{\lambda\in \Omega_\eps\,,\,(\xi,\eta)\in\R^2}
\left(\Re\lambda +\frac{\alpha}{2}(c-1)\right)
\gtrsim  \eps^3\,.
\end{align*}
Hence it follows from \eqref{eq:fm0} that
\begin{align*}
\|(\lambda-\lambda_+(D))^{-1}\|_{B(L^2_\a)}= &
\sup_{(\xi,\eta)\in\R^2}\frac{1}{|\lambda-\lambda_+(\xi+i\a,\eta)|}
\le C\eps^{-1}\,,
\\
\|(\lambda-\lambda_-(D))^{-1}\|_{B(L^2_\a)}= &
\sup_{(\xi,\eta)\in\R^2}\frac{1}{|\lambda-\lambda_-(\xi+i\a,\eta)|}
 \le C\eps^{-3}\,,
\end{align*}
where $C$ is a positive constants that does not depend on $\eps\in(0,\eps_0)$
and $\lambda\in\Omega_\eps$.
\par
Next, we will show \eqref{eq:l-inv2}. 
Suppose $f\in Y_{high}$. Then
$\supp \hat{f}(\xi+i\a,\eta)\subset \widetilde{A}_{low}^c
 \subset A_{\xi,m}\cup A_{\eta,m}\cup A_{high}$. 
By Lemma~\ref{lem:lm-},
\begin{equation}
\label{eq:lm-est2}
\inf_{\lambda\in \Omega_\eps\,,\,(\xi,\eta)\in A_{\xi,m} \cup A_{\eta,m}\cup A_{high}}
|\lambda-\lambda_-(\xi+i\a,\eta)|\gtrsim K^2\eps^3\,.
\end{equation}
Hence it follows from \eqref{eq:fm0} that
\begin{align*}
  \|(\lambda-\lambda_-(D))^{-1}f\|_{L^2_\a(\R^2)}
\le & \sup_{(\xi,\eta)\not\in \widetilde{A}_{low}}
\frac{1}{|\lambda-\lambda_{-}(\xi+i\a,\eta)|}
\left(\int_{\R^2}|\hat{f}(\xi+i\a,\eta)|^2\,d\xi d\eta\right)^{1/2}
\\ \lesssim & K^{-2}\eps^{-3}\|f\|_{L^2_\a(\R^2)}\,.
\end{align*}
\par
Next, we will prove \eqref{eq:mu-res2'}.
By \eqref{eq:high},
\begin{equation}
  \label{eq:mu-res1a}
\sup_{\lambda\in \Omega_\eps,(\xi,\eta)\in A_{high}}
\frac{|\xi+i\a|+|\mu(\xi+i\a,\eta)|}
{|B(\xi+i\a,\eta)|^{1/2}|\lambda-\lambda_-(\xi+i\a,\eta)|} \lesssim \delta^{-2}\eps^{-1}\,,  
\end{equation}
where $B(\xi,\eta)=1+b(\xi^2+\eta^2)$.
By \eqref{eq:lm-xi-m} and the definition of $A_{\xi,m}$,
\begin{equation}
  \label{eq:mu-res1b}
\frac{|\xi+i\a|+|\mu(\xi+i\a,\eta)|}{|\lambda-\lambda_-(\xi+i\a,\eta)|}
\lesssim  \frac{\sqrt{\xi^2+\eta^2}}{\eps\xi^2}
\lesssim \frac{1}{K\eps^2}
\quad\text{for $(\xi,\eta)\in A_{\xi,m}$ and $\lambda\in\Omega_\eps$.}  
\end{equation}
By \eqref{eq:lm-eta-m} and the fact that 
$|\xi+i\a|+|\mu(\xi+i\a,\eta)|\lesssim  K\eps$ for $(\xi,\eta)\in A_{\eta,m}$,
\begin{equation}
  \label{eq:mu-res1c}
\frac{|\xi+i\a|+|\mu(\xi+i\a,\eta)|}{|\lambda-\lambda_-(\xi+i\a,\eta)|}
\lesssim K|\xi+i\a|^2\eta^{-2} \lesssim  \frac{1}{K\eps^2}
\quad\text{for $(\xi,\eta)\in A_{\eta,m}$ and $\lambda\in\Omega_\eps$.}  
\end{equation}
Combining \eqref{eq:mu-res1a}--\eqref{eq:mu-res1c} with
\begin{equation}
  \label{eq:lowB}
|B(\xi+i\a,\eta)|\ge 1-b\a^2>0\,,
\end{equation}
we have \eqref{eq:mu-res2'}.
\par
Finally, we will prove \eqref{eq:mu-res3}.
By \eqref{eq:lmm-low2},  we have for $\lambda\in \Omega_\eps$ and
$(\xi,\eta)\in \widetilde{A}_{low}$,
\begin{align*}
&  |\lambda-\lambda_-(\xi+i\a,\eta)|
\\ \ge & \eps^3\left[
-\hat{\beta}+\frac{\hat{\a}}{2}\{
1-(b-a)\hat{\a}^2+3(b-a)\hxi^2+\frac{\heta^2}{\hxi^2+\hat{\a}^2}\}
\right]+O(K^9\eps^5)
\\ \gtrsim & \eps^3(1+\hxi^2)\,.
\end{align*}
\begin{align*}
\sup_{\lambda\in\Omega_\eps\,,\,(\xi,\eta)\in \widetilde{A}_{low}}
\frac{|\xi+i\a|}{|\lambda-\lambda_-(\xi+i\a,\eta)|}
\lesssim & 
\sup_{\hxi\in[0,K]}\frac{|\hxi|+\hat{\a}}{\eps^2(1+\hxi^2)}=O(\eps^{-2})\,.
\end{align*}
Thus we complete the proof.
\end{proof}
\bigskip

\section{Spectral stability  for small line solitary waves}
\label{sec:spectrum}
In this section, we will prove Theorem~\ref{thm:spectrum}.
For small line solitary waves, the spectrum of the linearized operator
$\mL$ is well approximated by that of $\mL_{KP}$ in the low frequency regime,
while the spectrum of $\mL$ is close to that of the free operator $\mL_0$
in the high-frequency regime. 
We will show that any spectrum of $\mL$ locates
in the stable half plane and is bounded away from the imaginary axis
except for the continuous eigenvalues $\{\lambda_\eps(\eta)\}$.
More precisely, we will prove
\begin{equation}
  \label{eq:ev1-est}
\sup_{\lambda\in\Omega_\eps}\|(\lambda-\mL)^{-1}\mathcal{Q}(\eps^2\eta_0)\|_{B(X)}
<\infty\,.  
\end{equation}
Since the potential part of $\mL$ is independent of $y$, we can
estimate the high frequency part in $y$ and the low frequency part in $y$,
separately.
\subsection{Spectral stability  for high frequencies in $y$}
\label{subsec:high}
First, we will estimate solutions of the resolvent equation
\begin{equation}
  \label{eq:ev1}
  (\lambda-\mL)u=f
\end{equation}
for $f\in \tilde{\rho}_y(D_y)X$.
In the high frequency regime in $y$, the potential term 
$V$ is relatively small compared with $\lambda-\mL_0$.
 
\begin{lemma}
\label{lem:high frequencies}
Let $c$, $\a$, $\hat{\a}$ and $\Omega_\eps$ be as in Lemma~\ref{cl:inv-lambda}.
There exists a positive number $\eps_0$ such that if $\eps\in(0,\eps_0)$ and $\lambda \in\Omega_\eps$,
then 
\begin{equation*}
\sup_{\lambda\in \Omega_\eps}\|\tilde{\rho}_y(D_y)(\lambda-\mL)^{-1}\tilde{\rho}_y(D_y)\|_{B(X)}<\infty\,.
\end{equation*}
\end{lemma}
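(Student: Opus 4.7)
My plan is to invert $\lambda - \mL$ on $\tilde\rho_y(D_y) X$ by treating $V$ as a perturbation of the free operator $\mL_0$. Because $V$ has coefficients depending only on $z$ (while $\mL_0$ is a constant-coefficient Fourier multiplier in $y$), both commute with $\tilde\rho_y(D_y)$, so the subspace $\tilde\rho_y(D_y)X$ is invariant. I would factor
$$\lambda - \mL = (\lambda - \mL_0)\bigl(I - (\lambda - \mL_0)^{-1}V\bigr),$$
and aim to prove $\|(\lambda - \mL_0)^{-1}V\|_{B(\tilde\rho_y(D_y)X)} \le 1/2$ uniformly in $\lambda \in \Omega_\eps$ for $\eps$ sufficiently small. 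A Neumann series then gives a uniformly bounded inverse $(I - (\lambda - \mL_0)^{-1}V)^{-1}$ on $\tilde\rho_y(D_y)X$, and combining with Lemma~\ref{cl:inv-lambda} finishes the proof.

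The key estimate I would establish via a symbol analysis in Fourier space. Using the formula
$$(\lambda - \mL_0(\xi,\eta))^{-1}\begin{pmatrix}0 \\ \hat g\end{pmatrix} = \frac{1}{(\lambda-\lambda_+)(\lambda-\lambda_-)}\begin{pmatrix}\hat g \\ (\lambda - ic\xi)\hat g\end{pmatrix}$$
together with the partial-fraction identities in \eqref{eq:cxi-muS}, everything reduces to bounds on $(\lambda - \lambda_\pm(\xi+i\a,\eta))^{-1}$ and on the multiplier $\mu/[(\lambda-\lambda_+)(\lambda-\lambda_-)]$, the latter controlled using $2i\mu S = \lambda_+ - \lambda_-$ and the lower bound $|S| \gtrsim \sqrt{a/b}$ from Claim~\ref{cl:spectrum-free1}. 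Since $Vf$ for $f \in \tilde\rho_y(D_y)X$ has Fourier support contained in $\R^2 \setminus \widetilde{A}_{low} \subseteq A_{high}\cup A_{\xi,m}\cup A_{\eta,m}$, the refined bounds of Lemma~\ref{lem:lm-} apply, and the smoothing provided by $B^{-1}$ in the definition $V = -B^{-1}\left(\begin{smallmatrix}0 & 0\\ v_{1,c} & v_{2,c}\end{smallmatrix}\right)$, together with the $O(\eps^2)$ size of the exponentially decaying coefficients $r_c, q_c$, furnishes the required smallness. In particular, one writes
$$B^{-1}v_{1,c} = 2r_c'(B^{-1}\pd_z) + r_c(B^{-1}\Delta) + [B^{-1},r_c']\pd_z + [B^{-1},r_c]\Delta,$$
so that the only unbounded ingredients are absorbed into the $L^2_\a$-bounded Fourier multipliers $B^{-1}\pd_z$ and $B^{-1}\Delta$, while the commutator $[B^{-1},r_c] = -bB^{-1}(r_c''+2r_c'\pd_z)B^{-1}$ (and its analogue for $q_c$) gains additional powers of $\eps$.

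The main obstacle, I expect, is the tension between the $O(\eps^{-3})$ size of $(\lambda - \lambda_-)^{-1}$ on the high-$y$-frequency subspace and the $O(\eps^2)$ size of the potential: a naive composition of the two norms only gives $O(K^{-2}\eps^{-1})$, which is not small as $\eps\downarrow 0$. Overcoming this requires a careful region-by-region analysis in Fourier space, exploiting the improved $\hxi^2$- and $\heta^2$-weighted lower bounds for $|\lambda - \lambda_-|$ in $A_{\xi,m}$ and $A_{\eta,m}$ to absorb the remaining $\pd_z$- and $\Delta$-factors of $v_{1,c}$ and $v_{2,c}$, and the strong bound $|\lambda-\lambda_-| \gtrsim \delta^2\eps$ in $A_{high}$. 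Additionally, controlling the $H^1_\a$-component of the output requires multiplying the $(1,2)$-entry of $(\lambda-\mL_0)^{-1}$ by $|\xi+i\a| \lesssim |\mu(\xi+i\a,\eta)|$ and reducing to partial fractions via $2i\mu S = \lambda_+ - \lambda_-$. This delicate bookkeeping across the three frequency regimes is the technical heart of the proof.
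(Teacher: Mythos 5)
Your overall framework --- the second resolvent identity $(\lambda-\mL)^{-1}=\{I-(\lambda-\mL_0)^{-1}V\}^{-1}(\lambda-\mL_0)^{-1}$, the reduction to the four entries $r_{ij}(\lambda)$ of $(\lambda-\mL_0)^{-1}V$ via the partial fractions \eqref{eq:cxi-muS}, the use of the support condition $\supp\hat f\subset A_{high}\cup A_{\xi,m}\cup A_{\eta,m}$ to invoke Lemma~\ref{lem:lm-}, and the structural forms \eqref{eq:v1c-alt}, \eqref{eq:v2c-alt} of the potential --- matches the paper. But your central claim, that one can prove $\|(\lambda-\mL_0)^{-1}V\|_{B(\tilde\rho_y(D_y)X)}\le 1/2$ and then sum a Neumann series, has a genuine gap: that operator norm is \emph{not} small, and no amount of region-by-region refinement will make it so. The obstruction sits in the off-diagonal entry $r_{12}(\lambda)$, which must be measured in $B(L^2_\a,H^1_\a)$ and therefore costs an extra factor of $|\xi+i\a|$. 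The sharpest bound obtainable from Lemma~\ref{lem:lm-} is exactly what the paper records in \eqref{eq:rij-bound}, namely $\|\tilde\rho_y(D_y)r_{12}(\lambda)\tilde\rho_y(D_y)\|_{B(L^2_\a,H^1_\a)}=O(K^{-1}\eps^{-1}+\delta^{-2})$; with $\delta=\eps^{1/20}$ and $K=\delta^{-3}$ this is $O(\eps^{-17/20})$, which diverges as $\eps\downarrow0$. (Near the boundary $|\xi|\sim\delta$ of $A_{high}$ one has $|\lambda-\lambda_+|\sim\eps$, $|\lambda-\lambda_-|\gtrsim\delta^2\eps$ and the derivative factor contributes $\sim\delta$, so the symbol of $r_{12}$ is genuinely of size $\eps^2\delta/(\eps\cdot\delta^2\eps)=\delta^{-1}$; there is no hidden cancellation to exploit.) So the full matrix $(\lambda-\mL_0)^{-1}V$ is large in $B(X)$ and the plain Neumann series does not converge.

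The paper's resolution is structural rather than analytic: it factors $I-(\lambda-\mL_0)^{-1}V=B_1(\lambda)B_2(\lambda)$ with $B_1$ upper-triangular and $B_2$ unipotent lower-triangular (a Schur-complement/Gaussian-elimination step), so that invertibility only requires the \emph{diagonal} entries $r_{11},r_{22}$ to be small ($O(K^{-1})$) together with the \emph{product} $\|r_{12}\|_{B(L^2_\a,H^1_\a)}\|r_{21}\|_{B(H^1_\a,L^2_\a)}=O(K^{-1}\delta^{-2}+\eps\delta^{-4})=O(\eps^{1/20})$ to be small; the largeness of $r_{12}$ alone is harmless because it is always paired with the small entry $r_{21}=O(\eps\delta^{-2})$. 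Your estimates for $r_{11}$, $r_{21}$, $r_{22}$ and your commutator decomposition of $B^{-1}v_{1,c}$ would all feed correctly into this scheme, so the fix is to replace "show the whole matrix has norm $\le 1/2$" by this triangular factorization (or equivalently invert the $2\times2$ system by eliminating one component first).
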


\begin{proof}[Proof of Lemma~\ref{lem:high frequencies}]
In view of Lemma~\ref{lem:spectrum-free} and the second resolvent formula 
\begin{equation}
  \label{eq:res-formula}
(\lambda-\mL)^{-1}=\{I-(\lambda-\mL_0)^{-1}V\}^{-1}(\lambda-\mL_0)^{-1}\,,
\end{equation}
it suffices to show that
\begin{equation}
  \label{eq:i-rv}
\sup_{\lambda\in \Omega_\eps}\left \|\tilde{\rho}_y(D_y)
(I-(\lambda-\mL_0)^{-1}V)^{-1} \tilde{\rho}_y(D_y)\right\|_{B(X)}<\infty\,.
\end{equation}
By \eqref{eq:resl0},
\begin{align*}
(\lambda-\mL_0)^{-1}V =&
-(\lambda-\lambda_+(D))^{-1}(\lambda-\lambda_-(D))^{-1}B^{-1}
\begin{pmatrix}
v_1 & v_2 \\ (\lambda-c\pd_z)v_1 & (\lambda-c\pd_z)v_2
\end{pmatrix}
\\=:& \begin{pmatrix}
r_{11}(\lambda) & r_{12}(\lambda)\\
r_{21}(\lambda) & r_{22}(\lambda)      
\end{pmatrix}\,.
\end{align*}
\par
First, we will show \eqref{eq:i-rv} admitting
\begin{equation}
  \label{eq:rij-bound}
  \begin{split}
& \sup_{\lambda\in\Omega_\eps}
(\|\tilde{\rho}_y(D_y)r_{11}(\lambda)\tilde{\rho}_y(D_y)\|_{B(H^1_\a)}
+\|\tilde{\rho}_y(D_y)r_{22}(\lambda)\tilde{\rho}_y(D_y)\|_{B(L^2_\a)})
=O(K^{-1})\,,
\\ &
\sup_{\lambda\in\Omega_\eps}\|\tilde{\rho}_y(D_y)r_{12}(\lambda)
\tilde{\rho}_y(D_y)\|_{B(L^2_\a,H^1_\a)}
=O(K^{-1}\eps^{-1}+\delta^{-2})\,,
\\ &
\sup_{\lambda\in\Omega_\eps} 
\|\tilde{\rho}_y(D_y)r_{21}(\lambda)\tilde{\rho}_y(D_y)\|_{B(H^1_\a,L^2_\a)}
=O(\eps\delta^{-2})\,.
  \end{split}
\end{equation}
Let 
\begin{gather*}
B_1(\lambda)=
\begin{pmatrix}
I-r_{11}(\lambda) & -r_{12}(\lambda) \\
O & I-r_{22}(\lambda)
\end{pmatrix}
\,,  \\
B_2(\lambda)=\begin{pmatrix}
I-(I-r_{11}(\lambda))^{-1}r_{12}(\lambda)
(I-r_{22}(\lambda))^{-1}r_{21}(\lambda) & 0 \\
-(I-r_{22}(\lambda))^{-1}r_{21}(\lambda) & I
\end{pmatrix}\,.
\end{gather*}
Then $I-(\lambda-\mL_0)^{-1}V=B_1(\lambda)B_2(\lambda)$.
We see from \eqref{eq:rij-bound} that
$I-r_{ii}(\lambda)$ ($i=1,2$) have bounded inverse and that
\begin{align*}
& \|\tilde{\rho}_y(D_y)r_{12}(\lambda)\tilde{\rho}_y(D_y)\|_{B(L^2_\a,H^1_\a)}
\|\tilde{\rho}_y(D_y)r_{21}(\lambda)\tilde{\rho}_y(D_y)\|_{B(H^1_\a,L^2_\a)}
\\=& O(K^{-1}\delta^{-2}+\eps\delta^{-4})=O(\eps^{1/20})\,.
\end{align*}
Thus we have
$$\sup_{\lambda\in\Omega_\eps}(
\|\tilde{\rho}_y(D_y)B_1(\lambda)^{-1}\tilde{\rho}_y(D_y)
\|_{B(X)}
+\|\tilde{\rho}_y(D_y)B_1(\lambda)^{-1}\tilde{\rho}_y(D_y)
\|_{B(X)})<\infty\,,$$
and
$\tilde{\rho}_y(D_y)(I-(\lambda-\mL_0)^{-1}V)^{-1}\tilde{\rho}_y(D_y)
\in L^\infty(\Omega_\eps;B(X))$.
\par
Now we will start to show \eqref{eq:rij-bound}.
By \eqref{eq:cxi-muS},
\begin{align}
& \label{eq:hf-pf2}
\Delta(\lambda-\lambda_+(D))^{-1}(\lambda-\lambda_-(D))^{-1}
=\frac{i\mu(D)}{2S(D)}\left\{
(\lambda-\lambda_+(D))^{-1}-(\lambda-\lambda_-(D))^{-1}\right\}\,,
\\ & \label{eq:hf-pf1}
(\lambda-c\pd_z)(\lambda-\lambda_+(D))^{-1}(\lambda-\lambda_-(D))^{-1}
=\frac{1}{2}\left\{
(\lambda-\lambda_+(D))^{-1}+(\lambda-\lambda_-(D))^{-1}\right\}\,.  
\end{align}
If $|\eta|\ge K(K+\hat{\a})\eps^2$, then $(\xi,\eta)\in \widetilde{A}_{low}^c
\subset A_{high}\cup A_{\xi,m}\cup A_{\eta,m}$.
Since
\begin{equation}
  \label{eq:v1c-alt}
v_{1,c}=cq_c''-c\Delta(q_c\cdot)\,,  
\end{equation}
it follows from \eqref{eq:hf-pf2}, \eqref{eq:S} and Claim~\ref{cl:qc-size}
in Appendix~\ref{sec:ap1} that
\begin{align*}
\left\|\tilde{\rho}_y(D_y)
(\lambda-\lambda_+(D))^{-1}(\lambda-\lambda_-(D))^{-1}B^{-1}v_{1,c}
 \right\|_{B(H^1_\a)} \lesssim I_1+I_2\,,
\end{align*}
where 
\begin{align*}
I_1=&  \eps^4 \left\|\tilde{\rho}_y(D_y)(\lambda-\lambda_+(D))^{-1}
(\lambda-\lambda_-(D))^{-1}B^{-1} \right\|_{B(L^2_\a)}\,,\\
I_2=&  \eps^2 \sum_{\pm}\left\|
\tilde{\rho}_y(D_y)(\lambda-\lambda_\pm(D))^{-1}B^{-1}
\mu(D) \right\|_{B(L^2_\a)}\,.
\end{align*}
By \eqref{eq:lm+est}, \eqref{eq:lm-est2} and \eqref{eq:lowB},
\begin{equation*}
I_1= \eps^4\sup_{(\xi,\eta)\not\in \widetilde{A}_{low}}
\frac{|B(\xi+i\a,\eta)|^{-1}}{\left|\lambda-\lambda_+(\xi+i\a,\eta)\right|
\left|\lambda-\lambda_-(\xi+i\a,\eta)\right|}=O(K^{-2})\,.
\end{equation*}
By  \eqref{eq:lm+est}, \eqref{eq:mu-res1a}--\eqref{eq:mu-res1c}
and Claim~\ref{cl:prop-B,S},
\begin{align*}
 I_2 \lesssim & \eps+ \eps^2\sup_{(\xi,\eta)\not\in \widetilde{A}_{low}}
\frac{|\mu(\xi+i\a,\eta)|}{|B(\xi+i\a,\eta)|}
\left|\lambda-\lambda_-(\xi+i\a,\eta)\right|^{-1}
 \lesssim  K^{-1}\,.
\end{align*}
Thus we prove 
$$\sup_{\lambda\in \Omega_\eps}
\|\tilde{\rho}_y(D_y)r_{11}(\lambda)\tilde{\rho}_y(D_y)\|_{B(H^1_\a)}
\lesssim I_1+I_2=O(K^{-1})\,.$$
\par
Next, we will estimate $\tilde{\rho}_y(D_y)r_{12}(\lambda)\tilde{\rho}_y(D_y)$.
Since 
\begin{equation}
  \label{eq:v2c-alt}
v_{2,c}=2\pd_z(q_c\cdot)-q_c'\,,  
\end{equation}
we have from Claim~\ref{cl:qc-size}
$$ \|\tilde{\rho}_y(D_y)r_{12}(\lambda)\tilde{\rho}_y(D_y)\|_{B(L^2_\a,H^1_\a)}
\lesssim I_3+I_4\,,$$ where
\begin{align*}
I_3=&  \eps^2 \|\tilde{\rho}_y(D_y)
(\lambda-\lambda_+(D))^{-1}(\lambda-\lambda_-(D))^{-1}\pd_zB^{-1}\|_{B(L^2_\a,H^1_\a)}\,,\\
I_4=&  \eps^3 \|
\tilde{\rho}_y(D_y)(\lambda-\lambda_+(D))^{-1}(\lambda-\lambda_-(D))^{-1}B^{-1}\|_{B(L^2_\a,H^1_\a)}\,.
\end{align*}
By \eqref{eq:fm0},
\begin{align*}
I_3\lesssim &  \sup_{(\xi,\eta)\not\in \widetilde{A}_{low}}\frac{\eps^2|\xi+i\a|}
{|\lambda-\lambda_+(\xi+i\a,\eta)||\lambda-\lambda_-(\xi+\a,\eta)||B(\xi+i\a,\eta)|^{1/2}}\,,
\\
I_4\lesssim & \sup_{(\xi,\eta)\not\in \widetilde{A}_{low}}\frac{\eps^3}
{|\lambda-\lambda_+(\xi+i\a,\eta)||\lambda-\lambda_-(\xi+\a,\eta)|}\,.
\end{align*}
It follows from  \eqref{eq:lm+est} and \eqref{eq:mu-res1a}--\eqref{eq:lowB} that
\begin{align*}
 I_3 \lesssim & 
\sup_{(\xi,\eta)\in A_{high}}
\frac{\eps^2|\xi+i\a|}
{|\lambda-\lambda_+(\xi+i\a,\eta)||\lambda-\lambda_-(\xi+\a,\eta)||B(\xi+i\a,\eta)|^{1/2}}
\\ & +\sup_{(\xi,\eta)\in A_{\xi,m}\cup A_{\eta,m}}
\frac{\eps^2|\xi+i\a|}
{|\lambda-\lambda_+(\xi+i\a,\eta)||\lambda-\lambda_-(\xi+\a,\eta)|}
\\ \lesssim & \delta^{-2}+K^{-1}\eps^{-1}\,.
\end{align*}
By \eqref{eq:lm+est} and \eqref{eq:lm-est2},
$$I_4=O(K^{-2}\eps^{-1})\,.$$
Thus we prove
$$\sup_{\lambda\in\Omega_\eps}
\|\tilde{\rho}_y(D_y)r_{12}(\lambda)\tilde{\rho}_y(D_y)\|_{B(L^2_\a)}
\lesssim K^{-1}\eps^{-1}+\delta^{-2}\,.
$$
\par
Using \eqref{eq:hf-pf1},
we can estimate $r_{21}$ and $r_{22}$ in exactly the same way.
Thus we complete the proof.
\end{proof}

\subsection{Spectral stability  for low  frequencies in $y$}
Now we will estimate solutions of \eqref{eq:ev1} for
$f\in \rho_y(D_y)X$ satisfying the orthogonality condition
\begin{equation}
  \label{eq:orth-f}
\int_\R (\mF_yf)(x,\eta)\cdot\overline{g_k^*(x,\eta)}\,dx=0
\quad\text{for $\eta\in [-\eps^2\eta_0,\eps^2\eta_0]$ and $k=1$, $2$.}
\end{equation}
\par
Let $\tf=(\tf_1,\tf_2)$ and $f=(f_1,f_2)=P(D)\tf$.  To begin with, We
will show that \eqref{eq:orth-f} is reduced to the secular term
condition that $\tf_2$ does not include the resonant modes of the
linearized KP-II operator $\mL_{KP}$ in the limit $\eps\to0$. 
\par
Let $E_\eps: L^2_\a(\R^2)\to L^2_{\hat{\a}}(\R^2)$ be an isomorphism defined by 
$(E_\eps f)(x,y):=\eps^{-3/2}f(\eps^{-1}x,\eps^{-2}y)$ and let
\begin{gather*}
Z=\{f\in \rho_y(D_y)X \mid \mathcal{P}(\eps^2\eta_0)f=0\}\,, \quad
\widetilde{Z}=P(D)^{-1}Z\,,
\\  \overline{Z}=\{(\barf_1,\barf_2)\in Y\times Y
\mid \mathcal{P}_{KP}(\eps^2\eta_0)E_\eps\rho_z(D_z)\barf_2=0\}\,.
\end{gather*}
Note that 
$P(D): Y\times Y \to \rho_y(D_y)X$ is isomorphic for small $\eps>0$ because 
$|\mu(\xi+i\a,\eta)|$ is bounded away from $0$ for $\eta\in \supp \rho_y$.
Let  $\overline{P}(\eta_0)$ be the projection on $L^2(\R^2;\C^2)$ defined by
$$ \overline{P}(\eta_0)\begin{pmatrix}  \tu_1 \\ \tu_2\end{pmatrix}
=
\begin{pmatrix}
 0 \\ \rho_z(D_z)E_\eps^{-1}\mathcal{P}_{KP}(\eta_0)E_\eps\rho_z(D_z)\tu_2
\end{pmatrix}\,.$$
The subspaces $\widetilde{Z}$ and $\overline{Z}$ are isomorphic
provided $\eps$ is small.
\begin{lemma}
  \label{lem:orthogonality}
Let  $\eps_0$ and $\eta_0$ be sufficiently small positive numbers.
Then for $\eps\in(0,\eps_0)$, there exists an operator
$\Pi:\widetilde{Z}\to  \overline{Z}$
such that
\begin{equation*}
\|\Pi-I\|_{B(\widetilde{Z},\overline{Z})}
+\|\Pi^{-1}-I\|_{B(\overline{Z},\widetilde{Z})}
=O(K^{-1})\,.  
\end{equation*}
\end{lemma}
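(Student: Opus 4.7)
The plan is to construct $\Pi$ by a Lyapunov--Schmidt-type adjustment, comparing the BL resonant dual modes $g_k^*$ with the KP-II ones $g_{0,k}^*$ via Corollary~\ref{lem:resonance2}. Membership in $\widetilde{Z}$ says that, for $|\eta|\le\eps^2\eta_0$ and $k=1,2$,
\[
\langle P(D)\tilde f(\cdot,\cdot),g_k^*(\cdot,\eta)e^{iy\eta}\rangle=0,
\]
while membership in $\overline{Z}$ is the analogous pairing of $\rho_z(D_z)\bar f_2$ against $g_{0,k}^*$ on the KP scale. Expanding $f=P(D)\tilde f$ componentwise, with $f_1=i\mu(D)^{-1}(\tilde f_2-\tilde f_1)$ and $f_2=S(D)(\tilde f_1+\tilde f_2)$, and substituting the asymptotics of Corollary~\ref{lem:resonance2}, one obtains an identity of the form
\[
\langle P(D)\tilde f,g_k^*(\cdot,\eta)\rangle = d_k\,\langle\rho_z(D_z)(\tilde f_1+\tilde f_2), g_{0,k}^*(\eps\cdot,\eps^{-2}\eta)\rangle + R_k(\tilde f,\eta),
\]
with explicit nonzero constants $d_k$, where $R_k$ collects contributions from (i) pairing $g_{k,1}^*$ against $\mu(D)^{-1}(\tilde f_2-\tilde f_1)$, (ii) the $O(\eps^2)$ difference $S(D)-I$ on the support of $\rho_z\rho_y$, and (iii) the $O(\eps^2+\eta^2)$ approximation error in Corollary~\ref{lem:resonance2}.

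Given $\tilde f\in\widetilde{Z}$, I would define
\[
\Pi\tilde f = \tilde f - \sum_{k=1,2}\int_{-\eps^2\eta_0}^{\eps^2\eta_0}\gamma_k(\eta)\begin{pmatrix}0\\ \Psi_k(z,\eta)\end{pmatrix}e^{iy\eta}\,d\eta,
\]
where $\Psi_k(z,\eta)$ is chosen (using $E_\eps^{-1}g_{0,k}(\cdot,\eps^{-2}\eta)$ up to a constant) so as to be biorthogonal to $g_{0,k}^*(\eps\cdot,\eps^{-2}\eta)$, and $\gamma_k(\eta)\in\C$ is determined by the requirement $\Pi\tilde f\in\overline{Z}$. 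Because $\tilde f\in\widetilde{Z}$ annihilates the left-hand side of the displayed identity, the resulting linear system for $(\gamma_1,\gamma_2)$ is governed, up to lower-order terms, by the biorthogonality $\int g_{0,j}\overline{g_{0,k}^*}=\delta_{jk}$ from Section~\ref{subsec:KP}, and yields $\gamma_k(\eta)=d_k^{-1}R_k(\tilde f,\eta)+O(K^{-2})$. The inverse $\Pi^{-1}:\overline{Z}\to\widetilde{Z}$ is constructed symmetrically by correcting an element of $\overline{Z}$ in the BL resonant directions $g_k(\cdot,\eta)$.

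The estimate $\|\Pi-I\|_{B(\widetilde{Z},\overline{Z})}=O(K^{-1})$ thus reduces to showing
\[
\sup_{|\eta|\le\eps^2\eta_0,\ \eps\in(0,\eps_0)}|R_k(\tilde f,\eta)|\lesssim K^{-1}\|\tilde f\|_{Y\times Y}.
\]
The main obstacle is the first piece of $R_k$: on the support of $\rho_z\rho_y$ the operator $\mu(D)^{-1}$ has $L^2_\alpha$-norm of order $\eps^{-1}$, while Corollary~\ref{lem:resonance2} gives $g_{k,1}^*=O(\eps)$; a naive product estimate yields only $O(1)$, not $O(K^{-1})$. The $K^{-1}$ gain must be extracted by exploiting the much tighter pairing range $|\eta|\le\eps^2\eta_0$, which is smaller than the $y$-frequency support $|\eta|\le K(K+\hat{\alpha})\eps^2$ of $\rho_y$ by a factor $K^2$, and by Taylor-expanding $\mu(D)^{-1}g_{k,1}^*(\cdot,\eta)$ around $\eta=0$ using the smoothness of $\eta\mapsto g_k^*(\eps^{-1}\cdot,\eta)$ from Lemma~\ref{lem:resonance1}. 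This, combined with the $O(\eps^2+\eta^2)$ error in Corollary~\ref{lem:resonance2} and $\eta^2\lesssim\eps^4\eta_0^2$, produces the required $O(K^{-1})$ bound and closes the argument.
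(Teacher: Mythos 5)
There is a genuine gap, and it sits exactly where you flagged it. The term you relegate to the remainder $R_k$ — the pairing of $\mu(D)^{-1}(\tf_2-\tf_1)$ against the first component of $g_k^*$ — is not an error term at all: it is a leading-order contribution, and it really is $O(1)$, as your own size count shows. By Corollary~\ref{lem:resonance2} (see \eqref{eq:g1*}, \eqref{eq:g2*}) the first component of $g_k^*$ is, to leading order, exactly $\pd_z$ of its second component (e.g.\ $\eps\theta_\eps'$ versus $\theta_\eps$ on the $\eps$-scale), so after moving $\mu(D)^{-1}$ onto the dual vector and using $i\bar{\mu}(D)^{-1}\approx(\pd_z^*)^{-1}$ (Claim~\ref{cl:P-low}), this "remainder" equals $\la \tf_2,g_{0,k}^*\ra-\la\tf_1,g_{0,k}^*\ra$ up to small errors. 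The $-\la\tf_1,g_{0,k}^*\ra$ piece cancels the $\tf_1$-dependence of your proposed main term $\la\tf_1+\tf_2,g_{0,k}^*\ra$, and the $+\la\tf_2,g_{0,k}^*\ra$ piece doubles the $\tf_2$ part; only after this cancellation does the pairing reduce to a condition on $\tf_2$ alone, which is what the definition of $\overline{Z}$ requires. As written, your identity has the wrong main term (it depends on $\tf_1$, which no map close to the identity can reconcile with the $\barf_2$-only condition defining $\overline{Z}$) and an $O(1)$ remainder, so the resulting $\gamma_k$ are $O(1)$ and $\Pi-I$ is not small.

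Your proposed rescue cannot close this: the problematic pairing has a nonzero limit as $\eta\to0$, so Taylor expansion in $\eta$ and the discrepancy between the pairing range $|\eta|\le\eps^2\eta_0$ and the support of $\rho_y$ gain nothing — the needed mechanism is the exact leading-order cancellation in $z$ described above, not smallness in $\eta$. This is precisely how the paper proceeds: it sets $\Pi=I-\overline{P}(\eta_0)$ and reduces the lemma to $\|P(D)\mathcal{P}(\eps^2\eta_0)P(D)^{-1}-\overline{P}(\eta_0)\|=O(K^{-1})$, which rests on showing $P(D)^{-1}g_k\approx(0,g_{0,k})$ and $P(D)^*g_k^*\approx(0,g_{0,k}^*)$ via Claim~\ref{cl:P-low} together with Corollary~\ref{lem:resonance2}; the vanishing of the first components is exactly the cancellation $i\bar{\mu}(D)^{-1}v_1^*+S(D)^{-1}v_2^*\approx -v_2^*+v_2^*=0$. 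Two secondary omissions: you would still need the Paley--Wiener bound \eqref{eq:Proj-approx3} to control the interaction of the projections with the cutoffs $\rho_z$, $\tilde\rho_z$ (the source of several of the $O(K^{-1})$ and $O(K^{-2}\eps)$ terms), and an $L^2$-in-$\eta$ (rather than pointwise) bound on the coefficients to conclude an operator norm estimate on $Y$.
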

Let 
$W_1=H^1_\a(\R)\times L^2_\a(\R)$, $ W_0=L^2_\a(\R;\C^2)$, 
$W_0^*=L^2_{-\a}(\R;\C^2)$ and $W_1^*=H^{-1}_{-\a}(\R)\times L^2_{-\a}(\R)$.
To prove Lemma~\ref{lem:orthogonality}, we need the following.
\begin{claim}
  \label{cl:P-low}
Let $\hat{\a}\in(0,\hat{\a}_0)$, $\a=\hat{\a}\eps$ and let $\eps_0$ and $\eta_0$ be sufficiently small positive numbers.
If $\eps\in (0,\eps_0)$ and $\eta\in[-\eta_0,\eta_0]$, then
\begin{gather*}
\left\|P(D_z,\eps^2\eta)^{-1}
-\frac12
\begin{pmatrix}
\pd_z & S^{-1}(D_z,\eps^2\eta) \\ -\pd_z  & S^{-1}(D_z,\eps^2\eta)
\end{pmatrix}\right\|_{B(W_1,W_0)}=O(\eps^2\eta^2)\,,
\\
\left\|P^*(D_z,\eps^2\eta)-\begin{pmatrix}
(\pd_z^*)^{-1} & \overline{S}^{-1}(D_z,\eps^2\eta) \\ -(\pd_z^*)^{-1} & \overline{S}^{-1}(D_z,\eps^2\eta)
\end{pmatrix}
\right\|_{B(W_1^*,W_0^*)}=O(\eps^2\eta^2)\,,  
\end{gather*}
where $(\pd_z^*)^{-1}f(z)=-\int_{-\infty}^z f(z_1)\,dz_1$.
\end{claim}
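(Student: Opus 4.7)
The proof is a direct computation with matrix-valued Fourier multipliers on weighted $L^2$ spaces, using the Plancherel characterization \eqref{eq:fm0}.

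First I would invert the $2\times 2$ matrix symbol directly. Since $\det P(\xi, \eta) = -2i\mu(\xi, \eta)^{-1} S(\xi, \eta)$, a straightforward computation from the standard inversion formula gives
$$P(\xi, \eta)^{-1} = \frac{1}{2}\begin{pmatrix} i\mu(\xi, \eta) & S(\xi, \eta)^{-1} \\ -i\mu(\xi, \eta) & S(\xi, \eta)^{-1}\end{pmatrix}.$$
The second-column entries of $P(D_z, \eps^2\eta)^{-1}$ coincide exactly with those of the approximating matrix, so the whole first claim reduces to the single operator estimate
$$\|i\mu(D_z, \eps^2\eta) - \pd_z\|_{B(H^1_\a(\R), L^2_\a(\R))} = O(\eps^2\eta^2).$$

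The key to this estimate is the algebraic identity $\mu(\xi, \eps^2\eta)^2 - \xi^2 = \eps^4\eta^2$, which rearranges to
$$\mu(\xi, \eps^2\eta) - \xi = \frac{\eps^4\eta^2}{\mu(\xi, \eps^2\eta) + \xi}.$$
By \eqref{eq:fm0}, the target operator norm is controlled by $\sup_{\xi\in\R} |\mu(\xi+i\a, \eps^2\eta) - (\xi+i\a)|\,/\,\langle\xi+i\a\rangle$. To bound this, I would use Claim~\ref{cl:mu}: the bounds $\Im \mu(\xi+i\a, \eta) \in [0, \a]$ and $\xi\,\Re \mu(\xi+i\a, \eta) \ge 0$ imply that $\mu(\xi+i\a, \eps^2\eta)$ and $\xi+i\a$ have real parts of the same sign and imaginary parts both nonnegative, so no cancellation in the denominator is possible. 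This yields
$$|\mu(\xi+i\a, \eps^2\eta) + (\xi+i\a)|^2 \ge |\mu(\xi+i\a, \eps^2\eta)|^2 + |\xi+i\a|^2 \gtrsim \max(|\xi+i\a|, \hat\a\eps)^2.$$
Dividing $\eps^4\eta^2$ by this lower bound and then by $\langle\xi+i\a\rangle$ produces a symbol estimate of order $O(\eps^4\eta^2/[\max(|\xi+i\a|,\eps)\langle\xi+i\a\rangle])$, uniformly $O(\eps^3\eta^2)$, which is a fortiori $O(\eps^2\eta^2)$.

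For the adjoint statement, I would compute $P^*(D_z,\eps^2\eta)$ as the Fourier multiplier with symbol equal to the Hermitian transpose of $P(\xi, \eps^2\eta)$; again only the first-column entries require approximation, and one compares $i\,\overline{\mu^{-1}(\xi, \eps^2\eta)}$ with the symbol $i\xi^{-1}$ of $(\pd_z^*)^{-1}$. Using $\mu^{-1} - \xi^{-1} = -\eps^4\eta^2/[\mu\xi(\mu+\xi)]$, evaluated on the shifted line $\xi - i\a$ corresponding to $L^2_{-\a}$, I would apply the same lower-bound argument from Claim~\ref{cl:mu} to each of the three factors in the denominator. The extra power of $\langle\xi\rangle$ produced by the $H^{-1}_{-\a}\to L^2_{-\a}$ operator norm is absorbed by one of the $|\xi|$ factors in $\mu\xi(\mu+\xi)$, producing the analogous symbolic bound. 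The hardest technical point, and the place where the claim can most easily fail, is the uniform lower bound on $|\mu(\xi+i\a,\eps^2\eta) + (\xi+i\a)|$ for $\xi$ near $0$: there $\mu$ is nearly purely imaginary and $|\xi+i\a| = O(\eps)$, so the sign information from Claim~\ref{cl:mu} is exactly what prevents the sum from collapsing and is indispensable.
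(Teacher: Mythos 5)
Your proposal follows the paper's proof essentially verbatim: both invert the symbol to get $P^{-1}=\frac12\bigl(\begin{smallmatrix} i\mu & S^{-1}\\ -i\mu & S^{-1}\end{smallmatrix}\bigr)$, observe that only the $\mu$-entries need approximating, and reduce the claim to the scalar multiplier bounds of \eqref{eq:inv-mu2}. Your derivation of the first bound via $\mu(\xi,\eps^2\eta)^2-\xi^2=\eps^4\eta^2$ together with the sign information of Claim~\ref{cl:mu} is a clean substitute for the paper's route (which Taylor-expands $\mu=(\xi+i\a)\sqrt{1+w}$ with $|w|\le\eps^2\eta^2/\hat{\a}^2$, as in the proof of \eqref{eq:pdon3}), and it even yields the slightly stronger rate $O(\eps^3\eta^2)$. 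Two caveats. First, your remark that for $P^*$ ``only the first-column entries require approximation'' is true for the matrix displayed in the paper's proof, whose second column is $\overline{S}$; the claim as printed has $\overline{S}^{-1}$ there, which must be a typo, since $\overline{S}-\overline{S}^{-1}$ is $O(1)$ at high frequency, not $O(\eps^2\eta^2)$. Second, and more substantively, your assertion that the factor $\la\xi\ra$ coming from the $H^{-1}_{-\a}\to L^2_{-\a}$ norm ``is absorbed by one of the $|\xi|$ factors in $\mu\xi(\mu+\xi)$'' is precisely the point where the argument is loose: near $\xi=0$ one has $\la\xi\ra\sim1$ while each of $|\mu|$, $|\xi+i\a|$, $|\mu+(\xi+i\a)|$ is only bounded below by $\sim\hat{\a}\eps$, so the symbol bound you actually obtain there is $\eps^4\eta^2/\a^3=O(\eps\eta^2)$, not $O(\eps^2\eta^2)$. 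The paper's one-line assertion of the second half of \eqref{eq:inv-mu2} has the same weakness, and $O(\eps\eta^2)$ is still more than enough for the only application (\eqref{eq:Proj-approx2} needs only $O(\eta^2)$), so nothing downstream breaks — but as written your adjoint estimate does not deliver the rate claimed, and you should either record the weaker rate or justify the absorption only on the frequency region where the claim is used.
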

\begin{proof}
In view of \eqref{eq:inv-pd}, \eqref{eq:pdon3} and their proofs,
\begin{equation}
  \label{eq:inv-mu2}
\left\|i\mu(D_z,\eps^2\eta)-\pd_z\right\|_{B(W_1)}
+\left\|i\bar{\mu}(D_z,\eps^2\eta)^{-1}-(\pd_z^*)^{-1}\right\|_{B(W_1^*)}
=O(\eps^2\eta^2)\,.
\end{equation}
Since
$$P(\xi,\eta)^{-1}=\frac12
\begin{pmatrix}
i\mu(\xi,\eta) & S(\xi,\eta)^{-1} \\ -i\mu(\xi,\eta) & S(\xi,\eta)^{-1}  
\end{pmatrix}
\,, \quad
P^*(\xi,\eta)=
\begin{pmatrix}
i\overline{\mu(\xi,\eta)}^{-1} & \overline{S(\xi,\eta)}\\
-i\overline{\mu(\xi,\eta)}^{-1} & \overline{S(\xi,\eta)}
\end{pmatrix}\,,
$$
Claim~\ref{cl:P-low} follows from \eqref{eq:inv-mu2}.
\end{proof}

\begin{proof}[Proof of Lemma~\ref{lem:orthogonality}]
Let $\Pi \tu=\tu-\overline{P}(\eta_0)\tu$ for $\tu\in \widetilde{Z}$. 
To prove Lemma~\ref{lem:orthogonality}, it suffices to show
  \begin{equation}
\label{eq:Proj-KPapprox}
\left\|P(D)\mathcal{P}(\eps^2\eta_0)P(D)^{-1}-\overline{P}(\eta_0)
\right\|_Y=O(K^{-1})\,.
  \end{equation}
See e.g. \cite[Chapter I, Section~4.6]{Kato}.
\par
First, we will show
\begin{equation}
  \label{eq:orth-lowfr}
 \|\mathcal{P}(\eps^2\eta_0)\tilde{\rho}_z(D_z)f\|_{L^2_\a(\R^2)}+ \|\tilde{\rho}_z(D_z)\mathcal{P}(\eps^2\eta_0)f\|_{L^2_\a(\R^2)}
\lesssim K^{-1}\|f\|_{L^2_\a(\R^2)}\,. 
\end{equation}
Let
$$\tc_k(\eta)=\int \tilde{\rho}_z(D_z)\mF_yf(z,\eta)\cdot g_k^*(z,\eta)\,dz\,.$$
Then 
$$\mathcal{P}(\eps^2\eta_0)\tilde{\rho}_z(D_z)f=
\frac{1}{\sqrt{2\pi}}\sum_{k=1,2}\int_{-\eps^2\eta_0}^{\eps^2\eta_0}
\tc_k(\eta)g_k(z,\eta)e^{iy\eta}\,d\eta\,.$$
Since $\|\pd_z^{-1}\tilde{\rho}_z(D_z)\|_{B(L^2_\a(\R))}\le (K\eps)^{-1}$ and
$\sup_{\eta\in[-\eps^2\eta_0,\eps^2\eta_0]}\|\pd_zg_k^*(\cdot,\eta)\|_{L^2_{-\a}(\R)}=O(\eps)$
by Corollary~\ref{lem:resonance2},
\begin{align*}
 |\tc_k(\eta)|=& 
\left| \int \pd_z^{-1}\tilde{\rho}_z(D_z)(\mF_yf)(z,\eta)\cdot
     \pd_zg_k^*(z,\eta)\,dz\right|
\\ \lesssim & K^{-1}\|\mF_yf(\cdot,\eta)\|_{L^2_\a(\R_z)}\,.
\end{align*}
Hence it follows from the Plancherel theorem  and the above that
\begin{align*}
\|\mathcal{P}(\eps^2\eta_0)\tilde{\rho}_z(D_z)f\|_{L^2_\a(\R^2)} 
\lesssim &  
\sum_{k=1,2} \left\|\|c_k(\eta)g_k(x,\eta)\|_{L^2_\a(\R_z)}\right\|_{L^2(-\eps^2\eta_0\le \eta \le\eps^2\eta_0)}
\\ \lesssim & K^{-1}\|f\|_{L^2_\a(\R^2)}\,.
\end{align*}
Similarly, we have $\|\tilde{\rho}_z(D_z)\mathcal{P}(\eps^2\eta_0)f\|_{L^2_\a(\R)^2}\lesssim K^{-1}\|f\|_{L^2_\a(\R^2)}$.
Thus we prove \eqref{eq:orth-lowfr}.
\par
Next, we will show $\rho_z(D_z)P(\eps^2\eta_0)\rho_z(D_z)\simeq\rho_z(D_z)E_\eps^{-1}P_{KP}(\eta_0)E_\eps\rho_z(D_z)$.
By the fact that  $\rho_z(D_z)$ is bounded on $L^2_\a(\R^2)$
and $\|f(\cdot)\|_{L^2_\a(\R)}=\eps^{-1/2}\|f(\eps^{-1}\cdot)\|_{L^2_{\hat{\a}(\R)}}$,
\begin{equation}
  \label{eq:Proj-approx1}
\begin{split}
&  \left\|\rho_z(\eps D_z)\left\{
\eps^{-1}P(\eps D_z,\eps^2\eta)^{-1}g_k(\eps^{-1}\cdot,\eps^2\eta)
- \begin{pmatrix} 0 \\ g_{0,k}(\cdot,\eta)\end{pmatrix}
\right\}\right\|_{L^2_{\hat{\a}}(\R)}
\\ \le  & II_1+II_2+II_3+II_4=O(K^{-2}\eps+\eta^2)\,,
\end{split}  
\end{equation}
where
\begin{align*}
II_1=& \eps^{-3/2}\left\|\left\{P(\eps D_z,\eps^2\eta)^{-1}
-\frac12
\begin{pmatrix}
\pd_z & S^{-1}(D_z,\eps^2\eta) \\ -\pd_z & S^{-1}(D_z,\eps^2\eta)
\end{pmatrix}\right\}g_k(\cdot,\eps^2\eta)\right\|_{L^2_\a}\,,
\\
II_2=& \frac12\eps^{-3/2}\left\|(S^{-1}(D_z,\eps^2\eta)-I)
\begin{pmatrix}0 & 1 \\ 0 & 1\end{pmatrix}
g_k(\cdot,\eps^2\eta)\right\|_{L^2_\a}\,,
\\
II_3=& \eps^{-1}\left\|
\frac12\begin{pmatrix}\pd_z & I \\ -\pd_z & I\end{pmatrix}
g_k\bigl(\eps^{-1}\cdot,\eps^2\eta\bigr)
-
\begin{pmatrix} 0 \\ g_{0,k}(\cdot,0) \end{pmatrix}
\right\|_{L^2_{\hat{\a}}(\R)}\,,
\\ 
II_4= & \|g_{0,k}(\cdot,\eta)-g_{0,k}(\cdot,0)\|_{L^2_{\hat{\a}}(\R)}\,.
\end{align*}
Indeed, it follows from Corollary~\ref{lem:resonance2} that
$II_3=O(\eps^2+\eta^2)$ and that for $k=1$, $2$,
$$\|g_k(\cdot,\eps^2\eta)\|_{L^2_\a(\R)}=O(\eps^{-1/2})\,,
\quad \left\|\begin{pmatrix} 0 & 1 \\ 0 & 1\end{pmatrix}
g_k(\cdot,\eta)\right\|_{L^2_\a(\R)}=O(\eps^{1/2})\,.$$
Combining the above with Claim~\ref{cl:P-low} and \eqref{eq:binv2},
we have $II_1=O(\eta^2)$ and $II_2=O(K^{-2}\eps)$ and
we have $II_4=O(\eta^2)$ from \eqref{eq:g0eta-0}.
We can prove 
\begin{equation}
\label{eq:Proj-approx2}
 \left\|\rho_z(\eps D_z)\left\{
P^*(\eps D_z,\eps^2\eta)g_k^*(\eps^{-1}\cdot,\eps^2\eta)
- \begin{pmatrix}
0 \\ g_{0,k}^*(\cdot,\eta)
  \end{pmatrix}\right\}\right\|_{L^2_{-\hat{\a}}(\R)} 
= O(\eta^2+K^{-2}\eps)
\end{equation}
in the same way.
\par
Since 
\begin{align*}
& P(D_z,\eta)^{-1}\mathcal{P}(\eps^2\eta_0)f\\
=&\sum_{k=1,2}
(2\pi)^{-1/2} \int_{-\eps^2\eta_0}^{\eps^2\eta_0} \la \mF_y\tf(\cdot,\eta), P(D_z,\eta)^*g_k^*(\cdot,\eta)\ra
P(D_z,\eta)^{-1}g_k(x,\eta) e^{iy\eta}\,d\eta
\end{align*}
for $f=P(D)\tf$, we have from \eqref{eq:Proj-approx1}
and \eqref{eq:Proj-approx2} that
\begin{equation}
  \label{eq:Proj-KPapprox2}
\begin{split}
& \left\|\rho_z(D_z)\left\{P(D)^{-1} \mathcal{P}(\eps^2\eta_0)P(D)
-\begin{pmatrix}0 & 0 \\
0 & \rho_z(D_z)E_\eps^{-1}\mathcal{P}_{KP}(\eta_0)E_\eps\rho_z(D_z)
\end{pmatrix}\right\}\rho_z(D_z)\right\|_{B(Y)}
\\=& O(\eta_0^2+K^{-1})\,.
\end{split}  
\end{equation}
\par
Finally, we will prove that for a $\tau_0>0$,
\begin{equation}
  \label{eq:KPproj-cutoff}
\|\tilde{\rho_z}(\eps D_z)\mathcal{P}_{KP}(\eta_0)\|_{B(L^2_{\hat{\a}})}
+\|\mathcal{P}_{KP}(\eta_0)\tilde{\rho_z}(\eps D_z)\|_{B(L^2_{\hat{\a}})}
=O(e^{-\tau_0K})\,.
\end{equation}
Since $\tilde{g}_0(z,\eta):=e^{\hat{\a}z}g_0(z,\eta)$ and $\tilde{g}_0^*(z,\eta)=e^{-\hat{\a}z}g_0^*(z,\eta)$ are analytic on $\{z\in\C\mid |\Im z|<\hat{\a}_0\}$ and
$\sup_{\tau\in [-\tau_0,\tau_0]}(\|\tilde{g_0}(z+i\tau,\eta)\|_{L^1(\R_z)}+\|\tilde{g}_0^*(z+i\tau,\eta)\|_{L^1(\R_z)})<\infty$ for any $\tau_0\in[0,\hat{\a}_0)$
and $\eta\in[-\eta_0,\eta_0]$,
it follows from the Paley-Wiener theorem that there exists a $C_{\tau_0}$
for any $\tau_0\in[0,\hat{\a})$ such that
\begin{equation}
  \label{eq:Proj-approx3}
 \sup_{\eta\in[-\eta_0,\eta_0]}\left(
|\mF_z\tilde{g}_0(\xi,\eta)|+|\mF_z\tilde{g}_0^*(\xi,\eta)|\right)
\le C_{\tau_0}e^{-\tau_0 |\xi|}\,.
\end{equation}
By \eqref{eq:KPproj-cutoff} and the definition of $\mathcal{P}_{KP}(\eta_0)$,
we have \eqref{eq:KPproj-cutoff}.
Combining \eqref{eq:orth-lowfr}, \eqref{eq:Proj-KPapprox2} and
\eqref{eq:KPproj-cutoff}, we have \eqref{eq:Proj-KPapprox}.
Thus we complete the proof.
\end{proof}

Next, we will show that $(\lambda-\mL)^{-1}|_Z$ is uniformly bounded in $\lambda\in\Omega_\eps$.
\begin{lemma}
  \label{lem:low-frequencies}
Let $c$, $\a$ and $\eps_0$ be as in Lemma~\ref{lem:high frequencies}.
Then there exists a positive constant $C$ such that
\begin{equation*}
\sup_{\lambda\in \Omega_\eps}\|(\lambda-\mL)^{-1}f \|_X \le C\|f\|_X
\quad\text{for any $f\in Z$.}
\end{equation*}
\end{lemma}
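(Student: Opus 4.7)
The strategy is to diagonalize the free operator $\mL_0$ via the matrix symbol $P(D)$ from \eqref{eq:P-def}, so that the resolvent equation $(\lambda-\mL)u=f$ decouples into a fast mode governed by $\lambda_+(D)$ and a slow mode that, after rescaling by $E_\eps$, is a small perturbation of $\Lambda-\mL_{KP}$. Proposition~\ref{prop:KPII}, together with the translated orthogonality from Lemma~\ref{lem:orthogonality}, then supplies the uniform bound on the slow mode.

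Setting $u=P(D)\tu$ with $\tu={}^t(\tu_1,\tu_2)$, the equation becomes
\begin{equation*}
\begin{pmatrix} \lambda-\lambda_+(D) & 0 \\ 0 & \lambda-\lambda_-(D) \end{pmatrix}\tu
 -P(D)^{-1}VP(D)\,\tu = P(D)^{-1}f=:\tf.
\end{equation*}
Because $P(D)$ is an isomorphism on $\rho_y(D_y)X$ for $\eps$ small (its symbol is bounded away from singularity on $\supp\rho_y$), it suffices to control $\tu$ in $L^2_\a\times L^2_\a$ in terms of $\tf$. The fast component $\tu_1$ is handled by \eqref{eq:l+inv} of Lemma~\ref{cl:inv-lambda}: $(\lambda-\lambda_+(D))^{-1}$ is of order $\eps^{-1}$, but the coupling to $\tu_2$ enters through entries of $P(D)^{-1}VP(D)$ that carry the $O(\eps^2)$ gain from $q_c$ (Claim~\ref{cl:qc-size}), so $\tu_1$ can be expressed in terms of $\tf_1$ and $\tu_2$ via a convergent Neumann series.

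The slow component $\tu_2$ is the heart of the argument. I split $\tu_2=\tilde\rho_z(D_z)\tu_2+\rho_z(D_z)\tu_2$. The high-$z$-frequency piece is controlled by \eqref{eq:l-inv2} and \eqref{eq:mu-res2'} of Lemma~\ref{cl:inv-lambda} directly, since the $K^{-2}\eps^{-3}$ bound on $(\lambda-\lambda_-(D))^{-1}$ restricted to $Y_{high}$ is more than absorbed by the $O(\eps^2)$ size of $V$. For the low-$z$-frequency piece $\rho_z(D_z)\tu_2$, I rescale by $E_\eps$: the expansion \eqref{eq:lmm-low2} of Lemma~\ref{lem:lm-} shows $\eps^{-3}\lambda_-(D)\to\mL_{KP,0}$ in symbol norm on $\widetilde A_{low}$, and the relevant block of $P(D)^{-1}VP(D)$ in the rescaled variables converges to $-\tfrac32\pd_{\hat z}(\theta_0\,\cdot)$, so $\eps^{-3}$ times the slow block converges to $\mL_{KP}$. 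By Lemma~\ref{lem:orthogonality}, the condition $f\in Z$ translates, up to an $O(K^{-1})$ error, into $\mathcal{P}_{KP}(\eta_0)E_\eps\rho_z(D_z)\tu_2=0$; Proposition~\ref{prop:KPII} then yields a uniform bound of $(\Lambda-\mL_{KP})^{-1}\mathcal{Q}_{KP}(\eta_0)$ for $\Re\Lambda\ge-\hat\beta$, which after undoing the rescaling gives a uniform-in-$\eps$ bound on $\rho_z(D_z)\tu_2$ of the desired order.

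The main obstacle is bookkeeping the couplings between the fast and slow channels and between the high- and low-$z$-frequency parts of the slow channel. The off-diagonal blocks of $P(D)^{-1}VP(D)$ involve operators like $\mu(D)^{-1}$ and $S(D)^{\pm1}$, and one must verify, using \eqref{eq:mu-res2'}--\eqref{eq:mu-res3} together with Claim~\ref{cl:qc-size}, that the combined loss of $\eps^{-1}$ or $\eps^{-3}$ from the relevant resolvent is overcome by the $\eps^2$ gain from $V$ and/or by the $K^{-1}$ smallness from Lemma~\ref{lem:orthogonality}. Once all cross terms are shown to contract as $\eps\to0$, a Schur-complement / Neumann-series argument closes the estimate uniformly in $\lambda\in\Omega_\eps$. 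The delicacy lies in ensuring that the error from replacing $Z$ by $\overline Z$ (which is merely $O(K^{-1})$) does not interact badly with the $\eps^{-3}$ scale of the slow-mode resolvent — this is why the choice $K=\delta^{-3}$ with $\delta=\eps^{1/20}$ is essential.
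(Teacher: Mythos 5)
Your proposal follows essentially the same route as the paper: diagonalize $\mL_0$ by $P(D)$, transfer the orthogonality condition via Lemma~\ref{lem:orthogonality}, split the $\lambda_-$ channel into high and low $z$-frequencies, control the low-frequency slow block by the KP-II approximation together with Proposition~\ref{prop:KPII} and the high-frequency and fast blocks by Lemma~\ref{cl:inv-lambda}, and close with a Schur/Neumann iteration. The paper packages the $O(K^{-1})$ mismatch between $Z$ and $\overline Z$ as explicit commutator terms $\barr_{ij}$ by conjugating with $\Pi$ from the start, but this is only a bookkeeping difference from your "up to $O(K^{-1})$ error" formulation.
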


Let $f\in Z$ and
\begin{equation}
  \label{eq:ftfutu}
\baru={}^t\!(\baru_1,\baru_2):=\Pi P(D)^{-1}u\,,\quad
\barf:={}^t\!(\barf_1,\barf_2)=\Pi P(D)^{-1}f\,.
\end{equation}
Then $\barf \in\overline{Z}$ and \eqref{eq:ev1} is translated into
\begin{equation}
  \label{eq:ev2}
\left\{
\begin{aligned}
& (\lambda-\lambda_+(D)-a_1-\barr_{11})\baru_1-(a_2+\barr_{12})\baru_2=\barf_1\,,\\
& (\lambda-\lambda_-(D)-a_2-\barr_{22})\baru_2-(a_1+\barr_{21})\baru_1= \barf_2\,,
\end{aligned}
\right.
\end{equation}
where 
\begin{gather*}
a_1= \frac{i}{2}B^{-1}S(D)^{-1}v_{1,c}\mu(D)^{-1}
-\frac{1}{2}B^{-1}S(D)^{-1}v_{2,c}S(D)\,,
\\
a_2=-\frac{i}{2}B^{-1}S(D)^{-1}v_{1,c}\mu(D)^{-1}
-\frac{1}{2}B^{-1}S(D)^{-1}v_{2,c}S(D)\,,
\\
\begin{pmatrix}
  \barr_{11} & \barr_{12} \\ \barr_{21} & \barr_{22}
\end{pmatrix}
= \left[\Pi,
  \begin{pmatrix}
    \lambda_+(D)+a_1 & a_2 \\ a_1 & \lambda_-(D)+a_2
  \end{pmatrix}
\right]\Pi^{-1}\,.
\end{gather*}

We decompose $\barf_2$ and $\baru_2$ into the high frequency part
and the low frequency part.
Let $\baru_{2,h}=\tilde{\rho}_z(D_z)\baru_2$, $\baru_{2,\ell}=\rho_z(D_z)\baru_2$, 
$\barf_{2,h}=\tilde{\rho}_z(D_z)\baru_2$ and $\barf_{2,\ell}=\rho_z(D_z)\barf_2$.
Then
\begin{equation*}
\left\{\lambda I-
  \begin{pmatrix}
\lambda_+(D) & 0 & 
\\ 0 & \lambda_-(D) & 0
\\ 0 & 0 & \lambda_-(D)
  \end{pmatrix}-A\right\}
  \begin{pmatrix}
    \baru_1 \\ \baru_{2,h} \\ \baru_{2,\ell}
  \end{pmatrix}
=
\begin{pmatrix}
  \barf_1 \\ \barf_{2,h} \\ \barf_{2,\ell}
\end{pmatrix}\,,
\end{equation*}
where $$A=
\begin{pmatrix}
a_1+\barr_{11} & a_2+\barr_{12} & a_2+\barr_{12}
\\ \tilde{\rho}_z(D_z)(a_1+\barr_{21})
& \tilde{\rho}_z(D_z)(a_2+\barr_{22})\tilde{\rho}_z(D_z)
& \tilde{\rho}_z(D_z)(a_2+\barr_{22})\rho_z(D_z)
\\  \rho_z(D_z)(a_1+\barr_{21}) & \rho_z(D_z)(a_2+\barr_{22})\tilde{\rho}_z(D_z) & \rho_z(D_z)(a_2+\barr_{22})\rho_z(D_z)
\end{pmatrix}\,.$$
To estimate $\baru_{2,h}$ and $\baru_{2,\ell}$, we need the following.
\begin{lemma}
  \label{lem:a2-res-bound}
Let $\hat{\a}\in(0,\hat{\a}_0/2)$,$\a=\hat{\a}\eps$  and $\Omega_\eps$ be
as in Lemma~\ref{cl:inv-lambda}.
There exists an $\eps_0>0$ such that 
\begin{gather}
  \label{eq:A2-b1}
\sup_{\lambda\in\Omega_\eps\;\eps\in(0,\eps_0)}
\left\|a_2(\lambda-\lambda_-(D))^{-1}\rho_z(D_z)\right\|_{B(Y)}<\infty\,,\\
  \label{eq:a2-res1}
\|a_2(\lambda-\lambda_-(D))^{-1}\tilde{\rho}_z(D_z)\|_{B(Y)}=O(K^{-1})\,.
\end{gather}
\end{lemma}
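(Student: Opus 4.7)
The plan is to decompose
\[
a_2 = -\tfrac{i}{2}B^{-1}S(D)^{-1}v_{1,c}\mu(D)^{-1} - \tfrac{1}{2}B^{-1}S(D)^{-1}v_{2,c}S(D)
\]
and, for each summand, estimate the composition with $(\lambda-\lambda_-(D))^{-1}\rho_z(D_z)$ and $(\lambda-\lambda_-(D))^{-1}\tilde\rho_z(D_z)$ separately. The strategy is to rearrange each expression into a finite sum of products of (i) bounded Fourier multipliers, (ii) multiplication operators $f\in\{q_c,q_c',r_c,r_c'\}$ of $L^\infty$-norm $O(\eps^2)$ or $O(\eps^3)$ by Claim~\ref{cl:qc-size}, and (iii) one of the three resolvent-type operators controlled by Lemma~\ref{cl:inv-lambda}: $(\lambda-\lambda_-(D))^{-1}$, $B^{-1}\mu(D)(\lambda-\lambda_-(D))^{-1}$, or $B^{-1}\pd_z(\lambda-\lambda_-(D))^{-1}$.

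The algebraic preparation will rely on three identities. First, since $\Delta = -\mu(D)^2$ as a Fourier multiplier on $L^2_\a(\R^2)$, I would rewrite
\[
v_{1,c}\mu(D)^{-1} = 2r_c'\pd_z\mu(D)^{-1} - r_c\mu(D),
\]
where, by a direct symbol computation on $\supp \rho_y(\eta)\subset\{|\eta|\le\a/2\}$ (valid once $K(K+\hat\a)\eps\le \hat\a/2$), one has $|\mu(\xi+i\a,\eta)|^2\ge \a^2/2$ and hence $\pd_z\mu(D)^{-1}$ is a uniformly bounded Fourier multiplier on $Y$, while $\|\mu(D)^{-1}\|_{B(Y)}=O(\eps^{-1})$. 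Second, since $S(D)$ commutes with $\pd_z$ and $B$, I would write $v_{2,c}S(D) = 2q_cS(D)\pd_z + q_c'S(D)$. Third, whenever an unbounded factor $\mu(D)$ or $\pd_z$ remains attached to the resolvent, I will commute $B^{-1}$ past multiplication by $f\in\{q_c,r_c\}$ via
\[
B^{-1}f = fB^{-1} + B^{-1}(bf''+2bf'\pd_z)B^{-1},
\]
which follows from $[f,B]=[f,-b\Delta]=bf''+2bf'\pd_z$ for $y$-independent $f$; the commutator correction carries an additional factor $\eps$ compared with the leading term $fB^{-1}$.

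After these manipulations each contribution takes the schematic form $M\cdot R$ with $\|M\|\lesssim \eps^k$ for $k\in\{2,3,4\}$ and $R$ one of the three resolvent-type operators. On $Y_{low}$ the bound $\|(\lambda-\lambda_-(D))^{-1}\|\lesssim \eps^{-3}$ from \eqref{eq:l-inv} and the bounds $\|B^{-1}\mu(D)(\lambda-\lambda_-(D))^{-1}\|_{B(Y_{low})}, \|B^{-1}\pd_z(\lambda-\lambda_-(D))^{-1}\|_{B(Y_{low})}\lesssim \eps^{-2}$ from \eqref{eq:mu-res3} are exactly cancelled by the $O(\eps^2)$ or $O(\eps^3)$ multiplication prefactors, which produces the uniform bound \eqref{eq:A2-b1}. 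On $Y_{high}$ the sharper estimates \eqref{eq:l-inv2} and \eqref{eq:mu-res2'} are smaller by factors $K^{-2}$ and $K^{-1}$, so the same algebra gives \eqref{eq:a2-res1}. The main obstacle is the term $B^{-1}S(D)^{-1}r_c\mu(D)(\lambda-\lambda_-(D))^{-1}\tilde\rho_z(D_z)$: on $Y_{high}$ the Fourier multiplier $\mu(D)$ is unbounded, so a naive composition fails. The commutator identity for $B^{-1}r_c$ rewrites this term as $S(D)^{-1}r_c\cdot B^{-1}\mu(D)(\lambda-\lambda_-(D))^{-1}\tilde\rho_z(D_z)$ plus a lower-order correction, after which \eqref{eq:mu-res2'} together with $\|r_c\|_{L^\infty}=O(\eps^2)$ yields the required $O(K^{-1})$ bound; the analogous maneuver, using in addition $S(D)B=BS(D)$, handles the $v_{2,c}$ contribution $B^{-1}S(D)^{-1}q_cS(D)\pd_z(\lambda-\lambda_-(D))^{-1}\tilde\rho_z(D_z)$.
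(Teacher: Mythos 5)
Your proposal is correct and follows essentially the same route as the paper: the paper likewise strips off the bounded factor $S(D)^{-1}$, rewrites $B^{-1}v_{1,c}\mu(D)^{-1}=c(q_c-B^{-1}[B,q_c])B^{-1}\mu(D)-2cB^{-1}q_c'\pd_z\mu(D)^{-1}$ (your $\Delta\mu(D)^{-1}=-\mu(D)$ plus commutator identity), and then closes the estimate with Claim~\ref{cl:qc-size}, Claim~\ref{cl:prop-B,S} and the resolvent bounds \eqref{eq:l-inv}, \eqref{eq:l-inv2}, \eqref{eq:mu-res2'}, \eqref{eq:mu-res3} exactly as you describe.
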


\begin{lemma}
  \label{lem:res-bound}
Let $\hat{\a}\in(0,\hat{\a}_0/2)$ and $\a=\hat{\a}\eps$.
Let $\hat{\beta}$ be a small positive number and
$\Omega_\eps$ be as in Lemma~\ref{cl:inv-lambda}.
There exist positive constants $\eps_0$ and $\eta_0$ such that if
$\eps\in(0,\eps_0)$,
\begin{equation*}
\sup_{\lambda\in\Omega_\eps\,,\;\eps\in(0,\eps_0)}
\left\|\rho_z(D_z)\{I-(a_2+\barr_{22})(\lambda-\lambda_-(D))^{-1}\}^{-1}
\rho_z(D_z) E_\eps^{-1}\mathcal{Q}_{KP}(\eta_0)E_\eps\rho_z(D_z)
\right\|_{B(Y)}<\infty\,.
\end{equation*}
\end{lemma}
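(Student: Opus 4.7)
The plan is to conjugate the resolvent-type operator by the KP-II rescaling $E_\eps$ and, on the low-frequency subspace, reduce it to $(\Lambda-\mL_{KP})(\Lambda-\mL_{KP,0})^{-1}$ with $\Lambda=\eps^{-3}\lambda$, after which Proposition~\ref{prop:KPII} applied on the range of $\mathcal{Q}_{KP}(\eta_0)$ yields the required uniform bound. Note that $\lambda\in\Omega_\eps$ translates to $\Re\Lambda\ge-\hat{\beta}$.

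First I would establish the KP-II approximation at the operator level. On the low-frequency set $\widetilde{A}_{low}$, Lemma~\ref{lem:lm-} \eqref{eq:lmm-low2} combined with Lemma~\ref{cl:inv-lambda} \eqref{eq:mu-res3} gives
$$E_\eps\bigl(\eps^{-3}(\lambda-\lambda_-(D))^{-1}\rho_z(D_z)\bigr)E_\eps^{-1}=(\Lambda-\mL_{KP,0})^{-1}\rho_{low}+O(K^8\eps^2)$$
in $B(L^2_{\hat{\a}}(\R^2))$, where $\rho_{low}$ denotes the rescaled low-frequency cutoff. Likewise, using the expansions \eqref{eq:mu-low}--\eqref{eq:S-low}, the asymptotics \eqref{eq:theta-approx}, and the KP-II scalings of $v_{1,c},v_{2,c}$ from \eqref{eq:KP2-scale1}--\eqref{eq:KP2-scale2}, a direct computation yields $E_\eps(\eps^{-3}a_2)E_\eps^{-1}\rho_{low}=-\tfrac{3}{2}\pd_{\hat{z}}(\theta_0\,\cdot\,)\rho_{low}+O(\eps^2)$. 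Combined with the bound $\|\barr_{22}(\lambda-\lambda_-(D))^{-1}\rho_z(D_z)\|_{B(Y)}=O(K^{-1})$, deduced from $\|\Pi-I\|=O(K^{-1})$ in Lemma~\ref{lem:orthogonality} together with Lemma~\ref{cl:inv-lambda}, this produces
$$E_\eps\bigl\{I-(a_2+\barr_{22})(\lambda-\lambda_-(D))^{-1}\bigr\}\rho_z(D_z)E_\eps^{-1}=(\Lambda-\mL_{KP})(\Lambda-\mL_{KP,0})^{-1}\rho_{low}+R(\Lambda)$$
with $\|R(\Lambda)\|_{B(L^2_{\hat{\a}})}=O(K^{-1}+K^8\eps^2)$ uniformly in $\Lambda$.

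Next I would use the algebraic identity
$$(\Lambda-\mL_{KP,0})(\Lambda-\mL_{KP})^{-1}=I-\tfrac{3}{2}\pd_{\hat{z}}(\theta_0\,\cdot\,)(\Lambda-\mL_{KP})^{-1},$$
which together with Proposition~\ref{prop:KPII} shows that $(\Lambda-\mL_{KP,0})(\Lambda-\mL_{KP})^{-1}\mathcal{Q}_{KP}(\eta_0)$ is uniformly bounded in $B(L^2_{\hat{\a}}(\R^2))$ for $\Re\Lambda\ge-\hat{\beta}$. Choosing $K$ large and then $\eps_0$ small enough that $R(\Lambda)$ is dominated by the reciprocal of this bound, a Neumann series argument gives the required uniform inversion of $I-(a_2+\barr_{22})(\lambda-\lambda_-(D))^{-1}$ restricted to the image of $\rho_z(D_z)E_\eps^{-1}\mathcal{Q}_{KP}(\eta_0)E_\eps\rho_z(D_z)$, as claimed.

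I expect the main obstacle to be the commutator estimate for $\barr_{22}=[\Pi,\lambda_-(D)+a_2]\Pi^{-1}$: since $\lambda_-(D)$ is unbounded, the estimate $\Pi-I=O(K^{-1})$ from Lemma~\ref{lem:orthogonality} alone does not give the needed smallness of $\barr_{22}(\lambda-\lambda_-(D))^{-1}\rho_z(D_z)$. To handle this I would use the explicit form $\Pi=I-\overline{P}(\eta_0)$, where $\overline{P}(\eta_0)$ is a finite-rank-in-$y$ projection onto the KP-II resonant modes $g_{0,k}, g_{0,k}^*$, and exploit their exponential Fourier decay \eqref{eq:Proj-approx3} to compute $[\Pi,\lambda_-(D)+a_2]$ directly, producing an operator of size $O(K^{-1}\eps^3)$ in the relevant norm; this exactly compensates the $\eps^{-3}$ coming from $(\lambda-\lambda_-(D))^{-1}$ on low frequencies.
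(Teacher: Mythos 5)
Your treatment of the main term coincides with the paper's: approximate $\rho_z(D_z)a_2(\lambda-\lambda_-(D))^{-1}\rho_z(D_z)$ by $-\tfrac32 E_\eps^{-1}\pd_{\hat z}\{\theta_0(\Lambda-\mL_{KP,0})^{-1}\}E_\eps$ (this is exactly \eqref{eq:KP-approx}, obtained from \eqref{eq:a,l-l} and \eqref{eq:lmm-low2}), recognize $I+\tfrac32\pd_{\hat z}\{\theta_0(\Lambda-\mL_{KP,0})^{-1}\}=(\Lambda-\mL_{KP})(\Lambda-\mL_{KP,0})^{-1}$, invert on the range of $\mathcal{Q}_{KP}(\eta_0)$ via Proposition~\ref{prop:KPII}, and absorb the remainder by a Neumann series. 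That part is fine.

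The gap is in your handling of $\barr_{22}$. You correctly observe that $\|\Pi-I\|=O(K^{-1})$ is useless here because $(\lambda-\lambda_-(D))^{-1}$ costs $\eps^{-3}$, but the repair you propose does not work: exponential Fourier decay of $g_{0,k}$, $g_{0,k}^*$ (i.e.\ \eqref{eq:Proj-approx3}) only controls the errors coming from the frequency cutoffs $\rho_z$ (that is the content of \eqref{eq:KPproj-cutoff}, and it is exponentially small in $K$); it says nothing about the bulk of the commutator $[\lambda_-(D)+a_2,\overline{P}(\eta_0)]$. Estimating that commutator ``directly'' term by term gives only $O(\eps^3)$ (each of $(\lambda_-(D)+a_2)\overline{P}$ and $\overline{P}(\lambda_-(D)+a_2)$ is already of size $\eps^3$ on $\widetilde{A}_{low}$, and nothing in the Fourier decay produces an extra $K^{-1}$), so after composing with $(\lambda-\lambda_-(D))^{-1}$ you only get $O(1)$ — not the smallness your Neumann series requires. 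The extra two powers of $\eps$ come from a cancellation you never invoke: $\mathcal{P}_{KP}(\eta_0)$ commutes exactly with $\mL_{KP}$, so one may replace $\lambda_{-,\eps}(D)+a_{2,\eps}$ in the commutator by its difference from $\mL_{KP}$, which acts on the resonant modes with norm $O(K^8\eps^2)$ by \eqref{eq:proj-approx1}. This is how the paper obtains $\|\barr_{22}\|_{B(Y)}=O(K^5\eps^5)$ in Claim~\ref{cl:tr-bound}, hence $\|\barr_{22}(\lambda-\lambda_-(D))^{-1}\|=O(K^5\eps^2)$, which is what the Neumann series actually needs. Without appealing to $[\mL_{KP},\mathcal{P}_{KP}(\eta_0)]=0$ (or an equivalent approximate-eigenfunction property of $g_{0,k}$, $g_{0,k}^*$ for $\lambda_{-,\eps}(D)+a_{2,\eps}$), your claimed bound $O(K^{-1}\eps^3)$ is unsubstantiated and the argument does not close.
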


\begin{proof}[Proof of Lemma~\ref{lem:a2-res-bound}]
By \eqref{eq:S} and the definition of $a_2$,
\begin{align*}
\|a_2(\lambda-\lambda_-(D))^{-1}\tilde{\rho}_z(D_z)\|_{B(Y)}
\lesssim & \|B^{-1}v_{1,c}\mu(D)^{-1}
(\lambda-\lambda_-(D))^{-1}\tilde{\rho}_z(D_z)\|_{B(Y)}
\\ & +\|B^{-1}v_{2,c}(\lambda-\lambda_-(D))^{-1}\tilde{\rho}_z(D_z)\|_{B(Y)}\,.  
\end{align*}
Since
\begin{equation}
  \label{eq:v1c-alt'}
B^{-1}v_{1,c}\mu(D)^{-1}=
c\left\{(q_c-B^{-1}[B,q_c])B^{-1}\mu(D)\right\}
-2cB^{-1}q_c'\pd_z\mu(D)^{-1}\,,  
\end{equation}
it follows from Claims~\ref{cl:qc-size}--\ref{cl:prop-B,S} and
Lemma~\ref{cl:inv-lambda} that
\begin{align*}
& \|B^{-1}v_{1,c}\mu(D)^{-1}(\lambda-\lambda_-(D))^{-1}\tilde{\rho}_z(D_z)\|_{B(Y)}
\\ \lesssim & \eps^2\|B^{-1}\mu(D)(\lambda-\lambda_-(D))^{-1})\|_{B(Y_{high})}
+\eps^3\|(\lambda-\lambda_-(D))^{-1}\|_{B(Y_{high})}
\\ =O(K^{-1})\,.
\end{align*}
We can prove
\begin{align*}
& \|B^{-1}v_{2,c}(\lambda-\lambda_-(D))^{-1}\tilde{\rho}_z(D_z)\|_{B(Y)}
\\ \lesssim & 
\eps^3\|(\lambda-\lambda_-(D))^{-1})\|_{B(Y_{high})}
+\eps^2\|B^{-1}\pd_z(\lambda-\lambda_-(D))^{-1}\|_{B(Y_{high})}
\\=& O(K^{-1})  
\end{align*}
in the same way. Thus we prove \eqref{eq:a2-res1}.
\par
Next, we will show \eqref{eq:A2-b1}.
As in the proof of \eqref{eq:a2-res1}, we have
\begin{align*}
 \|a_2(\lambda-\lambda_-(D))^{-1}\rho_z(D_z)\|_{B(Y)}
 \lesssim & \eps^2\|\mu(D)(\lambda-\lambda_-(D))^{-1}\|_{B(Y_{low})}
\\ & +\eps^2\|\pd_z(\lambda-\lambda_-(D))^{-1}\|_{B(Y_{low})}
 +\eps^3\|(\lambda-\lambda_-(D))^{-1}\|_{B(Y)}\,.
\end{align*}
Combining the above with Lemma~\ref{cl:inv-lambda}, we have \eqref{eq:A2-b1}.
Thus we complete the proof.
\end{proof}
\begin{proof}[Proof of Lemma~\ref{lem:res-bound}]
To prove  Lemma~\ref{lem:res-bound}, we approximate
$\lambda_-(D)+a_2$ by $\mL_{KP}$ and apply Proposition~\ref{prop:KPII}.
Let $E_\eps: L^2_\a(\R^2)\to L^2_{\hat{\a}}(\R^2)$ be an isomorphism defined by 
$(E_\eps f)(x,y):=\eps^{-3/2}f(x/\eps,y/\eps^2)$,
$a_{2,\eps}=\eps^{-3}E_\eps a_2 E_{\eps}^{-1}$ and 
$\lambda_{-,\eps}(\xi,\eta)=\eps^{-3}\lambda_-(\eps\xi,\eps^2\eta)$.
Then
\begin{equation*}
\left\|\rho_z(D_z)\left\{a_2(\lambda-\lambda_-(D))^{-1}
+\frac{3}{2}E_\eps^{-1}\pd_z (\theta_0\cdot)(\Lambda -\mL_{KP,0})^{-1}E_\eps
\right\}\rho_z(D_z)\right\|_{B(Y)}
\le  III_1+III_2\,,
\end{equation*}
where $\rho_{KP}(\xi,\eta)=\rho_z(\eps\xi)\rho_y(\eps^2\eta)$ and
\begin{align*}
III_1=& \left\|\rho_{KP}(D)\left\{
a_{2,\eps}+\frac{3}{2}\pd_z(\theta_0\cdot)\right\}\rho_{KP}(D)\right
\|_{B(L^2_{\hat{\a}})} \|(\Lambda-\lambda_{-,\eps}(D))^{-1}\|_{B(L^2_{\hat{\a}})}
\\ III_2=&  \frac{3}{2}
\left\|\rho_{KP}(D)\pd_z(\theta_0\cdot)
\{(\Lambda-\mL_{KP,0})^{-1}-(\Lambda-\lambda_{-,\eps}(D))^{-1}\}
\rho_{KP}(D)\right\|_{B(L^2_{\hat{\a}})}\,.
\end{align*}
By \eqref{eq:l-inv} and \eqref{eq:a,l-l}, we have $III_1=O(K^5\eps^2)$.
By \eqref{eq:fm0},
\begin{equation*}
III_2 \lesssim  \sup_{(\xi,\eta)\in \widetilde{A}_{low}}
\frac{(1+|\xi+i\hat{\a}|)
|\lambda_{-,\eps}(\xi+i\hat{\a},\eta)-\mL_{KP,0}(\xi+i\hat{\a},\eta)|}
{|\Lambda-\mL_{KP,0}(\xi+i\hat{\a},\eta)|
|\Lambda-\lambda_{-,\eps}(\xi+i\hat{\a},\eta)|}\,.  
\end{equation*}
Since 
$$|\lambda_{-,\eps}(\xi+i\hat{\a},\eta)-\mL_{KP,0}(\xi+i\hat{\a},\eta)|
=O(K^8\eps^2)$$
by \eqref{eq:lmm-low2} and 
$\sup_{\Re\Lambda \ge -\hat{\beta}/2\,,\;(\xi,\eta)\in\R^2}
(1+|\xi|) |\Lambda-\mL_{KP,0}(\xi+i\hat{\a},\eta)|^{-1}<\infty$
thanks to Lemma~\ref{lem:free-KP},  we have 
\begin{align*}
III_2  \lesssim  K^8\eps^2\,.
\end{align*}
Thus we have
\begin{equation}
  \label{eq:KP-approx}
\left\|
\rho_z(D_z)a_2(\lambda-\lambda_-(D))^{-1}\rho_z(D_z)
+ \frac{3}{2} E_\eps^{-1}
\pd_{\hat{z}}\left\{\theta_0(\Lambda-\mL_{KP,0})^{-1}\right\}E_\eps
\right\|_{B(Y)}=O(K^8\eps^2)\,.
\end{equation}
By Lemma~\ref{cl:inv-lambda} and Claim~\ref{cl:tr-bound}, we have
$$\|\barr_{22}(\lambda-\lambda_-(D))^{-1}\|_Y=O(K^5\eps^2)\,.$$
Combining the above with Proposition~\ref{prop:KPII} and \eqref{eq:KP-approx},
we obtain Lemma~\ref{lem:res-bound}. Thus we complete the proof.
\end{proof}

Now we are in position to prove Lemma~\ref{lem:low-frequencies}.
\begin{proof}[Proof of Lemma~\ref{lem:low-frequencies}]
By Lemma~\ref{cl:inv-lambda}, Claims~\ref{cl:V-bound12} and \ref{cl:tr-bound},
$$\|(\lambda-\lambda_+(D)-a_1-\barr_{11})^{-1}\|_{B(Y)}=O(\eps^{-1})\,,$$
\begin{equation}
  \label{eq:tu1-bound}
\|\baru_1\|_Y\lesssim \eps^{-1}\|\barf_1\|_Y+\eps(\|\baru_{2,h}\|_Y
+\|\baru_{2,\ell}\|_Y)\,.
\end{equation}
\par
Since
\begin{equation*}
\|\rho_z(D_z)(\lambda-\lambda_-(D)-a_2-\barr_{22})^{-1}
\rho_z(D_z)E_\eps^{-1}\mathcal{Q}_{KP}(\eta_0)E_\eps\rho_z(D_z)\|_{B(Y)}
=O(\eps^{-3}) 
\end{equation*} 
by Lemmas~\ref{cl:inv-lambda} and \ref{lem:res-bound},
$$\|\baru_{2,\ell}\|_Y\lesssim 
\eps^{-3}\|\barf_{2,\ell}\|_Y+K(\|\baru_{2,h}\|_Y+\|\baru_1\|_Y)$$
follows from Claims~\ref{cl:V-bound12} and \ref{cl:tr-bound}.
Furthermore, Lemmas~\ref{lem:a2-res-bound}, \ref{lem:res-bound} 
and Claim~\ref{cl:tr-bound} imply 
\begin{gather*}
\|(a_2+\barr_{22})
\rho_z(D_z)(\lambda-\lambda_-(D)-a_2-\barr_{22})^{-1}
\rho_z(D_z)E_\eps^{-1}\mathcal{Q}_{KP}(\eta_0)E_\eps\rho_z(D_z)\|_{B(Y)}=O(1)\,,
\\
\|(a_2+\barr_{22})\baru_{2,\ell}\|_Y\lesssim
\|\barf_{2,\ell}\|_Y+K\eps^3(\|\baru_1\|_Y+\|\baru_{2,h}\|_Y)\,.  
\end{gather*}
By Lemma~\ref{cl:inv-lambda}, Claims~\ref{cl:V-bound12} and \ref{cl:tr-bound},
$$\|\tilde{\rho}_z(D_z)(\lambda-\lambda_-(D_z)-a_2-\barr_{22})^{-1}
\tilde{\rho_z}(D_z)\|_{B(Y)}=O(K^{-2}\eps^{-3})\,,$$
and 
\begin{align*}
 \|\baru_{2,h}\|_Y\lesssim & 
K^{-2}\eps^{-3}(\|\barf_{2,h}\|_Y+\|(a_1+\barr_{21})\baru_1\|_Y
+\|(a_{22}+\barr_{22})\baru_{2,\ell}\|_Y)
\\ \lesssim & 
K^{-2}\eps^{-3}(\|\barf_{2,h}\|_Y+\|\barf_{2,\ell}\|_Y)
+K^{-2}\eps^{-1}\|\baru_1\|_Y+K^{-1}\|\baru_{2,h}\|_Y\,.
\end{align*}
Combining the above, we have
\begin{gather*}
  \|\baru_1\|_Y \lesssim \eps^{-1}\|\barf_1\|_Y
+\eps^{-2}(K^{-1}\|\barf_{2,h}\|_Y+\|\barf_{2,\ell}\|_Y)\,,
\\
\|\baru_{2,h}\|_Y \lesssim K^{-2}\eps^{-2}\|\barf_1\|_Y
+K^{-2}\eps^{-3}(\|\barf_{2,h}\|_Y+\|\barf_{2,\ell}\|_Y)\,,\\
\|\baru_{2,\ell}\|_Y \lesssim K^{-1}\eps^{-2}\|\barf_1\|_Y+\eps^{-3}
(K^{-1}\|\barf_{2,h}+\|\barf_{2,\ell}\|_Y)\,,
\end{gather*}
and
$\sup_{\lambda\in\Omega_\eps}\|\Pi P(D)^{-1}(\lambda-\mL)^{-1}P(D)\Pi^{-1}
\|_{B(\overline{Z})}<\infty$.
Since $\Pi P(D)^{-1}:Z\to\overline{Z}$ is isomorphic, we have
Lemma~\ref{lem:low-frequencies}.
Thus we complete the proof.
\end{proof}
\subsection{Proof of Theorem~\ref{thm:spectrum}}
Now we are in position to prove Theorem~\ref{thm:spectrum}.
Lemmas~\ref{lem:construction}, \ref{lem:resonance1},
\ref{lem:high frequencies} and \ref{lem:low-frequencies}
imply \eqref{eq:spec-L} and \eqref{eq:asymp-ev}.
Taking $\hat{\beta}>0$ smaller if necessary, we see from
Gearhart-Pr\"{u}ss theorem that for small $\eps>0$,
there exists a $K=K(\eps)$ satisfying \eqref{eq:decay-estimate}.
This completes the proof of Theorem~\ref{thm:spectrum}.
\bigskip

\section{Proof of Theorem~\ref{thm:1}}
\label{sec:bifurcation}
In this section, we will show that  the eigenvalue $\lambda=0$
of $\mL(0)$ splits into two stable eigenvalues of $\mL(\eta)$ for small
$\eta\ne0$ without assuming smallness of line solitary waves.
As in Subsection~\ref{subsec:resonant}, we will use Lyapunov Schmidt method.
\par
To begin with, we expand $\mL(\eta)$ as
$\mL(\eta)=\mL(0)+\eta^2\mL_1(\eta)$ with
$$\mL_1(0)=B_0^{-1}
\begin{pmatrix} 0 & 0 \\ I-A_0-B_0^{-1}A_0+r_c  & 0 \end{pmatrix}
+bB_0^{-2} \begin{pmatrix}0 & 0 \\ v_{1,c}(0) & v_{2,c}(0)
\end{pmatrix}\,.$$
We easily see that $\|\mL_1(\eta)\|_{B(H^1_\a(\R)\times L^2_\a(\R))}=O(1)$ as $\eta\to0$.
\par
Using the ansatz
\begin{gather*}
  \lambda(\eta)=i\eta\lambda_1(\eta)\,,\quad
\zeta(\eta)=\zeta_1+\{\lambda(\eta)+\eta^2\gamma(\eta)\}\zeta_2
+\eta^2 z(\eta)\,,
\end{gather*}
we will solve the eigenvalue problem \eqref{eq:evBL-eta}.
Suppose $\mL(\eta)\zeta(\eta)=\lambda\zeta(\eta)$ and
$z(\eta)\perp \zeta_1^*$, $\zeta_2^*$. Then
\begin{gather}
\label{eq:z}
\mathcal{Q}_0(\mathcal{L}(\eta)-i\eta\lambda_1)z(\lambda_1,\gamma,\eta)
+\mathcal{Q}_0G(\lambda_1,\gamma,\eta)=0\,,
\\ \label{eq:F-l-g}
F_k(\lambda_1,\gamma,\eta):=\la G(\lambda_1,\gamma,\eta)
+\eta^2\mL_1(\eta)z(\lambda_1,\gamma,\eta),\zeta_k^*\ra=0
\quad\text{for $k=1$, $2$,}
\\ G(\lambda_1,\gamma,\eta)=
\gamma\left(\zeta_1-i\lambda_1\eta\zeta_2+\eta^2\mL_1(\eta)\zeta_2\right)
+\lambda_1^2\zeta_2+\mL_1(\eta)(\zeta_1+i\lambda_1\eta\zeta_2)
\end{gather}
where $\mathcal{Q}_0:H^1_\a(\R)\times L^2_\a(\R)\to \perp^{}\ker_g(\mL(0))^*)$
is a spectral projection associated with $\mL(0)$.
\par
The operator
$\mL(0):H^2_\a(\R)\times H^1_\a(\R)\to H^1_\a(\R)\times L^2_{\alpha}(\R)$
is a Fredholm operator of index zero.
In fact, we see from Claim~\ref{cl:spectrum-free2}, \eqref{eq:resl0} and \eqref{eq:cxi-muS} with $\lambda=0$ that
$\mL_0(0):H^2_\a(\R)\times H^1_\a(\R)\to H^1_\a(\R)\times L^2_\a(\R))$
has a bounded inverse and $V(0)$ is a compact operator on $H^1_\a(\R)\times L^2_\a(\R)$.
Note that $\lambda_+(D_z,0)^{-1}\in B(L^2_\a(\R),H^1_\a(\R))$  by \eqref{eq:cl-sf2} and the fact that $\pd_z^{-1}\in B(L^2_\a(\R),H^1_\a(\R))$.

Thus there exist positive constants $C$ and $k$ such that if
$|\eta|(|\lambda_1|+\|\mL_1(\eta)\|_{B(H^1_\a(\R)\times L^2_\a(\R))})<k$,
then a solution $z=z(\lambda_1,\gamma,\eta)$ of \eqref{eq:z} satisfies 
$$\|z(\lambda_1,\gamma,\eta)\|_{H^2_\a(\R)\times H^1_\a(\R)}
\le C\|G(\lambda_1,\gamma,\eta)\|_{H^1_\a(\R)\times L^2_\a(\R)}\,.$$
\par
Now we choose constants $\lambda_{1,0}$ and $\gamma_0$ so that
\begin{equation}
  \label{eq:pft21-1}
\begin{split}
& 
F_1(\lambda_{1,0},\gamma_0,0)=\gamma_0\la\zeta_1,\zeta_1^*\ra
+\la \mL_1(0)\zeta_1,\zeta_1^*\ra+\lambda_{1,0}^2\la \zeta_2,\zeta_1^*\ra=0\,,
\\ &  \notag
F_2(\lambda_{1,0},\gamma_0,0)=\la \mL_1(0)\zeta_1,\zeta_2^*\ra
+\lambda_{1,0}^2\la \zeta_2,\zeta_2^*\ra=0\,.
\end{split} 
\end{equation}
By straightforward computations, we have
\begin{equation}
  \label{eq:zeta1-zeta1*}
\la \zeta_1,\zeta_2^*\ra =0\,,\quad
\la \zeta_1,\zeta_1^*\ra = \la \zeta_2,\zeta_2^*\ra
=\frac{1}{2}\frac{d}{dc}E(q_c,r_c) >0\,,  
\end{equation}
\begin{equation}
  \label{eq:zeta2-zeta1*}
\begin{split}
\la \zeta_2,\zeta_1^*\ra
&=-\left(\frac{c}{2}\frac{d}{dc} \int_{\R}q_c^2\,dz
+c\frac{d}{dc} \int_{\R} r_c\,dz\right) \left(\frac{d}{dc}\int_{\R}q_c\,dz\right)\\
&=\frac{16}{3c^4}\frac{bc^4-a}{c^2-1}\frac{a(c^2-1)+(bc^2-a)+2c^4(2bc^2-b-a)}{bc^2-a} >0\,,
\end{split}
\end{equation}
\begin{equation}
\label{eq:mL1-1-1}
\begin{split}
\la \mL_1(0)\zeta_1,\zeta_2^*\ra 
=& \la -A_0q_c-B_0^{-1}A_0q_c+q_c-bB_0^{-1}\pd_z^2(c\frac{3}{2}q_c^2)-cq_c^2,
cq_c\ra
\\=& \left\la\left(-\frac43A_0+\frac{c^2}{3}B_0+1-c^2\right)q_c,cq_c\right\ra
\\= &-\frac{8}{15} \frac{c^2-1}{c}\alpha_c\{ 2c^2(b-a)+3(bc^4-a)\}<0
\end{split}  
\end{equation}
because
\begin{equation}
  \label{eq:qc-alt}
(A_0-c^2B_0)q_c+\frac{3}{2}cq_c^2=0  
\end{equation}
by \eqref{eq:qc} and 
$$\int_\R q_c(x)^2\,dx=\frac{8(c^2-1)^2}{3\a_cc^2}\,,
\quad \int_\R q_c'(x)^2\,dx=\frac{8\a_c(c^2-1)^2}{15c^2}\,.$$
In view of \eqref{eq:zeta1-zeta1*} and \eqref{eq:mL1-1-1}, we have
$\lambda_{1,0}:=\sqrt{\frac{\la \mL_1(0)\zeta_1,\zeta_2^*\ra}{-\la\zeta_2,\zeta_2^*\ra}}>0$.
\par
Since
\begin{align*}
& \pd_{\lambda_1}F_1(\lambda_{1,0},\gamma_0,0)
=2\lambda_{1,0}\la \zeta_2,\zeta_1^*\ra\,,
\quad
\pd_{\gamma}F_1(\lambda_{1,0},\gamma_0,0)=\la \zeta_1,\zeta_1^*\ra\ne0\,,
\\ &
\pd_{\lambda_1}F_2(\lambda_{1,0},\gamma_0,0)
=2\lambda_{1,0}\la \zeta_2,\zeta_2^*\ra\ne 0\,,
\quad \pd_{\gamma}F_2(\lambda_{1,0},\gamma_0,0)=\la \zeta_1,\zeta_2^*\ra=0\,,
\end{align*}
it follows from the implicit function theorem that
there exists an $\eta_0>0$,
$\lambda_1(\eta)$, $\gamma(\eta)\in C^1([-\eta_0,\eta_0])$ such that
$\lambda_1(0)=\lambda_{1,0}$, $\gamma(0)=\gamma_0$ and
$F_k(\lambda_1(\eta),\gamma(\eta),\eta)=0$ for $\eta\in[-\eta_0,\eta_0]$
and $k=1$, $2$.
Moreover, we have
$$\lambda_1'(0)=
-\frac{\pd_\eta F_2(\lambda_{1,0},\gamma_0,0)}
{\pd_{\lambda_1} F_2(\lambda_{1,0},\gamma_0,0)}
=\frac{i}{2}\left(\gamma_0-
\frac{\la\mL_1(0)\zeta_2,\zeta_2^*\ra}{\la \zeta_2,\zeta_2^*\ra}\right)
=:i\lambda_{2,0}\,,$$
and $\lambda(\eta)=i\lambda_{1,0}\eta-\lambda_{2,0}\eta^2+O(\eta^3)$.
Thus we prove \eqref{eq:lambda-asymp} and \eqref{eq:zeta-asymp}.
\par
To obtain the asymptotic expansion of $\zeta^*(\eta)$, let
$\tv_2(z,\eta)=\zeta(-z,-\eta)\cdot {}^t(1,0)$,
where $\cdot$ denotes the inner product in $\C^2$ and
$$
\zeta^*(\eta)=c
\begin{pmatrix}
(\lambda(-\eta)+c\pd_z)B(\eta)\tv_2(z,\eta)+v_{2,c}(\eta)^*\tv_2(z,\eta)
\\ B(\eta)\tv_2(z,\eta)
\end{pmatrix}\,.$$
As in the proof of Lemma~\ref{lem:resonance1},
we have $\mL(\eta)\zeta^*(\eta)=\lambda(-\eta)\zeta^*(\eta)$.
Since
\begin{gather*}
  \lambda(-\eta)=-i\lambda_1\eta-\lambda_2\eta^2+O(\eta^3)\,,\\
\tv_2(\cdot,\eta)=q_c-i\lambda_1\eta\int_{-\infty}^z\pd_cq_c(z_1)\,dz_1
+O(\eta^2)\quad\text{in $H^k_{-\a}(\R)$ for any $k\ge0$,}
\end{gather*}
we have \eqref{eq:zeta*-asymp}. We can show \eqref{eq:zeta-parity} in the same way
as the proof of Lemmas~\ref{lem:construction} and \ref{lem:resonance1}.
\par
Finally, we will prove $\lambda_{2,0}>0$.
By \eqref{eq:zeta1-zeta1*} and the definition of $\lambda_{2,0}$,
\begin{equation}
  \label{lambda2}
\frac{d}{dc}E(q_c,r_c)\lambda_{2,0}=
-\la \mL_1(0)\zeta_1,\zeta_1^*\ra-\la \mL_1(0)\zeta_2,\zeta_2^*\ra
-\lambda_{1,0}^2\la \zeta_2,\zeta_1^*\ra\,.
\end{equation}
We have
\begin{align*}
\mL_1(0)\zeta_2=&  B_0^{-1}\begin{pmatrix} 0 \\
A_0\pd_z^{-1}\pd_cq_c+(A_0-B_0)B_0^{-1}\pd_z^{-1}\pd_cq_c
\end{pmatrix}
\\ & + bB_0^{-2}
\begin{pmatrix} 0 \\
 3c \pd_z(q_c\pd_cq_c)+\frac{3}{2}\pd_z(q_c^2)
\end{pmatrix} 
 +B_0^{-1}\begin{pmatrix} 0 \\ c q_c\pd_z^{-1}\pd_cq_c\end{pmatrix}\,.
\end{align*}
Using the fact that $q_c$ and $\pd_cq_c$ are even,
$q_c'$ is odd and $B^{-1}_0f$ retains the parity of  $f$, we have
\begin{equation} \label{lam-eq2}
  \begin{split}   
\la \mL_1(0)\zeta_2,\zeta_2^*\ra =&
 c\la \pd_z^{-1}\pd_cq_c,q_c\ra
+c^2\la \pd_z^{-1}\pd_cq_c,q_c^2\ra
= \frac{c}{3}(2c^2+1)\la \pd_z^{-1}\pd_cq_c,q_c\ra\,.
  \end{split}
\end{equation}
In the last line, we use $(A_0-c^2B_0)q_c+\frac{3c}{2}q_c^2=0$.
Analogously, we have
\begin{equation}
\label{lam-eq3}
\la \mL_1(0)\zeta_1,\zeta_1^*\ra=
c\la q_c,(\pd_z^{-1})^*\pd_cqc\ra+c^2\la q_c^2,(\pd_z^{-1})^*\pd_cqc\ra
=\frac{c}{3}(2c^2+1)\la \pd_z^{-1}\pd_cq_c,q_c\ra\,,
\end{equation}
where $(\pd_z^{-1})^*f=-\int^z_{-\infty}f(z_1)\,dz_1$.
By integration by parts,
\begin{gather}
\label{Int1}
\la \pd_z^{-1}\pd_cq_c,q_c\ra
=-\frac{1}{4}\frac{d}{dc}\left(\int_\R q_c\,dz\right)^2
=-4\frac{d}{dc}\frac{(c^2-1)(bc^2-a)}{c^2}\,.
\end{gather}
Combining  \eqref{lambda2}--\eqref{Int1} with
\eqref{eq:pft21-1}--\eqref{eq:mL1-1-1},  we have
\begin{equation}
\label{lambda2-3}
\begin{split} 
\lambda_{2,0}\frac{d}{dc}E(q_c,r_c) =&
-\lambda_{1,0}^2\la \zeta_2,\zeta_1^*\ra
-\frac{16}{3} \frac{(1 + 2 c^2)}{c^2} (b c^4-a) \\
&=32\frac{(b\rho^2-a)}{3d(c)}n(c)\,,
\end{split}
\end{equation}
where 
\begin{align}
\label{d-eq} d(c) &=6a^2+(3a^2-9ab)c^2+(6a^2+2b^2-2ab)c^4+(b^2-19ab)c^6+12b^2c^8,\\
\label{n-eq} n(c) &=7 a^2-ba+(4a^2-10ba)c^2+(3b^2+4a^2-7ab)c^4+6b(b-2a)c^6+6b^2c^8.
\end{align}
To show that $n(c)>0$ for all $c>1$ and $b>a>0$, we set $\rho=c^2$ and
differentiate $n(\rho)$ twice to get
$$n'(\rho)=4a^2-10ab+2(3b^2+4a^2-7ab)\rho+18(b^2-2ab)\rho^2+24b^2\rho^3,$$
and
$$n''(\rho)=2(3b^2+4a^2-7ab)+36b^2\rho+72b(b\rho^2-a\rho) > 0,\quad \forall \rho>1.$$
Since $n'(1)=12(b-a)^2+36(b-a)b>0$, $n'(\rho)>0$ for all $\rho>1$. Since $n(1)=15(b-a)^2>0$, thus, $n(\rho)>0$ for all $\rho>1$.
In the same way, to show that $d(\rho)>0$ for all $\rho>1$ and $b>a>0$, we set $\rho=c^2$ and differentiate $d(\rho)$ to obtain
$$d'(\rho)=(3a^2-9ab)+2(6a^2+2b^2-2ab)\rho+3(b^2-19ab)\rho^2+48b^2\rho^3,$$
  and
 $$d''(\rho)=12a^2+4b(b-a)+6b^2\rho+b\rho(-114a+144b\rho)>0, \quad\forall \rho>1.$$ 
 Since $d'(1)=15(b-a)^2+40(b-a)b>0$, $d'(\rho)>0$ for all $\rho>1$. Since $d(1)=15(b-a)^2>0$, thus, $d(\rho)>0$ for all $\rho>1$.\\
Since $\frac{d}{dc}E(q_c,r_c)>0$, $d(c)>0$ and $n(c)>0$ for $c>1$,
we conclude from \eqref{lambda2-3} that $\lambda_{2,0}>0$.
This completes the proof of Theorem~\ref{thm:1}.
\bigskip

\section{Proof of Corollary~\ref{thm:linear-stability}}
\label{sec:linear-stability}
The Gearhart-Pr\"{u}ss theorem \cite{Ge78,Pr85} tells us
the semigroup estimate \eqref{eq:decay} follows from 
uniform boundedness of $(\lambda-\mL)^{-1}\mathcal{Q}(\eta_0)$ in a stable half plane.
Let $\Omega=\{\lambda\mid \Re \lambda\ge -\beta'\}$.
Applying \cite[Corollary~4]{Pr85} to a Hilbert space $\mathcal{Q}(\eta_0)X$,
we have \eqref{eq:decay} provided
$(\lambda-\mL)^{-1}\mathcal{Q}(\eta_0)$ is uniformly bounded in $\Omega$.
Thus to prove Theorem~\ref{thm:linear-stability}, it suffices to show the following.
\begin{lemma}
  \label{lem:G-P-cond}
Let $c>1$ and $\a\in(0,\a_c)$.  Assume \eqref{ass:H} for $\beta\in(0,\a(c-1)/2)$ and 
an $\eta_0>0$. Then for any $\beta'<\beta$,
\begin{equation}
  \label{eq:G-P-cond}
\sup_{\lambda\in\Omega}\|(\lambda-\mL)^{-1}\mathcal{Q}(\eta_0)\|_{B(X)}<\infty\,.
\end{equation}
\end{lemma}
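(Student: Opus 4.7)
The plan is to combine the spectral estimates for the free operator $\mL_0$ from Lemma~\ref{lem:spectrum-free} with the hypothesis \eqref{ass:H}, treating large $|\Im\lambda|$ and bounded $|\Im\lambda|$ separately. Writing $\mL=\mL_0+V$ and $R_0(\lambda):=(\lambda-\mL_0)^{-1}$, the second resolvent identity gives
\begin{equation*}
  (\lambda-\mL)^{-1}=(I-R_0(\lambda)V)^{-1}R_0(\lambda).
\end{equation*}
Taking $\alpha$ in the range where Lemma~\ref{lem:spectrum-free} applies (one may reduce $\alpha$ so that $\alpha<\alpha_c'$, as the hypothesis $\beta<\alpha(c-1)/2$ already suggests such a restriction), $\sigma(\mL_0)$ lies strictly to the left of $\Omega$, and the Fourier-multiplier formulas \eqref{eq:resl0}--\eqref{eq:cxi-muS} together with Claim~\ref{cl:spectrum-free2} yield $\sup_{\lambda\in\Omega}\|R_0(\lambda)\|_{B(X)}<\infty$.

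The crucial step is the norm decay
\begin{equation*}
  \lim_{|\Im\lambda|\to\infty,\,\lambda\in\Omega}\|R_0(\lambda)V\|_{B(X)}=0.
\end{equation*}
Since $V$ is translation invariant in $y$, I would Fourier-decompose in $y$ and write $V=\int^{\oplus}V(\eta)\,d\eta$, $R_0(\lambda)=\int^{\oplus}R_0(\lambda,\eta)\,d\eta$ over the fibers $H^1_\alpha(\R_z)\times L^2_\alpha(\R_z)$. On each fiber the coefficients of $V(\eta)$ (namely $q_c$, $q_c'$, $r_c$, $r_c'$) decay exponentially in $z$, so $V(\eta)$ is compact on the fiber by a Rellich-type argument in the one-dimensional weighted space. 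Combined with the pointwise limit $(\lambda-\lambda_\pm(\xi+i\alpha,\eta))^{-1}\to 0$ as $|\Im\lambda|\to\infty$ and a dominated-convergence estimate uniform in $\eta$, this yields the displayed limit.

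Granted this decay, fix $R_0>0$ so that $\|R_0(\lambda)V\|_{B(X)}\le 1/2$ for every $\lambda\in\Omega$ with $|\Im\lambda|\ge R_0$; the resulting Neumann series gives a uniform bound on $(\lambda-\mL)^{-1}$ in this region. On the remaining compact set $\Omega\cap\{|\Im\lambda|\le R_0\}$, hypothesis \eqref{ass:H} places this set inside $\rho(\mL|_Z)$, so $\lambda\mapsto(\lambda-\mL|_Z)^{-1}\mathcal{Q}(\eta_0)$ is a continuous $B(X)$-valued function there and hence bounded. Piecing the two regimes together gives \eqref{eq:G-P-cond}.

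The main obstacle is the norm decay of $R_0(\lambda)V$ at $|\Im\lambda|=\infty$. Because the coefficients of $V$ do not decay in $y$, $V$ itself is not compact on $X$, so the usual ``compact operator composed with a strongly-to-zero family'' argument cannot be applied directly in $X$ and must be run fiberwise after Fourier decomposition in $y$. The delicate point is upgrading the fiberwise limit to convergence in $B(X)$ uniformly in $\eta$: one must dominate the polynomial-in-$\eta$ growth of $V(\eta)$ (through the $-\eta^2$ inside $v_{1,c}(\eta)$) by the exponential weight $e^{\alpha z}$ and the symbol bounds of Claim~\ref{cl:spectrum-free2}, and the restriction $\beta<\alpha(c-1)/2$ is used precisely to preserve the spectral gap of $\mL_0(\eta)$ uniformly in $\eta\in\R$.
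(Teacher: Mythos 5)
Your overall architecture matches the paper's: reduce via the second resolvent identity $(\lambda-\mL)^{-1}=(I-(\lambda-\mL_0)^{-1}V)^{-1}(\lambda-\mL_0)^{-1}$, run a Neumann series where $\|(\lambda-\mL_0)^{-1}V\|\le\frac12$, and invoke \eqref{ass:H} on the leftover region. Two issues, one minor and one substantive. Minor: splitting $\Omega$ by $|\Im\lambda|\le R_0$ versus $|\Im\lambda|\ge R_0$ leaves a \emph{non-compact} leftover region (it is unbounded as $\Re\lambda\to+\infty$), so \eqref{ass:H} plus continuity does not immediately give a uniform bound there; the paper instead splits by $|\lambda|\le K_1$ versus $|\lambda|\ge K_1$, and the large-$\lambda$ smallness of $(\lambda-\mL_0)^{-1}V$ covers all directions (for $\Re\lambda$ large the resolvent symbols $|\lambda-\lambda_\pm(\xi+i\a,\eta)|^{-1}$ are uniformly small anyway). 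This is easily repaired.

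The substantive gap is in your treatment of large transverse frequencies $\eta$. You correctly identify that the fiberwise argument (compactness of $V(\eta)$ on each $z$-fiber plus strong convergence of the free resolvent) only gives pointwise-in-$\eta$ decay, and that the operator norm on $X$ is a supremum over all $\eta\in\R$. But your proposed fix --- dominating the $\eta^2$ growth inside $v_{1,c}(\eta)$ ``by the exponential weight $e^{\a z}$ and the symbol bounds of Claim~\ref{cl:spectrum-free2}'' --- does not work: the weight acts in $z$ and says nothing about $\eta$, and Claim~\ref{cl:spectrum-free2} only yields a $\lambda$- and $\eta$-\emph{uniform lower bound} on $|\lambda-\lambda_\pm(\xi+i\a,\eta)|$, not any decay in $\eta$. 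The mechanism that actually closes this step (the content of the paper's estimate \eqref{eq:r-high}) is the smoothing of $B^{-1}$ combined with the specific structure of $V$: rewriting $v_{1,c}=cq_c''-c\Delta(q_c\cdot)$ and using the partial-fraction identities \eqref{eq:hf-pf2}--\eqref{eq:hf-pf1}, the high-$\eta$ block of $(\lambda-\mL_0)^{-1}V$ is controlled by multipliers such as $\tilde\chi(D_y)\mu(D)B^{-1}$ and $\tilde\chi(D_y)B^{-1}$, whose symbols tend to $0$ as the cutoff threshold $K_2\to\infty$, \emph{uniformly in} $\lambda\in\Omega$. One first makes this block smaller than $\epsilon$ for all $\lambda$ by choosing $K_2$ large, and only then sends $\lambda\to\infty$ in the low-$\eta$ block, where the cutoff $\chi(D_y)$ together with the rapid decay of $q_c$ confines the relevant frequencies to a compact set on which the free resolvent symbols vanish uniformly. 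Without supplying this high-frequency mechanism your argument does not establish $\|(\lambda-\mL_0)^{-1}V\|_{B(X)}\to0$, which is the heart of the lemma.
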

\begin{proof}
By \eqref{ass:H}, the restricted resolvent $(\lambda-\mL)^{-1}
\mathcal{Q}(\eta_0)$ is
uniformly bounded on any compact subset of $\Omega$.
Thus by Lemma~\ref{lem:spectrum-free} and \eqref{eq:res-formula}, we have
\eqref{eq:G-P-cond}  provided 
\begin{equation}
  \label{eq:G-P2}
  \sup_{\lambda\in\Omega,\,|\lambda|\ge K_1}\|(\lambda-\mL_0)^{-1}V\|_{B(X)}\le \frac12
\end{equation}
for sufficiently large $K_1$.  To prove \eqref{eq:G-P2}, we apply the
argument for the $1$-dimensional Benney-Luke equation \cite{MPQ13} for
low frequencies in $y$ and use the argument in \S\ref{subsec:high} for
high frequencies in $y$.
\par
Let $K_2>0$, $\chi$ be the characteristic function of $[-K_2,K_2]$
and $\tilde{\chi}(\eta)=1-\chi(\eta)$ for $\eta\in\R$.
First, we will show that
\begin{equation}
  \label{eq:uniform-conv}
(\lambda-\mL_0)^{-1}V\chi(D_y)=
\begin{pmatrix}
r_{11}(\lambda) & r_{12}(\lambda)\\ r_{21}(\lambda) & r_{22}(\lambda)
\end{pmatrix}\chi(D_y)\to 0
\quad \text{uniformly as $\lambda\to\infty$ with $\lambda\in\Omega$.}
\end{equation}
By \eqref{eq:hf-pf2} and \eqref{eq:v1c-alt}, 
\begin{align*}
r_{11}(\lambda)\chi(D_y)=& 
\frac{ic}{2}S(D)^{-1}
\{(\lambda-\lambda_+(D))^{-1}-(\lambda-\lambda_-(D))^{-1}\}\mu(D)B^{-1}q_c\chi(D_y)
\\ & -c(\lambda-\lambda_+(D))^{-1}(\lambda-\lambda_-(D))^{-1}B^{-1}q_c''\chi(D_y)\,.
\end{align*}
By the Plancherel theorem, 
\begin{equation*}
  \|(\lambda-\lambda_\pm(D))^{-1}\chi(D_y)f\|_{L^2_\a(\R^2)}
=\left\| \frac{\hat{f}(\xi+i\a,\eta)}{\lambda-\lambda_\pm(\xi+i\a,\eta)}
\right\|_{L^2(\R_\xi\times [-K_2,K_2]}\,.
\end{equation*}
In view of \eqref{eq:cl-sf4} and \eqref{eq:cl-sf5},
we have $\lim_{\lambda\in\Omega,\,\lambda\to\infty}
\|(\lambda-\lambda_\pm(D))^{-1}f\|_{L^2_\a(\R^2)}=0$
for any $f\in L^2_\a$ thanks to the dominated convergence theorem.
Thus we prove $(\lambda-\lambda_\pm(D))^{-1}\to 0$ strongly as $\lambda\to\infty$
with $\lambda\in\Omega$.
Since $\mu(D)B^{-1}q_c\chi(D_y)$, $B^{-1}q_c''\chi(D_y):H^1_\a\to H^1_\a$ are compact,
we see that $\lim_{\lambda\in\Omega,\,\lambda\to\infty}\|r_{11}(\lambda)\|_{B(H^1_\a)}=0$ 
as in \cite[p.265]{MPQ13}.
We can prove
$$\lim_{\lambda\in\Omega,\,\lambda\to\infty}
\left(\|r_{12}(\lambda)\chi(D_y)\|_{B(L^2_\a,H^1_\a)}
+\|r_{21}(\lambda)\chi(D_y)\|_{B(H^1_\a,L^2_\a)}
+\|r_{22}(\lambda)\chi(D_y)\|_{B(L^2_\a)}
\right)=0$$
in exactly the same way.
\par
By Lemma~\ref{cl:inv-lambda} and the definition of $r_{ij}(\lambda)$,
\begin{equation}
  \label{eq:r-high}
\begin{split}
& \|r_{ij}(\lambda)\tilde{\chi}(D_y)\|_{B(H^{2-j}_\a,H^{2-i}_\a)}
\\ \lesssim &
\|\tilde{\chi}(D_y)\mu(D)B^{-1}\|_{B(L^2_\a)}
+\|\tilde{\chi}(D_y)B^{-1}\|_{B(L^2_\a)}
 \to 0 \quad\text{as $K_2\to\infty$.}
\end{split}  
\end{equation}
Combining \eqref{eq:uniform-conv} and \eqref{eq:r-high},
we have \eqref{eq:G-P2}.
Thus we complete the proof.
\end{proof}
\bigskip

\section{Proof of Theorem~\ref{thm:linear-dynamics}}
\label{sec:dynamics}
Let 
$$
g(z,\eta)=\left(1+i\frac{\Re \la \zeta(\cdot,\eta),\zeta^*(\cdot,\eta)\ra}{\Im \la \zeta(\cdot,\eta),\zeta^*(\cdot,\eta)\ra}\right)
\zeta(x,\eta)\,,\quad g^*(x,\eta)=\zeta^*(x,\eta)\,,$$
and define $g_k(x,\eta)$ and $g_k^*(x,\eta)$ ($k=1$, $2$)
by \eqref{eq:def-gk} and \eqref{eq:def-gk*}
as in Section~\ref{sec:resonant}.
By \eqref{eq:zeta-parity}, we have for $\eta\in[-\eta_0,\eta_0]$, $z\in\R$ and $k=1$, $2$,
\begin{align*}
& \overline{g(z,\eta)}=g(z,-\eta)\,,\quad \overline{g^*(x,\eta)}=g^*(x,-\eta)\,, 
\\ &
\kappa(\eta):=\frac12\Im\la g(\cdot,\eta), g^*(\cdot,\eta)\ra\quad\text{is odd,}
\end{align*}
and $g_k(z,\eta)$ and $g_k^*(z,\eta)$ are real valued and even in $\eta$. Moreover,
$$\la g_j(\cdot,\eta),g_k^*(\cdot,\eta)=\delta_{jk}\quad\text{for $j$, $k=1$, $2$.}$$
By Theorem~\ref{thm:1} and \eqref{eq:zeta1-zeta1*},
\begin{align*}
  \la \zeta(\cdot,\eta),\zeta^*(\cdot,\eta)\ra =& 
\la \zeta_1,\zeta_2^*\ra+i\lambda_1\eta\{\la \zeta_2,\zeta_2^*\ra+ \la \zeta_1,\zeta_1^*\ra\}+O(\eta^2)
\\=& 2i\kappa_1\eta+O(\eta^2)\,,
\end{align*}
and
\begin{equation}
  \label{eq:kappa-exp}
\begin{split}
\kappa(\eta)=& \frac12\Im\la \zeta(\cdot,\eta),\zeta^*(\cdot,\eta)\ra
\left\{1+\left(\frac{\Re\la \zeta(\cdot,\eta),\zeta^*(\cdot,\eta)}{\Im\la \zeta(\cdot,\eta),\zeta^*(\cdot,\eta)}\right)^2\right\}
\\= & \kappa_1\eta+O(\eta^3)\,.
\end{split}  
\end{equation}
\par

Let $\vec{\Phi}(t)=(\Phi(t),\Psi(t))$ be a solution of
\eqref{eq:linear} with $\vec{\Phi}(0)=(\Phi_0,\Psi_0)$ and
\begin{equation*}
c_k(t,\eta)=\left \la \mF_y\vec{\Phi}(t,\cdot,\eta),g_k^*(\cdot,\eta)\right\ra
\quad\text{for $\eta\in[-\eta_0,\eta_0]$ and $k=1$, $2$.}  
\end{equation*}
Then $$\vec{\Phi}(t)=\frac{1}{\sqrt{2\pi}}\sum_{k=1,2}\int_{-\eta_0}^{\eta_0}
c_k(t,\eta)g_k(z,y)e^{iy\eta}\,d\eta\,.$$
By Remark~\ref{rem:L},
\begin{align*}
\pd_t  \begin{pmatrix}c_1(t,\eta) \\ c_2(t,\eta) \end{pmatrix}
=& \begin{pmatrix}
\la \mL(\eta)\mF_y\vec{\Phi}(t,\cdot,\eta), g_1^*(\cdot,\eta)\ra \\
\la \mL(\eta)\mF_y\vec{\Phi}(t,\cdot,\eta), g_2^*(\cdot,\eta)\ra       
   \end{pmatrix}
= \mathcal{A}(\eta)
 \begin{pmatrix} c_1(t,\eta) \\ c_2(t,\eta) \end{pmatrix}\,,
\end{align*}
where 
$$\mathcal{A}(\eta)=
\begin{pmatrix}
\Re\lambda(\eta) & \frac{\Im\lambda(\eta)}{\kappa(\eta)}
\\ -\kappa(\eta)\Im\lambda(\eta) & \Re\lambda(\eta)  
\end{pmatrix}\,.$$
Let $e(t,\eta)=|\kappa(\eta)c_1(t,\eta)|^2+|c_2(t,\eta)|^2$. Then
$e(t,\eta)=e^{2t\Re\lambda(\eta)}e(0,\eta)$ and
\begin{multline}
  \label{eq:e-est}
 \|\eta^{k+1}c_1(t,\eta)\|_{L^2(-\eta_0,\eta_0)}^2
+\|\eta^kc_2(t,\eta)\|_{L^2(-\eta_0,\eta_0)}^2
\\ \lesssim
  \int_{-\eta_0}^{\eta_0} \eta^{2k}e(t,\eta)\,d\eta
\\  \lesssim 
 (1+t)^{-k}\{\|\eta^{k+1}c_1(0,\eta)\|_{L^2(-\eta_0,\eta_0)}^2
  +\|\eta^kc_2(0,\eta)\|_{L^2(-\eta_0,\eta_0)}^2\}
\\  \lesssim
(1+t)^{-k}(\|\Phi_0\|_{L^2_\a(\R^2)}+\|\Psi_0\|_{L^2_\a(\R^2)})\,.
\end{multline}  
because $\kappa(\eta)=\kappa_1\eta+O(\eta^3)$ with $\kappa_1\ne0$ and
$\Re\lambda(\eta)=-\lambda_2\eta^2+O(\eta^4)$ with $\lambda_2>0$.
Since $\kappa(\eta)$ and $\Im\lambda(\eta)$ are odd and $\Re\lambda(\eta)$
is even, it follows from Theorem~\ref{thm:1} and \eqref{eq:kappa-exp} that
$$\mathcal{A}(\eta)=\mathcal{A}_0(\eta)
+\begin{pmatrix}
  O(\eta^4) & O(\eta^2) \\ O(\eta^4) & O(\eta^4)
\end{pmatrix}\,,
\quad
\mathcal{A}_0(\eta)=
\begin{pmatrix}
  -\lambda_2\eta^2 & \frac{\lambda_1}{\kappa_1}
\\ -\lambda_1\kappa_1\eta^2 & -\lambda_2\eta^2
\end{pmatrix}\,.$$
By the variation of the constants formula,
\begin{align*}
\begin{pmatrix}    c_1(t,\eta) \\ c_2(t,\eta)  \end{pmatrix}
=& e^{t\mathcal{A}_0(\eta)}
\begin{pmatrix}c_1(0,\eta) \\ c_2(0,\eta)  \end{pmatrix}
-\int_0^t e^{(t-s)\mathcal{A}_0(\eta)}
\left(\mathcal{A}(\eta)-\mathcal{A}_0(\eta)\right)
\begin{pmatrix}    c_1(s,\eta) \\ c_2(s,\eta)  \end{pmatrix}
\,ds\,,
\end{align*}
where $e^{t\mathcal{A}_0(\eta)}=e^{-t\lambda_2\eta^2}
\begin{pmatrix}
  \cos t\lambda_1\eta & \frac{\sin t\lambda_1\eta}{\kappa_1\eta}
\\ - \kappa_1\eta\sin t\lambda_1\eta & \cos t\lambda_1\eta
\end{pmatrix}$. Using \eqref{eq:e-est}, we have for $k=0$ and $1$,
\begin{equation}
  \label{eq:pf24-1}
\begin{split}
& \left\|\eta^k\left\{c_1(t,\eta)
-e^{-t\lambda_2\eta^2}\frac{\sin t\lambda_1\eta}{\kappa_1\eta}
c_2(0,\eta) \right\}\right\|_{L^2(-\eta_0,\eta_0)}
\\ \lesssim & 
\|\eta e^{-t\lambda_2\eta^2}c_1(0,\eta)\|_{L^2(-\eta_0,\eta_0)}
+\sum_{j=1,2}
\int_0^t  \| \eta^{4+k-j}e^{-(t-s)\lambda_2\eta^2}c_j(s,\eta)\|_{L^2(-\eta_0,\eta_0)}\,ds
\\ \lesssim &
(1+t)^{-k/2}\|c_1(0,\eta)\|_{L^2(-\eta_0,\eta_0)}
\\ & +\int_0^t(1+t-s)^{-3/4}(1+s)^{-(2k+1)/4}\,ds
\|\eta^{\frac{5}{2}+k-j}e(0,\eta)\|_{L^2(-\eta_0,\eta_0)}
\\ \lesssim & (1+t)^{-k/2}(\|\Phi_0\|_{L^2_\a(\R^2)}+\|\Psi_0\|_{L^2_\a(\R^2)})\,.
\end{split} 
\end{equation}
Since $f(y)=\la \vec{\Phi}(0,\cdot,y), \zeta_2^*\ra$ and
$\|g_2^*(\cdot,\eta)-\zeta_2^*\|_{L^2_{-\a}(\R)}=O(\eta^2)$,  we have
\begin{align*}
|c_2(0,\eta)-\hat{f}(\eta)| \le &
\|\mF_y\vec{\Phi}(0,\cdot,\eta)\|_{L^2_\a(\R)}
\|g_2^*(\cdot,\eta)-\zeta_2^*\|_{L^2_{-\a}(\R)}
\\ \lesssim  & 
\eta^2(\|\mF_y\Phi_0(\cdot,\eta)\|_{L^2_\a(\R)}
+\|\mF_y\Psi_0(\cdot,\eta)\|_{L^2_\a(\R)})\,,
\end{align*}
and 
\begin{equation}
\label{eq:pf24-2}
\left\|e^{-t\lambda_2\eta^2}\frac{\sin t\lambda_1\eta}{\kappa_1\eta}
\{c_2(0,\eta)-\hat{f}(\eta)\}\right\|_{L^2(-\eta_0,\eta_0)}
\lesssim  (1+t)^{-1/2} (\|\Phi_0\|_{L^2_\a(\R^2)}+\|\Psi_0\|_{L^2_\a(\R^2)})\,.
\end{equation}
Combining \eqref{eq:pf24-1} and \eqref{eq:pf24-2} with $\|g_1(\cdot,\eta)-\zeta_1\|_{L^2_\a(\R)}=O(\eta^2)$,
we have for $k=0$ and $1$,
\begin{equation}
\label{eq:pf24-3}
\begin{split}
& \left\|\eta^k\left\{c_1(t,\eta)g_1(\cdot,\eta)-e^{-t\lambda_2\eta^2}
\frac{\sin t\lambda_1\eta}{\kappa_1\eta}\hat{f}(\eta)\zeta_1\right\}
\right\|_{L^2([-\eta_0,\eta_0];L^2_\a(\R_z))}
\\ \lesssim &
\|\eta^{k+2}c_1(t,\eta)\|_{L^2(-\eta_0,\eta_0)}
\sup_{0<|\eta|\le\eta_0}\eta^{-2}\|g_1(\cdot,\eta)-\zeta_1\|_{L^2_\a(\R)}
\\ & +
\left\|\eta^k\left\{c_1(t,\eta)-e^{-t\lambda_2\eta^2}\frac{\sin t\lambda_1\eta}{\kappa_1\eta}
\hat{f}(\eta)\right\}\right\|_{L^2(-\eta_0,\eta_0)}\|\zeta_1\|_{L^2_\a(\R)}
\\ \lesssim & (1+t)^{-k/2}(\|\Phi_0\|_{L^2_\a(\R^2)}+\|\Psi_0\|_{L^2_\a(\R^2)})\,.
\end{split}
\end{equation}
Since $\|\hat{f}\|_{L^2}=\|f\|_{L^2}\lesssim \|\Phi_0\|_{L^2_\a(\R^2)}+\|\Psi\|_{L^2_\a(\R^2)}$,
\begin{equation}
\label{eq:pf24-4}
\left\|e^{-t\lambda_2\eta^2}\frac{\sin t\lambda_1\eta}{\kappa_1\eta}
\hat{f}(\eta)\zeta_1(z)\right\|_{L^2(|\eta|\ge \eta_0;L^2_\a(\R_z))}
\lesssim e^{-t\lambda_2\eta_0^2}(\|\Phi_0\|_{L^2_\a(\R^2)}+\|\Psi_0\|_{L^2_\a(\R^2)})\,.
\end{equation}
Using the Plancherel theorem, \eqref{eq:e-est}, \eqref{eq:pf24-3} and \eqref{eq:pf24-4}, we have
\begin{align*}
& \|\pd_y^j\{
\mathcal{P}(\eta_0)\vec{\Phi}(t)-(H_t*W_t*f)(y)\zeta_1(z)\}\|_X
\\ \lesssim &
\left\|\eta^j\{c_1(t,\eta)g_1(z,\eta)
-e^{-t\lambda_2\eta^2}\frac{\sin t\lambda_1\eta}{\kappa_1\eta}\hat{f}(\eta)
\zeta_1(z)\}\right\|_{L^2([-\eta_0,\eta_0];H^1_\a(\R_z)\times L^2_\a(\R_z))}
\\ & 
+\|\eta^jc_2(t,\eta)\|_{L^2(-\eta_0,\eta_0)}
+\|\eta^je^{-t\lambda_2\eta^2}\frac{\sin t\lambda_1\eta}{\kappa_1\eta}
\hat{f}(\eta)\|_{L^2_\a(|\eta|\ge\eta_0)}
\\ \lesssim & (1+t)^{-j/2}(\|\Phi_0\|_{L^2_\a(\R^2)}+\|\Psi_0\|_{L^2_\a(\R^2)})\,.
\end{align*}
By Theorem~\ref{thm:linear-stability},
$$\|\mathcal{Q}(\eta_0)\vec{\Phi}(t)\|_{H^2_\a(\R^2)\times H^1_\a(\R^2)}
\lesssim e^{-\beta' t}(\|\Phi_0\|_{H^2_\a(\R^2)}+\|\Psi_0\|_{H^1_\a(\R^2)})
\,.$$ Combining the above, we obtain for $j=0$, $1$,
\begin{align*}
& \left\|\diag(\pd_z^j,1)\{
\vec{\Phi}(t,z,y)-(H_t*W_t*f)(y)\zeta_1(z)\}\right\|_{L^2_\a(\R_z)L^\infty(\R_y)}\
\\ \lesssim &
(1+t)^{-1/4}(\|\Psi_0\|_{H^2_\a(\R^2)}+\|\Psi_0\|_{H^1_\a(\R^2)})\,.
\end{align*}
This completes the proof of Theorem~\ref{thm:linear-dynamics}.
\bigskip

\appendix
\section{Miscellaneous estimates of operator norms}
\label{sec:ap1}
In this section, we collect estimates of the norm of operators.
\par
A solitary wave profile $q_c(x)$ is similar to KdV $1$-solitons provided
$c$ is close to $1$.
In view of \eqref{eq:KP2-scale2}, we have the following estimates on derivatives
of $q_c$.
\begin{claim}
\label{cl:qc-size}
Let $c=\sqrt{1+\eps^2}$, $\a=\hat{\a}\eps$ and $\hat{\a}\in(0,\hat{\a}_0/2)$.
There exists positive constants  $\eps_0$ and $C$ such that
$$\|\pd_z^i\pd_c^jq_c\|_{B(L^2_\a(\R))}\le C\eps^{2+i-2j}\,.
\quad\text{for $\eps\in(0,\eps_0)$ and $i$ $j\in\Z_{\ge0}$.}$$
\end{claim}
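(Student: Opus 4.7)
The plan is to reduce the operator-norm bound to a pointwise $L^\infty$ bound and then exploit the KP scaling already introduced in \eqref{eq:KP2-scale1}--\eqref{eq:KP2-scale2}. Since $\pd_z^i\pd_c^j q_c$ is a function of $z$ alone, its norm as a multiplication operator on $L^2_\a(\R)$ equals $\|\pd_z^i\pd_c^j q_c\|_{L^\infty(\R)}$, so it suffices to prove the pointwise estimate $\|\pd_z^i\pd_c^j q_c\|_{L^\infty(\R)}\le C\eps^{2+i-2j}$.

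Writing $q_c(z) = \eps^2\theta_\eps(\hat{z})$ with $\hat{z} = \eps z$, the function $\theta_\eps(\hat{z}) = c^{-1}\sech^2(\hat{\a}_\eps\hat{z}/2)$ depends smoothly on $\eps$ (in fact on $\eps^2$, since $c = \sqrt{1+\eps^2}$ and $\hat{\a}_\eps = (bc^2-a)^{-1/2}$ are smooth in $\eps^2$), and for $\hat{\a}\in(0,\hat{\a}_0/2)$ the value $\hat{\a}_\eps$ stays bounded away from $0$ and $\infty$ uniformly in small $\eps$. In particular, every mixed derivative $\pd_\eps^k\pd_{\hat{z}}^n\theta_\eps(\hat{z})$ decays exponentially as $|\hat{z}|\to\infty$ at a rate uniform in $\eps\in[0,\eps_0]$, so that every expression of the form $\hat{z}^m\pd_\eps^k\pd_{\hat{z}}^n\theta_\eps(\hat{z})$ is bounded in $L^\infty(\R)$ uniformly in $\eps$ small, for any $m,n,k\ge 0$.

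Next, I track the chain-rule factors. Holding $c$ fixed, $\pd_z = \eps\pd_{\hat{z}}$ contributes a clean factor of $\eps^i$. For $c$-derivatives, since $\eps = \sqrt{c^2-1}$ yields $\pd_c = (c/\eps)\pd_\eps$ at fixed $z$, applying this once to $q_c(z) = \eps^2\theta_\eps(\eps z)$ produces
\[
\pd_c q_c(z) = 2c\,\theta_\eps(\hat{z}) + c\eps\,(\pd_\eps\theta_\eps)(\hat{z}) + c\,\hat{z}\,\theta_\eps'(\hat{z}),
\]
which is $O(1) = O(\eps^{2-2\cdot 1})$ in $L^\infty$. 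The mechanism is that the prefactor $\eps^2$ absorbs one $\eps$ per application of $\pd_\eps$, while the $z$ produced by differentiating $\theta_\eps(\eps z)$ at fixed $z$ is repackaged as $\hat{z}/\eps$, contributing a bounded factor of $\hat{z}$ and a single lost power of $\eps$. Inductively, $\pd_z^i\pd_c^j q_c$ should be a finite linear combination of terms of the form $\eps^{2+i-2j}\hat{z}^m(\pd_\eps^k\pd_{\hat{z}}^n\theta_\eps)(\hat{z})$ for suitable $m,n,k\ge 0$, each bounded in $L^\infty$ uniformly in $\eps$ small, whence the claim.

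The only real obstacle is the bookkeeping of the induction. One clean way to organize it is to verify that the module, over the ring of smooth functions of $\eps^2$ uniformly bounded for $\eps$ small, generated by the family $\{\hat{z}^m(\pd_\eps^k\pd_{\hat{z}}^n\theta_\eps)(\hat{z})\}_{m,n,k\ge 0}$ is preserved by the operators $\eps\pd_{\hat{z}}$ and $(c/\eps)\pd_\eps$, with the explicit $\eps$-scaling shifting by $+1$ and by $-2$ respectively. Given this closure property, the scaling $\eps^{2+i-2j}$ follows by induction on $i+j$, and the remainder is a routine check using the exponential decay of all derivatives of $\sech^2$ together with the smoothness of $c$ and $\hat{\a}_\eps$ in $\eps^2$.
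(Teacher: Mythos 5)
Your proof is correct and follows exactly the route the paper intends: the paper states Claim~B.1 without proof, remarking only that it follows ``in view of \eqref{eq:KP2-scale2},'' and your reduction of the multiplication-operator norm to the $L^\infty$ norm followed by the chain-rule bookkeeping for $\pd_z=\eps\pd_{\hat z}$ and $\pd_c=(c/\eps)\pd_\eps$ acting on $\eps^2\theta_\eps(\eps z)$ is precisely the intended argument. The closure of the family $\hat z^m\pd_\eps^k\pd_{\hat z}^n\theta_\eps$ under these operators, with scaling shifts $+1$ and $-2$, correctly accounts for the exponent $2+i-2j$.
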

Next, we collect estimates of $\pd_z$, $\mu(D)$, $S(D)$ and $B^{-1}$.
\begin{claim}
\label{cl:basic}
Let $\hat{\a}>0$ and $\a=\hat{\a}\eps$.
There exists a positive constants $\eps_0$  such that
if $\eps\in(0,\eps_0)$, 
\begin{gather}
  \label{eq:inv-pd}
\|\pd_z^{-1}\|_{B(L^2_\a)} \le \a^{-1}\,,
\\ \label{eq:inv-mu}
\|\mu(D)^{-1}\|_{B(Y)}\le \sqrt{2}\a^{-1}\,, \quad
\|\pd_z\mu(D)^{-1}\|_{B(Y)}\le \sqrt{2}\,,
\\
\label{eq:pdon2}
\|\pd_z\|_{B(Y_{low})} \le (K+\hat{\a})\eps\,,
\quad
\|\mu(D)^j\|_{B(Y_{low})}\le \{2(K+\hat{\a})\eps\}^j
\text{for $j\in\N$,}\\
\label{eq:pdon3}
 \|i\pd_z\mu(D)^{-1}+I\|_{B(Y_{low})}=O(K^4\eps^2)\,.
\end{gather}
\end{claim}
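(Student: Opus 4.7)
The plan is to reduce each of the five inequalities to a pointwise bound on the corresponding Fourier symbol, via the multiplier characterization \eqref{eq:fm0} of the $B(L^2_\a(\R^2))$-norm. Restricting to the subspaces $Y$ and $Y_{low}$ amounts to restricting the supremum over $(\xi,\eta)$ to the frequency support of the cutoffs $\rho_y$ and $\rho_y\rho_z$; since $|\eta|\le K(K+\hat{\a})\eps^2$ on $Y$ and additionally $|\Re\xi|\le K\eps$ on $Y_{low}$, while $\a=\hat{\a}\eps$, the parameter $\eps_0$ can always be chosen so small that $\eta/\a$ and $(K\eps)/\a$ are as small as needed.

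For \eqref{eq:inv-pd} I would simply observe that $|i(\xi+i\a)|^{-1}=(\xi^2+\a^2)^{-1/2}$ is maximized at $\xi=0$. For \eqref{eq:inv-mu}, expand
$$|\mu(\xi+i\a,\eta)|^4=(\xi^2-\a^2+\eta^2)^2+4\a^2\xi^2=(|\xi+i\a|^2+\eta^2)^2-4\a^2\eta^2,$$
which on $Y$ (where $\eta^2\le\a^2/2$ for small $\eps_0$) is minimized in $\xi^2$ at $\xi=0$ and bounded below there by $(\a^2-\eta^2)^2\ge\a^4/4$, yielding $\|\mu(D)^{-1}\|_{B(Y)}\le\sqrt{2}/\a$. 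The bound on $\pd_z\mu(D)^{-1}$ reduces to maximizing $u^2/((u+v)^2-4\a^2 v)$ over $u=|\xi+i\a|^2\ge\a^2$ and $v=\eta^2\in[0,\a^2/2]$; a short one-variable calculus argument (computing interior critical points on $u+v=4\a^2$ and on the boundary $v=\a^2/2$) shows the maximum equals $4$, attained at $(u,v)=(\a^2,\a^2/2)$.

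For \eqref{eq:pdon2} the frequency support of $\rho_z$ gives $|\xi+i\a|\le\sqrt{K^2+\hat{\a}^2}\,\eps\le(K+\hat{\a})\eps$ directly, and then $|\mu(\xi+i\a,\eta)|^2\le|\xi+i\a|^2+\eta^2\le(K+\hat{\a})^2\eps^2(1+K^2\eps^2)\le 2(K+\hat{\a})^2\eps^2$ for $\eps_0$ small; the bound on $\mu(D)^j$ then follows by submultiplicativity of Fourier multiplier norms (or by applying \eqref{eq:fm0} to $\mu^j$ directly).

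The main obstacle is \eqref{eq:pdon3}. My approach is to write $\mu=(\xi+i\a)\sqrt{1+w}$ with $w:=\eta^2/(\xi+i\a)^2$, using the principal branch of $\sqrt{\cdot}$. On $Y_{low}$ we have $|\xi+i\a|\ge\a=\hat{\a}\eps$ and $|\eta|\le K(K+\hat{\a})\eps^2$, so
$$|w|\le\frac{K^2(K+\hat{\a})^2\eps^2}{\hat{\a}^2}=O(K^4\eps^2),$$
which is small, so $1+w$ lies in a small disk around $1$, well inside the principal-branch domain. The symbol of $i\pd_z\mu(D)^{-1}+I$ then becomes $1-(1+w)^{-1/2}=w/2+O(w^2)=O(K^4\eps^2)$, giving the claim. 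The only point that needs some care is verifying that the principal-branch representation matches the sign convention $\mu=\mathrm{sgn}(\xi)\sqrt{(\xi+i\a)^2+\eta^2}$ of Claim~\ref{cl:mu} uniformly on $Y_{low}$, including across $\xi=0$; since $1+w$ stays in a small neighborhood of $1$ and $\mu$ is continuous in $\eta$ with $\mu|_{\eta=0}=\xi+i\a$, a short continuity argument in $\eta$ closes this gap.
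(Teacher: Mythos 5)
Your proof is correct and follows essentially the same route as the paper: reduce each bound to a pointwise symbol estimate via \eqref{eq:fm0}, use $\eta^2\le\a^2/2$ on $Y$ to bound $|\mu(\xi+i\a,\eta)|$ from below, and expand $(1+\eta^2/(\xi+i\a)^2)^{-1/2}-1$ on $\widetilde{A}_{low}$ for \eqref{eq:pdon3}. The only cosmetic difference is that the paper derives both halves of \eqref{eq:inv-mu} at once from the single inequality $|\mu(\xi+i\a,\eta)|^4\ge\tfrac14(\xi^2+\a^2)^2$, whereas you optimize $|\xi+i\a|^2/|\mu|^2$ separately; both computations are valid.
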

\begin{proof}
By \eqref{eq:fm0},
$$\|\pd_z^{-1}\|_{B(L^2_\a)}= \sup_{\xi\in\R}
\left|\frac{1}{\xi+i\a}\right| \le  \a^{-1}\,,
$$
\begin{align*}
\|\pd_z^j\mu(D)^{-1}\|_{B(Y)}=& 
\sup_{(\xi,\eta)\in\R\times[-K(K+\hat{\a})\eps^2,K(K+\hat{\a})\eps^2]}
\frac{|\xi+i\a|^j}{|\mu(\xi+i\a,\eta)|}\,.
\end{align*}
If $\eta\in[- K^2\eps^2,K^2\eps^2]$ and $\eps$ is sufficiently small,
then $\eta^2\le \a^2/2$ and
\begin{align*}
|\mu(\xi+i\a,\eta)|^4=&  (\xi^2+\a^2-\eta^2)^2+4\xi^2\eta^2
\ge  \frac14(\xi^2+\a^2)^2\,.
\end{align*}
Combining the above,  we have \eqref{eq:inv-mu}.
\par

Since $\supp \hat{f}(\xi+i\a,\eta)\subset\widetilde{A}_{low}$ for $f\in Y_{low}$,
we have \eqref{eq:pdon2} and
\begin{align*}
\|i\pd_z\mu(D)^{-1}+I\|_{B(Y_{low})}
=& \sup_{(\xi,\eta)\in\widetilde{A}_{low}}
\left|\left\{1+\frac{\eta^2}{(\xi+i\a)^2}\right\}^{-1/2}-1\right|
 = O(K^4\eps^2)\,.
\end{align*}
Thus we complete the proof.
\end{proof}
  \begin{claim}
\label{cl:prop-B,S}
Let $\hat{\a}>0$ and $\a=\hat{\a}\eps$.
There exists  positive constants $C$ and $\eps_0$ such that for 
any $\eps\in(0,\eps_0)$,
    \begin{gather}
\label{eq:S}
\|S(D)\|_{B(L^2_\a)}+\|S(D)^{-1}\|_{B(L^2_\a)}\le C\,,\\
      \label{eq:binv1}
\|\pd_z^jB^{-1}\|_{B(L^2_\a,H^{2-j}_\a)}+\|\mu_j(D)^jB^{-1}\|_{B(L^2_\a,H^{2-j}_\a)}
\le C\quad\text{for $j=0$, $1$, $2$,}\\
\label{eq:binv3}
\|[B,\pd_z^jq_c]\|_{B(H^1_\a,L^2_\a)}\le C\eps^{j+3}\,,\\
      \label{eq:binv2}
\|B^{-1}-I\|_{B(Y_{low})}
+\|S(D)-I\|_{B(Y_{low})}+\|S^{-1}(D)-I\|_{B(Y_{low})} \le CK^2\eps^2\,.
    \end{gather}
  \end{claim}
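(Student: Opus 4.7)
The plan is to treat each of the four estimates as a Fourier multiplier computation on $L^2_\a(\R^2)$, appealing to the boundedness criterion \eqref{eq:fm0}, the symbol estimates for $S$ already collected in Claim~\ref{cl:spectrum-free1}, and the decay of $q_c$ and its derivatives from Claim~\ref{cl:qc-size}. A uniform preliminary observation is that for sufficiently small $\eps_0$ one has $b\a^2=b\hat{\a}^2\eps^2<1/2$, whence $\a<\a_c$ so that Claim~\ref{cl:spectrum-free1} applies, and also
\[
\Re B(\xi+i\a,\eta)=1-b\a^2+b(\xi^2+\eta^2)\ge \tfrac12\{1+b(\xi^2+\eta^2)\},
\]
so $|B(\xi+i\a,\eta)|^{-1}\lesssim (1+\xi^2+\eta^2)^{-1}$ uniformly on $\R^2$.

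For \eqref{eq:S} the plan is simply to invoke Claim~\ref{cl:spectrum-free1}, which gives $\sqrt{a/b}<|S(\xi+i\a,\eta)|<S(i\a,0)$, and then \eqref{eq:fm0} closes the estimate. For \eqref{eq:binv1} I would combine the lower bound on $|B|$ with $|\xi+i\a|^2\lesssim 1+\xi^2+\eta^2$ and $|\mu(\xi+i\a,\eta)|^2=|\xi^2+\eta^2-\a^2+2i\a\xi|\lesssim 1+\xi^2+\eta^2$; to measure the $H^{2-j}_\a$ norm I would apply \eqref{eq:fm0} to the three scalar symbols $|\xi+i\a|^{2-j}\cdot(\xi+i\a)^j/B$, $|\eta|^{2-j}\cdot(\xi+i\a)^j/B$, and $(\xi+i\a)^j/B$ (and similarly with $\mu$ in place of $\pd_z$), each of which is uniformly bounded by the above comparison. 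For \eqref{eq:binv3}, since $q_c$ is independent of $y$, a direct calculation yields
\[
[B,\pd_z^jq_c]=-b[\pd_z^2,q_c^{(j)}]=-b\bigl\{q_c^{(j+2)}+2q_c^{(j+1)}\pd_z\bigr\},
\]
and the multiplication norm on $L^2_\a$ coincides with the $L^\infty$ norm of the symbol, so Claim~\ref{cl:qc-size} gives $\|q_c^{(j+1)}\|_{L^\infty}+\|q_c^{(j+2)}\|_{L^\infty}=O(\eps^{j+3})$ from which the $B(H^1_\a,L^2_\a)$ bound follows.

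The estimate \eqref{eq:binv2} is the only place that really uses the low-frequency cutoff. On $\widetilde{A}_{low}=\supp\rho_z(\xi)\rho_y(\eta)$ one has $|\xi+i\a|^2+\eta^2=O(K^2\eps^2)$, hence
\[
|B(\xi+i\a,\eta)-1|=b\bigl|(\xi+i\a)^2+\eta^2\bigr|=O(K^2\eps^2),
\]
which gives $|B(\xi+i\a,\eta)^{-1}-1|=O(K^2\eps^2)$. The factorization \eqref{eq:cl-sfpf1} shows
\[
|S(\xi+i\a,\eta)^2-1|=\bigl(1-\tfrac{a}{b}\bigr)\frac{|b((\xi+i\a)^2+\eta^2)|}{|1-b\a^2+b(\xi^2+\eta^2)+2ib\a\xi|}=O(K^2\eps^2)
\]
on $\widetilde{A}_{low}$, and since $\Re S$ remains bounded away from $0$ by \eqref{eq:cl-sf0}, both $|S-1|$ and $|S^{-1}-1|$ inherit the $O(K^2\eps^2)$ bound. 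Applying \eqref{eq:fm0} to the restricted multipliers then yields \eqref{eq:binv2}. None of the four parts is substantively hard; the only point that requires care is tracking the uniform-in-$\eps$ constants, in particular maintaining $b\a^2<1$ so that both the symbol inequalities of Claim~\ref{cl:spectrum-free1} and the comparability $|B(\xi+i\a,\eta)|\asymp 1+\xi^2+\eta^2$ remain in force.
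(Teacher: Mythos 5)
Your proof is correct and follows essentially the same route as the paper: the paper simply defers \eqref{eq:S}--\eqref{eq:binv3} to the corresponding one-dimensional multiplier estimates of Lemmas~7.2 and 7.4 in \cite{MPQ13} (which are exactly the computations you carry out via \eqref{eq:fm0}) and proves \eqref{eq:binv2} by the same symbol estimate on $\widetilde{A}_{low}$ that you give. One cosmetic point: $b\a^2<1/2$ does not by itself imply $\a<\a_c$ (that would require $\hat{\a}<\hat{\a}_\eps$), but this is harmless because the bounds you quote from Claim~\ref{cl:spectrum-free1} only rely on $1-b\a^2>0$, which you verify directly for small $\eps_0$.
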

  \begin{proof}
We can prove \eqref{eq:S}--\eqref{eq:binv3} in 
the same way as Lemmas~7.2 and 7.4 in \cite{MPQ13}.
Since $$B(\xi+i\a,\eta)=1+b\{(\xi+i\a)^2+\eta^2\}=1+O(K^2\eps^2)
\quad\text{for $(\xi,\eta)\in \widetilde{A}_{low}$,}$$
we have
\begin{align*}
\|B^{-1}-I\|_{Y_{low}}=& 
\sup_{(\xi,\eta)\in \widetilde{A}_{low}}\left|B^{-1}(\xi+i\a,\eta)-1\right|
= O(K^2\eps^2)\,.
\end{align*}
Similarly, we have 
$\|S(D)-I\|_{B(Y_{low})}+\|S^{-1}(D)-I\|_{B(Y_{low})}=O(K^2\eps^2)$
from \eqref{eq:S-low}.
\end{proof}

Next, we will estimates the operator norms of $a_1$ and $a_2$.
\begin{claim}
  \label{cl:V-bound12}
Let $\hat{\a}\in (0,\hat{\a}_0/2)$ and
$c=\sqrt{1+\eps^2}$. There exists an $\eps_0>0$ such that if
$\eps\in(0,\eps_0)$ and $\a=\eps\hat{\a}$, then
\begin{gather}
\label{eq:a1,a2}
 \|a_i\|_{B(Y)}=O(\eps^2)
\quad\text{for $i=1$, $2$,}
\\ \label{eq:a,l-h}
\| a_i \rho_z(D_z)\|_{B(Y)}+\|\rho_z(D_z)a_i\|_{B(Y)}=O(K\eps^3)
\quad\text{for $i=1$, $2$,}
\\ \label{eq:a,l-l}
\left \|\rho_{KP}(D)\{
a_{2,\eps}+\frac{3}{2}\pd_z(\theta_0\cdot)\}
\rho_{KP}(D)\right\|_{B(L^2_{\hat{\a}}(\R^2))}=O(K^5\eps^2)\,.
\end{gather}
\end{claim}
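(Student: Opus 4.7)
My plan is to unify the three estimates by first rewriting $a_1,a_2$ so that every summand consists of a potential multiplier composed with a bounded Fourier multiplier. The key identity is $\pd_z^2-\eta^2=-\mu(D)^2$ as Fourier multipliers (implicit in Claim~\ref{cl:mu}), which yields
\[
v_{1,c}(\eta)\mu(D)^{-1}=2r_c'\,\pd_z\mu(D)^{-1}-r_c\,\mu(D).
\]
Together with $v_{2,c}=2\pd_z(q_c\cdot)-q_c'$ and the commutator identity \eqref{eq:v1c-alt'}, this displays $a_i$ as a sum of four terms in which potentials of size $O(\eps^2)$ or $O(\eps^3)$ (Claim~\ref{cl:qc-size}) multiply the bounded Fourier multipliers $B^{-1}S(D)^{\pm 1}$, $B^{-1}\mu(D)$, and $\pd_z\mu(D)^{-1}$.

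For \eqref{eq:a1,a2}, I combine the potential size estimates with the uniform operator bounds from Claims~\ref{cl:basic} and \ref{cl:prop-B,S}: $\|B^{-1}\mu(D)\|_{B(L^2_\a)}+\|\pd_z\mu(D)^{-1}\|_{B(Y)}=O(1)$ and $\|B^{-1}S(D)^{\pm 1}\|_{B(L^2_\a)}=O(1)$. Each summand is then $O(\eps^2)$, giving $\|a_i\|_{B(Y)}=O(\eps^2)$. For \eqref{eq:a,l-h}, restriction by $\rho_z(D_z)$ lands in $Y_{low}$, where Claim~\ref{cl:basic} gives $\|\pd_z\|_{B(Y_{low})}+\|\mu(D)\|_{B(Y_{low})}=O(K\eps)$. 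The summands with $\pd_z$ or $\mu(D)$ sitting to the right thereby gain an extra $O(K\eps)$ and become $O(K\eps^3)$, while the remaining summands are already $O(\eps^3)\le O(K\eps^3)$. The bound $\|\rho_z(D_z)a_i\|_{B(Y)}=O(K\eps^3)$ then follows by commuting $\rho_z(D_z)$ past the Fourier multipliers $B^{-1}S(D)^{\pm 1}$ (which commute with it) and controlling the commutators with the smooth decaying potentials $q_c,r_c$, reducing to the first case.

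The principal estimate is \eqref{eq:a,l-l}, where the KP-II structure emerges. On the range of $\rho_{KP}(D)$, Claims~\ref{cl:basic} and \ref{cl:prop-B,S} supply
\[
B^{-1}=I+O(K^2\eps^2),\quad S(D)^{\pm 1}=I+O(K^2\eps^2),\quad i\pd_z\mu(D)^{-1}=-I+O(K^4\eps^2),
\]
so that $\pd_z\mu(D)^{-1}=iI$ and, after multiplication by $\mu(D)$ on both sides, $\mu(D)=-i\pd_z$, up to operator errors of $O(K^4\eps^2)$ and $O(K^5\eps^3)$ respectively on $Y_{low}$. Substituting these into the formula for $a_2$ and using $r_c=-cq_c$ with $c=1+O(\eps^2)$, the leading-order part collapses to
\[
r_c'+\tfrac12 r_c\pd_z-q_c\pd_z-\tfrac12 q_c'=-\tfrac32 q_c'-\tfrac32 q_c\pd_z=-\tfrac32\pd_z(q_c\cdot).
\]
Conjugation by $E_\eps$ sends $\pd_z$ to $\eps\pd_{\hat z}$ and $q_c$ to $\eps^2\theta_\eps$, turning $a_{2,\eps}=\eps^{-3}E_\eps a_2 E_\eps^{-1}$ into $-\tfrac32\pd_{\hat z}(\theta_\eps\cdot)$ plus rescaled errors, and replacing $\theta_\eps$ by $\theta_0$ costs a further $O(\eps^2)$ by \eqref{eq:theta-approx}.

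The main obstacle is the bookkeeping of $K$-dependence. Each low-frequency approximation deteriorates as $K$ grows: the dominant contribution is the $O(K^4\eps^2)$ error in $i\pd_z\mu(D)^{-1}+I$, and one additional factor of $K$ enters from the $\pd_z$ or $\mu(D)$ sitting to the right of the $O(\eps^2)$ potential in the term $\tfrac i2 B^{-1}S(D)^{-1}r_c\mu(D)$, so the total error rescales (after division by $\eps^3$) to the claimed $O(K^5\eps^2)$; none of the individual estimates are hard, but marshalling them so the $K$-powers add up correctly is the delicate point.
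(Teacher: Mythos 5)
Your proposal is correct and follows essentially the same route as the paper: the same rearrangements \eqref{eq:v1c-alt'} and \eqref{eq:v2c-alt} that pair each $O(\eps^2)$ or $O(\eps^3)$ potential with one of the bounded multipliers $B^{-1}\mu(D)$, $\pd_z\mu(D)^{-1}$, $B^{-1}S(D)^{\pm1}$, the low-frequency gains of Claims~\ref{cl:basic} and \ref{cl:prop-B,S}, and the same leading-order collapse $2a_2\approx-3\pd_z(q_c\cdot)$ on $Y_{low}$ with the dominant error coming from $\|(i\mu(D)-\pd_z)\rho_z(D_z)\|_{B(Y)}=O(K^5\eps^3)$ against the $O(\eps^2)$ potential. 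Your $K$-bookkeeping for \eqref{eq:a,l-l} reproduces exactly the paper's $O(K^5\eps^5)$ bound before the $\eps^{-3}$ rescaling.
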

\begin{proof}
By Claims~\ref{cl:qc-size}--\ref{cl:prop-B,S},
\eqref{eq:v2c-alt} and \eqref{eq:v1c-alt'}, we have
\begin{align*}
& \|B^{-1}v_{1,c}\mu(D)^{-1}\|_{B(L^2_\a)}
+ \|B^{-1}v_{2,c}\|_{B(L^2_\a)}=O(\eps^2)\,,
\\ &
 \|B^{-1}v_{1,c}\mu(D)^{-1}\rho_z(D_z)\|_{B(Y)}
+\|B^{-1}v_{2,c}\rho_z(D_z)\|_{B(L^2_\a)}=O(K\eps^3)\,,
\\ &
 \|\rho_z(D_z)B^{-1}v_{1,c}\mu(D)^{-1}\|_{B(Y)}
+\|\rho_z(D_z)B^{-1}v_{2,c}\|_{B(L^2_\a)}=O(K\eps^3)\,.
\end{align*}  
Combining the above with \eqref{eq:S},
we have \eqref{eq:a1,a2} and \eqref{eq:a,l-h}.
\par
Finally, we will prove \eqref{eq:a,l-l}.
By \eqref{eq:binv2},
\begin{multline*}
  \left\|\rho_z(D_z)\{2a_2+3c(q\pd_z+q_c')\}\rho_z(D_z)\right\|_{B(Y)}
\\ \le  
\left\|\{iv_{1,c}\mu(D)^{-1}-c(q_c\pd_z+2q_c')\}\rho_z(D_z)\right\|_{B(Y)}
\\ +O\left(K^2\eps^2(\|\rho_z(D_z)v_{1,c}\mu(D)^{-1}\rho_z(D_z)\|_{B(Y)}
+\|\rho_z(D_z)v_{2,c}\rho_z(D_z)\|_{B(Y)})\right)\,.
\end{multline*}
Claims~\ref{cl:qc-size} and \ref{cl:basic} imply
$$\|\rho_z(D_z)v_{1,c}\mu(D)^{-1}\rho_z(D_z)\|_{B(Y)}
+\|\rho_z(D_z)v_{2,c}\rho_z(D_z)\|_{B(Y)}=O(K\eps^3)\,,$$
and 
\begin{align*}
& \left\|\{iv_{1,c}\mu(D)^{-1}-c(q_c\pd_z+2q_c')\}\rho_z(D_z)\right\|_{B(Y)}
\\ \lesssim & \|q_c\|_{L^\infty}\|(i\mu(D)-\pd_z)\rho_z(D_z)\|_{B(Y)}
+\|q_c'\|_{L^\infty}\|(i\pd_z\mu(D)^{-1}+I)\rho_z(D_z)\|_{B(Y)}
\\ &+(c-1)\|(q_c\pd_z+2q_c')\rho_z(D_z)\|_{B(Y)}
\\ \lesssim & K^5\eps^5\,.
\end{align*}
In the last inequality, we use the fact that $c=1+O(\eps^2)$.
Combining the above with the fact that
$\|\eps^{-2}q_c(\cdot/\eps)-\theta_0\|_{C^1}=O(\eps^2)$,
we have \eqref{eq:a,l-l}.
Thus we complete the proof.
\end{proof}

\begin{claim}
  \label{cl:tr-bound}
  \begin{gather}
\label{eq:tr-bound1}
 \|\barr_{ij}\|_{B(Y)}\lesssim K\eps^3
\quad\text{for  $i$, $j=1$, $2$,}
\\ \label{eq:tr-bound2}
\|\barr_{22}\|_{B(Y)}\lesssim K^5\eps^5\,.
  \end{gather}
\end{claim}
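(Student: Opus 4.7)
The plan is to exploit the structure $\Pi = I - \overline{P}(\eta_0)$ on $\widetilde{Z}$, so that $\bar R := [\Pi, M]\Pi^{-1} = \Pi M \Pi^{-1} - M$, where $M := \begin{pmatrix} \lambda_+(D)+a_1 & a_2 \\ a_1 & \lambda_-(D)+a_2\end{pmatrix}$. Since $\overline{P}(\eta_0)$ acts only on the second component, $\Pi$ is block-diagonal, $\Pi = \mathrm{diag}(I, I-\mathcal{K})$ with $\mathcal{K} := \rho_z(D_z) E_\eps^{-1}\mathcal{P}_{KP}(\eta_0) E_\eps\rho_z(D_z)$, and a direct computation (using $(I-\mathcal{K})^{-1} - I = \mathcal{K}(I-\mathcal{K})^{-1}$) will give
\begin{equation*}
\bar R = \begin{pmatrix} 0 & a_2 \mathcal{K}(I-\mathcal{K})^{-1} \\ -\mathcal{K}a_1 & -[\mathcal{K},\,\lambda_-(D)+a_2](I-\mathcal{K})^{-1} \end{pmatrix}.
\end{equation*}
Lemma~\ref{lem:orthogonality} furnishes $\|(I-\mathcal{K})^{-1}\|_{B(Y)}=O(1)$, so the task reduces to estimating $\|a_2\mathcal{K}\|_{B(Y)}$, $\|\mathcal{K}a_1\|_{B(Y)}$ and $\|[\mathcal{K},\lambda_-(D)+a_2]\|_{B(Y)}$.

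For \eqref{eq:tr-bound1} the idea is to use the cutoff $\rho_z(D_z)$ sitting inside $\mathcal{K}$: writing $a_2\mathcal{K} = (a_2\rho_z(D_z))\cdot E_\eps^{-1}\mathcal{P}_{KP}(\eta_0)E_\eps\rho_z(D_z)$ and combining the $L^2_{\hat\a}$-boundedness of $\mathcal{P}_{KP}(\eta_0)$ recorded in Section~\ref{subsec:KP} with the estimate $\|a_2\rho_z(D_z)\|_{B(Y)} + \|\rho_z(D_z)a_1\|_{B(Y)}=O(K\eps^3)$ from \eqref{eq:a,l-h}, one obtains $\|a_2\mathcal{K}\|_{B(Y)}+\|\mathcal{K}a_1\|_{B(Y)}=O(K\eps^3)$. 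The weaker $(2,2)$-bound of order $K\eps^3$ then follows by decomposing $[\mathcal{K},\lambda_-(D)+a_2]=[\mathcal{K},a_2]+[\mathcal{K},\lambda_-(D)]$ and applying the same cutoff argument to each term (the $\lambda_-(D)$-commutator will in fact be of higher order in $\eps$ by the next step).

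For the refined bound \eqref{eq:tr-bound2} the plan is to combine the KP-II approximations \eqref{eq:lmm-low2} and \eqref{eq:a,l-l}. Conjugating by $E_\eps$, these estimates together with $\mL_{KP}=\mL_{KP,0}-\frac{3}{2}\pd_z(\theta_0\cdot)$ show that, on the support of $\rho_{KP}(D)$,
\begin{equation*}
E_\eps\bigl(\lambda_-(D)+a_2\bigr)E_\eps^{-1} = \eps^3\mL_{KP} + O(K^c\eps^5)
\end{equation*}
for an explicit $c$. Since $\mathcal{P}_{KP}(\eta_0)\mL_{KP}=\mL_{KP}\mathcal{P}_{KP}(\eta_0)$ exactly, the commutator of $\mathcal{K}$ with the leading $\eps^3\mL_{KP}$ reduces to commutators of the scaled cutoff $\tilde\rho_z := E_\eps\rho_z(D_z)E_\eps^{-1}$ with the potential term $\pd_z(\theta_0\cdot)$; the analyticity/exponential-decay estimate \eqref{eq:Proj-approx3} for the KP-II resonant kernels $g_0$, $g_0^*$ makes these sandwiched against $\mathcal{P}_{KP}(\eta_0)$ negligibly small. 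Combined with the $O(K^c\eps^5)$ remainder, this will yield \eqref{eq:tr-bound2}.

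The delicate point will be the $(2,2)$-entry: because the multiplication operator $a_2$ does not commute with $\rho_z(D_z)$, the individual commutators $[\mathcal{K},a_2]$ and $[\mathcal{K},\lambda_-(D)]$ each only enjoy the crude bounds $O(K\eps^3)$ and $O(K^8\eps^5)$, and only by recognising the full KP-II operator $\mL_{KP}$ hidden inside $\lambda_-(D)+a_2$ does one obtain the cancellation through $[\mathcal{P}_{KP}(\eta_0),\mL_{KP}]=0$ that produces the sharper $K^5\eps^5$ bound. Careful bookkeeping of the growing factor $K=\delta^{-3}=\eps^{-3/20}$ in the various approximation remainders will be the other main piece of technical work.
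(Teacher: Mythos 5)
Your proposal follows essentially the same route as the paper: you exploit the block-diagonal form $\Pi=\diag(I,I-\mathcal{K})$ to reduce $\barr_{ij}$ to $a_2\mathcal{K}$, $\mathcal{K}a_1$ and the commutator $[\lambda_-(D)+a_2,\mathcal{K}]$ (the paper absorbs your extra $(I-\mathcal{K})^{-1}$ factor into $\Pi^{-1}=I+O(K^{-1})$ from Lemma~\ref{lem:orthogonality}, which is equivalent), then invoke \eqref{eq:a,l-h} for the crude bounds and the KP-II approximation \eqref{eq:lmm-low2}, \eqref{eq:a,l-l} together with $[\mathcal{P}_{KP}(\eta_0),\mL_{KP}]=0$ and the exponential decay \eqref{eq:Proj-approx3} for the refined bound on $\barr_{22}$. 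This is exactly the paper's argument, so the proposal is correct.
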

\begin{proof}
By Lemma~\ref{lem:orthogonality},
\begin{equation*}
\Pi^{-1}=
=\begin{pmatrix}  I & O \\ \eps_{21} & I+\eps_{22} \end{pmatrix}
\end{equation*}
with $\|\eps_{2j}\|_{B(L^2_\a(\R^2))}=O(K^{-1})$ and
for ${}^t\!(\tu_1,\tu_2)\in\widetilde{Z}$ and ${}^t\!(\baru_1,\baru_2)
=\Pi{}^t\!(\tu_1,\tu_2)$,
\begin{align*}
&   \begin{pmatrix}
 \barr_{11} & \barr_{12} \\ \barr_{21} & \barr_{22}
  \end{pmatrix}
\begin{pmatrix} \baru_1 \\ \baru_2 \end{pmatrix}
=
\left[\Pi,
    \begin{pmatrix}
\lambda_+(D)+a_1 & a_2 \\ a_1 &  \lambda_-(D)+a_2
    \end{pmatrix}\right]
\begin{pmatrix} \tu_1 \\ \tu_2 \end{pmatrix}
\\=&
\begin{pmatrix}
-a_2\rho_z(D_z)E_\eps^{-1}\mathcal{P}_{KP}(\eta_0)E_\eps\rho_z(D_z)\tu_2 \\ 
\rho_z(D_z)E_\eps^{-1}\mathcal{P}_{KP}(\eta_0)E_\eps\rho_z(D_z)a_1\tu_1
-[\lambda_-(D)+a_2,
\rho_z(D_z)E_\eps^{-1}\mathcal{P}_{KP}(\eta_0)E_\eps\rho_z(D_z)]\tu_2
\end{pmatrix}\,.
\end{align*}
Combining the above with Claim~\ref{cl:V-bound12}, we have
\eqref{eq:tr-bound1}.
\par
Next, we will prove \eqref{eq:tr-bound2} by using the KP-II approximation
of $\lambda_{-,\eps}(D)+a_{2,\eps}$ in the low frequency regime.
Since 
\begin{align*}
\barr_{22}\baru_2=&
-[\lambda_-(D)+a_2,
\rho_z(D_z)E_\eps^{-1}\mathcal{P}_{KP}(\eta_0)E_\eps\rho_z(D_z)]\tu_2
\\=&
-\eps^3E_\eps^{-1} [\lambda_{-,\eps}(D)+a_{2,\eps},
\rho_z(\eps D_z)\mathcal{P}_{KP}(\eta_0)\rho_z(\eps D_z)]E_\eps\tu_2\,,  
\end{align*}
it follows from \eqref{eq:lmm-low2}, \eqref{eq:Proj-approx3} and
\eqref{eq:a,l-l},
\begin{equation}
  \label{eq:proj-approx1}
  \begin{split}
& \|(\lambda_{-,\eps}(D)+a_{2,\eps})\rho_z(\eps D_z)g_{0,k}(\cdot,\eta)
-\mL_{KP}(\eta)g_{0,k}(\cdot,\eta)\|_{L^2_{\hat{\a}}}
\\ & +\|(\lambda_{-,\eps}(D)+a_{2,\eps})^*\rho_z(\eps D_z)g_{0,k}^*(\cdot,\eta)
-\mL_{KP}(\eta)^*g_{0,k}^*(\cdot,\eta)\|_{L^2_{-\hat{\a}}}
\\=&O(K^8\eps^2)\,.      
  \end{split}
\end{equation}
Since $\mL_{KP}\mathcal{P}_{KP}(\eta_0)=\mathcal{P}_{KP}(\eta_0)\mL_{KP}$,
we have  \eqref{eq:tr-bound2} from \eqref{eq:proj-approx1}. 
\end{proof}
\bigskip

\section*{Acknowledgment}
This research is supported by JSPS KAKENHI Grant Number JP25400174.


\begin{thebibliography}{10}
%
\bibitem{AbCu13}
M.~J.~Ablowitz and  C.~W.~Curtis,
{\it Conservation laws and non-decaying solutions for the Benney-Luke equation},
Proc. R. Soc. Lond. Ser. A Math. Phys. Eng. Sci. 469 (2013), 20120690, 16 pp.
%
\bibitem{APS}
J.~C.~Alexander, R.~L.~Pego and R.~L.~Sachs,
{\it On the transverse instability of solitary waves in the
Kadomtsev-Petviashvili equation},
Phys. Lett. A \textbf{226} (1997),  187--192.
%
\bibitem{Ben72}
{T.~B. Benjamin}, {\it The stability of solitary waves},
Proc. Roy. Soc. (London) Ser. A, 328 (1972), 153--183.
%
\bibitem{BL64}
{D.~J. Benney and J.~C. Luke}, {\it {Interactions of permanent waves of
  finite amplitude}}, J. Math. Phys., 43 (1964), 309--313.
%
\bibitem{Bo75}
{J.~Bona}, {\it On the stability theory of solitary waves}, Proc. Roy. Soc.
  London Ser. A, 344 (1975), 363--374.
%
\bibitem{BCG13}
{J.~L. Bona, T.~Colin, and C.~Guillop\'{e}},
{\it Propagation of long-crested water waves},
Discrete Contin. Dyn. Syst. 33 (2013), 599–-628.
%
\bibitem{BCS02}
{J.~L. Bona, M.~Chen, and J.-C. Saut}, {\it Boussinesq equations and other
  systems for small-amplitude long waves in nonlinear dispersive media. {I}.
  {D}erivation and linear theory}, J. Nonlinear Sci., 12 (2002), 283--318.
%
\bibitem{BCL05}
{J.~L. Bona, T.~Colin and D.~Lannes}, {\it Long wave approximations for
  water waves}, Arch. Ration. Mech. Anal., 178 (2005), 373--410.
%
\bibitem{Burtsev85}
S.~P.~Burtsev.
{\it Damping of soliton oscillations in media with a negative dispersion law},
Sov. Phys. JETP, 61 (1985), 270--274.
%
\bibitem{CCD10}
{M.~Chen, C.~W. Curtis, B.~Deconinck, C.~W. Lee, and N.~Nguyen},
{\it Spectral stability of stationary solutions of a {B}oussinesq system
  describing long waves in dispersive media},
SIAM J. Appl. Dyn. Syst., 9 (2010), 999--1018.
%
\bibitem{FP2}
{G.~Friesecke and R.~L. Pego}, {\it Solitary waves on FPU lattices. II.
  Linear implies nonlinear stability}, Nonlinearity, 15 (2002),
  1343--1359.
%
\bibitem{FP3}
{G.~Friesecke and R.~L. Pego}, 
{\it Solitary waves on Fermi-Pasta-Ulam lattices. {III}.
Howland-type Floquet theory}, Nonlinearity, 17 (2004), 207--227.
%
\bibitem{FP4}
{G.~Friesecke and R.~L. Pego}, {\it Solitary waves on
  Fermi-Pasta-Ulam lattices. IV. Proof of stability at low energy},
  Nonlinearity, 17 (2004), 229--251.
%
\bibitem{GaSc01}
{T.~Gallay and G.~Schneider},
{\it KP description of unidirectional long waves. The model case},
Proc. Roy. Soc. Edinburgh Sect. A 131 (2001),  885-–898.
%
\bibitem{Ge78}
{L.~Gearhart}, {\it Spectral theory for contraction semigroups on {H}ilbert space},
Trans. Amer. Math. Soc., 236 (1978), 385--394.
%
\bibitem{GSS1}
{M.~Grillakis, J.~Shatah, and W.~Strauss},
{\it Stability theory of solitary waves in the presence of symmetry. {I}}, 
J. Funct. Anal., 74 (1987), 160--197.
%
\bibitem{Herbst}
{I.~Herbst}, {\it The spectrum of {H}ilbert space semigroups}, J. Operator
  Theory, 10 (1983), 87--94.
%
\bibitem{How}
{J.~S. Howland}, {\it On a theorem of {G}earhart}, Integral Equations
  Operator Theory, 7 (1984), 138--142.
%
\bibitem{Huang}
{F.~L. Huang}, {\it Characteristic conditions for exponential stability of
  linear dynamical systems in {H}ilbert spaces}, Ann. Differential Equations, 1
  (1985), 43--56.
%
\bibitem{Kato}
T.~Kato, Perturbation theory for linear operators. Second edition,
Classics in Mathematics, Springer-Verlag, Berlin.
%
\bibitem{KP} B.~B.~Kadomtsev and V.~I.~Petviashvili.
{\it On the stability of solitary waves in weakly dispersive media}.
\textit{Sov. Phys. Dokl.} \textbf{15} (1970), 539--541.
%
\bibitem{MilKel96}
P.~A.~Milewski and J~.B.~Keller, 
\textit{Three dimensional water waves},
Studies Appl. Math., 37 (1996), 149--166.
%
\bibitem{Marics}
M.~Mari\c s,
\textit{Analyticity and decay properties of the solitary waves to the
{B}enney-{L}uke equation},
Differential Integral Equations, 14 (2001), 361--384.
%
\bibitem{MW96}
{J.~R. Miller and M.~I. Weinstein}, {\it Asymptotic stability of solitary
  waves for the regularized long-wave equation},
 Comm. Pure Appl. Math., 49 (1996), 399--441.
%
\bibitem{Mi02}
{A.~Mielke}, {\it On the energetic stability of solitary water waves}, R.
  Soc. Lond. Philos. Trans. Ser. A Math. Phys. Eng. Sci., 360 (2002),
  2337--2358.
%
\bibitem{Miz09}
{T.~Mizumachi}, {\it Asymptotic stability of lattice solitons in the energy space},
Comm. Math. Phys., 288 (2009), 125--144.
%
\bibitem{Miz13}
{T.~Mizumachi}, {\it Asymptotic stability of $N$-solitary waves of the FPU lattices},
Arch. Ration. Mech. Anal., 207 (2013), 393–-457.
%
\bibitem{Miz15}
T.~Mizumachi,
{\it Stability of line solitons for the KP-II equation in $\R^2$},
Mem. Amer. Math. Soc. 238 (2015), no. 1125.
\bibitem{Miz-preprint}
T.~Mizumachi,
{\it Stability of line solitons for the KP-II equation in $\R^2$}, II.,
Proc. Roy. Soc. Edinburgh Sect. A, {\it to appear}.
\bibitem{MPQ13}
T.~Mizumachi, R.~L.~Pego and J.~R.~Quintero,
{\it  Asymptotic stability of solitary waves in the Benney-Luke model of water waves},
Differential Integral Equations 26 (2013),  253-–301.

\bibitem{MT11}
{T.~Mizumachi and N.~Tzvetkov}, {\it Stability of the line soliton of the
  {KP-II} equation under periodic transverse perturbations},
 Math. Ann. 352 (2012), 659–690.
%
\bibitem{Pa}
{A.~Pazy}, {\it {Semigroups of linear operators and applications. Appl.
  Math. Sci.}}, vol.~44, Springer--Verlag, NY, 1983.
%
\bibitem{PQ99}
{R.~L. Pego and J.~R. Quintero}, {\it {Two-dimensional solitary waves for a
  Benney-Luke equation}}, Physica D, 132 (1999), 476--496.
%
\bibitem{PS2}
{R.~L. Pego and S.-M. Sun}, {\it Asymptotic linear stability of solitary
  water waves}, Arch. Ration. Mech. Anal. (2016), doi:10.1007/s00205-016-1021-z.
%
\bibitem{PW94}
{R.~L. Pego and M.~I. Weinstein}, {\it Asymptotic stability of solitary
 waves}, Comm. Math. Phys., 164 (1994), 305--349.
%
\bibitem{PW97}
R.~L.~Pego and M.~I.~Weinstein,
{\it Convective linear stability of solitary waves for Boussinesq equations},
Stud. Appl. Math., 99 (1997), 311–375.
%
\bibitem{Pr85}
{J.~Pr{\"u}ss}, {\it On the spectrum of {$C\sb{0}$}-semigroups},
Trans. Amer. Math. Soc., 284 (1984), 847--857.
%
\bibitem{Q03}
{J.~R. Quintero},
{\it Nonlinear stability of a one-dimensional Boussinesq equation},
J. Dynam. Differential Equations, 15 (2003), 125--142.
%
\bibitem{Q05}
{J.~R. Quintero},
{\it Nonlinear stability of solitary waves for a 2-D
 Benney-Luke equation}, Discrete Contin. Dyn. Syst., 13 (2005), 203--218.
%
\bibitem{Q13}
{J.~R. Quintero},
{\it The Cauchy problem and stability of solitary waves for a 2D
   Boussinesq-KdV type system},
Differential Integral Equations, 24 (2011), 325--360.
%
\bibitem{RSv1}
{M.~Reed and B.~Simon}, {Methods of modern mathematical physics {I}
Functional analysis,} Academic Press Inc., New York, 2nd~ed., 1980.
%
\bibitem{RT1} F.~Rousset and N.~Tzvetkov,
{\it Transverse nonlinear instability for two-dimensional dispersive models },
Ann. IHP, Analyse Non Lin\'eaire, 26 (2009), 477-496.
%
\bibitem{RT2} F.~Rousset and N.~Tzvetkov, {\it Transverse nonlinear instability for some Hamiltonian PDE's},
J. Math. Pures Appl., 90 (2008), 550-590.
%
\bibitem{Sm92}
{P.~Smereka}, {\it A remark on the solitary wave stability for a
Boussinesq equation}, in Nonlinear dispersive wave systems (Orlando, FL,
1991), World Sci. Publishing, River Edge, NJ, 1992, 255--263.
\end{thebibliography}
\end{document}